\setlist{listparindent=\parindent,parsep=0pt,}
\newtheorem{theorem}{Theorem}[section]
\newtheorem{proposition}[theorem]{Proposition}
\newtheorem{lemma}[theorem]{Lemma}
\newtheorem{corollary}[theorem]{Corollary}
\theoremstyle{definition}
\newtheorem{definition}[theorem]{Definition}
\theoremstyle{remark}
\newtheorem{remark}[theorem]{Remark}
\numberwithin{equation}{section}
\newcommand{\C}{\mathbb{C}}
\renewcommand{\epsilon}{\varepsilon}
\newcommand{\N}{\mathbb{N}}
\newcommand{\R}{\mathbb{R}}
\newcommand*\diff{\mathop{}\!\mathrm{d}} 
\newcommand{\eps}{\varepsilon}
\newcommand{\xl}{{x, \lambda}}
\newcommand{\zl}{{z, \lambda}}
\newcommand{\pxi}{\partial_{x_i}}
\newcommand{\pxj}{\partial_{x_j}}
\newcommand{\pl}{\partial_{\lambda}}
\newcommand{\ths}{{\widetilde{H}^s(\Omega)}}
\newcommand{\Sph}{{\mathbb{S}^N}}
\newcommand{\Ste}{{\mathcal{S}}}
\newcommand{\Ds}{{(-\Delta)^{s}}}
\newcommand{\ds}{{(-\Delta)^{s/2}}}
\newcommand{\Ps}{{\mathcal{P}_{2s}}}
\newcommand{\ns}{\frac{N-2s}{2}}
\newcommand{\Ns}{{N-2s}}
\newcommand{\dd}{\, \mathrm{d}}
\DeclareMathOperator{\dist}{dist}
\begin{document}

\title[Fractional Brezis--Nirenberg in low dimensions]{Critical functions and blow-up asymptotics for the fractional Brezis--Nirenberg problem in low dimension}

\author[N. De Nitti]{Nicola De Nitti}
\address[N. De Nitti]{Friedrich-Alexander-Universität Erlangen-Nürnberg, Department of Data Science, Chair for Dynamics, Control and Numerics (Alexander von Humboldt Professorship), Cauerstr. 11, 91058 Erlangen, Germany.}
\email{nicola.de.nitti@fau.de}

\author[T. K\"onig]{Tobias K\"onig}
\address[T. König]{Institut de Mathématiques de Jussieu -- Paris Rive Gauche	Université de Paris -- Campus des Grands Moulins Bâtiment Sophie Germain, Boite Courrier 7012	8 Place Aurélie Nemours, 75205 Paris Cedex 13.}
\email{koenig@imj-prg.fr}

\subjclass[2010]{35R11, 35A01,  35A15, 35S15, 47G20, 35A01.}
\keywords{Fractional Laplacian; Brezis--Nirenberg problem; semilinear elliptic equations with critical nonlinearities; critical Sobolev embeddings; critical dimension; critical functions; optimal Sobolev constant; energy asymptotics. \vspace{1.5mm}}

\thanks{}

\begin{abstract}
For $s \in (0,1)$ and a bounded open set $\Omega \subset \R^N$ with $N > 2s$, we study the fractional Brezis--Nirenberg type minimization problem of finding
\[ S(a) := \inf \frac{\int_{\R^N} |(-\Delta)^{s/2} u|^2 + \int_\Omega a u^2}{\left( \int_\Omega u^\frac{2N}{N-2s} \right)^\frac{N-2s}{N}}, \]
where the infimum is taken over all functions $u \in H^s(\R^N)$ that vanish outside $\Omega$. The function $a$ is assumed to be critical in the sense of Hebey and Vaugon. For low dimensions $N \in (2s, 4s)$, we prove that the Robin function $\phi_a$ satisfies $\inf_{x \in \Omega} \phi_a(x) = 0$, which extends a result obtained by Druet for $s = 1$. In dimensions $N \in (8s/3, 4s)$, we then study the asymptotics of the fractional Brezis--Nirenberg energy $S(a + \eps V)$ for some $V \in L^\infty(\Omega)$ as $\eps \to 0+$. We give a precise description of the blow-up profile of (almost) minimizing sequences and characterize the concentration speed and the location of concentration points.
\end{abstract}

\maketitle


\section{Introduction and main results}

Let $N \in \N$ and $0 < 2s < N$ for some $s \in (0,1)$, and let $\Omega \subset \R^N$ be a bounded open set. The goal of the present paper is to analyze the variational problem of minimizing, for a given $a \in C(\overline{\Omega})$, the quotient functional 
\begin{equation}
\label{quot_func_a}
\mathcal S_{a}[u] := \frac{ \int_{\R^N} |\ds u|^2 \diff y +  \int_\Omega a(y) u(y)^2 \diff y}{\|u\|_{L^{\frac{2N}{N-2s}}(\Omega)}^2}
\end{equation}
over functions in the space
\begin{equation}
    \label{ths definition}
    \ths := \left\{ u \in H^s(\R^N) \, : \, u \equiv 0 \quad \text{ on } \R^N \setminus \Omega \right\},
\end{equation}
where $u \in H^s(\R^N)$ iff 
\begin{align}\label{eq:fr_norm}
\Vert u \Vert_{L^2(\R^N)} + \left(\int_{\R^N} |\ds u|^2 \diff y \right)^{1/2} < \infty,
\end{align} 
and the fractional Laplacian operator $\Ds u$ is defined for any $u \in H^s(\R^N)$ through the Fourier representation 
\begin{equation}
    \label{frac lap def Fourier}
    (-\Delta)^s u = \mathcal F^{-1} (|\xi|^{2s} \mathcal F u). 
\end{equation} 
We also recall the singular integral representation of the fractional Laplacian (see  \cite{BuVa2016,MR3916700}): 
\begin{equation}
\label{frac-lap-sg-int}
(-\Delta)^s u(x) := C_{N,s} P.V. \int_{\R^N} \frac{u(x) - u(y)}{|x-y|^{N+2s}} \diff y,
\end{equation}
where 
\begin{align}\label{eq:constant}
C_{N,s} :=   \frac{s 2^{2s} \Gamma(\frac{N+2s}{2})}{\pi^{N/2} \Gamma (1-s)}. 
\end{align}

The associated infimum,
\begin{equation}
    \label{S(a) def}
    S(a) :=  \inf \left \{ \mathcal S_{a}[u] \, : \, u \in \ths \right \},
\end{equation}
is to be compared with the number $S:= S_{N,s} := S(0)$, which is equal to the best constant in the fractional Sobolev embedding 
\begin{equation}
    \label{sob_ineq}
            \int_{\R^N} |\ds u|^2 \diff y \geq S \| u\|_{L^\frac{2N}{N-2s}(\R^N)}^2 , 
\end{equation}
given by     \begin{equation}
    \label{eq:sobolevconstant}
     S_{N,s} := 2^{2s} \pi^s \frac{\Gamma(\frac{N+2s}{2})}{\Gamma(\frac{N-2s}{2})} \left( \frac{\Gamma(N/2)}{\Gamma(N)}\right)^{2s/N}.
\end{equation}
We note that the embedding $\ths \hookrightarrow L^{p+1}(\Omega)$ and the associated best constant 
are in fact independent of $\Omega$ and equal to the best full-space Sobolev constant $S_{N,s}$ (see \cite{MR2944369}). 
In the classical case $s = 1$, problem \eqref{S(a) def} has been first  studied in the famous paper \cite{BrNi} by Brezis and Nirenberg, who were interested in obtaining positive solutions to the associated elliptic equation. One of the main findings in that paper is that the behavior of \eqref{S(a) def} depends on the space dimension $N$ in a rather striking way. Indeed, when $N \geq 4$, then $S(a) < S$ if and only if $a(x) < 0$ for some $x \in \Omega$. On the other hand, when $N = 3$, then $S(a) = S$ whenever $\|a\|_\infty$ is small enough, leaving open the question of characterizing the cases $S(a) < S$ in terms of $a$. In \cite{Druet2002}, Druet proved that, for $N=3$, the following equivalence holds:
\begin{equation}
    \label{druet equivalence}
    S(a) < S \qquad  \iff \qquad  \phi_a(x) < 0 \quad \text{ for some }  x \in \Omega,
\end{equation} 
where $\phi_a(x)$ denotes the Robin function associated to $a$ (see \eqref{phi a definition} below). This answered positively a conjecture previously formulated by Brezis in  \cite{Brezis1986}. 

For a fractional power $s \in (0,1)$ and assuming $a = - \lambda$ for some constant $\lambda >0$, Brezis--Nirenberg type results  have been obtained by Servadei and Valdinoci:
\begin{enumerate}
	\item[(i)] In \cite{SeVa2015}, they proved that when $N \ge 4s$, then $S(-\lambda) < S$ whenever $\lambda > 0$;
	\item[(ii)] In \cite{SeVa2013}, they proved that for $2s < N < 4s$ and prove that there is $\lambda_{s} \in (0, \lambda_{1,s})$ (where $\lambda_{1,s}$ is the first Dirichlet eigenvalue of $(-\Delta)^s$) such that for every $\lambda \in (\lambda_{s}, \lambda_{1,s})$, one has $S(-\lambda) < S$. 
\end{enumerate}

In this paper, we shall exclusively be concerned with the low-dimensional range $2s < N < 4s$. This is the natural replacement of the classical case $N=3$. $s=1$, as indicated by the results above. One may also notice that when $2s <N$, the Green's function for $(-\Delta)^s$ on $\R^N$ behaves like $G(x,y) \sim |x-y|^{-N+2s}$ near the diagonal and thus fails to be in $L^2_\text{loc}(\R^N)$ precisely if $N \leq 4s$, compare \cite{MR1705383}. 

A central notion to what follows is that of a critical function $a$, which was introduced  by Hebey and Vaugon in \cite{HeVa2001} for for $s = 1$ and readily generalizes to the fractional situation. Indeed, the following definition is naturally suggested by the behavior of $S(a)$ just described. 

\begin{definition}[Critical function]
\label{definition critical function}
 Let $a \in C(\overline{\Omega})$. We say that $a$ is \emph{critical} if $S(a) = S$ and $S(\tilde{a}) < S(a)$ for every $\tilde{a} \in C(\overline{\Omega})$ with $\tilde{a} \leq a$ and $\tilde{a} \not \equiv a$. 
\end{definition}

When $N \geq 4s$, the result of \cite{SeVa2015} implies that the only critical potential is $a \equiv 0$. For this case, or more generally for $N > 2s$ with $a \equiv 0$, the recent literature is rather rich in refined results going beyond \cite{SeVa2015}. Notably, in \cite{ChKiLe2014} and \cite{ChKi2017}, the authors prove the fractional counterpart of some conjectures by Brezis and Peletier \cite{BrPe1989} concerning the blow-up asymptotics of minimizers to the problem $S(-\eps)$ and a related problem with subcritical exponent $p-\eps$ as $\eps \to 0$. In the classical case $s = 1$, these results are due to Han \cite{Han1991} and Rey \cite{Rey1989, Rey1990}. Corresponding existence results, also for non-minimizing multi-bubble solutions, are also given in \cite{ChKiLe2014, ChKi2017}, as well as in \cite{DaLoSi2017, MR4232665}. 

In contrast to this, in the more challenging setting of dimension $2s < N < 4s$, critical functions can have all possible shapes and are necessarily non-zero, compare \cite{Druet2002} and Corollary \ref{corollary critical} below. In this setting, and notably in the presence of a critical function, results of Han--Rey type as just discussed are much more scarce in the literature. Even in the local case $s=1$ and $N=3$, the conjecture of Brezis and Peletier (see \cite[Conjecture 3.(ii)]{BrPe1989})  which involves a (constant) critical function has only been proved recently in \cite{FrKoKo2021blow-up}. For the fractional case $2s < N < 4s$, we are not aware of any results going beyond \cite{SeVa2013}.  The purpose of the present paper is to start filling this gap.

\subsection{Main results}
\label{subsec:mainresults}

For all of our results, a crucial role is played by the Green's function of $\Ds + a$, which we introduce now. For a function $a \in C(\overline{\Omega})$ such that $(-\Delta)^s + a$ is coercive, i.e.  
\[ \int_{\R^{N}} |\ds v|^2 \diff y  + \int_\Omega a v^2 \diff y \geq c \int_{\R^N} |\ds v|^2 \diff y \]
for some $c > 0$, define $G_a: \Omega \times \R^N \to \R$ as the unique function such that for every fixed $x \in \Omega$
\begin{align}\nonumber
\begin{cases}
((- \Delta)^s + a) G_a(x, \cdot) = \gamma_{N,s} \delta_x & \text{ in } \Omega,  \\
G_a(x, \cdot) = 0  & \text{ on } \R^N \setminus  \Omega. 
\end{cases}
\end{align}
Here, we set $\gamma_{N,s} = \frac{2^{2s} \pi^{N/2} \Gamma(s)}{\Gamma(\frac{N-2s}{2})}$, so that $\Ds |y|^{-N+2s} = \gamma_{N,s} \delta_0$ on $\R^N$. Thus this choice of $\gamma_{N,s}$ ensures that we can write $G_a$ as a sum of its singular part and its regular part $H_a(x,y)$ as follows:
\[ G_a(x,y) = \frac{1}{|x-y|^{N-2s}} - H_a(x,y). \]
The function $H_a$ is continuous up to the diagonal, see e.g. Lemma \ref{lemma Ha expansion}. Therefore, we may define the \emph{Robin function} 
\begin{equation}
    \label{phi a definition}
    \phi_a(x) := H_a(x,x), \qquad  x \in \Omega.
\end{equation}
We prove several properties of the Green's functions $G_a$ in Appendix \ref{sec:app:greens}.

Our first main result is the following extension of Druet's theorem from \cite{Druet2002} to the fractional case. 

\begin{theorem}[Characterization of criticality]
	\label{theorem critical}
Let $2s < N < 4s$ and let $a \in C(\overline{\Omega})$ be such that $(-\Delta)^s + a$ is coercive. The following properties are equivalent. 
	\begin{enumerate}
		\item[(i)] There is $x \in \Omega$ such that $\phi_a(x) < 0$.
		\item[(ii)] $S(a) < S$. 
		\item[(iii)] $S(a)$ is achieved by some function $u \in \ths$.
	\end{enumerate}
	\end{theorem}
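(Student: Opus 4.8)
The plan is to prove the cycle of implications (i) $\Rightarrow$ (iii) $\Rightarrow$ (ii) $\Rightarrow$ (i), which is the standard structure for Brezis--Nirenberg type characterizations. The implication (iii) $\Rightarrow$ (ii) is immediate: if $S(a)$ is achieved by some $u \in \ths$, then by criticality considerations (or directly by strict monotonicity arguments), one sees $S(a) < S$; more precisely, if $S(a) = S$ were achieved by $u$, then $u$ would also be a minimizer for the full-space Sobolev quotient restricted to $\Omega$, forcing $u$ to be (a rescaled translate of) the Aubin--Talenti bubble, which cannot vanish outside the bounded set $\Omega$ — a contradiction. So a minimizer forces $S(a) < S$.

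For (ii) $\Rightarrow$ (i), I would argue by contraposition: assume $\phi_a \geq 0$ on all of $\Omega$ and show $S(a) = S$. Since $S(a) \leq S$ always holds (by testing with bubbles concentrating at an interior point and letting the concentration parameter degenerate, the boundary term $\int_\Omega a u^2$ becomes negligible compared to the gradient term in the low-dimensional regime — this is where $N < 4s$ enters, as the bubble is \emph{not} in $L^2_{\loc}$), it suffices to show $S(a) \geq S$. The idea is that if $\phi_a \geq 0$, then the potential $a$ is, loosely speaking, "at least as repulsive as zero" in an integrated sense, and one can use the positivity of the Green's function / Robin function to bound the quadratic form from below. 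Concretely, for any $u \in \ths$ one writes $u$ in terms of $G_a$ and estimates $\int |\ds u|^2 + \int a u^2$ from below using the expansion of $H_a$ near the diagonal (Lemma \ref{lemma Ha expansion}) together with $\phi_a \geq 0$; this should reduce to the sharp fractional Sobolev inequality \eqref{sob_ineq}.

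For (i) $\Rightarrow$ (iii), the strategy is the classical concentration-compactness / test-function argument. Assuming $\phi_a(x_0) < 0$ at some point $x_0 \in \Omega$, I would construct a test function by truncating and correcting the Green's function $G_a(x_0, \cdot)$, or equivalently by taking a fractional Aubin--Talenti bubble $U_{\eps, x_0}$ concentrating at $x_0$ and subtracting its "regular part" $\eps^{\ns} H_a(x_0, \cdot)$ (suitably cut off) so that the test function lies in $\ths$. Expanding the quotient $\mathcal{S}_a$ at this test function as $\eps \to 0$, the leading term is $S$, and the first correction is of order $\eps^{\Ns} \phi_a(x_0)$ with a negative coefficient (this is precisely the regime $2s < N < 4s$, since the competing term $\int_\Omega a U_{\eps}^2$ is of lower order $\eps^{\Ns}$ or smaller — note $\int_{B_1} |x|^{-2(N-2s)} \diff x < \infty$ iff $N < 4s$). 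Hence $\mathcal{S}_a[u_\eps] < S$ for small $\eps$, giving $S(a) < S$; then a standard concentration-compactness argument (the defect of compactness in $\ths \hookrightarrow L^{2N/(N-2s)}$ occurs only at the level $S$, which is now strictly above $S(a)$) shows the infimum is attained.

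The main obstacle I expect is the delicate test-function expansion in the fractional setting for (i) $\Rightarrow$ (iii): unlike the local case, the fractional Laplacian is nonlocal, so the "bubble minus regular part" test function does not have compact interactions, and one must carefully control cross terms $\int_{\R^N} \ds U_{\eps, x_0} \, \ds(\chi H_a)$ and the nonlinear term $\int_\Omega (U_{\eps} - \eps^{\ns}\chi H_a)^{2N/(N-2s)}$ using the precise near-diagonal asymptotics of $H_a$ and the decay of $U_\eps$. Getting the sign and the exact order $\eps^{\Ns}$ of the Robin-function correction term — and ensuring all error terms are genuinely $o(\eps^{\Ns})$, which is exactly what the constraint $N < 4s$ buys us — is the technical heart of the argument. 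The coercivity hypothesis is used throughout to guarantee $G_a$ is well-defined and that the quadratic form controls the $\ths$-norm, so that the concentration-compactness dichotomy applies cleanly.
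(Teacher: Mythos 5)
Your implication (i) $\Rightarrow$ (iii) is fine in outline and coincides with the paper's route: the $H_a$-corrected bubble $\psi_{x,\lambda}=PU_{x,\lambda}-\lambda^{-\ns}(H_a(x,\cdot)-H_0(x,\cdot))$ gives $\mathcal S_a[\psi_{x,\lambda}]=S+C\,\phi_a(x)\lambda^{-N+2s}+\mathcal O(\lambda^{-2s})$ with $C>0$ (Theorem \ref{theorem upper bound}; note the $a$-dependent contribution only enters at order $\lambda^{-2s}=o(\lambda^{-N+2s})$ after a cancellation, not trivially), so $\phi_a(x)<0$ forces $S(a)<S$, and existence then follows from the fractional Lieb/Brezis--Nirenberg lemma. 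The two remaining implications, however, are where the theorem actually lives, and both of your arguments for them have genuine gaps.

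For (iii) $\Rightarrow$ (ii), your claim that a minimizer $u$ with $S(a)=S$ "would also be a minimizer for the full-space Sobolev quotient" is false. From $\int_{\R^N}|\ds u|^2+\int_\Omega a u^2=S\|u\|_{\frac{2N}{N-2s}}^2$ and the Sobolev inequality you only deduce $\int_\Omega a u^2\le 0$; if this integral is strictly negative, then $\int_{\R^N}|\ds u|^2>S\|u\|_{\frac{2N}{N-2s}}^2$ and there is no contradiction with $u$ having compact support. The paper's Proposition \ref{proposition nonex-min} instead runs a genuine second-variation argument following Druet: one tests the quotient with $u(1+\eps\varphi_i)$, where $\varphi_i$ are the components of the inverse stereographic projection, imposes the balancing conditions $\int_\Omega u^p\varphi_i=0$ via a Brouwer fixed-point argument (Lemma \ref{lemma fixed point druet}), and shows $\sum_i\mathcal I(\varphi_i)<S(p-2)$ by a sharp computation on $\mathbb S^N$ using the fractional product rule. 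None of this is replaceable by your one-line argument.

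For (ii) $\Rightarrow$ (i), the contrapositive "$\phi_a\ge 0$ on $\Omega$ implies $S(a)\ge S$" is precisely the hard direction, and "writing $u$ in terms of $G_a$ and bounding the quadratic form from below using the near-diagonal expansion of $H_a$ and $\phi_a\ge 0$" is a hope, not a proof: the pointwise sign of the Robin function does not convert into a lower bound of the quadratic form over arbitrary $u\in\ths$ by any direct argument, which is why Druet's and Esposito's proofs (and the paper's) are so involved. The paper proves this implication by blow-up analysis: for $\bar a=a+c$ with $c$ minimal such that $S(\bar a)=S$, it takes true minimizers $u_\eps$ of $S(\bar a-\eps)$, establishes the profile decomposition $u_\eps=\alpha(PU_{x_\eps,\lambda_\eps}+w_\eps)$ with orthogonality and coercivity (Propositions \ref{proposition decomposition}--\ref{prop w and d}), identifies the limit of $\lambda^{\ns}w_\eps$ as $H_0(x_0,\cdot)-H_a(x_0,\cdot)$ using the Euler--Lagrange equation, derives $S(\bar a-\eps)=S+c_{N,s}A_{N,s}^{-2/p}a_{N,s}\phi_{\bar a}(x_\eps)\lambda_\eps^{-N+2s}+o(\lambda_\eps^{-N+2s})$ (Lemma \ref{lemma esposito remainder}), concludes $\phi_{\bar a}(x_0)\le 0$, and finally passes from $\bar a$ to $a$ by strict monotonicity of the Robin function via the resolvent identity. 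Without a substitute for this machinery your proposal does not establish the equivalence.
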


As an immediate corollary, we can characterize critical functions in terms of their Robin function.  
\begin{corollary}
    \label{corollary critical}
    Let $a$ be critical. Then $\inf_{x \in \Omega} \phi_a(x) = 0$. 
\end{corollary}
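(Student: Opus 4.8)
The corollary follows almost immediately from Theorem~\ref{theorem critical}, and the plan is simply to chase the definitions. Suppose $a$ is critical; by Definition~\ref{definition critical function} we have $S(a) = S$, so in particular $S(a) \not< S$, and by the equivalence (i)$\iff$(ii) in Theorem~\ref{theorem critical} this forces $\phi_a(x) \geq 0$ for all $x \in \Omega$. Hence $\inf_{x \in \Omega} \phi_a(x) \geq 0$, and it remains only to rule out the strict inequality $\inf_{x \in \Omega} \phi_a(x) > 0$.

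To exclude this, I would argue by contradiction: assume $\phi_a \geq \delta > 0$ on $\Omega$ and perturb $a$ slightly downward. The natural candidate is $\tilde a := a - \eps$ for a small constant $\eps > 0$ (or more generally $a$ minus a small nonnegative bump). One checks that $(-\Delta)^s + \tilde a$ is still coercive for $\eps$ small, so the Green's function $G_{\tilde a}$ and Robin function $\phi_{\tilde a}$ are well-defined. The key point is a continuity/monotonicity statement: as $\eps \to 0+$, $\phi_{\tilde a} \to \phi_a$ uniformly on $\Omega$ (this should follow from the properties of Green's functions established in Appendix~\ref{sec:app:greens}, e.g. from a representation of $H_{\tilde a} - H_a$ in terms of $G_a$ and the perturbation $-\eps$, together with monotonicity of $G_a$ in $a$). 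Consequently, for $\eps$ small enough, $\phi_{\tilde a}$ still stays bounded away from... no: rather, lowering $a$ decreases the Robin function, so in fact $\phi_{\tilde a}(x) < \phi_a(x)$; but this alone need not make it negative. The cleaner route is: since $\tilde a \leq a$ and $\tilde a \not\equiv a$, criticality of $a$ gives $S(\tilde a) < S(a) = S$; then by (ii)$\Rightarrow$(i) of Theorem~\ref{theorem critical} there exists $x_\eps \in \Omega$ with $\phi_{\tilde a}(x_\eps) < 0$. Letting $\eps \to 0+$ and using the uniform convergence $\phi_{\tilde a} \to \phi_a$, we obtain a point $x_0 \in \overline{\Omega}$ with $\phi_a(x_0) \leq 0$, hence $\inf_{x \in \Omega} \phi_a \leq 0$ (a small argument is needed if $x_0 \in \partial\Omega$, using that $\phi_a \to +\infty$ near $\partial\Omega$, so in fact $x_\eps$ stays in a compact subset of $\Omega$). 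Combined with $\inf \phi_a \geq 0$ from the first paragraph, this yields $\inf_{x \in \Omega} \phi_a(x) = 0$.

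The only genuine technical ingredient is the continuous dependence $\phi_{a - \eps} \to \phi_a$ as $\eps \to 0+$, uniformly on compact subsets of $\Omega$, which I expect to be the main (though modest) obstacle; it should be a routine consequence of the Green's function estimates proved in the appendix — in particular the fact that $H_a$ is continuous up to the diagonal (Lemma~\ref{lemma Ha expansion}) together with a resolvent-type identity $G_{a-\eps} - G_a = \eps\, G_{a-\eps} \ast_\Omega G_a$ and a uniform bound on $\|G_{a-\eps}\|$. Since the statement only asserts $\inf \phi_a = 0$ and not that the infimum is attained, no compactness of the minimizing sequence $x_\eps$ beyond staying in a compact subset of $\Omega$ is required, which keeps the argument short.
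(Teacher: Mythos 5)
Your argument is correct in outline and is essentially the reconstruction of what the paper leaves implicit (the corollary is stated without proof as ``immediate''). The first half --- $\phi_a\ge 0$ from $S(a)=S$ via the contrapositive of (i)$\Rightarrow$(ii) --- is exactly the paper's Corollary \ref{corollary phia geq 0, a leq 0}(i). For the reverse inequality, however, your route differs from the one the paper actually has available: you apply Theorem \ref{theorem critical} as a black box to $\tilde a=a-\eps$ to get a point where $\phi_{a-\eps}<0$, and then need the continuity $\phi_{a-\eps}\to\phi_a$. The paper's machinery instead gives the conclusion directly in terms of $\phi_a$ itself: the proof of $(ii)\Rightarrow(i)$ (Lemma \ref{lemma esposito remainder} applied to minimizers of $S(a-\eps)$, with $\bar a=a$ since $a$ is already critical) produces concentration points $x_\eps\to x_0\in\Omega$ with $S>S(a-\eps)=S+c\,\phi_a(x_\eps)\lambda_\eps^{-N+2s}+o(\lambda_\eps^{-N+2s})$, forcing $\phi_a(x_0)\le 0$; no continuity of the Robin function in $a$ is needed, and Proposition \ref{prop w and d} already confines $x_\eps$ to a compact subset of $\Omega$. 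That route in fact yields the stronger statement $\mathcal N_a\neq\emptyset$, which the paper uses later.

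The one step in your version that deserves more care than ``routine'' is the uniform bound on the resolvent correction $\eps\int_\Omega G_a(x,z)G_{a-\eps}(z,x)\,\diff z$ for \emph{all} $x\in\Omega$, not just $x$ in compacts: near $\partial\Omega$ the regular parts $H_{a}(x,\cdot)$, $H_{a-\eps}(x,\cdot)$ are of size $d(x)^{2s-N}$, so a crude Cauchy--Schwarz bound gives a correction of order $\eps\,d(x)^{2(2s-N)}$, which is not obviously dominated by $\phi_a(x)\sim d(x)^{2s-N}$ when $d(x)$ is small. You acknowledge this (``a small argument is needed if $x_0\in\partial\Omega$''), and it can be repaired, e.g.\ via a comparison estimate $0\le G_{a-\eps}\lesssim G_0\le|x-y|^{2s-N}$ for coercive operators, which gives $\|G_{a-\eps}(x,\cdot)\|_{L^2(\Omega)}\lesssim 1$ uniformly precisely because $N<4s$; but such a comparison is not proved in the paper's appendix, so if you go this way you should supply it. With that ingredient in place your proof is complete.
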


The implications $(i) \Rightarrow (ii)$ and $(ii) \Rightarrow (iii)$ in Theorem \ref{theorem critical} are well-known: indeed, $(i) \Rightarrow (ii)$ easily follows by the proper choice of test functions thanks to Theorem \ref{theorem upper bound} below; the implication $(ii) \Rightarrow (iii)$ is the fractional version of the seminal observation in \cite{BrNi} (see \cite[Theorem 2]{SeVa2015}). 

Our proof of $(iii) \Rightarrow (ii)$ is the content of Proposition \ref{proposition nonex-min} below and follows \cite[Step 1]{Druet2002}. The most involved proof is that implication is $(ii) \Rightarrow (i)$, which we give in Section \ref{ssec:criticality}. We adapt the strategy developed by Esposito in \cite{Esposito2004}, who gave an alternative proof of that implication. His approach is based on an expansion of the energy functional $\mathcal S_{a - \eps}[u_\eps]$ as $\eps \to 0$, where $a$ is critical as in Definition \ref{definition critical function} and $u_\eps$ is a minimizer of $S(a - \eps)$. 

In fact, by using the techniques applied in the recent work \cite{FrKoKo2021} for $s = 1$, we are even able to push this expansion of $\mathcal S_{a - \eps}[u_\eps]$ further by one order of $\eps$ and derive precise asymptotics of the energy $S(a -\eps)$ and of the sequence $(u_\eps)$.

To give a precise statement of our results, let us fix some more assumptions and notations. We denote the zero set of the Robin function $\phi_a$ by
$$\mathcal N_a := \{ x \in \Omega: \phi_a(x) = 0\}.$$
It follows from Theorem \ref{theorem critical} that $\inf_\Omega \phi_a(x) = 0$ if and only if $a$ is critical. In particular, $\mathcal N_a$ is not empty if $a$ is critical. 

We will consider perturbations of $a$ of the form $a + \eps V$, with non-constant $V \in L^\infty(\Omega)$. For such $V$, following \cite{FrKoKo2021}, we let  
\[ Q_V(x) := \int_\Omega V(y) G_a(x,y)^2 \diff y \]
and 
\[  \mathcal N_a(V) := \{ x \in \mathcal N_a \, : \, Q_V(x) < 0\} .  \]
Finally, we shall assume that $\Omega$ has $C^2$ boundary and that 
\begin{align}
\label{assumption a}
    a \in C(\bar \Omega) \cap C^1(\Omega), \ a(x) < 0 \quad \text{ for all } x \in \mathcal N_a; 
\end{align}
By Corollary \ref{corollary phia geq 0, a leq 0}, we have a priori that $a(x) \leq 0$ on $\mathcal N_a$. Therefore assumption \eqref{assumption a} is not severe. 

We point out that with our methods we are able to prove the following theorems only for the restricted dimensional range $\tfrac{8}{3}s < N < 4s$, which enters in Section \ref{sec:LB2}. We discuss this assumption in some more detail after the statement of Theorem \ref{thm:17} below. 

The following theorem describes the asymptotics of the perturbed minimal energy $S(a + \eps V)$ as $\eps \to 0+$. It shows in particular the non-obvious fact that the condition $\mathcal N_a(V) \neq \emptyset$ is sufficient to have $S(a+ \eps V) < S$. 

\begin{theorem}[Energy asymptotics]\label{thm:13}
Let $ \tfrac{8}{3} s < N < 4s$.  Let us assume that $\mathcal N_a(V) \neq \emptyset$. Then $S(a+\eps V) < S$ for all $\eps >0$ 
and 
\begin{align*}
    \lim_{\eps \to 0^+} \frac{S(a+\eps V) -S}{\eps^{\frac{2s}{4s-N}}} &=  \sigma_{N,s} \sup_{x \in \mathcal N_a(V)} \frac{|Q_V(x)|^\frac{2s}{4s-N}}{|a(x)|^\frac{N-2s}{4s-N}} ,
\end{align*}
where $\sigma_{N,s} > 0$ is a dimensional constant given explicitly by
 $$   \sigma_{N,s} = A_{N,s}^{-\frac{N-2s}{N}} \left( \alpha_{N,s} + c_{N,s} d_{N,s} b_{N,s} \right)^{-\frac{N-2s}{4s-N}} \left(\frac{N-2s}{2s} \right)^\frac{2s}{4s-N} \frac{4s - N}{N-2s}.
$$
The constants $A_{N,s}$, $\alpha_{N,s}$, $c_{N,s}$, $d_{N,s}$ and $b_{N,s}$ are given explicitly in Lemma \ref{lemma constants} below.
\end{theorem}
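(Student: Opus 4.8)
The plan is to prove matching upper and lower bounds for $S(a+\eps V)$, both governed by a sharp two‑scale expansion of the energy on projected Aubin--Talenti bubbles. Write $p:=\tfrac{N+2s}{N-2s}$, let $U$ be the extremal of \eqref{sob_ineq}, $U_{\delta,\xi}(y):=\delta^{-\frac{N-2s}{2}}U\big(\tfrac{y-\xi}{\delta}\big)$, and let $PU_{\delta,\xi}\in\ths$ be the projection of $U_{\delta,\xi}$ relative to $(-\Delta)^s+a$, i.e.\ the solution of $((-\Delta)^s+a)PU_{\delta,\xi}=(-\Delta)^sU_{\delta,\xi}$ in $\Omega$, $PU_{\delta,\xi}=0$ on $\R^N\setminus\Omega$. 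The core computation (using the Green's function estimates of Appendix \ref{sec:app:greens} and the constants of Lemma \ref{lemma constants}) is the expansion, valid for $\xi$ in a fixed compact subset of $\Omega$ on which $a<0$, as $\delta\to0$ and $\eps\to0$,
\[
\mathcal S_{a+\eps V}[PU_{\delta,\xi}]=S+c_1\,\delta^{N-2s}\Big(\phi_a(\xi)+\tfrac{\eps}{\gamma_{N,s}}Q_V(\xi)+o(\eps)\Big)+c_2\,\delta^{2s}\big(|a(\xi)|+o(1)\big)+o\big(\delta^{2s}+\eps\,\delta^{N-2s}\big),
\]
with $c_1,c_2>0$ explicit (the precise constants assemble into $A_{N,s},\alpha_{N,s},c_{N,s},d_{N,s},b_{N,s}$). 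The crucial structural feature is that $N-2s<2s$, so that the two terms $\delta^{N-2s}$ and $\delta^{2s}$ compete, and the $\eps$‑shift of the Robin coefficient is what forces the fractional exponent $\eps^{2s/(4s-N)}$.

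For the upper bound I would first record, from the resolvent expansion $G_{a+\eps V}=G_a-\tfrac{\eps}{\gamma_{N,s}}G_a\,V\,G_a+O(\eps^2)$ (cf.\ Appendix \ref{sec:app:greens}), the first‑order formula $\phi_{a+\eps V}(\xi)=\phi_a(\xi)+\tfrac{\eps}{\gamma_{N,s}}Q_V(\xi)+O(\eps^2)$. On $\mathcal N_a(V)$ this gives $\phi_a(\xi)=0$ and $\phi_{a+\eps V}(\xi)<0$ for $\eps$ small, so the $\delta^{N-2s}$‑coefficient in the expansion is negative of order $\eps$ while the $\delta^{2s}$‑coefficient is positive of order one. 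Minimizing $\delta\mapsto c_1\delta^{N-2s}\tfrac{\eps}{\gamma_{N,s}}Q_V(\xi)+c_2\delta^{2s}|a(\xi)|$ over $\delta>0$ produces an interior minimum at $\delta_*(\xi)\asymp\big(\tfrac{\eps|Q_V(\xi)|}{|a(\xi)|}\big)^{1/(4s-N)}$ whose value is $-\sigma_{N,s}\tfrac{|Q_V(\xi)|^{2s/(4s-N)}}{|a(\xi)|^{(N-2s)/(4s-N)}}\eps^{2s/(4s-N)}(1+o(1))$, with $\sigma_{N,s}$ the constant of Lemma \ref{lemma constants}. Optimizing over $\xi$ — the supremum is attained because $\mathcal N_a$ is a compact subset of $\Omega$ (boundary blow‑up of $\phi_a$, Appendix \ref{sec:app:greens}) and $|a|$ is bounded below on it by \eqref{assumption a} — yields the asymptotic upper bound. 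Moreover, for any fixed $\eps>0$ and $\xi_0\in\mathcal N_a(V)$, the $\delta^{N-2s}$‑term alone makes $\mathcal S_{a+\eps V}[PU_{\delta,\xi_0}]<S$ for $\delta$ small (if $(-\Delta)^s+a+\eps V$ fails to be coercive this is trivial), which gives the claimed strict inequality $S(a+\eps V)<S$ for all $\eps>0$.

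The lower bound is the substantial part and I would argue by compactness. Given $\eps_n\to0^+$, take minimizers $u_n\in\ths$ of $S(a+\eps_nV)$ (these exist by Theorem \ref{theorem critical}, since $S(a+\eps_nV)<S$ and $(-\Delta)^s+a+\eps_nV$ is coercive for $\eps_n$ small), normalized $\|u_n\|_{L^{2N/(N-2s)}}=1$. Since $|S(a+\eps V)-S|\le\eps\|V\|_\infty|\Omega|^{2s/N}$ we have $S(a+\eps_nV)\to S$; as $S(a)=S$ is not attained (Theorem \ref{theorem critical}), $u_n$ cannot converge strongly, so concentration‑compactness and a profile decomposition for \eqref{sob_ineq} give $u_n=\alpha_n PU_{\delta_n,\xi_n}+r_n$ with $\delta_n\to0$, $\xi_n\to\xi_0\in\overline\Omega$, $\alpha_n\to\alpha_0>0$, and $r_n\to0$ in $\ths$, orthogonal in $\dot H^s(\R^N)$ to $PU_{\delta_n,\xi_n}$ and its scaling/translation derivatives. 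Inserting this in the energy, using the Euler--Lagrange equation of $u_n$ to cancel the first‑order term in $r_n$ and the coercivity of the linearized operator to absorb the quadratic one, one obtains $S(a+\eps_nV)-S\ge\mathcal S_{a+\eps_nV}[PU_{\delta_n,\xi_n}]-S-(\text{remainder error})$. Comparing with the upper bound forces $\xi_0\in\mathcal N_a$ (otherwise $\phi_a(\xi_0)>0$ of order one and the $\delta^{N-2s}$‑term gives energy $\ge S$, a contradiction) and $Q_V(\xi_0)\le0$, after which the infimum over $\delta$ and $\xi$ of the leading expansion exactly reproduces $-\sigma_{N,s}\sup_{\mathcal N_a(V)}\tfrac{|Q_V(x)|^{2s/(4s-N)}}{|a(x)|^{(N-2s)/(4s-N)}}\eps_n^{2s/(4s-N)}$, and $\xi_0$ is identified as a maximizer.

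The main obstacle — and the source of the restriction $\tfrac{8}{3}s<N<4s$ — is to control the remainder to the precision $o(\eps_n^{2s/(4s-N)})$. The bound supplied directly by the coercivity estimate, $\|r_n\|_{\ths}\lesssim\delta_n^{(N-2s)/2}$, only controls the remainder error by $O(\delta_n^{N-2s})$, which is \emph{larger} than the signal scale $\delta_n^{2s}$ precisely because $N<4s$. One must therefore improve the approximate solution by subtracting the leading potential‑driven correction (the solution of $((-\Delta)^s+a)\psi=a\,PU_{\delta_n,\xi_n}$ in $\Omega$), lowering the residual of the Euler--Lagrange equation, and re‑run the coercivity argument; the condition $N>\tfrac{8}{3}s$ is exactly the threshold at which the resulting nonlinear/cross error terms, together with the contribution $\eps_n\delta_n^{(N-2s)/2}$ of the source $\eps_nVu_n$, become of order $o(\eps_n^{2s/(4s-N)})$. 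I expect this refined blow‑up/remainder analysis (carried out in Section \ref{sec:LB2}), rather than the bubble expansion itself, to be where the real work lies; once it is in place, matching the two bounds and identifying the concentration point is routine given Lemma \ref{lemma constants}.
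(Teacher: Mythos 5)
Your outline follows essentially the same route as the paper: the upper bound via a corrected bubble whose energy exhibits the competition between a negative $\eps\,Q_V(x)\lambda^{-N+2s}$ term and a positive $|a(x)|\lambda^{-2s}$ term (the paper's test function $\psi_{x,\lambda}=PU_{x,\lambda}-\lambda^{-\frac{N-2s}{2}}(H_a-H_0)$ agrees with your $a$-projected bubble up to terms that are $o(\lambda^{-2s})$ in energy), and the lower bound via profile decomposition, orthogonality, coercivity, and a second-order correction of the ansatz. Your diagnosis of why the first-pass bound $\|w\|_{\dot H^s}=O(\lambda^{-\frac{N-2s}{2}})$ is insufficient for $N<4s$, and your identification of the correction as the solution of $((-\Delta)^s+a)\psi=-a\,PU$ (i.e.\ $\lambda^{-\frac{N-2s}{2}}(H_0-H_a)$), are both correct. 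One minor difference: the paper runs the lower bound for almost-minimizers without invoking the Euler--Lagrange equation, whereas you use minimizers and their equation; for this particular theorem that is a legitimate simplification.

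There is, however, a genuine gap in the step you describe as ``re-run the coercivity argument.'' The original remainder $w$ lies in $T_{x,\lambda}^\perp$, but after subtracting the correction the new remainder $q=w-\lambda^{-\frac{N-2s}{2}}(H_0-H_a)$ does \emph{not}: it acquires a tangential component $t=\Pi_{x,\lambda}q$, to which the coercivity inequality of Proposition \ref{proposition coerc with a} does not apply. This component is not negligible: $\|(-\Delta)^{s/2}t\|\sim\lambda^{-N+2s}$, and it contributes to both numerator and denominator at orders $\lambda^{-N+2s}$ and $\lambda^{-2N+4s}$, which for $N<4s$ (resp.\ $N<3s$) are much larger than the target precision $\lambda^{-2s}$. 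The proof only closes because these contributions cancel exactly when the quotient is formed (Lemma \ref{lemma N_1 D_1}), a structural cancellation reflecting the invariance directions of the Sobolev quotient; establishing it requires the explicit expansion of $t$ in the basis $\{PU_{x,\lambda},\partial_\lambda PU_{x,\lambda},\partial_{x_i}PU_{x,\lambda}\}$ carried out in Section \ref{ssec:s}. Your sketch treats the refined step as a repetition of the coercivity argument with smaller errors, but without the splitting $q=t+r$ and the verification of this cancellation the lower bound does not reach $o(\eps^{\frac{2s}{4s-N}})$ precision. The role of $\tfrac{8}{3}s<N$ is also slightly more specific than ``cross terms become small'': it is exactly the condition $\lambda^{-3N+6s}=o(\lambda^{-2s})$ ensuring that the \emph{cubic} contributions of $t$, which do not cancel, are negligible.
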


On the other hand, when $\mathcal N_a(V) = \emptyset$, the next theorem shows that the asymptotics become trivial provided $Q_V > 0$ on $\mathcal N_a$. Only in the case when $\min_{\mathcal N_a} Q_V = 0$ we do not obtain the precise leading term of $S(a + \eps V) - S$. 

\begin{theorem}[Energy asymptotics, degenerate case]\label{thm:14}
Let $ \tfrac{8}{3} s < N < 4s$.  Let us assume 
that $\mathcal N_a(V) = \emptyset$. Then $S(a+\eps V) = S + o(\eps^2)$ as $\eps \to 0^+$. If, in addition, $Q_V(x) >0$ for all $x \in \mathcal N_a$ then $S(a+\eps V) = S$ for sufficiently small $\eps>0$.
\end{theorem}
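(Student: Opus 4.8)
Note first that $S(a+\eps V) \le S$ holds unconditionally in the range $N<4s$: along any concentrating family of Aubin--Talenti bubbles one has $\int_\Omega u^2 \to 0$, so the corresponding Rayleigh quotients tend to $S$; this is already contained in Theorem~\ref{theorem upper bound}. Hence the content of both assertions is a matching lower bound --- $S(a+\eps V)\ge S-o(\eps^2)$ in the first case, and $S(a+\eps V)\ge S$ for small $\eps$ in the second.

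For the second assertion the plan is to argue directly with the Robin function. Resolvent perturbation of $(-\Delta)^s+a+\eps V$, together with the bounds on $G_a$ from Appendix~\ref{sec:app:greens}, gives the first-order expansion
\[
  \phi_{a+\eps V}(x) \;=\; \phi_a(x) + \gamma_{N,s}^{-1}\,\eps\,Q_V(x) + O(\eps^2)
  \qquad\text{uniformly in } x\in\overline\Omega .
\]
Assume $Q_V>0$ on the compact set $\mathcal N_a$. By continuity there are a neighbourhood $U$ of $\mathcal N_a$ and $c>0$ with $Q_V\ge c$ on $U$, while $\phi_a\ge c_0>0$ on $\overline\Omega\setminus U$. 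Since $\phi_a\ge 0$ everywhere by Corollary~\ref{corollary critical}, the expansion forces $\phi_{a+\eps V}>0$ on all of $\Omega$ once $\eps$ is small: on $U$ it equals $\gamma_{N,s}^{-1}\eps\,Q_V(x)+O(\eps^2)\ge \gamma_{N,s}^{-1}c\,\eps+O(\eps^2)$, and on $\overline\Omega\setminus U$ it is $\ge c_0-O(\eps)$. As $(-\Delta)^s+a+\eps V$ is coercive for small $\eps$ (coercivity being an open condition), the equivalence $(i)\Leftrightarrow(ii)$ of Theorem~\ref{theorem critical}, applied to $a+\eps V$, yields $S(a+\eps V)\ge S$, whence $S(a+\eps V)=S$.

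For the first assertion I would combine the same expansion with the quantitative lower bound that powers Theorem~\ref{thm:13}. When $\mathcal N_a(V)=\emptyset$, i.e.\ $Q_V\ge 0$ on $\mathcal N_a$, the expansion shows that $\phi_{a+\eps V}(x)<0$ can occur only when $\phi_a(x)<C\eps$ and $Q_V(x)<0$; such $x$ lie in $\{\phi_a<C\eps\}\cap\{Q_V<0\}$, a set disjoint from $\mathcal N_a$ which, as $\eps\to 0$, is eventually contained in any prescribed neighbourhood of $\mathcal N_a$. On that set $|Q_V|\to 0$ (because $Q_V\ge 0$ on $\mathcal N_a$), so $\min_\Omega\phi_{a+\eps V}\ge -o(\eps)$; moreover $\{\phi_a<C\eps\}$ is close to $\mathcal N_a$, where $a<0$ by assumption~\eqref{assumption a}, so $a+\eps V$ stays bounded away from $0$ at every near-minimizer of $\phi_{a+\eps V}$. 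Feeding these two facts into the quantitative, two-sided energy estimate of Section~\ref{sec:LB2} --- the sharp form of the implication $(ii)\Rightarrow(i)$, which bounds the drop $S-S(a+\eps V)$ in terms of the negative part of $\phi_{a+\eps V}$ and the size of the potential at its minimum --- the admissible drop is of the order $\frac{(o(\eps))^{2s/(4s-N)}}{|a|^{(N-2s)/(4s-N)}}$, which the analysis of that section shows to be $o(\eps^2)$ in the range $\tfrac83 s<N<4s$. This gives $S(a+\eps V)\ge S-o(\eps^2)$.

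The step I expect to be the main obstacle is precisely this quantitative lower bound: one must show that any (almost) minimizing family for $S(a+\eps V)$ is either precompact --- impossible, since $a$ critical means $S(a)=S$ is not attained (Theorem~\ref{theorem critical}) --- or concentrates at a single interior point $\xi_\eps$ with a remainder controlled well enough to expand the energy one order past $\delta^{N-2s}\big(\phi_a(\xi_\eps)+\gamma_{N,s}^{-1}\eps\,Q_V(\xi_\eps)\big)$, picking up the positive subleading term $\propto |a(\xi_\eps)|\,\delta^{2s}$ that makes the optimization in $\delta$ nondegenerate. Carrying this out in the range $\tfrac83 s<N<4s$ is the purpose of the blow-up analysis in Sections~\ref{ssec:criticality}--\ref{sec:LB2}, on which both Theorem~\ref{thm:13} and the present theorem rest.
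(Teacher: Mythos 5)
Your proposal is correct in substance, and for the second assertion it takes a genuinely different route from the paper's. The paper proves both parts directly from the lower bound \eqref{S[u] lower bound} produced by the blow-up analysis: since the concentration point satisfies $x_0 \in \mathcal N_a$ (Lemma \ref{lemma x0 in Na}), the hypothesis $\mathcal N_a(V)=\emptyset$ gives $Q_V(x_0)\ge 0$ and assumption \eqref{assumption a} gives $a(x_0)<0$, so \eqref{S[u] lower bound} reduces to $0\ge (1+o(1))(S-S(a+\eps V))+c\lambda^{-2s}+o(\eps\lambda^{-N+2s})$ and Young's inequality yields the first claim; when moreover $Q_V(x_0)>0$, the same bound contradicts $S(a+\eps V)\le S$, so $S(a+\eps_0 V)=S$ for some $\eps_0>0$, and concavity of $\eps\mapsto S(a+\eps V)$ extends this to all $\eps\in[0,\eps_0]$. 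Your treatment of the second part instead perturbs the Robin function, shows $\phi_{a+\eps V}>0$ on all of $\Omega$ for small $\eps$, and invokes the implication $(ii)\Rightarrow(i)$ of Theorem \ref{theorem critical} for the potential $a+\eps V$. This is arguably cleaner (no concavity argument, and the mechanism is conceptually transparent), but it hides the blow-up analysis inside Theorem \ref{theorem critical} rather than using it directly, and it requires $a+\eps V$ to satisfy that theorem's hypotheses — in particular continuity, whereas $V$ is only assumed to be in $L^\infty(\Omega)$. For the first part your argument is essentially the paper's in disguise: the combination $a_{N,s}c_{N,s}\phi_a(x)+\eps Q_V(x)$ appearing in \eqref{S[u] lower bound} equals $\gamma_{N,s}\phi_{a+\eps V}(x)+O(\eps^2)$ precisely because $\gamma_{N,s}=a_{N,s}c_{N,s}$, so your statement ``$\min_\Omega\phi_{a+\eps V}\ge -o(\eps)$'' repackages ``$\phi_a(x_0)=0$ and $Q_V(x_0)\ge 0$''; the ``sharp two-sided estimate in terms of the negative part of $\phi_{a+\eps V}$'' that you invoke is not stated anywhere in the paper and would have to be extracted from \eqref{S[u] lower bound} in exactly this way, so this step is deferred rather than supplied.

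One caveat on the exponent: what the optimization actually yields is $S(a+\eps V)\ge S+o(\eps^{2s/(4s-N)})$, and $\tfrac{2s}{4s-N}\ge 2$ only when $N\ge 3s$; for $\tfrac83 s<N<3s$ the bound obtained is weaker than the stated $o(\eps^2)$. This discrepancy is present in the paper's own proof as well, so it is not a defect specific to your argument, but you should not claim that the analysis of Section \ref{sec:LB2} ``shows'' $o(\eps^2)$ throughout the whole range $\tfrac83 s<N<4s$. Finally, note that the blow-up machinery is run under the standing assumption $S(a+\eps V)<S$; this is harmless for the first assertion (if $S(a+\eps V)=S$ there is nothing to prove for that $\eps$), but it is worth stating explicitly, as the paper does.
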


For a potential $V$ such that $\mathcal N_a(V) \neq \emptyset$, and thus $S(a + \eps V) < S$ by Theorem \ref{thm:13}, a minimizer $u_\eps$ of $S(a + \eps V)$ exists by Theorem \ref{theorem critical}. We now turn to studying the asymptotic behavior of the sequence $(u_\eps)$. In fact, since our methods are purely variational, we do not need to require that the $u_\eps$ satisfy a corresponding equation and we can equally well treat a sequence of almost minimizers in the sense of \eqref{almost_min} below. 

Since the functional $\mathcal S_{a}$ is merely a perturbation of the standard Sobolev quotient functional, it is not surprising that to leading order, the sequence $u_\eps$ approaches the family of functions
\begin{equation}
\label{bubble s}
U_{x, \lambda}(y) = \left( \frac{\lambda}{1 + \lambda^2|x-y|^2} \right)^\frac{N-2s}{2}, \qquad x \in \R^N, \quad \lambda > 0. 
\end{equation}
The $U_\xl$ are precisely the optimizers of the fractional Sobolev inequality on $\R^N$
\begin{equation}
\label{sobolev_ineq}
\|(-\Delta)^{s/2} u\|_{L^2(\R^N)}^2 \geq S_{N,s} \|u\|_{L^{\frac{2N}{N-2s}}(\R^N)}^2. 
\end{equation}
and satisfy the equation 
\begin{equation}
\label{eq_U_xl} (-\Delta)^s U_\xl(y) = c_{N,s} U_\xl(y)^\frac{N+2s}{N-2s},
\end{equation}
with $c_{N,s} > 0$ given explicitly in Lemma \ref{lemma constants}. 

Since we are working on the bounded set $\Omega$, the first refinement of the approximation consists in 'projecting' the functions $U_\xl$ to $\ths$. That is, we consider the unique function $PU_\xl \in \ths$ satisfying 
\begin{align}\label{eq:projintro}
\begin{cases}
(- \Delta)^s PU_{x, \lambda} = (- \Delta)^s U_{x, \lambda} & \quad \text{ in } \Omega, \\
PU_\xl = 0  & \quad \text{ on } \R^N \setminus  \Omega. \end{cases}
\end{align}
in the weak sense, that is, 
\[ \int_{\R^N} \ds PU_\xl \ds \eta \diff y = \int_{\R^N} \Ds U_\xl \eta \diff y = c_{N,s} \int_\Omega U_\xl^\frac{N+2s}{N-2s} \eta \diff y \]
for every $\eta \in \ths$. 

Finally, we introduce the space
\[ T_\xl = \text{span} \left \{ PU_\xl, \pl PU_\xl, \{\pxi PU_\xl\}_{i=1}^N \right\} \subset \ths \]
and denote by $T_\xl^\perp \subset \ths$ its orthogonal complement in $\ths$ with respect to the scalar product $(u,v) := \int_{\R^N} \ds u \ds v \diff y$. Moreover, let us denote by $\Pi_\xl$ and $\Pi_\xl^\bot$ the projections onto $T_\xl$ and $T_\xl^\bot$ respectively. 

Then we have the following result. 

\begin{theorem}[Concentration of almost-minimizers] \label{thm:17}
Let $ \tfrac{8}{3} s < N < 4s$. Suppose that $(u_\eps) \subset \ths$ is a sequence such that
\begin{equation}
\label{almost_min}
\lim_{\eps \to 0} \frac{\mathcal S_{a+\eps V}[u_\eps] - S(a+\eps V)}{S - S(a+\eps V)} = 0 \quad \text{ and } \quad \int_\Omega u_\eps^p \diff y = \int_{\R^N} U_{0,1}^p \diff y .       
\end{equation}
Then there exist sequences $(x_\eps) \subset \Omega$, $(\lambda_\eps) \subset (0, \infty)$, $(w_\eps) \subset T_{x_\eps, \lambda_\eps}^\bot$, and $(\alpha_\eps) \subset \R$ such that, up to extraction of a subsequence,
\begin{equation}
u_\eps =  \alpha_\eps \left( PU_{x_\eps, \lambda_\eps} + \lambda^{-\ns} \Pi^\perp_{x_\eps,\lambda_\eps}\left( H_0(x_\eps, \cdot) - H_a(x_\eps, \cdot)  \right) + r_\eps\right) 
\end{equation}
Moreover, as $\eps \to 0$, we have 
\begin{align*}
& x_\eps \to x_0 \ \text{ for some $x_0$ such that }  \quad  \frac{|Q_V(x_0)|^{\frac{2s}{4s-N}}}{|a(x_0)|^{\frac{N-2s}{4s-N}}} = \sup_{y \in \mathcal N_a(V)} \frac{|Q_V(y)|^{\frac{2s}{4s-N}}}{|a(y)|^{\frac{N-2s}{4s-N}}}, \\
&\phi_a(x_\eps) =  o(\eps), \\ 
&\lim_{\eps \to 0} \eps^\frac{1}{4s-N} \lambda_\eps =  \left(\frac{2s (\alpha_{N,s} + c_{N,s} d_{N,s} b_{N,s} ) |a(x_0)| }{(N-2s) |Q_V(x_0)| } \right)^\frac{1}{4s-N} , \\
&\alpha_\eps = \xi + \mathcal O(\eps^\frac{N-2s}{4s-N}) \text{ for some } \xi \in \{\pm 1\}.
\end{align*}
Finally, $r_\eps \in T^\perp_{x_\eps,\lambda_\eps}$ and $\|(-\Delta)^{\frac{s}{2}} r \|_{L^2(\R^N)}^2 = o(\eps^{\frac{2s}{4s-N}})$.

The constants $\alpha_{N,s}$, $c_{N,s}$, $d_{N,s}$ and $b_{N,s}$ are given explicitly in Lemma \ref{lemma constants}.
\end{theorem}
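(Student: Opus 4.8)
## Proof proposal for Theorem \ref{thm:17}

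The plan is to run a finite-dimensional reduction (Lyapunov--Schmidt type) for the functional $\mathcal S_{a+\eps V}$, exploiting the fact that any almost-minimizing sequence, being close in energy to $S$, must concentrate. The starting point is a compactness/profile-decomposition argument: since $\mathcal S_{a+\eps V}[u_\eps] \to S$ (because $S(a+\eps V) \to S$, as follows from Theorem \ref{thm:13}) and the mass of $u_\eps$ is normalized, standard concentration-compactness for the fractional Sobolev quotient on $\ths$ forces $u_\eps$, after sign normalization, to look like $\alpha_\eps PU_{x_\eps,\lambda_\eps}$ with $\alpha_\eps \to \pm 1$, $\lambda_\eps \to \infty$, $x_\eps \to x_0 \in \overline\Omega$, plus a lower-order error. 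One then shows $x_0 \in \Omega$ (not on $\partial\Omega$) and extracts, via the implicit function theorem applied to the orthogonality conditions against $T_{x_\eps,\lambda_\eps}$, parameters $(x_\eps,\lambda_\eps,\alpha_\eps)$ so that the remainder lies in $T_{x_\eps,\lambda_\eps}^\perp$. This yields the decomposition $u_\eps = \alpha_\eps(PU_{x_\eps,\lambda_\eps} + v_\eps)$ with $v_\eps \in T_{x_\eps,\lambda_\eps}^\perp$ and $\|(-\Delta)^{s/2} v_\eps\|_{L^2} \to 0$.

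The second, and main, step is a sharp energy expansion. Writing $u_\eps = \alpha_\eps(PU_{x_\eps,\lambda_\eps} + v_\eps)$, I would expand $\mathcal S_{a+\eps V}[u_\eps]$ simultaneously in the small quantities $\lambda_\eps^{-1}$, $\eps$, $\phi_a(x_\eps)$, and $\|v_\eps\|$. The key inputs are: (a) the expansion of $H_a(x,y)$ near the diagonal (Lemma \ref{lemma Ha expansion}) and the relation $\int_\Omega a\, PU_{x,\lambda}^2$, which produces the term $\alpha_{N,s}\lambda^{-(N-2s)}\phi_a(x)$ plus the $a(x)\lambda^{-2s}$ contribution of order $\lambda^{-2s}$ (dominant for $N<4s$); (b) the perturbation term $\eps\int_\Omega V\, PU_{x,\lambda}^2 = \eps \lambda^{-(N-2s)} Q_V(x)(1+o(1))$, up to the constants $b_{N,s}, c_{N,s}, d_{N,s}$ coming from the interaction of $PU$ with $H_0 - H_a$; (c) a coercivity estimate on $T_{x,\lambda}^\perp$ showing the quadratic form in $v_\eps$ is bounded below by $c\|(-\Delta)^{s/2}v_\eps\|_{L^2}^2$, so that $v_\eps$ contributes only at the order $\|v_\eps\|^2$, which the expansion will force to be $o(\eps^{2s/(4s-N)})$. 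Collecting terms, the reduced energy becomes, to leading order,
\[
\mathcal S_{a+\eps V}[u_\eps] - S \gtrsim A_{N,s}^{-\frac{N-2s}{N}}\left( |a(x_\eps)|\,\lambda_\eps^{-2s}\cdot(\text{sign correction}) + (\alpha_{N,s}+c_{N,s}d_{N,s}b_{N,s})\,\eps\, Q_V(x_\eps)\,\lambda_\eps^{-(N-2s)} + \text{h.o.t.}\right),
\]
after using $\phi_a(x_\eps) = o(\eps)$ (which itself must be extracted from the requirement that the energy be $\le S(a+\eps V) + o(S - S(a+\eps V))$, since a non-negligible $\phi_a(x_\eps)>0$ would raise the energy above $S$). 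Here one must be careful that $a(x_\eps)<0$ is needed (assumption \eqref{assumption a}) for this term to be favorable, and that the two competing powers $\lambda^{-2s}$ and $\eps\lambda^{-(N-2s)}$ balance precisely when $\lambda_\eps \sim \eps^{-1/(4s-N)}$.

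The third step is optimization. Minimizing the right-hand side above over $\lambda>0$ for fixed $x = x_\eps$ gives the optimal $\lambda_\eps$ as stated (a one-variable calculus computation: $\frac{d}{d\lambda}[c_1\lambda^{-2s} + c_2\lambda^{-(N-2s)}]=0$), and the resulting value, once minimized over $x_\eps \in \mathcal N_a(V)$, must equal $-\sigma_{N,s}\,\eps^{2s/(4s-N)}\sup_{\mathcal N_a(V)}|Q_V|^{2s/(4s-N)}/|a|^{(N-2s)/(4s-N)}$ by Theorem \ref{thm:13}; matching forces $x_\eps \to x_0$ at a maximizer, $\alpha_\eps = \pm 1 + \mathcal O(\eps^{(N-2s)/(4s-N)})$, and $\|v_\eps\|^2 = o(\eps^{2s/(4s-N)})$. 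Finally, the refined shape of the remainder — that $v_\eps = \lambda_\eps^{-(N-2s)/2}\Pi^\perp_{x_\eps,\lambda_\eps}(H_0(x_\eps,\cdot) - H_a(x_\eps,\cdot)) + r_\eps$ with $r_\eps$ genuinely smaller — comes from solving the auxiliary (Schmidt) equation $\Pi^\perp_{x_\eps,\lambda_\eps}\nabla\mathcal S_{a+\eps V}[u_\eps]=0$ to first order: the linearized operator on $T^\perp$ is invertible (by the coercivity in (c)), and its inversion applied to the leading source term $\Pi^\perp((-\Delta)^s$-defect of $PU$ against $a$) reproduces exactly $\lambda^{-(N-2s)/2}(H_0-H_a)$, this being the fractional analogue of the correction identified in \cite{FrKoKo2021}.

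The main obstacle I expect is twofold. First, proving the coercivity estimate (c) on $T_{x,\lambda}^\perp$ uniformly as $\lambda\to\infty$: in the fractional setting the non-locality of $(-\Delta)^s$ and the fact that $PU_{x,\lambda}\ne U_{x,\lambda}$ (with the difference governed by $H_0$) make the spectral-gap analysis of the linearized operator $(-\Delta)^s - c_{N,s}\tfrac{N+2s}{N-2s}U_{x,\lambda}^{4s/(N-2s)}$ more delicate than for $s=1$; one relies on the known non-degeneracy of the bubble $U_{0,1}$ for the fractional equation and a careful cutoff/rescaling argument to transfer it to $\Omega$. Second — and this is where the dimensional restriction $N > \tfrac 83 s$ enters (Section \ref{sec:LB2}) — controlling the error terms in the energy expansion to the required order $o(\eps^{2s/(4s-N)})$ demands that the cross term between the correction $\lambda^{-(N-2s)/2}(H_0-H_a)$ and itself, of size $\lambda^{-2(N-2s)} \sim \eps^{2(N-2s)/(4s-N)}$, be smaller than $\eps^{2s/(4s-N)}$, i.e. $2(N-2s) > 2s$, which is exactly $N > \tfrac 32 s$ — but the genuinely binding constraint comes from a further cross term of order $\eps \lambda^{-(N-2s)/2}\cdot\lambda^{-(N-2s)/2}\cdot(\ldots)$ versus the need for a clean remainder, forcing $N > \tfrac 83 s$; making this bookkeeping rigorous, rather than the individual estimates, is the crux.
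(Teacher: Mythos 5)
Your overall architecture (profile decomposition, orthogonal remainder, coercivity on $T_{x,\lambda}^\perp$, expansion in $\lambda^{-1}$, $\eps$, $\phi_a(x)$, then optimization over $\lambda$ and $x$ via the one-variable calculus of Lemma \ref{lem-taylor}) matches the paper's Sections \ref{sec:LB1} and \ref{sec:proof}, and your extraction of $\phi_a(x_\eps)=o(\eps)$ from the nonnegativity of the $\phi_a$-term is exactly right. The genuine gap is in how you identify and control the correction $\lambda^{-\ns}(H_0-H_a)(x_\eps,\cdot)$. You propose to obtain it by solving the auxiliary Schmidt equation $\Pi^\perp_{x_\eps,\lambda_\eps}\nabla\mathcal S_{a+\eps V}[u_\eps]=0$ and inverting the linearized operator on $T^\perp$. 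But the theorem is stated for \emph{almost} minimizers in the sense of \eqref{almost_min}, which satisfy no Euler--Lagrange equation, and the energy-defect hypothesis does not give you a usable bound on $\nabla\mathcal S_{a+\eps V}[u_\eps]$ in a dual norm; this step would fail as stated. The paper instead posits the correction directly (it is dictated by the sharp test function $\psi_{x,\lambda}$ of Theorem \ref{theorem upper bound}), writes $w=\lambda^{-\ns}(H_0-H_a)+t+r$ with $t\in T_{x,\lambda}$, $r\in T_{x,\lambda}^\perp$, bounds the finite-dimensional piece $t$ by explicit linear algebra (Lemmas \ref{lemma beta gamma delta} and \ref{lemma bounds on s}), and then proves that the contributions of $t$ to numerator and denominator, which individually enter at the dangerous orders $\lambda^{-N+2s}$ and $\lambda^{-2N+4s}$, cancel exactly in the quotient (Lemma \ref{lemma N_1 D_1}). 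This cancellation is the variational substitute for your Lyapunov--Schmidt step and is what keeps the argument equation-free; without it, or without the equation, your plan cannot dispose of the tangential error.

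A second, smaller but concrete issue is your bookkeeping for the threshold $N>\tfrac83 s$. The self-interaction of the correction is of order $\lambda^{-2(N-2s)}$, and $2(N-2s)>2s$ reads $N>3s$, not $N>\tfrac32 s$; moreover that term is \emph{not} required to be $o(\lambda^{-2s})$ — it contributes to the coefficients $\mathcal T_i$ in Theorem \ref{theorem upper bound} and is kept. The binding constraint in the paper is $\lambda^{-3(N-2s)}=o(\lambda^{-2s})$, i.e.\ $3(N-2s)>2s$, which is exactly $N>\tfrac83 s$; it arises from the cubic terms in the correction and from the cross terms $\lambda^{-\ns}\|\ds t\|\,\|\ds r\|\lesssim\lambda^{(-3N+6s)/2}\|\ds r\|$ (see \eqref{-3N+6s < -2s} and Lemmas \ref{lemma expansion refined numerator}--\ref{lemma expansion refined denominator}). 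Your account of where the restriction comes from would therefore not survive the rigorous bookkeeping you correctly identify as the crux.
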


Theorem \ref{thm:17} should be seen as the low-dimensional counterpart of \cite[Theorems 1.1 and 1.2]{ChKiLe2014}, which concerns $N > 4s$. The decisive additional complication to be overcome in our case is the presence of a non-zero critical function $a$. More concretely, the coefficient $\phi_a(x)$ of the subleading term of the energy expansion vanishes due to criticality of $a$ (compare Theorem \ref{theorem upper bound} and Lemma \ref{lemma expansion refined quotient}). As a consequence, it is only after further refining the expansion that we are able to conclude the desired information about the concentration behavior of the sequence $u_\eps$. 

In the same vein, the energy expansions from Theorem \ref{thm:13} are harder to obtain than their analogues in higher dimensions $N \geq 4s$. Indeed, for $N > 4s$ we have
\begin{equation}
\label{S(eps V) higher dimension}
   S(\eps V) = S_{N,s} - \tilde{c}_{N,s}  \sup_{ \{x \in \Omega \, : \, V(x) < 0\}} \phi_0(x)^{-\frac{2s}{N-4s}} |V(x)|^\frac{N-2s}{N-4s} \eps^\frac{N-2s}{N-4s} + o(  \eps^\frac{N-2s}{N-4s}),
\end{equation} 
where $\tilde{c}_{N,s} > 0$ is some dimensional constant. In this case, a sharp upper bound on $S(\eps V)$ can already be derived from testing $\mathcal S_{\eps V}$ against the family of functions $PU_\xl$. In contrast, for $2s < N < 4s$ this family needs to be modified by a lower order term in order give the sharp upper bound for Theorem \ref{thm:13} (see \eqref{definition psi} and Theorem \ref{theorem upper bound} below). For details of the computations in case $N \geq 4s$, we refer to the forthcoming work \cite{DNK21}. It is noteworthy that the auxiliary minimization problem giving the subleading coefficient in \eqref{S(eps V) higher dimension} is local in $V$ in the sense that it only involves the pointwise value $V(x)$, whereas that of Theorem \ref{thm:13} contains the non-local quantity $Q_V$.

Let us now describe in more detail the approach we use in the proofs of Theorems \ref{thm:13}, \ref{thm:14} and \ref{thm:17}, which are in fact intimately linked. Firstly, the family of functions $\psi_\xl$ defined in \eqref{definition psi} below yields an upper bound for $S(a + \eps V)$, which will turn out to be sharp. The strategy we use to prove the corresponding lower bound on $\mathcal S_{a + \eps V}[u_\eps]$, for a sequence $(u_\eps)$ as in \eqref{almost_min}, can be traced back at least to work of Rey \cite{Rey1989, Rey1990} and Bahri--Coron \cite{BaCo1988} on the classical Brezis--Nirenberg problem for $s =1$; it was adapted to treat problems with a critical potential $a$ when $s=1$, $N = 3$ in \cite{Esposito2004} and, more recently, in \cite{FrKoKo2021, FrKoKo2021blow-up}. This strategy features two characteristic steps, namely (i) supplementing the initial asymptotic expansion $u_\eps = \alpha_\eps (PU_{x_\eps, \lambda_\eps} + w_\eps)$, obtained by a concentration-compactness argument, by the orthogonality condition $w_\eps \in T_{x_\eps, \lambda_\eps}^\bot$ and (ii) using a certain coercivity inequality, valid for functions in $T_{x_\eps, \lambda_\eps}^\bot$, to improve the bound on the remainder $w_\eps$. The basic instance of this strategy is carried out in Section \ref{sec:LB1}. Indeed, after performing steps (i) and (ii), in Proposition \ref{prop w and d} below we are able to exclude concentration near $\partial \Omega$ and obtain a quantitative bound on $w_\eps = \alpha_\eps^{-1} u_\eps - PU_{x_\eps, \lambda_\eps}$. As in \cite{Rey1989} and \cite{FrKoKo2020}, this information is enough to arrive at \eqref{S(eps V) higher dimension} and similar conclusions when $N > 4s$; see the forthcoming paper \cite{DNK21} for details.  

On the other hand, when $2s < N < 4s$, the bound that Proposition \ref{prop w and d} provides for the modified difference $u_\eps - \psi_{x_\eps, \lambda_\eps}$ is still insufficient. For $s = 1$, it was however observed in \cite{FrKoKo2021} that one can refine the expansion of $u_\eps$ by reiterating steps (i) and (ii). Here, we carry out their strategy in a streamlined version (compare Remark \ref{remark section 5 s=1}) and for fractional $s \in (0,1)$.  That is, one writes $w_\eps = \psi_{x_\eps, \lambda_\eps} - PU_{x_\eps, \lambda_\eps} + q_\eps$, decomposes $q_\eps = t_\eps + r_\eps$ with $r_\eps \in T_{x_\eps, \lambda_\eps}^\bot$ and applies the coercivity inequality a second time. We are able to conclude as long as the technical condition $8s/3 < N$ is met (which is equivalent to $\lambda^{-3N+6s} = o(\lambda^{-2s})$). Indeed, in that case the leading contributions of $t_\eps$ to the energy, which enter to orders $\lambda^{-N+2s}$ and $\lambda^{-2N+4s}$, cancel precisely; see Lemma \ref{lemma N_1 D_1}. If $8s/3 \leq N$, a plethora of additional terms in $t_\eps$, which contribute to orders $\lambda^{-k(N -2s)}$ with $3 \leq k \leq \tfrac{2s}{N-2s}$, will become relevant, and we were not able to treat those in a systemized way. It is natural to expect that the cancellation phenomenon that occurs for $k =1,2$ still persists for $k \geq 3$. This would allow to prove Theorems \ref{thm:13}, \ref{thm:14}, and \ref{thm:17} for general $N > 2s$. For further details of the argument and the difficulties just discussed, we refer to Section \ref{sec:LB2}. 

As far as we know, the role of the threshold configurations given by $k(N -2s) = 2s$ for $k \geq 1$ in the fractional Brezis--Nirenberg problem \eqref{S(a) def} has only been investigated in the literature for $k = 1$ corresponding to $N = 4s$, below which the problem is known to behave differently by the results quoted above. It would be exciting to exhibit some similar, possibly refined, qualitative change in the behavior of \eqref{S(a) def} at one or each of the following thresholds $N = 3s$, $N = 8s/3$, $N= 10s/4$, etc.

To conclude this introduction, let us mention that several works in the literature (see \cite{Tan2011, BaCoPaSa2011, BrCoPaSa2013}) treat the problem corresponding to \eqref{S(a) def} for a different notion of Dirichlet boundary conditions for $(-\Delta)^s$ on $\Omega$, namely the \emph{spectral} fractional Laplacian, defined by classical spectral theory using the $L^2(\Omega)$-ONB of Dirichlet eigenfunctions for $-\Delta$. In contrast to this, the notion of $(-\Delta)^s$ we use in this paper, as defined in \eqref{frac lap def Fourier} or \eqref{frac-lap-sg-int} on $\ths$ given by \eqref{ths definition}, usually goes in the literature by the name of \emph{restricted} fractional Laplacian. A nice discussion of these two operators, as well as a method of unified treatment for both, can be found in \cite{DaLoSi2017} (see also \cite{MR3233760}). 

Our method of proof just described is rather different from most of the previous contributions to the fractional Brezis--Nirenberg problem. Namely, we do not need to pass through the extension formulation for $(-\Delta)^s$ due to either \cite{CaSi2007} for the restricted or to \cite{CaTa2009, StTo2009} for the spectral fractional Laplacian. On the other hand, using the properties of $PU_\xl$ (as given in Lemma \ref{lemma decomp PU}) allows us to avoid lengthy calculations with singular integrals, appearing e.g. in \cite{SeVa2015}, while at the same time optimizing the cutoff procedure with respect to \cite{SeVa2015}.

\subsection{Notation}
\label{ssec:notation}

We will often abbreviate the fractional critical Sobolev exponent by $p := \frac{2N}{N-2s}$. For any $q \geq 1$, we abbreviate $\| \cdot \|_q := \|\cdot \|_{L^q(\R^N)}$. When $q=2$, we sometimes write $\|\cdot\| := \|\cdot\|_2$. 

Unless stated otherwise, we shall always assume $s\in (0,1)$ and $N \in (2s, 4s)$. 

For $x \in \Omega$, we use the shorthand $d(x) = \text{dist}(x, \partial \Omega)$.

For a set $M$ and functions $f,g : M \to \R_+$, we shall write $f(m) \lesssim g(m)$ if there exists a constant $C > 0$, independent of $m$, such that $f(m) \leq C g(m)$ for all $m \in M$, and accordingly for $\gtrsim$. If $f \lesssim g$ and $g \lesssim f$, we write $f \sim g$. 

The various constants appearing throughout the paper and their numerical values are collected in Lemma \ref{lemma constants} in the appendix.

\section{Proof of the upper bound}
\label{sec:UB}

The following theorem gives the asymptotics of $\mathcal S_{a + \eps V}[\psi_\xl]$, for the test function 
\begin{equation}
\label{definition psi}
\psi_\xl (y) := PU_\xl(y) - \lambda^{-\frac{N-2s}{2}} (H_a(x,y) - H_0(x,y)),
\end{equation}
as $\lambda \to \infty$.

\begin{theorem}[Expansion of $\mathcal S_{a + \eps V}{[\psi_\xl]}$]
	\label{theorem upper bound}
	As $\lambda \to \infty$, uniformly for $x$ in compact subsets of $\Omega$ and for $\eps \geq 0$, 
	\begin{align}
	& \quad \|\ds \psi_\xl\|_2^2 + \int_\Omega (a + \eps V) \psi_\xl^2 \diff y  \nonumber \\
	& = c_{N,s} A_{N,s} - c_{N,s} a_{N,s} \phi_a(x) \lambda^{-N+2s}  \label{exp_num} \\
	&\quad + (c_{N,s} d_{N,s} b_{N,s} - \alpha_{N,s})  a(x) \lambda^{-2s} +  \eps \lambda^{-N + 2s} Q_V(x) + o (\lambda^{-2s}) + o(\eps \lambda^{-N+2s}) \nonumber
	\end{align}
	and 
	\begin{align}
	\label{exp_den}
	\int_\Omega \psi_\xl^p \diff y &=  A_{N,s} - p a_{N,s} \phi_a(x) \lambda^{-N+2s} + \mathcal T_1 (\phi_a(x), \lambda) + p d_{N,s} b_{N,s} a(x) \lambda^{-2s}  + o(\lambda^{-2s}).
	\end{align}
	In particular, 
	\begin{align}
	\mathcal S_{a + \eps V} [\psi_\xl] &= S + A_{N,s}^{-\frac{N-2s}{N}} \Big[ a_{N,s} c_{N,s} \phi_a(x) \lambda^{-N+2s} + \mathcal T_2 (\phi_a(x), \lambda) - a(x) \lambda^{-2s} (\alpha_{N,s} + c_{N,s} d_{N,s} b_{N,s} ) \nonumber \\
	&\quad  + \eps \lambda^{-N+2s} Q_V(x) \Big] + o( \lambda^{-2s}) + o(\eps \lambda^{-N+2s}).  \label{exp_quot}
	\end{align}
Here, $\mathcal T_i(\phi, \lambda)$ are (possibly empty) sums of the form 
\begin{equation}
\label{T i definition}
    \mathcal T_i(\phi, \lambda) := \sum_{k = 2}^{K} \gamma_i(k) \phi^k \lambda^{-k(N-2s)}
\end{equation} 
for suitable coefficients $\gamma_i(k) \in \R$, where $K = \lfloor \frac{2s}{N-2s} \rfloor$ is the largest integer less than or equal to $\frac{2s}{N-2s}$.
\end{theorem}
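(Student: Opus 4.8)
The plan is to prove the numerator expansion \eqref{exp_num} and the denominator expansion \eqref{exp_den} separately, and to deduce \eqref{exp_quot} from them by a first‑order Taylor expansion of the quotient. Writing the numerator as $c_{N,s}A_{N,s}+\mathcal E_1$ and the denominator as $A_{N,s}+\mathcal E_2$ with $\mathcal E_1,\mathcal E_2=O(\lambda^{-N+2s})\to 0$, and recalling that $S=c_{N,s}A_{N,s}^{2s/N}$ (which follows from \eqref{eq:sobolevconstant} together with $A_{N,s}=\int_{\R^N}U_{0,1}^p$ and $\|\ds U_{0,1}\|_2^2=c_{N,s}A_{N,s}$ by \eqref{eq_U_xl}) and that $\tfrac{N-2s}{N}p=2$, one finds
\[
\mathcal S_{a+\eps V}[\psi_\xl]-S = A_{N,s}^{-\frac{N-2s}{N}}\Big(\mathcal E_1-\tfrac{(N-2s)c_{N,s}}{N}\,\mathcal E_2\Big)+O\big(\mathcal E_1^2+\mathcal E_2^2\big).
\]
Substituting \eqref{exp_num}--\eqref{exp_den} and collecting powers of $\lambda$ then yields \eqref{exp_quot}: the $\phi_a(x)\lambda^{-N+2s}$, $a(x)\lambda^{-2s}$ and $\eps\lambda^{-N+2s}Q_V(x)$ coefficients combine exactly as stated (using $\tfrac{N-2s}{N}p=2$), while $\mathcal T_2$ absorbs $-\tfrac{(N-2s)c_{N,s}}{N}\mathcal T_1$ together with the quadratic contributions $\propto\phi_a(x)^j\lambda^{-j(N-2s)}$, $2\le j\le K$, produced by $\mathcal E_1^2$ and $\mathcal E_1\mathcal E_2$; all remaining error is $o(\lambda^{-2s})+o(\eps\lambda^{-N+2s})$ since the $\eps$‑cross‑terms are $O(\eps\lambda^{-2(N-2s)})$. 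Thus it remains to prove \eqref{exp_num} and \eqref{exp_den}.

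For \eqref{exp_num} the key is to bypass singular integrals by using the equations. From \eqref{eq_U_xl} we have $(-\Delta)^s PU_\xl=c_{N,s}U_\xl^{\frac{N+2s}{N-2s}}$ in $\Omega$, and from the definitions of $G_a,G_0$ together with $(-\Delta)^s|x-\cdot|^{-N+2s}=\gamma_{N,s}\delta_x$ one gets $(-\Delta)^s H_a(x,\cdot)=aG_a(x,\cdot)$ and $(-\Delta)^s H_0(x,\cdot)=0$ in $\Omega$; hence $(-\Delta)^s\psi_\xl=c_{N,s}U_\xl^{\frac{N+2s}{N-2s}}-\lambda^{-\ns}aG_a(x,\cdot)$ in $\Omega$. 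Since $\psi_\xl\in\ths$, pairing with $\psi_\xl$ gives $\|\ds\psi_\xl\|_2^2=c_{N,s}\int_\Omega U_\xl^{\frac{N+2s}{N-2s}}\psi_\xl-\lambda^{-\ns}\int_\Omega aG_a(x,\cdot)\psi_\xl$. Adding $\int_\Omega(a+\eps V)\psi_\xl^2$ and using the algebraic identity $H_a(x,\cdot)-H_0(x,\cdot)+G_a(x,\cdot)=G_0(x,\cdot)$, i.e. $\psi_\xl-\lambda^{-\ns}G_a(x,\cdot)=PU_\xl-\lambda^{-\ns}G_0(x,\cdot)$, the numerator becomes
\[
c_{N,s}\int_\Omega U_\xl^{\frac{N+2s}{N-2s}}\psi_\xl\,\diff y\;+\;\int_\Omega a\,\psi_\xl\big(PU_\xl-\lambda^{-\ns}G_0(x,\cdot)\big)\diff y\;+\;\eps\int_\Omega V\psi_\xl^2\,\diff y .
\]
Each integral is then evaluated by the standard inner/outer decomposition of $\Omega$ into $B(x,\delta)$ and $\Omega\setminus B(x,\delta)$: on $B(x,\delta)$ one rescales $y=x+z/\lambda$ and uses $\psi_\xl(x+z/\lambda)=\lambda^{\ns}(U_{0,1}(z)+o(1))$ together with the diagonal expansion of $H_a,H_0,G_0$ from Lemma \ref{lemma Ha expansion}; on $\Omega\setminus B(x,\delta)$ one uses the decay estimates $U_\xl(y)=\lambda^{-\ns}|x-y|^{-N+2s}(1+O((\lambda|x-y|)^{-2}))$, $PU_\xl(y)=\lambda^{-\ns}(G_0(x,y)+o(1))$ and $\psi_\xl(y)=\lambda^{-\ns}(G_a(x,y)+o(1))$ from Lemma \ref{lemma decomp PU}, together with $\int_\Omega G_a(x,\cdot)^2<\infty$ (valid since $2(N-2s)<N$). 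The $\delta$‑dependent parts of the inner and outer contributions cancel, leaving the stated $\delta$‑independent coefficients: $c_{N,s}\int_\Omega U_\xl^{\frac{N+2s}{N-2s}}\psi_\xl$ produces $c_{N,s}A_{N,s}$, the term $-c_{N,s}a_{N,s}\phi_a(x)\lambda^{-N+2s}$ (combining the value $\phi_a(x)$ of $H_a-H_0$ in $\psi_\xl$ with the $H_0(x,x)$‑contribution of the projection defect $\varphi_\xl:=\lambda^{\ns}(U_\xl-PU_\xl)$, so that the $H_0(x,x)$'s cancel and $\phi_a(x)$ remains), and a $\lambda^{-2s}$‑term from the $|x-y|^{4s-N}$‑part of $H_a$; the second integral produces $-\alpha_{N,s}a(x)\lambda^{-2s}$ (after rescaling near $x$, where $PU_\xl-\lambda^{-\ns}G_0(x,\cdot)\sim\lambda^{\ns}\big(U_{0,1}(\lambda(\cdot-x))-|\lambda(\cdot-x)|^{-N+2s}\big)$, with $\alpha_{N,s}$ the finite integral of $U_{0,1}\big(|\cdot|^{-N+2s}-U_{0,1}\big)$); and the third integral produces $\eps\lambda^{-N+2s}Q_V(x)$, the $\delta^{4s-N}$‑divergences of the bubble part $\eps\lambda^{-2s}V(x)\int_{|z|<\lambda\delta}U_{0,1}^2$ and of the Green's function part $\eps\lambda^{-N+2s}\int_{\Omega\setminus B(x,\delta)}VG_a(x,\cdot)^2$ cancelling exactly. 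The relevant integrals of $U_{0,1}$ and the values of $A_{N,s},a_{N,s},b_{N,s},c_{N,s},d_{N,s},\alpha_{N,s}$ are those of Lemma \ref{lemma constants}.

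For \eqref{exp_den}, using $PU_\xl=U_\xl-\lambda^{-\ns}\varphi_\xl$ we write $\psi_\xl=U_\xl-\lambda^{-\ns}(H_a(x,\cdot)+\varphi_\xl-H_0(x,\cdot))$. On $B(x,\delta)$ the quantity $\lambda^{-\ns}(H_a+\varphi_\xl-H_0)/U_\xl$ is $O(\lambda^{-N+2s})$, so the binomial series for $\psi_\xl^p$ converges termwise; after rescaling, its $k$‑th term equals $(-1)^k\binom{p}{k}\lambda^{-k(N-2s)}\big(\phi_a(x)^k\int_{\R^N}U_{0,1}^{p-k}+o(1)\big)$ for $k<\tfrac{N}{N-2s}$. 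The $k=1$ term gives both $-p\,a_{N,s}\phi_a(x)\lambda^{-N+2s}$ and, from the $|x-y|^{4s-N}$‑term of $H_a$ (coefficient proportional to $a(x)$; the exponent bookkeeping $-(N-2s)-(4s-N)=-2s$ after the scaling), the term $p\,d_{N,s}b_{N,s}a(x)\lambda^{-2s}$; the terms with $2\le k\le K=\lfloor\tfrac{2s}{N-2s}\rfloor$ give the entries $\gamma_1(k)\phi_a(x)^k\lambda^{-k(N-2s)}$ of $\mathcal T_1$; and the terms with $k\ge K+1$ are $O(\lambda^{-(K+1)(N-2s)}\log\lambda)=o(\lambda^{-2s})$ since $(K+1)(N-2s)>2s$. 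On $\Omega\setminus B(x,\delta)$ one has $\psi_\xl^p=O(\lambda^{-N})$ and $\int_{\R^N\setminus B(x,\delta)}U_\xl^p=O(\lambda^{-N})$, both $o(\lambda^{-2s})$, and the $\delta$‑dependent contributions again cancel. This gives \eqref{exp_den}.

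All error estimates above are controlled by the uniform‑on‑compacts statements of Lemmas \ref{lemma Ha expansion} and \ref{lemma decomp PU} and by the explicit, $\eps$‑independent constants of Lemma \ref{lemma constants}, and every term depends affinely on $\eps$; hence the expansions hold uniformly for $x$ in compact subsets of $\Omega$ and for $\eps\ge 0$, with the error orders $o(\lambda^{-2s})$ and $o(\eps\lambda^{-N+2s})$ uniform. I expect the main obstacle to be precisely this bookkeeping: one must account for every contribution of size between $\lambda^{-N+2s}$ and $\lambda^{-2s}$ and show that, apart from the $\lambda^{-k(N-2s)}$ terms collected in $\mathcal T_i$, the cutoff parameter $\delta$ drops out of every coefficient. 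This is where the hypotheses $2s<N<4s$ and $s<1$ enter, guaranteeing $0<4s-N<2s$ (so that $|x-y|^{4s-N}$ is the slowest‑decaying non‑constant term of $H_a$ and contributes exactly at order $\lambda^{-2s}$) and $(N-2s)+2>2s$ (so that the smooth part of $H_a,H_0$ and the $O((\lambda|x-y|)^{-2})$ correction to $U_\xl$ contribute only at order $o(\lambda^{-2s})$).
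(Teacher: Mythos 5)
Your proposal is correct and follows essentially the same route as the paper: you compute $\|\ds \psi_\xl\|_2^2$ by pairing $\psi_\xl$ with $(-\Delta)^s\psi_\xl = c_{N,s}U_\xl^{p-1} - \lambda^{-\ns}aG_a(x,\cdot)$, combine the $G_a$-term with the potential term via $\psi_\xl - \lambda^{-\ns}G_a(x,\cdot) = PU_\xl - \lambda^{-\ns}G_0(x,\cdot)$ (the paper's function $h$), extract $\phi_a(x)$ and the $d_{N,s}a(x)|x-y|^{4s-N}$ contribution from the diagonal expansion of $H_a$, treat the denominator by a finite binomial expansion, and Taylor-expand the quotient, exactly as in the paper's three steps. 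The only differences are cosmetic — the paper delegates the inner/outer evaluations to Lemmas \ref{lemma Up-k Hk}, \ref{lemma T xl} and \ref{lemma h} instead of redoing the $\delta$-splitting in place — plus two harmless imprecisions on your side: when $K\ge 3$ the quotient expansion must keep Taylor terms beyond quadratic order to populate $\mathcal T_2$ (your $O(\mathcal E_1^2+\mathcal E_2^2)$ remainder is not $o(\lambda^{-2s})$ by itself), and the ratio $\lambda^{-\ns}H_a/U_\xl$ on $B(x,\delta)$ is only $O(\delta^{N-2s})$, not $O(\lambda^{-N+2s})$, which is why the paper uses a finite Taylor formula with explicit remainder rather than a termwise-convergent series.
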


Theorem \ref{theorem upper bound} is valid irrespective of the criticality of $a$. The following corollary states two consequences of Theorem \ref{theorem upper bound}, which concern in particular critical potentials. 

\begin{corollary}[Properties of critical potentials]
\label{corollary phia geq 0, a leq 0}
\begin{enumerate}
    \item[(i)] If $S(a) = S$, then $\phi_a(x) \geq 0$ for all $x \in \Omega$. 
    \item[(ii)] If $S(a) = S$ and $\phi_a(x) = 0$ for some $x \in \Omega$, then $a(x) \leq 0$. 
\end{enumerate}
\end{corollary}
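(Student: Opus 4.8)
The plan is to derive both parts of Corollary \ref{corollary phia geq 0, a leq 0} directly from the expansion \eqref{exp_quot} in Theorem \ref{theorem upper bound}, exploiting the fact that $\mathcal S_{a+\eps V}[\psi_\xl] \geq S(a + \eps V)$ for every admissible test function, combined with the hypothesis $S(a) = S$. Throughout I take $\eps = 0$, so $\psi_\xl$ is tested against $\mathcal S_a$, and the relevant expansion reads
\[
\mathcal S_a[\psi_\xl] = S + A_{N,s}^{-\frac{N-2s}{N}}\Big[ a_{N,s} c_{N,s}\, \phi_a(x)\, \lambda^{-N+2s} + \mathcal T_2(\phi_a(x),\lambda) - a(x)\,\lambda^{-2s}\,(\alpha_{N,s} + c_{N,s} d_{N,s} b_{N,s}) \Big] + o(\lambda^{-2s}),
\]
uniformly for $x$ in compact subsets of $\Omega$.

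\emph{Part (i).} Fix $x \in \Omega$ and suppose, for contradiction, that $\phi_a(x) < 0$. Since $S(a) = S$, we have $\mathcal S_a[\psi_\xl] \geq S$ for all $\lambda$. I would argue that the term $a_{N,s} c_{N,s} \phi_a(x) \lambda^{-N+2s}$ is the dominant one among those written on the right-hand side: indeed $\lambda^{-N+2s} \gg \lambda^{-2s}$ as $\lambda \to \infty$ precisely because $N < 4s$, and the terms in $\mathcal T_2(\phi_a(x),\lambda)$ are of order $\lambda^{-k(N-2s)}$ with $k \geq 2$, hence lower order than $\lambda^{-(N-2s)}$. Since $a_{N,s}, c_{N,s} > 0$ (see Lemma \ref{lemma constants}), the bracket is negative for $\lambda$ large, so $\mathcal S_a[\psi_\xl] < S$ for $\lambda$ large, contradicting $S(a) = S$. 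Hence $\phi_a(x) \geq 0$ for all $x \in \Omega$.

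\emph{Part (ii).} Now assume in addition that $\phi_a(x) = 0$ for some $x \in \Omega$, and suppose for contradiction that $a(x) > 0$. With $\phi_a(x) = 0$, all the terms $a_{N,s} c_{N,s}\phi_a(x)\lambda^{-N+2s}$ and $\mathcal T_2(\phi_a(x),\lambda)$ vanish identically (each is a monomial in $\phi_a(x)$), so the expansion collapses to
\[
\mathcal S_a[\psi_\xl] = S - A_{N,s}^{-\frac{N-2s}{N}} (\alpha_{N,s} + c_{N,s} d_{N,s} b_{N,s})\, a(x)\, \lambda^{-2s} + o(\lambda^{-2s}).
\]
Here I would record that the coefficient $\alpha_{N,s} + c_{N,s} d_{N,s} b_{N,s}$ is strictly positive (by the explicit values in Lemma \ref{lemma constants}; this is the same positivity that makes $\sigma_{N,s}$ well-defined in Theorem \ref{thm:13}). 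Since $a(x) > 0$, the leading correction is strictly negative, whence $\mathcal S_a[\psi_\xl] < S$ for $\lambda$ large, again contradicting $S(a) = S(a)[\psi_\xl] \geq S$ wait — contradicting $S = S(a) \leq \mathcal S_a[\psi_\xl]$. Therefore $a(x) \leq 0$.

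The only genuinely delicate point is bookkeeping the signs and orders of magnitude of the various dimensional constants and of the auxiliary sums $\mathcal T_1, \mathcal T_2$, to be sure that the term one wants to isolate really is the leading one and that its sign is as claimed; this is exactly where the restriction $N < 4s$ (guaranteeing $\lambda^{-N+2s}$ beats $\lambda^{-2s}$) and the positivity statements collected in Lemma \ref{lemma constants} are used. Beyond that, the argument is a routine ``test function gives an upper bound, compare with the known value $S(a) = S$'' contradiction, so no further technical obstacle is expected.
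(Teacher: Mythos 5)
Your proposal is correct and is essentially the paper's own argument: apply the expansion \eqref{exp_quot} with $\eps=0$, note that in case (i) the $\phi_a(x)\lambda^{-N+2s}$ term dominates (since $N<4s$) and in case (ii) the $a(x)\lambda^{-2s}$ term does, and contradict $S(a)\leq\mathcal S_a[\psi_\xl]$. The paper states this more tersely but the sign/order bookkeeping you spell out (positivity of $a_{N,s}c_{N,s}$ and of $\alpha_{N,s}+c_{N,s}d_{N,s}b_{N,s}$, and the fact that $\mathcal T_2$ vanishes when $\phi_a(x)=0$) is exactly what its one-line justification relies on.
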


\begin{proof}
Both statements follow from Theorem \ref{theorem upper bound} applied with $\eps = 0$. Indeed, suppose that either $\phi_a(x) < 0$ or $\phi_a(x) = 0 < a(x)$ for some $x \in \Omega$. In both cases, \eqref{exp_quot} gives $S(a) \leq \mathcal S_{a}[\psi_\xl] < S$ for $\lambda > 0$ large enough, contradiction.
\end{proof}

Based on Theorem \ref{theorem upper bound}, we can now easily derive the following upper bound for $S(a+\eps V)$ provided that $\mathcal N_a(V)$ is not empty.

\begin{corollary}
	\label{corollary upper bound}
	Suppose that $\mathcal N_a(V) \neq \emptyset$. Then $S(a+\eps V) < S$ for all $\eps > 0$ and, as $\eps \to 0+$, 
\begin{equation}
    \label{upper bound asymp corollary}
    S(a+\eps V) \leq S - \sigma_{N,s} \sup_{x \in \mathcal N_a(V)} \frac{|Q_V(x)|^\frac{2s}{4s-N}}{|a(x)|^\frac{N-2s}{4s-N}}  \eps^\frac{2s}{4s-N} + o(\eps^\frac{2s}{4s-N}), 
\end{equation}
	where 
\begin{equation}
    \label{sigma Ns definition}
    \sigma_{N,s} = A_{N,s}^{-\frac{N-2s}{N}} \left( \alpha_{N,s} + c_{N,s} d_{N,s} b_{N,s} \right)^{-\frac{N-2s}{4s-N}} \left(\frac{N-2s}{2s} \right)^\frac{2s}{4s-N} \frac{4s - N}{N-2s}.
\end{equation}
\end{corollary}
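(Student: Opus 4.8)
The plan is to read the result directly off the expansion \eqref{exp_quot} of Theorem~\ref{theorem upper bound}, evaluated along a scaling $\lambda = \lambda(\eps) \to \infty$ chosen to minimise its right-hand side. Fix $x \in \mathcal N_a(V)$; then $\phi_a(x) = 0$, $Q_V(x) < 0$, and $a(x) < 0$ (by our standing assumptions on $a$; cf.\ \eqref{assumption a} and Corollary~\ref{corollary phia geq 0, a leq 0}(ii)). Since every summand of $\mathcal T_2(\phi,\lambda)$ in \eqref{T i definition} carries a factor $\phi^k$ with $k \ge 2$, it vanishes at $\phi = \phi_a(x) = 0$, as does the term $a_{N,s} c_{N,s}\phi_a(x)\lambda^{-N+2s}$. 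Hence \eqref{exp_quot} collapses to
\begin{equation*}
\mathcal S_{a + \eps V}[\psi_\xl] = S + A_{N,s}^{-\frac{N-2s}{N}}\Bigl( B\, \lambda^{-2s} - |Q_V(x)|\, \eps\, \lambda^{-N+2s} \Bigr) + o(\lambda^{-2s}) + o(\eps\, \lambda^{-N+2s}),
\end{equation*}
where $B := |a(x)|\,(\alpha_{N,s} + c_{N,s} d_{N,s} b_{N,s}) > 0$, positivity of the second factor being recorded in Lemma~\ref{lemma constants}.

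For the first claim, fix $\eps > 0$. Since $N < 4s$ we have $\lambda^{-2s} = o(\lambda^{-N+2s})$ as $\lambda \to \infty$, so the parenthesis equals $-|Q_V(x)|\,\eps\,\lambda^{-N+2s}(1 + o(1)) < 0$ for $\lambda$ large; therefore $S(a+\eps V) \le \mathcal S_{a + \eps V}[\psi_\xl] < S$.

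For the asymptotics, I would minimise $g(\lambda) := B\,\lambda^{-2s} - |Q_V(x)|\,\eps\,\lambda^{-N+2s}$ over $\lambda > 0$. One has $g(\lambda) \to +\infty$ as $\lambda \to 0^+$ and $g(\lambda) \to 0^-$ as $\lambda \to \infty$, and a short computation gives a unique critical point $\lambda_\eps = \bigl( \tfrac{2s B}{(N-2s)|Q_V(x)|\eps}\bigr)^{1/(4s-N)} \sim \eps^{-1/(4s-N)}$, at which $g(\lambda_\eps) = \tfrac{N-4s}{N-2s}\, B\, \lambda_\eps^{-2s}$. Substituting $\lambda_\eps^{-2s} = \bigl( \tfrac{(N-2s)|Q_V(x)|\eps}{2sB}\bigr)^{2s/(4s-N)}$, using the exponent identity $1 - \tfrac{2s}{4s-N} = -\tfrac{N-2s}{4s-N}$ to combine the powers of $B$, and finally unfolding $B = |a(x)|(\alpha_{N,s}+c_{N,s}d_{N,s}b_{N,s})$, one finds that $A_{N,s}^{-(N-2s)/N} g(\lambda_\eps)$ is precisely $-\sigma_{N,s}\,|Q_V(x)|^{2s/(4s-N)}|a(x)|^{-(N-2s)/(4s-N)}\,\eps^{2s/(4s-N)}$ with $\sigma_{N,s}$ as in \eqref{sigma Ns definition}. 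Moreover $\lambda_\eps^{-2s} \sim \eps^{2s/(4s-N)}$ and $\eps\lambda_\eps^{-N+2s} \sim \eps^{2s/(4s-N)}$, so by the uniformity (in $\lambda \to \infty$, $\eps \ge 0$, and $x$ on compacta) asserted in Theorem~\ref{theorem upper bound} the two remainders become $o(\eps^{2s/(4s-N)})$; evaluating \eqref{exp_quot} at $\lambda = \lambda_\eps$ then yields $S(a+\eps V) \le \mathcal S_{a+\eps V}[\psi_{x,\lambda_\eps}] = S - \sigma_{N,s}\,|Q_V(x)|^{2s/(4s-N)}|a(x)|^{-(N-2s)/(4s-N)}\,\eps^{2s/(4s-N)} + o(\eps^{2s/(4s-N)})$.

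To pass to the supremum, for each $\delta > 0$ I would pick $x_\delta \in \mathcal N_a(V)$ realising $\sup_{x \in \mathcal N_a(V)}|Q_V(x)|^{2s/(4s-N)}|a(x)|^{-(N-2s)/(4s-N)}$ up to $\delta$, run the previous step at $x = x_\delta$ (with the $o(\eps^{2s/(4s-N)})$ uniform on a compact neighbourhood of $x_\delta$, again by Theorem~\ref{theorem upper bound}), divide by $\eps^{2s/(4s-N)}$, and let $\eps \to 0^+$ and then $\delta \to 0$. I expect the only points requiring care to be the exponent bookkeeping that identifies the constant $\sigma_{N,s}$ exactly, and --- should the supremum over $\mathcal N_a(V)$ fail to be attained --- the uniformity of the error term, which is precisely why one argues via near-maximisers together with the locally uniform expansion of Theorem~\ref{theorem upper bound}; the rest is the routine one-variable optimisation just sketched.
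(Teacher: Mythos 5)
Your proof is correct and follows essentially the same route as the paper: evaluate the expansion \eqref{exp_quot} at a point of $\mathcal N_a(V)$ (where $\phi_a$ and hence $\mathcal T_2$ vanish), optimize over $\lambda$ exactly as in Lemma \ref{lem-taylor}, and then optimize over $x$. The only (harmless) divergence is in the claim $S(a+\eps V)<S$ for \emph{all} $\eps>0$: you obtain it directly for each fixed $\eps$ by letting $\lambda\to\infty$ and using $\lambda^{-2s}=o(\lambda^{-N+2s})$, whereas the paper deduces it from the small-$\eps$ case via concavity of $\eps\mapsto S(a+\eps V)$ together with $S(a)=S$; both arguments are valid, and your handling of a possibly unattained supremum via near-maximizers is a slightly more careful version of the paper's ``optimize over $x$'' step.
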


\begin{proof}
	Let us fix $\eps > 0$ and $x \in \mathcal N_a(V)$. Then by \eqref{exp_quot}
	\begin{align}
	    	S(a+\eps V) &\leq S_{a  +\eps V}[\psi_{x,\lambda}] \\
	    	&= S + A_{N,s}^{-\frac{N-2s}{N}} \left( - (a(x) +o(1)) \lambda^{-2s} (\alpha_{N,s} + c_{N,s} d_{N,s} b_{N,s} ) + \eps \lambda^{-N+2s} (Q_V(x) +o(1)) \right). 
	\end{align} 
We first optimize the right side over $\lambda > 0$. Since $A_\eps := (-a(x) +o(1))  (\alpha_{N,s} + c_{N,s} d_{N,s} b_{N,s} )$ and $B_\eps := -Q_V(x)+o(1)$, are strictly positive by our assumptions, we are in the situation of Lemma \ref{lem-taylor}. Picking $\lambda= \lambda_0(\eps)$ given by \eqref{f eps minimum}, we have $o(\lambda^{-2s}) = o(\eps^\frac{2s}{4s-N})$. Thus, by \eqref{f eps min value}, we get, as $\eps \to 0+$, 
	\begin{align*}
S(a+\eps V) &\leq´S  - \eps^\frac{2s}{4s-N}   \frac{|Q_V(x)|^\frac{2s}{4s-N}}{|a(x)|^\frac{N-2s}{4s-N}}  A_{N,s}^{-\frac{N-2s}{N}} \left( \alpha_{N,s} + c_{N,s} d_{N,s} b_{N,s} \right)^{-\frac{N-2s}{4s-N}} \left(\frac{N-2s}{2s} \right)^\frac{2s}{4s-N} \frac{4s - N}{N-2s} \\
 & \qquad + o(\eps^\frac{2s}{4s-N}).
	\end{align*}
Now, optimizing over $x\in \mathcal N_a(V)$ completes the proof of \eqref{upper bound asymp corollary}. In particular, $S(a + \eps V) < S$ for small enough $\eps > 0$. Since $S(a + \eps V)$ is a concave function of $\eps$ (being the infimum over $u$ of functions $\mathcal S_{a + \eps V}[u]$ linear in $\eps$) and $S(a) = S$, this implies that $S(a + \eps V) < S$ for every $\eps > 0$. 
\end{proof}

\begin{proof}
[Proof of Theorem \ref{theorem upper bound}]
\textbf{Step 1:} \emph{Expansion of the numerator.} Since $(-\Delta)^s H_a(x, \cdot) = a G_a(x, \cdot)$, the function $\psi_\xl$ satisfies
\[ \Ds \psi_\xl = c_{N,s} U_\xl^{p-1} - \lambda^{-\ns} a G_a(x, \cdot). \]
Therefore, recalling Lemma \ref{lemma decomp PU},
\begin{align*}
\| \ds \psi_\xl\|_2^2 &= \int_\Omega \psi_\xl(y) \Ds \psi_\xl(y) \diff y \\
&= \int_\Omega \left( U_\xl - \lambda^{-\ns} H_a(x, \cdot) - f_\xl \right) \left(  c_{N,s} U_\xl^{p-1} - \lambda^{-\ns} a G_a(x, \cdot) \right) \diff y \\
&= c_{N,s} \int_\Omega U_\xl^p \diff y - c_{N,s} \lambda^{-\ns} \int_\Omega U_\xl^{p-1} H_a(x, \cdot) \diff y \\
& \qquad  - \lambda^{-\ns} \int_\Omega a G_a(x, \cdot) \left( U - \lambda^{-\ns} H_a(x, \cdot) \right) \diff y  \\
& \qquad - \int_\Omega f_\xl \left( c_{N,s} U_\xl^{p-1} - \lambda^{-\ns} a G_a(x,\cdot) \right) \diff y. 
\end{align*}
We now treat the four terms on the right side separately. 

A simple computation shows that $\int_{\R^N \setminus B_{d(x)}(x)} U_\xl^p \diff y = \mathcal O(\lambda^{-N})$. Thus the first term is given by 
\[  c_{N,s} \int_\Omega U_\xl^p \diff y = c_{N,s} \left( \int_{\R^N} U_{0,1}^p \diff y \right) + \mathcal O(\lambda^{-N}) = c_{N,s} A_{N,s} + o( \lambda^{-2s}). \]
The second term is, by Lemma \ref{lemma Up-k Hk}, 
\begin{align*}
- c_{N,s} \lambda^{-\ns} \int_\Omega U_\xl^{p-1} H_a(x, \cdot) \diff y &= - c_{N,s} a_{N,s} \phi_a(x) \lambda^{-N+2s} + c_{N,s} d_{N,s} b_{N,s} a(x) \lambda^{-2s} + o( \lambda^{-2s}). 
\end{align*}
The third term will be combined with a term coming from $\int_\Omega (a + \eps V) \psi_\xl^2 \diff y$, see below. 

The fourth term can be bounded, by Lemma \ref{lemma int Uq} and recalling $\|f_\xl\|_\infty \lesssim \lambda^{-\frac{N+4-2s}{2}}$ from Lemma \ref{lemma decomp PU}, by 
\begin{align*}
 \left| \int_\Omega f_\xl \left( c_{N,s} U_\xl^{p-1} - \lambda^{-\ns} a G_a(x,\cdot) \right) \diff y \right| \lesssim \|f_\xl\|_\infty \left( \|U_\xl\|_{p-1}^{p-1} + \lambda^{-\ns} \right) \lesssim \lambda^{-N - 2 + 2s} = o(\lambda^{-2s}). 
\end{align*}
Now we treat the potential term. We have
\begin{align*}
\int_\Omega (a + \eps V) \psi_\xl^2 \diff y &= \int_\Omega (a + \eps V)  \left( U_\xl - \lambda^{-\ns} H_a(x, \cdot) - f_\xl \right) ^2 \diff y \\
&= \int_\Omega (a + \eps V) \left( U_\xl - \lambda^{-\ns} H_a(x, \cdot) \right) ^2 \diff y \\
& \quad  - 2 \int_\Omega (a + \eps V) f_\xl  \left( U_\xl - \lambda^{-\ns} H_a(x, \cdot) \right)\diff y + \int_\Omega (a + \eps V) f_\xl^2 \diff y .
\end{align*}
Similarly to the above, the terms containing $f_\xl$ are bounded by 
\begin{align*}
\left| \int_\Omega (a + \eps V) f_\xl  \left( U_\xl - \lambda^{-\ns} H_a(x, \cdot) \right)\diff y \right| \lesssim \|f_\xl\|_\infty (\|U_\xl\|_1 - \lambda^{-\ns}) \lesssim  \lambda^{-N - 2 + 2s} = o(\lambda^{-2s})
\end{align*}
and 
\begin{align*}
\left| \int_\Omega (a + \eps V) f_\xl^2 \diff y  \right|  \lesssim \|f_\xl\|^2_\infty \lesssim \lambda^{-N-4+2s} = o(\lambda^{-2s}).
\end{align*}
Finally, we combine the main term with the third term in the expansion of $\|\ds \psi_\xl\|_2^2$ from above. Recalling that 
\begin{equation}
    \label{U Ga h identity}
    U_\xl - \lambda^{-\ns} H_a(x, \cdot) - \lambda^{-\ns} G_a(x, \cdot) = U_\xl - \lambda^{-\ns} |x-y|^{-N+ 2s} = -\lambda^\ns h(\lambda(x-y))
\end{equation} 
with $h$ as in Lemma \ref{lemma h}, we get
\begin{align*}
&\quad  - \lambda^{-\ns} \int_\Omega a G_a(x, \cdot) \left( U_\xl - \lambda^{-\ns} H_a(x, \cdot) \right) \diff y  + \int_\Omega (a + \eps V) \left( U_\xl - \lambda^{-\ns} H_a(x, \cdot) \right) ^2 \diff y \\
&= - \int_\Omega a \left( U_\xl- \lambda^{-\ns} H_a(x, \cdot) \right)  \lambda^{\frac{\Ns}{2}} h(\lambda(x-y)) \diff y + \eps \int_\Omega V  \left( U_\xl- \lambda^{-\ns} H_a(x, \cdot) \right)^2 \diff y. 
\end{align*}
Since 
\[ \int_\Omega a H_a(x, \cdot) h(\lambda(x-y)) \diff y \lesssim \lambda^{-N} \|h\|_{L^1(\R^N)} =o(\lambda^{-2s}),  \]
by Lemma \ref{lemma T xl} we have
\[ - \int_\Omega a \left( U_\xl- \lambda^{-\ns} H_a(x, \cdot) \right)  \lambda^{\frac{\Ns}{2}} h(\lambda(x-y)) \diff y = -\alpha_{N,s} \lambda^{-2s} a(x) + o(\lambda^{-2s}). \]
Moreover, again by \eqref{U Ga h identity}, and using that $h \in L^2(\R^N)$ by Lemma \ref{lemma h}, 
\begin{align*}
& \qquad \eps \int_\Omega V  \left( U- \lambda^{-\ns} H_a(x, \cdot) \right)^2 \\
&= \eps \lambda^{-N + 2s} \int_\Omega V G_a(x, \cdot)^2 \diff y - 2 \eps \int_\Omega V G_a(x, \cdot) h(\lambda(x-y)) \diff y + \eps  \lambda^{N-2s} \int_\Omega V h(\lambda(x-y))^2 \diff y,
\end{align*}
with 
\begin{align*}
\eps \int_\Omega V G_a(x, \cdot) h(\lambda(x-y)) \diff y \lesssim \eps \lambda^{-N/2} \|G_a(x, \cdot)\|_2 \|h\|_2 = o(\eps \lambda^{-N+2s})
\end{align*}
and 
\begin{align*}
\eps  \lambda^{N-2s} \int_\Omega V h(\lambda(x-y))^2 \diff y \lesssim \eps \lambda^{-2s} \|h\|_2^2 = o(\eps \lambda^{-N+2s}).
\end{align*}
This completes the proof of the claimed expansion \eqref{exp_num}. 

\textbf{Step 2:} \emph{Expansion of the denominator.} Recall $p = \frac{2N}{N-2s}$. Firstly, writing $\psi_\xl = U_\xl - \lambda^{-\ns} H_a(x, \cdot) - f_\xl$, we have 
\[ \int_\Omega \psi_\xl^p \diff y = \int_\Omega (U_\xl - \lambda^{-\ns} H_a(x, \cdot))^p \diff y + \mathcal O(\|U_\xl - \lambda^{-\ns} H_a(x, \cdot)\|_{p-1}^{p-1} \|f_\xl\|_\infty + \|f_\xl\|_\infty^p). \]
Using Lemma \ref{lemma int Uq} and the bound  $\|f_\xl\|_\infty \lesssim \lambda^{-\frac{N+4-2s}{2}}$, we deduce  that the remainder term is $o(\lambda^{-2s})$. To evaluate the main term, from Taylor's formula for $t \mapsto t^p$, we have
\[ (a + b)^p = a^p - pa^{p-1} b + \sum_{k=2}^K {p \choose k} a^{p-k} b^k + \mathcal O(a^{p-K-1} |b|^{K+1} + |b|^p). \]
Here, ${p \choose k} := \frac{\Gamma(p+1)}{\Gamma(p-k+1) \Gamma(k+1)}$ is the generalized binomial coefficient and $K = \lfloor \frac{2s}{N-2s} \rfloor$ as in the statement of the theorem. Applying this with $a = U_\xl(y)$, $b = - \lambda^{-\ns} H_a(x, \cdot)$, we find 
\begin{align*}
\int_\Omega (U_\xl - \lambda^{-\ns} H_a(x, \cdot))^p \diff y &=  \int_\Omega U_\xl^p \diff y - p \lambda^{-\ns} \int_\Omega U_\xl^{p-1} H_a(x, \cdot) \diff y \\
&\quad + \mathcal O\left( \lambda^{-N+2s} \int_\Omega U_\xl^{p-2} H_a(x, \cdot)^2 \diff y \right) + \mathcal O\left( \lambda^{-N} \int_\Omega H_a(x, \cdot)^p \diff y \right) .
\end{align*} 
By Lemma \ref{lemma Up-k Hk}, the claimed expansion \eqref{exp_den} follows. 

\textbf{Step 3:} \emph{Expansion of the quotient.} 
For $\alpha = 2/p \in (0,1)$, and fixed $a > 0$, we again use the Taylor expansion 
\[ (a + b)^{-\alpha} = a^{-\alpha} - \alpha a^{-\alpha - 1} b + \sum_{k=2}^K {{-\alpha} \choose k} a^{-\alpha - k} b^k + \mathcal O(b^{K+1}). \]
By Step 2, we may apply this with $a = A_{N,s}$ and $b = - p a_{N,s} \phi_a(x) \lambda^{-N+2s} + \mathcal T_1 (\phi_a(x), \lambda) + p d_{N,s} b_{N,s} a(x) \lambda^{-2s}  + o(\lambda^{-2s})$. Since $b = \mathcal O(\lambda^{-N+2s})$ and $K+1 > \frac{2s}{N-2s}$, we have $\mathcal O(b^{K+1}) = o(\lambda^{-2s})$ and thus 
\begin{align}
    \left( \int_\Omega \psi_\xl^p \diff y \right)^{-2/p} &= A_{N,s}^{-\frac{N-2s}{N}} + A_{N,s}^{-\frac{2(N-s)}{N}} \left( 2 a_{N,s} \phi_a(x) \lambda^{-N+2s} - 2 d_{N,s} b_{N,s} a(x) \lambda^{-2s} \right) \\
    & \qquad + \mathcal T_3(\phi_a(x), \lambda) + o(\lambda^{-2s}), 
\end{align}
for some term $\mathcal T_3(\phi, \lambda)$ as in \eqref{T i definition}. Multiplying this expansion with \eqref{exp_num}, we obtain
\begin{align*}
\mathcal S_{a + \eps V} [\psi_\xl] &= c_{n,s} A_{N,s}^\frac{2s}{N} + A_{N,s}^{-\frac{N-2s}{N}} \Big[ a_{N,s} c_{N,s} \phi_a(x) \lambda^{-N+2s} + \mathcal T_2(\phi_a(x),\lambda) \\
&\quad - a(x) \lambda^{-2s} (\alpha_{N,s} + c_{N,s} d_{N,s} b_{N,s}) + \eps \lambda^{-N+2s} Q_V(x) \Big]\\
&\quad  + o( \lambda^{-2s}) + o(\eps \lambda^{-N+2s}). 
\end{align*}
By integrating the equation $(-\Delta)^s U_{0,1} = c_{N,s} U_{0,1}^{p-1}$ and using the fact that $U_{0,1}$ minimizes the Sobolev quotient on $\R^N$ (or by a computation on the numerical values of the constants given in Lemma \ref{lemma constants}), we have $c_{N,s} A_{N,s}^\frac{2s}{N} = S$.  Hence, this is the expansion claimed in \eqref{exp_quot}. 
\end{proof}

\section{Proof of the lower bound I: a first expansion}
\label{sec:LB1}

\subsection{Non-existence of a minimizer for $S(a)$}
\label{ssec:nonex-min}

In this section, we prove that for a critical potential $a$, the infimum $S(a)$ is not attained. As we will see in Section \ref{ssec:profiledeco}, this implies the important basic fact that the minimizers for $S(a+ \eps V)$ must blow up as $\eps \to 0$. 

The following is the main result of this section. 

\begin{proposition}[Non-existence of a minimizer for $S(a)$]
\label{proposition nonex-min}
Suppose that $a \in C(\overline{\Omega})$ is a critical potential. Then 
\[S(a) =  \inf_{u \in \ths, u \not \equiv 0} \frac{\int_\Omega |\ds u|^2 \diff y + \int_\Omega a u^2 \diff y}{\|u\|_\frac{2N}{N-2s}^2} \]
is not achieved. 
\end{proposition}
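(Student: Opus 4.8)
The plan is to argue by contradiction: suppose $u \in \ths$, $u \not\equiv 0$, achieves $S(a) = S$. Without loss of generality we may normalize $\|u\|_p = 1$, so that $\int_{\R^N} |\ds u|^2 + \int_\Omega a u^2 = S$. We may also assume $u \geq 0$ (replacing $u$ by $|u|$ does not increase the numerator, since $\||\ds| |u|\|_2 \leq \||\ds| u\|_2$ by the fractional Polya--Szeg\H{o}/Kato-type inequality, and leaves the denominator unchanged; by the strong maximum principle for $\Ds + a$ on a connected component, $u > 0$ there). Since $u$ is a minimizer, it solves the Euler--Lagrange equation $\Ds u + a u = S u^{p-1}$ weakly in $\ths$.

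The first main step is to use criticality of $a$ in the strong sense of Definition \ref{definition critical function}. I would take a potential $\tilde a \leq a$, $\tilde a \not\equiv a$, for instance $\tilde a = a - \eta \chi$ for a small bump $\eta \chi \geq 0$, $\chi \not\equiv 0$ supported where $u > 0$; by criticality, $S(\tilde a) < S(a) = S$. On the other hand, testing the quotient $\mathcal S_{\tilde a}$ against the minimizer $u$ itself gives
\[
S(\tilde a) \leq \mathcal S_{\tilde a}[u] = \frac{\int_{\R^N} |\ds u|^2 + \int_\Omega a u^2 - \eta \int_\Omega \chi u^2}{\|u\|_p^2} = S - \eta \int_\Omega \chi u^2 < S,
\]
which by itself is merely consistent, not contradictory. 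So the real content must come from comparing $S(\tilde a)$ with $S(a)$ more quantitatively: following \cite[Step 1]{Druet2002}, I would estimate $S(\tilde a)$ from below by a perturbation argument around the minimizer $u$ of $S(a)$, obtaining $S(\tilde a) = S - \eta \int_\Omega \chi u^2 + o(\eta)$ as $\eta \to 0$, and then — this is where the low-dimensional hypothesis $N < 4s$ enters — show that the minimizer $u_\eta$ of $S(\tilde a)$ cannot converge to $u$ but must instead develop a concentrating bubble, because a minimizer of $S(a) = S$ together with the sharp Sobolev constant forces the associated "profile decomposition" to contain a nontrivial bubble $U_{x,\lambda}$; the interaction of this bubble with the regular part of the Green's function produces, in dimension $N < 4s$, a term of order $\lambda^{-2s}$ coming from $a(x)\lambda^{-2s}$ (cf.\ Theorem \ref{theorem upper bound}) that dominates the $\lambda^{-(N-2s)}$ Robin-function term and has an unfavorable sign once one knows $a \leq 0$ on $\mathcal N_a$. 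Concretely: the existence of the minimizer $u$ for $S(a)=S$ means equality holds in the Sobolev inequality along $u$ up to the lower-order potential correction, which is impossible unless $u$ is (a projection of) a bubble, and a genuine bubble cannot lie in $\ths$ for a bounded $\Omega$ — a contradiction in the spirit of Pohozaev.

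Concretely, the cleanest route I would actually write is: a minimizer $u$ of $S(a) = S$ would, after the reduction above, solve $\Ds u + a u = S u^{p-1}$ in $\Omega$, $u = 0$ outside $\Omega$, $u > 0$. Now apply the expansion philosophy of Theorem \ref{theorem upper bound} in reverse, or more directly: since $S(a) = S$ and $u$ is an optimizer, one has $\int |\ds u|^2 = S \|u\|_p^2 - \int_\Omega a u^2 = S\|u\|_p^2 - \int_\Omega a u^2$. If $\int_\Omega a u^2 > 0$ this immediately contradicts the Sobolev inequality $\int|\ds u|^2 \geq S\|u\|_p^2$ on $\R^N$ (which $u$ satisfies as an element of $H^s(\R^N)$). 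Hence $\int_\Omega a u^2 \leq 0$, and moreover equality $\int|\ds u|^2 = S\|u\|_p^2$ can only hold if $\int_\Omega a u^2 = 0$ and $u$ is an extremal of the full-space Sobolev inequality, i.e.\ $u = c\,U_{x,\lambda}$ for some $x,\lambda$; but $U_{x,\lambda}$ does not vanish outside any bounded set, so $u \notin \ths$, contradiction. The remaining case is $\int_\Omega a u^2 < 0$, i.e.\ $\int |\ds u|^2 > S\|u\|_p^2$ strictly — and here one cannot conclude for free; this is precisely where I expect to need the criticality hypothesis again, via the strict monotonicity $S(\tilde a) < S(a)$, to derive that $u$ would force $S(\tilde a) \geq S$ as well (by a careful test-function / comparison estimate showing the would-be minimizer of $S(\tilde a)$ gains nothing to leading order), contradicting criticality.

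The main obstacle, and the step I would spend the most care on, is the last one: ruling out the "strict" case $\int_\Omega a u^2 < 0$. The mechanism must be that from a minimizer $u$ of $S(a)=S$ one builds a test function for $S(\tilde a)$ whose energy is $\geq S$ up to errors that criticality forbids — equivalently, one shows a minimizer of $S(a)=S$ cannot exist by running the blow-up analysis of Sections \ref{sec:LB1}--\ref{sec:LB2} with $\eps = 0$ and $V$ replaced by the defect $a - \tilde a$, where the key sign obstruction comes from $a(x) \le 0$ on $\mathcal N_a$ (Corollary \ref{corollary phia geq 0, a leq 0}) combined with the $-a(x)\lambda^{-2s}(\alpha_{N,s} + c_{N,s}d_{N,s}b_{N,s})$ term in the energy expansion, which in dimension $N < 4s$ is of lower order than — and hence not compensated by — the Robin term $\phi_a(x)\lambda^{-(N-2s)} \ge 0$. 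I would structure the write-up to isolate this sign computation as the crux, citing \cite[Step 1]{Druet2002} and \cite{Esposito2004} for the analogous local argument.
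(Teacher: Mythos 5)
There is a genuine gap. Your reduction correctly disposes of the cases $\int_\Omega a u^2 \ge 0$ (Sobolev inequality plus the fact that a full-space extremal $U_{x,\lambda}$ cannot lie in $\ths$), but the remaining case $\int_\Omega a u^2 < 0$ is not a degenerate leftover: it is the main case (e.g.\ for $a=-\lambda<0$ constant it is the only case), and neither of the two mechanisms you sketch for it closes the argument. Perturbing $a$ downward to $\tilde a\le a$ and invoking $S(\tilde a)<S(a)$ only yields, as you yourself observe, an inequality that is consistent with everything; and running the blow-up analysis of Sections 3--5 ``with $\eps=0$'' is both inapplicable (there is no small parameter and no concentrating sequence once you fix a single minimizer $u$) and circular, since the profile decomposition of Proposition 3.3 is itself proved \emph{using} the non-existence of a minimizer for $S(a)$. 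So the crux you flag remains open in your write-up.

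The paper's actual argument is of a different nature and uses only $S(a)=S$, not the full strength of criticality: it is a \emph{second-variation (stability) argument at the minimizer}. One tests the quotient with $u(1+\eps\varphi)$; the zeroth and first order terms in $\eps$ cancel by the Euler--Lagrange equation, and the $\eps^2$ coefficient yields the stability inequality
\[
\int_\Omega u^p\varphi^2\,\diff y\ \le\ \frac{1}{S(p-2)}\,\mathcal I(\varphi)+\Bigl(\int_\Omega u^p\varphi\,\diff y\Bigr)^2,
\qquad
\mathcal I(\varphi)=\frac{C_{N,s}}{2}\iint u(x)u(y)\frac{|\varphi(x)-\varphi(y)|^2}{|x-y|^{N+2s}}\,\diff x\,\diff y,
\]
where the fractional product rule $\Ds(uv)=u\Ds v+v\Ds u-I_s(u,v)$ replaces the classical Leibniz rule. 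Choosing $\varphi_i$ to be the $N+1$ components of the inverse stereographic projection, arranging $\int_\Omega u^p\varphi_i=0$ by a Brouwer fixed-point balancing (Lemma 3.2), summing over $i$ using $\sum_i\varphi_i^2=1$, and transplanting $\sum_i\mathcal I(\varphi_i)$ to $\mathbb S^N$, one computes $\sum_i\mathcal I(\varphi_i)<S(p-2)$ \emph{strictly}, because $u$ vanishes outside $\Omega$ and hence Hölder's inequality on the sphere is strict. This contradicts the stability inequality. If you want to salvage your approach, you would need to supply an argument of comparable strength for the case $\int_\Omega a u^2<0$; the Pohozaev-type and perturbative ideas you list do not provide one.
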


For $s=1$, Proposition \ref{proposition nonex-min} was proved by Druet \cite{Druet2002} and we follow his strategy. The feature that makes the generalization of \cite{Druet2002} to $s \in (0,1)$ not completely straightforward is its use of the product rule for ordinary derivatives. Instead, we shall use the identity  
\begin{equation}
    \label{frac prod rule}
    \Ds (uv) = u \Ds v + v \Ds u - I_s(u,v),
\end{equation}
where 
\[ I_s(u,v)(x) := C_{N,s} P.V. \int_{\R^N} \frac{(u(x) - u(y))(v(x) - v(y))}{|x-y|^{N+2s}} \diff y,  \]
with $C_{N,s}$ as in \eqref{eq:constant}. While the relation \eqref{frac prod rule} can be verified by a simple computation (see e.g. \cite[Lemma 20.2]{MR3916700}), it leads to more complicated terms than those arising in Druet's proof. To be more precise, the term $\int_\Omega u^2 |\nabla \varphi|^2$ from \cite{Druet2002} is replaced by the term $\mathcal I(\varphi)$ defined in \eqref{definition I varphi}, which is more involved to evaluate for the right choice of $\varphi$.

\begin{proof}
[Proof of Proposition \ref{proposition nonex-min}]
For the sake of finding a contradiction, we suppose that there exists $u$ which achieves $S(a)$, normalized so that 
\begin{equation}
    \label{minimizer norm}
    \int_\Omega u^{\frac{2N}{N-2s}} \diff y = 1. 
\end{equation}
Then $u$ satisfies the equation 
\begin{equation}
    \label{eq minimizer}
    \Ds u + a u = S u^\frac{N-2s}{N-2s}, 
\end{equation}
with Lagrange multiplier $S = S_{N,s}$ equal to the Sobolev constant. (Indeed, this value is determined by integrating the equation against $u$ and using \eqref{minimizer norm}.)

Since $S(a) = S$, we have, for every $\varphi \in C^\infty(\R^N)$ and $\eps > 0$, and abbreviating $p = \frac{2N}{N-2s}$, 
\begin{equation}
    \label{ineq variational}
    S \left( \int_\Omega \left(u(1+ \eps \varphi)\right)^p \diff y \right)^\frac{2}{p} \leq \int_{\R^N} |\ds \left(u(1 + \eps \varphi)\right)|^2 \diff y + \int_\Omega a u^2 (1+ \eps \varphi)^2 \diff y. 
\end{equation}
We shall expand both sides of \eqref{ineq variational} in powers of $\eps$. For the left side, a simple Taylor expansion together with \eqref{minimizer norm} gives
\begin{equation}
    \label{left side expansion druet}
    \left( \int_\Omega \left(u(1+ \eps \varphi) \right)^p \diff y \right)^\frac{2}{p} = 1 + 2 \eps \int_\Omega u^p \varphi \diff y + \eps^2 \left( (p-1) \int_\Omega u^p \varphi^2 \diff y - (p-2)  \left( \int_\Omega u^p \varphi \diff y \right)^2 \right) + o(\eps^2). 
\end{equation}
Expanding the right side is harder and we need to invoke the fractional product rule \eqref{frac prod rule}. Firstly, integrating by parts we have \[ \int_{\R^N} |\ds \left(u(1 + \eps \varphi)\right)|^2 \diff y  = \int_{\R^N} u (1 + \eps \varphi) \Ds  \left(u(1 + \eps \varphi)\right) \diff y.   \]
By \eqref{frac prod rule}, we can write 
\begin{align*}
    \Ds  \left(u(1 + \eps \varphi)\right) = (1 + \eps \varphi) \Ds u + \eps u \Ds \varphi - \eps I_s (u, \varphi).
\end{align*}
Hence 
\[ \int_{\R^N} |\ds \left(u(1 + \eps \varphi)\right)|^2 \diff y = \int_\Omega u \Ds u (1+ \eps \varphi)^2 + \eps^2 \mathcal I(\varphi), \]
where we write 
\begin{align}
\label{definition I varphi}
    \mathcal I(\varphi) := \eps^{-1} \int_\Omega u (1 + \eps \varphi) \left( u \Ds \varphi - I_s(u, \varphi) \right) \diff y. 
\end{align}
Writing out $\Ds \varphi$ as the singular integral given by \eqref{frac-lap-sg-int}, we obtain (we drop the principal value for simplicity)
\begin{align}
    \mathcal I(\varphi) &= \eps^{-1} C_{N,s} \iint_{\R^N \times \R^N} u(x) u(y) (1 + \eps \varphi(x)) \frac{\varphi(x) - \varphi(y)}{|x-y|^{N+2s}} \diff x \diff y \label{I varphi definition} \\
    &= \frac{C_{N,s}}{2} \iint_{\R^N \times \R^N} u(x) u(y) \frac{|\varphi(x) - \varphi(y)|^2}{|x-y|^{N+2s}} \diff x \diff y.  \nonumber
\end{align}
The last equality follows by symmetrizing in the $x$ and $y$ variables. 

Thus we can write the right side of \eqref{ineq variational} as 
\begin{align*}
   &\qquad   \int_{\R^N} |\ds \left(u(1 + \eps \varphi)\right)|^2 \diff y + \int_\Omega a u^2 (1+ \eps \varphi)^2 \diff y \\
   &= \int_\Omega u (\Ds u + a u ) (1 + \eps \varphi)^2 \diff y + \eps^2 \mathcal I(\varphi) \\
   &= S \int_\Omega u^p (1 + \eps \varphi)^2 \diff y + \eps^2 \mathcal I(\varphi),
\end{align*}
where we used equation \eqref{eq minimizer}. After expanding the square $(1 + \eps \varphi)^2$, the terms of orders 1 and $\eps$ on both sides of \eqref{ineq variational} cancel precisely. For the coefficients of $\eps^2$, we thus recover the inequality 
\begin{equation}
    \label{hessian ineq}
    \int_\Omega u^p \varphi^2 \diff y \leq \frac{1}{S(p-2)} \mathcal I(\varphi) + \left( \int_\Omega u^p \varphi \diff y \right)^2. 
\end{equation} 

We now make a suitable choice of $\varphi$, which turns \eqref{hessian ineq} into the desired contradiction. As in \cite{Druet2002}, we choose
\[ \varphi_i(y) := (\mathcal S(y))_i, \quad i = 1, ..., N+1, \]
where $\mathcal S: \R^N \to \mathbb S^N$ is the (inverse) stereographic projection, i.e. \cite[Sec. 4.4]{LiLo2001}
\begin{equation}
    \label{ste pro definition}
    \varphi_i = \frac{2y_i}{1 + |y|^2} \quad \text{ for } i = 1,..., N, \qquad \varphi_{N+1} = \frac{1- |y|^2}{1 + |y|^2}. 
\end{equation} 
Moreover we may assume (up to scaling and translating $\Omega$ if necessary) that the balancing condition 
\begin{equation}
    \label{balance druet}
     \int_\Omega u^p \varphi_i \diff y = 0, \qquad i = 1,..., N+1 
\end{equation}
is satisfied. Since \cite{Druet2002} is rather brief on this point, we include some details in Lemma \ref{lemma fixed point druet} below for the convenience of the reader. 

By definition, we have $\sum_{i = 1}^{N+1} \varphi_i^2 = 1$. Testing \eqref{hessian ineq} with $\varphi_i$ and summing over $i$ thus yields, by \eqref{balance druet},
\begin{equation}
    \label{hessian ineq 2}
    1 = \int_\Omega u^p \diff y \leq \frac{1}{S(p-2)} \sum_{i = 1}^{N+1} \mathcal I(\varphi_i). 
\end{equation}
To obtain a contradiction and finish the proof, we now show that $\sum_{i = 1}^{N+1} \mathcal I(\varphi_i) < S(p-2)$. By definition of $\varphi_i$, we have 
\begin{equation}
    \label{sum I RN}
    \sum_{i=1}^{N+1}\mathcal I(\varphi_i) = \frac{C_{N,s}}{2} \iint_{\R^N \times \R^N} u(x) u(y) \frac{|\mathcal S(x) - \mathcal S(y)|^2}{|x-y|^{N+2s}} \diff x \diff y .
\end{equation} 
To evaluate this integral further, we pass to $\mathbb S^N$. Set $J_{\mathcal S^{-1}}(\eta) := \det D \mathcal S^{-1} (\eta)$ and define 
\[ U(\eta) := u(\mathcal S^{-1}(\eta)) J_{\mathcal S^{-1}} (\eta)^\frac{1}{p}, \]
so that $\int_{\mathbb S^N} U^p \diff \eta = 1$. Since the distance transforms as 
\[ |\mathcal S^{-1}(\eta) - \mathcal S^{-1} (\xi)| = J_{\mathcal S^{-1}}(\eta)^\frac{1}{2N} |\eta - \xi| J_{\mathcal S^{-1}}(\xi)^\frac{1}{2N}, \]
changing variables in \eqref{sum I RN} gives
\begin{equation}
    \label{sum I SN}
    \sum_{i=1}^{N+1} \mathcal I(\varphi_i) = \frac{C_{N,s}}{2} \iint_{\mathbb S^N \times \mathbb S^N} \frac{U(\eta) U(\xi)}{|\eta - \xi|^{N+2s -2}} \diff \eta \diff \xi
\end{equation}
By applying first Cauchy--Schwarz and then Hölder's inequality, we estimate
\begin{align}
    \sum_{i=1}^{N+1} I(\varphi_i) &\leq \frac{C_{N,s}}{2} \iint_{\mathbb S^N \times \mathbb S^N} \frac{U(\eta)^2}{|\eta - \xi|^{N+2s -2}} \diff \eta \diff \xi \nonumber \\
    &= \frac{C_{N,s}}{2}  \delta_{N,s} \int_{\mathbb S^N} U(\eta)^2 \diff \eta < \frac{C_{N,s}}{2}  \delta_{N,s} |\mathbb S^N|^\frac{2s}{N}, \label{hölder druet proof}
\end{align} 
where the last inequality is strict. Indeed, $U$ vanishes near the south pole of $\mathbb S^N$, hence there cannot be equality in Hölder's inequality applied with the functions $U^2$ and $1$. Moreover, in the above we abbreviated
\[ \delta_{N,s} := \int_{\mathbb S^N} \frac{1}{|\eta - \xi|^{N+2s-2}} \diff \xi \]
(note that this number  is independent of $\eta \in \mathbb S^N$). By transforming back to $\R^n$ and evaluating a Beta function integral, the explicit value of $\delta_{N,s}$ can be computed explicitly to be
\[ \delta_{N,s} = 2^{2-2s} \pi^{N/2} \frac{\Gamma(1-s)}{\Gamma(\frac{N}{2} + 1 - s)}. \]
Inserting this into estimate \eqref{hölder druet proof}, as well as the explicit values of $C_{N,s}$ given in \eqref{eq:constant} and of $S_{N,s}$ given in \eqref{eq:sobolevconstant}, a direct computation then gives
\[ \frac{ \sum_{i=1}^{N+1}\mathcal  I(\varphi_i) }{S_{N,s}} < \frac{1}{S_{N,s}} \frac{C_{N,s}}{2}  \delta_{N,s} |\mathbb S^N|^\frac{2s}{N} = \frac{s 2^{2-2s}}{N-2s} \left(2  \frac{\Gamma(N) \Gamma(\frac 12)}{\Gamma(\frac{N+1}{2})\Gamma(\frac{N}{2})} \right)^\frac{2s}{N} . \]
It can be easily shown by induction over $N$ that 
\[ 2 \frac{\Gamma(N) \Gamma(\frac 12)}{\Gamma(\frac{N+1}{2})\Gamma(\frac{N}{2})}  = 2^N \]
for every $N \in \N$, and hence 
\[  \sum_{i=1}^{N+1} \mathcal I(\varphi_i) < S \frac{4s}{N-2s} = S(p-2). \]
This is the desired contradiction to \eqref{hessian ineq 2}.
\end{proof}

Here is the lemma that we referred to in the previous proof. It expands an argument sketched in \cite[Step 1]{Druet2002}. To emphasize its generality, instead of $u^p$ we state it for a general nonnegative function $h$ with $\int_\Omega h = 1$.

\begin{lemma}
    \label{lemma fixed point druet}
    Let $\Omega \subset \R^N$ be an open bounded set and $0 \leq h \in L^1(\Omega)$ with $\|h\|_1 = 1$. Then there is $(y, t) \in \R^N \times (0, \infty)$ such that 
    \begin{align*}
        F(y,t) &:= \int_\Omega h(x) \frac{2t(x-y)}{1+ t^2|x-y|^2} \diff x = 0, \\
        G(y,t) &:= \int_\Omega h(x) \frac{1 - t^2|x-y|^2}{1 + t^2 |x-y|^2} \diff x = 0. 
    \end{align*} 
\end{lemma}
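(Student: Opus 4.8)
The plan is to realize $F$ and $G$ as the components of a single $\R^{N+1}$--valued map and to extract a common zero by a topological degree argument, following the conformal normalization that underlies the proof of the sharp Sobolev inequality. With $\mathcal{S}(z) = \bigl(\tfrac{2z}{1+|z|^2},\ \tfrac{1-|z|^2}{1+|z|^2}\bigr)$ the inverse stereographic projection from \eqref{ste pro definition}, define
\[
V(y,t) := \int_\Omega h(x)\,\mathcal{S}\bigl(t(x-y)\bigr)\,\diff x\ \in\ \R^{N+1},\qquad (y,t)\in\R^N\times(0,\infty).
\]
Its first $N$ components are exactly $F(y,t)$ and its last component is $G(y,t)$, so it suffices to find $(y_0,t_0)$ with $V(y_0,t_0)=0$. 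Since $|\mathcal{S}|\equiv 1$ and $\|h\|_1=1$, we have $|V(y,t)|\le 1$, with equality only if $z\mapsto\mathcal{S}(tz)$ is $h\,\diff x$--a.e.\ constant; as $t>0$ makes $z\mapsto\mathcal{S}(tz)$ injective and $\{h>0\}$ has positive measure, this is impossible, and $V$ maps into the open unit ball $B^{N+1}$.

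I would then substitute $w:=ty$, so that $(y,t)\mapsto(w,t)$ is a diffeomorphism of $\R^N\times(0,\infty)$, and set $\widetilde V(w,t):=V(w/t,t)=\int_\Omega h(x)\,\mathcal{S}(tx-w)\,\diff x$. Identify $\R^N\times(0,\infty)$ with the open ball $B^{N+1}$ by the standard correspondence between the upper half--space and ball models of hyperbolic space $\mathbb{H}^{N+1}$, under which the closure adds the ideal boundary $\R^N\cup\{\infty\}\cong\mathbb{S}^N$ and a sequence $(w_n,t_n)$ converges to a boundary point precisely when $w_n\to w\in\R^N$ and $t_n\to 0$ (limit $w$), or $|w_n|+t_n\to\infty$ (limit $\infty$). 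The key claim is that $\widetilde V$ extends continuously to $\overline{B^{N+1}}$ with $\widetilde V(w,0)=\mathcal{S}(-w)$ for $w\in\R^N$ and $\widetilde V(\infty)=-e_{N+1}$, where $e_{N+1}$ is the last standard basis vector of $\R^{N+1}$ (note $\lim_{|z|\to\infty}\mathcal{S}(z)=-e_{N+1}$). Granting this, the boundary restriction of $\widetilde V$ is the homeomorphism $\sigma\colon\R^N\cup\{\infty\}\to\mathbb{S}^N$, $\sigma(w)=\mathcal{S}(-w)$, $\sigma(\infty)=-e_{N+1}$; composing $\widetilde V$ with a homeomorphism of $\overline{B^{N+1}}$ that restricts to $\sigma^{-1}$ on $\mathbb{S}^N$, we obtain a continuous self--map of $\overline{B^{N+1}}$ that is the identity on the boundary sphere. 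Such a map is onto the open ball — otherwise, radial projection away from an omitted interior point would be a retraction of $\overline{B^{N+1}}$ onto $\mathbb{S}^N$, contradicting the no--retraction theorem — so $0$ lies in the image, which yields $\widetilde V(w_0,t_0)=0$ and hence $V(w_0/t_0,t_0)=0$, i.e.\ $F(y_0,t_0)=G(y_0,t_0)=0$ with $y_0=w_0/t_0$.

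The substance of the argument, and the step I expect to be the main obstacle, is the continuity of the boundary extension of $\widetilde V$. For $(w_n,t_n)$ with $w_n\to w$ and $t_n\to 0$, boundedness of $\Omega$ gives $t_nx\to 0$ uniformly for $x\in\Omega$, so $\mathcal{S}(t_nx-w_n)\to\mathcal{S}(-w)$ uniformly and $\widetilde V(w_n,t_n)\to\mathcal{S}(-w)$ by dominated convergence. A sequence with $|w_n|+t_n\to\infty$ has, after passing to a subsequence, either $t_n\to\infty$, or $t_n$ bounded and $|w_n|\to\infty$; in the latter case $|t_nx-w_n|\ge|w_n|-t_n\sup_\Omega|x|\to\infty$ uniformly on $\Omega$, while in the former one splits $\Omega$ into a ball $B_\delta(w_n/t_n)$ — whose $h$--mass is small uniformly in $n$ once $\delta$ is small, by absolute continuity of the integral — and its complement, on which $|t_nx-w_n|\ge\delta t_n\to\infty$; in both cases $\widetilde V(w_n,t_n)\to-e_{N+1}$. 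This equi--integrability is exactly where $h\in L^1$ enters, i.e.\ that $h\,\diff x$ carries no atoms: if $h$ charged a point $x_0$, then $V(x_0,t)\to(2h(\{x_0\})-1)e_{N+1}\ne-e_{N+1}$ as $t\to\infty$ and no such continuous extension would exist (the same estimates also show that the hypothesis $\Omega$ bounded is inessential). Once interior continuity and these boundary limits are in hand, continuity of $\widetilde V$ on all of $\overline{B^{N+1}}$ is a routine metric--space consequence, since every sequence leaving the compact subsets of $\R^N\times(0,\infty)$ subconverges to a boundary point of one of the two types above.
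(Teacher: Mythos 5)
Your proof is correct, but it takes a genuinely different route from the paper's. The paper's argument is a direct application of Brouwer's fixed point theorem: it reparametrizes $t=\tfrac{s+\sqrt{s^2+4}}{2}$ so that $s$ ranges over all of $\R$, sets $H(y,s)=(F+y,\,G+s)$, checks that $|H(y,s)|^2\leq |y|^2+s^2$ outside a large ball of $\R^{N+1}$ (so $H$ maps a large closed ball into itself), and reads off the common zero from a fixed point of $H$. You instead recognize $(F,G)$ as the $\mathbb{S}^N$-valued ``conformal center of mass'' $\int_\Omega h\,\mathcal S(t(x-y))\diff x$, compactify the parameter half-space to a closed ball whose boundary sphere is the ideal boundary, show the map extends continuously with boundary trace a homeomorphism of spheres, and conclude by the no-retraction theorem that the image covers the open ball. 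The two arguments are topologically equivalent (Brouwer $=$ no retraction), but yours is the classical Hersch-type normalization argument: it is more conceptual, explains why precisely these test functions work, and makes visible that only non-atomicity (equi-integrability) of $h\diff x$ is used, not boundedness of $\Omega$ — a point the paper's more elementary computation obscures. The delicate steps you flag (the uniform smallness of $\int_{B_\delta(z)}h$ over all centers $z$, and the three boundary regimes) are all handled correctly.

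One cosmetic point: surjectivity of $\Psi\circ\widetilde V\circ\Phi$ onto the open ball gives a preimage of $\Psi(0)$, not directly of $0$; either aim at the interior point $\Psi(0)$ (so that $\widetilde V\circ\Phi$ hits $0$), or take $\Psi$ to be the radial cone extension of $\sigma^{-1}\colon\mathbb S^N\to\mathbb S^N$, which fixes the origin, so the two coincide. This is a one-line fix and not a gap.
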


\begin{proof}
Define $H: \R^N \times \R \to \R^{N+1}$ by 
\[ H(y,s) := \left( F\left(y, \frac{s + \sqrt{s^2 + 4}}{2} \right) + y, G\left(y, \frac{s + \sqrt{s^2 + 4}}{2} \right) + s \right). \]
We claim that 
\begin{equation}
    \label{brouwer bound}
    |H(y,s)| \leq |y|^2 + s^2
\end{equation}
whenever $|y|^2 + s^2$ is large enough. Thus for large enough radii $R >0$, the map $H$ sends $B(0,R) \subset \R^{N+1}$ into itself. By the Brouwer fixed point theorem, $H$ has a fixed point $(y,s)$. Then the pair  $(y,\frac{s + \sqrt{s^2 + 4}}{2})$ satisfies the property stated in the lemma. 

To prove \eqref{brouwer bound}, it is more natural to set $t := \frac{s + \sqrt{s^2 + 4}}{2} > 0$, so that $s = t - t^{-1}$. By writing out $|H(y,s)|^2$, \eqref{brouwer bound} is equivalent to 
\begin{equation}
    \label{brouwer bound 2}
    2 y \cdot F(y,t) + 2 (t - t^{-1}) G(y,t) + |F(y,t)|^2 + |G(y,t)|^2 \leq 0
\end{equation}
whenever $|y|^2 + (t - t^{-1})^2$ is large enough. 

First, it is easy to see that $y \cdot F(y,t)$, $F(y,t)$ and $G(y,t)$ are bounded in absolute value uniformly in $(y,t) \in \R^N \times (0,\infty)$. Moreover, there is $C > 0$ such that 
\[ G(y,t) = 1 - 2 \int_\Omega h(x) \frac{t^2|x-y|^2}{1 + t^2|x-y|^2} \diff x 
\begin{cases}
\geq \frac12 & \text{ if } 0 <  t \leq 1/C, \\
\leq - \frac12 & \text{ if } t \geq C.
\end{cases}
\]
Therefore $(t - t^{-1}) G(y,t) \to -\infty$ as $t \to 0$ or $t \to \infty$. Thus \eqref{brouwer bound 2} holds whenever $(t - t^{-1})^2$ is large enough. 

Thus we assume in the following that $t \in [1/C, C]$ and prove that \eqref{brouwer bound 2} holds if $|y|$ is large enough. For convenience, fix some sequence $(y,t)$ with $|y| \to \infty$ and $t \to t_0 \in (0,\infty)$. Then $|F(y,t)| \to 0$ and $G(y,t) \to -1$. Moreover, since $\Omega$ is bounded, $\frac{|x-y|}{|y|} \to 1$ uniformly in $x \in \Omega$ and hence
\begin{align*}
    2 y \cdot F(y,t) = - \int_\Omega h(x) \frac{4t|y|^2}{1 + t^2|x-y|^2} + \mathcal O \left( \int_\Omega h(x) \frac{t|y|}{1 + t^2 |x-y|^2} \diff x \right) \to - \frac{4}{t_0}.
\end{align*}
Altogether, the quantity on the left side of \eqref{brouwer bound 2} thus tends to $-2 t_0 - 2 t_0^{-1} + 1 \leq -3 < 0$, which concludes the proof of \eqref{brouwer bound 2}. 
\end{proof}

\subsection{Profile decomposition}
\label{ssec:profiledeco}

The following proposition gives an asymptotic decomposition of a general sequence of normalized (almost) minimizers of $S(a+\eps V)$. 

\begin{proposition}[Profile decomposition]
\label{proposition decomposition}
Let $a \in C(\overline{\Omega})$ be critical and let $V \in C(\overline{\Omega})$ be such that $\mathcal N_a(V) \neq \emptyset$. Suppose that $(u_\eps) \subset \ths$ is a sequence such that
\begin{equation}
\label{almost_min prop}
\lim_{\eps \to 0} \frac{\mathcal S_{a+\eps V}[u_\eps] - S(a+\eps V)}{S - S(a+\eps V)} = 0 \quad \text{ and } \quad \int_\Omega u_\eps^p \diff y = \int_{\R^N} U_{0,1}^p \diff y.  
\end{equation}
Then there are sequences $(x_\eps) \subset \Omega$, $(\lambda_\eps) \subset (0, \infty)$, $(w_\eps) \subset T_{x_\eps, \lambda_\eps}^\bot$, and $(\alpha_\eps) \subset \R$ such that, up to extraction of a subsequence,
\begin{equation}
\label{dec_PU+w} u_\eps = \alpha_\eps \left( PU_{x_\eps, \lambda_\eps} + w_\eps \right). 
\end{equation}
Moreover, as $\eps \to 0$, we have 
\begin{align*}
\|(-\Delta)^{s/2} w_\eps\|_2 &\to 0, \\
d(x_\eps) \lambda_\eps &\to \infty, \\
x_\eps &\to x_0, \\
\alpha_\eps &\to \pm 1.
\end{align*}
\end{proposition}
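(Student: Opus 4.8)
The idea is to regard $(u_\eps)$ as a minimizing sequence for the \emph{pure} fractional Sobolev quotient, to extract a single concentrating bubble via concentration--compactness, to exclude a nonzero weak limit using the non-existence result of Proposition~\ref{proposition nonex-min}, and finally to orthogonalize against the tangent space $T_\xl$ by an implicit-function-theorem argument. For the preliminaries, since $|\mathcal S_{a+\eps V}[u]-\mathcal S_a[u]|=\eps\,|\int_\Omega Vu^2|/\|u\|_p^2\le \eps\|V\|_\infty|\Omega|^{1-2/p}$ for every $u$ with $\|u\|_p=1$, we get $S(a+\eps V)\to S(a)=S$, while $S(a+\eps V)<S$ by Corollary~\ref{corollary upper bound}; hence $S-S(a+\eps V)\to 0^+$ and the first condition in \eqref{almost_min prop} gives $\mathcal S_{a+\eps V}[u_\eps]\to S$. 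Because $\int_\Omega u_\eps^p=A_{N,s}$ is fixed and $|\int_\Omega(a+\eps V)u_\eps^2|\lesssim\|u_\eps\|_2^2\lesssim\|u_\eps\|_p^2$ by Hölder on the bounded set $\Omega$, the sequence is bounded in $\ths$; up to a subsequence $u_\eps\rightharpoonup u_0$ in $\ths$, and $u_\eps\to u_0$ in $L^2(\Omega)$ and a.e.\ by the compact embedding $\ths\hookrightarrow\hookrightarrow L^2(\Omega)$. Consequently $\int_\Omega(a+\eps V)u_\eps^2\to\int_\Omega a u_0^2$ and $\|\ds u_\eps\|_2^2\to S A_{N,s}^{2/p}-\int_\Omega a u_0^2$.

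Next I would show $u_0=0$. Expanding the Hilbert norm and applying Brezis--Lieb, $\|\ds u_\eps\|_2^2=\|\ds u_0\|_2^2+\|\ds(u_\eps-u_0)\|_2^2+o(1)$ and $A_{N,s}=\|u_0\|_p^p+\|u_\eps-u_0\|_p^p+o(1)$. If $u_0\neq 0$, then $\|\ds u_0\|_2^2\ge S\|u_0\|_p^2-\int_\Omega a u_0^2$ since $\mathcal S_a[u_0]\ge S(a)=S$, and $\|\ds(u_\eps-u_0)\|_2^2\ge S\|u_\eps-u_0\|_p^2$ by \eqref{sob_ineq}; combining these with the two identities and setting $t:=\|u_0\|_p^p/A_{N,s}\in(0,1]$ yields $1\ge t^{2/p}+(1-t)^{2/p}$, which, since $2/p<1$ makes the right side $\ge 1$ with equality only at the endpoints, forces $t=1$. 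Then $\|u_\eps-u_0\|_p\to 0$, and the two identities in turn give $\|\ds(u_\eps-u_0)\|_2\to 0$, so $u_\eps\to u_0$ in $\ths$ and $u_0$ achieves $S(a)$, contradicting Proposition~\ref{proposition nonex-min}. Hence $u_\eps\rightharpoonup 0$, $u_\eps\to 0$ in $L^2(\Omega)$, $\|u_\eps\|_p^p=A_{N,s}$ and $\|\ds u_\eps\|_2^2\to S A_{N,s}^{2/p}$; i.e.\ $(u_\eps)$ is a minimizing sequence for the fractional Sobolev inequality on $\dhs$ with vanishing weak limit.

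At this stage I would invoke the profile decomposition for the fractional Sobolev embedding: such a sequence is, up to a subsequence, a single rescaled bubble plus a $\dhs$-null remainder, i.e.\ there exist $x_\eps\in\R^N$, $\lambda_\eps>0$ and $\alpha_\eps\in\R$ with $|\alpha_\eps|\to 1$ such that $\|\ds(u_\eps-\alpha_\eps U_{x_\eps,\lambda_\eps})\|_2\to 0$; weak convergence to zero then forces $\lambda_\eps\to\infty$. Since $u_\eps\equiv 0$ on $\R^N\setminus\Omega$ and $u_\eps-\alpha_\eps U_\xl\to 0$ also in $L^p(\R^N)$, one gets $\int_{\R^N\setminus\Omega}U_{x_\eps,\lambda_\eps}^p\to 0$; as $\Omega$ is bounded this is impossible unless $d(x_\eps)\lambda_\eps\to\infty$, and in particular $\dist(x_\eps,\overline\Omega)\to 0$, so along a subsequence $x_\eps\to x_0$. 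Finally, by Lemma~\ref{lemma decomp PU} the projection satisfies $\|\ds(U_{x_\eps,\lambda_\eps}-PU_{x_\eps,\lambda_\eps})\|_2\lesssim\lambda_\eps^{-\ns}\to 0$, so $\|\ds(u_\eps-\alpha_\eps PU_{x_\eps,\lambda_\eps})\|_2\to 0$.

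It remains to orthogonalize. For small $\eps$, $u_\eps$ lies in a thin $\ds$-tube around the $(N+2)$-parameter manifold $\{\alpha PU_\xl\}$, and minimizing $(\alpha,x,\lambda)\mapsto\|\ds(u_\eps-\alpha PU_{x,\lambda})\|_2^2$ over a neighborhood of $(\alpha_\eps,x_\eps,\lambda_\eps)$ — the minimum being attained thanks to the uniform nondegeneracy of the Gram matrix of the suitably rescaled vectors $PU_\xl,\pl PU_\xl,\{\pxi PU_\xl\}_{i=1}^N$ — produces new parameters, close to the old ones, whose Euler--Lagrange equations state precisely that $w_\eps:=\alpha_\eps^{-1}u_\eps-PU_{x_\eps,\lambda_\eps}\in T_{x_\eps,\lambda_\eps}^\perp$. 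Since the minimal value does not exceed the value at the old parameters, $\|\ds w_\eps\|_2\to 0$, and the new parameters inherit $\lambda_\eps\to\infty$, $d(x_\eps)\lambda_\eps\to\infty$, $x_\eps\to x_0$ and $\alpha_\eps\to\pm 1$; relabelling yields \eqref{dec_PU+w} with all the stated properties. The only genuinely nontrivial input — the expected main obstacle — is the single-bubble profile decomposition for the fractional Sobolev inequality (together with the standard nondegeneracy of the bubble manifold used for the orthogonalization); everything else is routine Hilbert-space bookkeeping and the cited estimates.
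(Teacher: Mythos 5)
Your proposal is correct and follows essentially the same route as the paper: rule out a nonzero weak limit via the non-existence of a minimizer for $S(a)$ (Proposition \ref{proposition nonex-min}), extract a single bubble from the concentration--compactness/profile decomposition for the fractional Sobolev quotient (the paper cites \cite{PaPi2014}), and then orthogonalize against $T_{x,\lambda}$ by locally minimizing $\|\ds(u_\eps-\alpha PU_{x,\lambda})\|_2$ (the paper cites the fractional Bahri--Coron statement of \cite{AbChHa2018} for the existence and uniqueness of that minimizer). The only cosmetic differences are that you write out the Brezis--Lieb dichotomy explicitly where the paper cites \cite{FrKoKo2021}, and your ``$\dist(x_\eps,\overline\Omega)\to 0$'' should just read that $(x_\eps)$ is bounded so a convergent subsequence exists.
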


In all of the following, we shall always work with a sequence $u_\eps$ that satisfies the assumptions of Proposition \ref{proposition decomposition}. For readability, we shall often drop the index $\eps$ from $\alpha_\eps$, $x_\eps$, $\lambda_\eps$ and $w_\eps$, and write $d:= d_\eps := d(x_\eps)$. Moreover, we make the convention that we always assume the strict inequality 
\begin{equation}
    \label{strict ineq S(a+eps V) < S}
    S(a + \eps V) < S. 
\end{equation}
In Theorems \ref{thm:13} and \ref{thm:17} we assume $\mathcal N_a(V) \neq \emptyset$, so assumption \eqref{strict ineq S(a+eps V) < S} is certainly justified in view of Corollary \ref{corollary upper bound}. For Theorem \ref{thm:14}, where we assume $\mathcal N_a(V) = \emptyset$, we discuss the role of assumption \eqref{strict ineq S(a+eps V) < S} in the proof of that theorem in Section \ref{sec:proof}.

\begin{proof}
\textbf{Step 1.} We derive a preliminary decomposition in terms of the Sobolev optimizers $U_\zl$ and without orthogonality condition on the remainder, see \eqref{dec_cc_prelim} below. 

The assumptions imply that the sequence $(u_\eps)$ is bounded in $\ths$, hence up to a subsequence we may assume $u_\eps \rightharpoonup u_0$ for some $u_0 \in \ths$. By the argument given in \cite[Proof of Proposition 3.1, Step 1]{FrKoKo2021}, the fact that $\mathcal S_{a + \eps V}[u_\eps] \to S(a) = S$ implies that either $u_0$ is a minimizer for $S(a)$, unless $u_0 \equiv 0$. Since such a minimizer does not exist by Proposition \ref{proposition nonex-min}, we conclude that in fact $u_\eps \rightharpoonup 0$ in $\ths$. 

By Rellich's theorem, $u_\eps \to 0$ strongly in $L^2(\Omega)$, in particular $\int_\Omega (a + \eps V) u_\eps^2 = o(1)$. The assumption \eqref{almost_min} therefore implies that $(u_\eps)$ is a minimizing sequence for the Sobolev quotient $\int_{\R^N} |\ds u|^2 \diff y / \|u\|^2_\frac{2N}{N-2s}$. Therefore the assumptions of \cite[Theorem 1.3]{PaPi2014} are satisfied, and we may conclude by that theorem that there are sequences $(z_\eps) \subset \R^N$, $(\mu_\eps) \subset (0, \infty)$, $(\sigma_\eps)$ such that 
\[ \mu_\eps ^{-\frac{N-2s}{2}} u_\eps (z_\eps + \mu_\eps^{-1} \cdot) \to \beta U_{0,1} \]
in $\dot{H}^s(\R^N)$, for some $\beta \in \R$. By the normalization condition from \eqref{almost_min}, $\beta \in \{ \pm 1\}$. Now, a change of variables $y = z_\eps + \mu_\eps^{-1} x$ implies 
\begin{equation}
\label{dec_cc_prelim} u_\eps(y) = U_{z_\eps, \mu_\eps}(y) + \sigma_\eps, 
\end{equation}
where still $\sigma_\eps \to 0$ in $\dot{H}^s(\R^N)$, since the $\dot{H}^s(\R^N)$-norm is invariant under this change of variable. 

Moreover, since $\Omega$ is smooth, the fact that 
\[ \int_{\mu_\eps \Omega + z_\eps} U_{0,1}^p \diff y = \int_\Omega U_{z_\eps, \mu_\eps}^p \diff y  =  \int_{\R^N} U_{0,1}^p \diff y + o(1) \]
implies $\mu_\eps \text{dist}(z_\eps, \R^N \setminus \Omega) \to \infty$.  

\textbf{Step 2.}  We make the necessary modifications to derive \eqref{dec_PU+w} from \eqref{dec_cc_prelim}. The crucial argument is furnished by \cite[Proposition 4.3]{AbChHa2018}, which generalizes the corresponding statement by Bahri and Coron \cite[Proposition 7]{BaCo1988} to fractional $s \in (0,1)$. It states the following. Suppose that $u \in \ths$ with $\|u\|_\ths = A_{N,s}$ satisfies 
\begin{equation} 
\label{condition_bahri_coron}
\inf \left \{ \| \ds ( u -  PU_\xl)\|_2 \, : \, x \in \Omega, \, \lambda d(x) > \eta^{-1} \right \} < \eta. 
\end{equation}
for some $\eta > 0$. Then if $\eta$ is small enough, the minimization problem
\begin{equation}
\label{inf_bahri_coron}
\inf \left \{ \| \ds( u - \alpha PU_\xl) \|_2 \, : \, x \in \Omega, \, \lambda d(x) > (4\eta)^{-1}, \, \alpha \in (1/2, 2) \right \}
\end{equation}
has a unique solution.

By the decomposition from Step 1 and Lemma \ref{lemma decomp PU}, we have 
\[ \| \ds (u_\eps - PU_{z_\eps, \mu_\eps})\|_2 \leq \|\ds (U_{z_\eps, \mu_\eps} - PU_{z_\eps, \mu_\eps}) \|_2 + \|\ds \sigma_\eps\|_2  \to 0 \]
as $\eps \to 0$, so that \eqref{condition_bahri_coron} is satisfied by $u_\eps$ for all $\eps$ small enough, with a constant $\eta_\eps$ tending to zero. We thus obtain the desired decomposition 
\[ u_\eps = \alpha_\eps (PU_{x_\eps, \lambda_\eps} + w_\eps) \]
by taking $(x_\eps, \lambda_\eps, \alpha_\eps)$ to be the solution to \eqref{inf_bahri_coron} and $w_\eps := \alpha_\eps^{-1} u_\eps -  PU_{x_\eps, \lambda_\eps}$. To verify the claimed asymptotic behavior of the parameters, note that since $\eta_\eps \to 0$, by definition of the minimization problem \eqref{inf_bahri_coron}, we have $\|\ds w_\eps\|_2 < \eta_\eps \to 0$ and $\lambda_\eps d(x_\eps) > (4 \eta_\eps)^{-1} \to \infty$. Since $\Omega$ is bounded, the convergence $x_\eps \to x_0 \in \overline{\Omega}$ is ensured by passing to a suitable subsequence. Finally, using \eqref{almost_min} we have
\[ \int_{\R^N} U_{0,1}^p \diff y  = \int_\Omega u_\eps^p \diff y  = |\alpha_\eps|^p \int_\Omega PU_{x_\eps, \lambda_\eps} \diff y + o(1)  = |\alpha_\eps|^p \int_{\R^N} U_{0,1}^p \diff y + o(1), \]
which implies $\alpha_\eps = \pm 1 + o(1)$. 
\end{proof}

\subsection{Coercivity}
\label{ssec:coercivity}

In the following sections, our goal is to improve the bounds from Proposition \ref{proposition decomposition} step by step. 

The following inequality, and its improvement in Proposition \ref{proposition coerc with a} below, will be central. For $s=1$, these inequalities are due to Rey  \cite[Eq. (D.1)]{Rey1990} and Esposito \cite[Lemma 2.1]{Esposito2004}, respectively, whose proofs inspired those given below.  

\begin{proposition}[Coercivity inequality]
\label{proposition coercivity}
For all $x \in \R^n$ and $\lambda > 0$, we have 
\begin{equation}
\label{coerc_ineq_rey}
\|(-\Delta)^{s/2} v\|_2^2 - c_{N,s} (p-1) \int_\Omega U_\xl ^{p-2} v^2 \diff y \geq \frac{4s}{N+2s+2}  \|(-\Delta)^{s/2} v\|_2^2,
\end{equation}
for all $v \in T_\xl^\bot$.
\end{proposition}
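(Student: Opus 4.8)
The plan is to reduce the coercivity inequality \eqref{coerc_ineq_rey} to a spectral problem for the linearized operator on the full space $\R^N$. The natural strategy is to rewrite the left-hand side as
\[
\|(-\Delta)^{s/2} v\|_2^2 - c_{N,s}(p-1) \int_\Omega U_\xl^{p-2} v^2 \diff y = \left( 1 - \frac{4s}{N+2s+2}\right) \|(-\Delta)^{s/2} v\|_2^2 + \left( \frac{4s}{N+2s+2}\|(-\Delta)^{s/2}v\|_2^2 - c_{N,s}(p-1)\int_\Omega U_\xl^{p-2} v^2 \diff y\right),
\]
so that it suffices to prove the "localized" inequality
\[
c_{N,s}(p-1) \int_\Omega U_\xl^{p-2} v^2 \diff y \le \frac{4s}{N+2s+2}\,\|(-\Delta)^{s/2}v\|_2^2 \cdot \frac{N+2s+2-4s}{N+2s+2}\cdot\frac{1}{1-\tfrac{4s}{N+2s+2}},
\]
or more cleanly: to show that for $v \in T_\xl^\perp$,
\[
\frac{c_{N,s}(p-1)\int_\Omega U_\xl^{p-2} v^2 \diff y}{\|(-\Delta)^{s/2}v\|_2^2} \le \frac{N+2s-2}{N+2s+2} = 1 - \frac{4s}{N+2s+2}.
\]
First I would note that by scaling and translation invariance of both the fractional Dirichlet energy and the weight $U_\xl^{p-2}$ (under $v \mapsto \lambda^{(N-2s)/2} v(x + \lambda \cdot)$, the space $T_\xl$ maps to $T_{0,1}$), it is enough to treat $x = 0$, $\lambda = 1$.

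The key step is then an eigenvalue analysis of the problem $(-\Delta)^s \varphi = \mu\, c_{N,s}(p-1) U_{0,1}^{p-2}\varphi$ on $\R^N$ (in $\dot H^s$), equivalently of the compact self-adjoint operator $w \mapsto (-\Delta)^{-s/2}\big( c_{N,s}(p-1) U_{0,1}^{p-2} (-\Delta)^{-s/2} w\big)$. The relevant spectral data is classical for this fractional linearized Lane--Emden/Sobolev operator (see e.g. the nondegeneracy results in the fractional literature, e.g. Dávila--del Pino--Sire or Frank--Lenzmann--Silvestre): the largest eigenvalue is $\mu_1 = \frac{N+2s}{N-2s}= p - 1$ with eigenfunction $U_{0,1}$ itself (which follows by differentiating $(-\Delta)^s U_{0,\lambda}|_{\lambda = 1}$ — no, rather, $U_{0,1}$ solves $(-\Delta)^s U = c_{N,s}U^{p-1}$, so $c_{N,s}(p-1)U^{p-2}\cdot U = (p-1)(-\Delta)^s U$, giving $\mu_1 = 1/(p-1)$ in the normalization where the eigenvalue multiplies $c_{N,s}(p-1)U^{p-2}$; I will fix the normalization carefully). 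The second eigenvalue is $\mu_2 = 1$, realized precisely on the $(N+1)$-dimensional space spanned by $\partial_\lambda U_{0,1}$ and $\partial_{x_i} U_{0,1}$, $i = 1,\dots,N$ — this is exactly the nondegeneracy statement for the fractional Sobolev bubbles. Hence on the orthogonal complement (in the $\dot H^s$ inner product) of $\mathrm{span}\{U_{0,1}, \partial_\lambda U_{0,1}, \partial_{x_i}U_{0,1}\}$, which is exactly $T_{0,1}^\perp$ after identifying $PU_{0,1}$-derivatives with $U_{0,1}$-derivatives, the third eigenvalue $\mu_3$ controls the ratio. The value $\mu_3$ is known to equal the constant appearing in Rey's inequality; a direct computation identifying $\mu_3$ gives $\frac{N+2s-2}{N+2s+2}\cdot\frac{1}{p-1}$ or the equivalent form that produces the stated $\frac{4s}{N+2s+2}$.

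There is a subtlety: the inequality is posed on $\ths$ with $v$ supported in $\Omega$, whereas the spectral theory lives on $\R^N$; but since $\ths \subset \dot H^s(\R^N)$ isometrically and $U_\xl^{p-2}$ is extended by its full-space value (the integral over $\Omega$ is dominated by the integral over $\R^N$ since $U_\xl^{p-2} \ge 0$), restricting the full-space inequality to functions supported in $\Omega$ is immediate — one gets $c_{N,s}(p-1)\int_\Omega U_\xl^{p-2}v^2 \le c_{N,s}(p-1)\int_{\R^N} U_\xl^{p-2}v^2 \le \mu \|(-\Delta)^{s/2}v\|_2^2$. Another subtlety is the orthogonality: $T_\xl$ is defined via $PU_\xl$ and its derivatives, not $U_\xl$; I would argue that on the full space the relevant constraint is orthogonality to $U_\xl$ and its derivatives, and check that $v \in T_\xl^\perp$ (orthogonality to $PU_\xl, \partial_\lambda PU_\xl, \partial_{x_i}PU_\xl$) suffices — or, following Rey, simply observe that the quadratic form inequality $\|(-\Delta)^{s/2}v\|^2 - c_{N,s}(p-1)\int U_\xl^{p-2}v^2 \ge \frac{4s}{N+2s+2}\|(-\Delta)^{s/2}v\|^2$ is monotone under enlarging the constraint subspace, so it is enough that $T_\xl$ contains $\mathrm{span}\{U_\xl, \partial_\lambda U_\xl, \partial_{x_i} U_\xl\}$ modulo the error $f_\xl = U_\xl - PU_\xl$ (controlled by Lemma \ref{lemma decomp PU}); handling this error term perturbatively, or arranging the decomposition so that the error is absorbed, is where care is needed.

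The main obstacle I anticipate is pinning down the third eigenvalue $\mu_3$ of the fractional linearized operator and its eigenfunctions with enough precision to get the sharp constant $\frac{4s}{N+2s+2}$ — in the local case $s=1$ this is Rey's explicit computation using spherical harmonics after stereographic projection to $\mathbb{S}^N$, and for fractional $s$ one expects an analogous but more delicate analysis via the representation of $(-\Delta)^s$ on $\mathbb{S}^N$ (intertwining operators / Branson's formula) where the eigenvalues on degree-$k$ spherical harmonics are explicit ratios of Gamma functions. I would carry out: (1) reduce to $x=0,\lambda=1$ by scaling; (2) conformally map to $\mathbb{S}^N$, turning $U_{0,1}^{p-2}$ into a constant weight and $(-\Delta)^s$ into the conformal fractional Laplacian on the sphere; (3) decompose into spherical harmonics, read off the eigenvalues, identify the first two eigenspaces with $\{U_{0,1}\} \cup \{\text{translations, dilation}\}$, and extract $\mu_3$ from degree-$2$ harmonics; (4) verify that $\mu_3 = \frac{N+2s-2}{N+2s+2}$ in the normalization making the claimed inequality equivalent to $v \perp$ (first two eigenspaces); (5) transfer back, handle the $PU$ vs $U$ discrepancy and the restriction to $\Omega$. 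The expected answer is that steps (1), (2), (5) are routine and step (3)--(4), the explicit fractional spherical-harmonic computation, is the crux.
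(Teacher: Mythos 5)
Your proposal follows essentially the same route as the paper's proof: reduce to $(x,\lambda)=(0,1)$ by scaling, pass to $\mathbb S^N$ via stereographic projection so that $(-\Delta)^s$ becomes the intertwining (Paneitz-type) operator with explicit eigenvalues $\lambda_l = \Gamma(l+\tfrac N2+s)/\Gamma(l+\tfrac N2-s)$ on degree-$l$ spherical harmonics, show that the orthogonality conditions kill the $l=0,1$ modes, and read the sharp constant off $\lambda_2$. Two corrections: the target constant in your reduction should be $1-\frac{4s}{N+2s+2}=\frac{N-2s+2}{N+2s+2}$, not $\frac{N+2s-2}{N+2s+2}$ (these coincide only at $s=1$). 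More importantly, the $PU$-versus-$U$ discrepancy you propose to handle perturbatively in fact disappears exactly: since $(-\Delta)^s PU_\xl=(-\Delta)^s U_\xl$ tested against $\ths$, one has $\int \ds v\, \ds \pl PU_\xl\diff y = c_{N,s}(p-1)\int_\Omega v\, U_\xl^{p-2}\pl U_\xl\diff y$ identically (and similarly for $PU_\xl$ and $\pxi PU_\xl$), so $v\in T_\xl^\perp$ is literally the weighted-$L^2$ orthogonality to the $l=0,1$ harmonics; a perturbative treatment of $f_\xl$ could not yield the inequality with the sharp constant uniformly for all $x$ and $\lambda>0$ as stated.
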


As a corollary, we can include the lower order term $\int_\Omega a v^2$, at least when $d(x) \lambda$ is large enough and at the price of having a non-explicit constant on the right side. This is the form of the inequality which we shall use below to refine our error bounds in Sections \ref{ssec:improvedbounds} and \ref{ssec:refinedex}.  

\begin{proposition}[Coercivity inequality with potential $a$]
\label{proposition coerc with a}
    Let $(x_n) \subset \Omega$ and $(\lambda_n) \subset (0, \infty)$ be sequences such that $\dist (x_n, \partial \Omega) \lambda_n \to \infty$. Then there is $\rho> 0$ such that for all $n$ large enough, 
\begin{equation}
\label{coerc_ineq}
\|(-\Delta)^{s/2} v\|_2^2 + \int_\Omega a v^2 \diff y - c_{N,s} (p-1) \int_\Omega U_{x_n, \lambda_n} ^{p-2} v^2 \diff y \geq \rho \|(-\Delta)^{s/2} v\|_2^2,\qquad \text{ for all  } v \in T_{x_n, \lambda_n}^\bot. 
\end{equation}
\end{proposition}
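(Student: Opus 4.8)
The plan is to argue by contradiction, deducing the estimate from the \emph{unperturbed} coercivity inequality of Proposition~\ref{proposition coercivity} together with the coercivity of $\Ds + a$, via a two-scale concentration--compactness argument. Suppose the assertion fails. Applying the negated statement with $\rho = 1/m$ for each $m$ and diagonalizing, one obtains indices $n \to \infty$ and functions $v_n \in T_{x_n,\lambda_n}^\bot$ with $\|\ds v_n\|_2 = 1$ and $\limsup_n Q_n(v_n) \le 0$, where
\[ Q_n(v) := \|\ds v\|_2^2 + \int_\Omega a v^2 \diff y - c_{N,s}(p-1)\int_\Omega U_{x_n,\lambda_n}^{p-2} v^2 \diff y. \]
I would then pass to a further subsequence along which $v_n \rightharpoonup v_\infty$ in $\ths$ (hence $v_n \to v_\infty$ in $L^2(\Omega)$, by compactness of $\ths \hookrightarrow L^2(\Omega)$), and along which the rescalings $w_n(y) := \lambda_n^{-\ns} v_n(x_n + \lambda_n^{-1} y)$ converge weakly in $\dhs$ to some $w$. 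Since $\lambda_n \dist(x_n,\partial\Omega) \to \infty$ forces $\lambda_n \to \infty$ (as $\dist(\cdot,\partial\Omega)$ is bounded on $\Omega$) and makes $\Omega_n := \lambda_n(\Omega - x_n)$ exhaust $\R^N$, and since an appropriately normalized basis of $T_{x_n,\lambda_n}$, rescaled about $(x_n,\lambda_n)$, converges in $\dhs$ to a basis of $T_{0,1} := \mathrm{span}\{ U_{0,1}, \pl U_{0,1}, \{\pxi U_{0,1}\}_{i=1}^N\}$ (using the decay estimates for $PU_\xl - U_\xl$ from Lemma~\ref{lemma decomp PU}), passing to the limit in the orthogonality conditions yields $w \in T_{0,1}^\bot \subset \dhs$.

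Next I would identify the limit of each term of $Q_n(v_n)$. Since $a$ is bounded and $v_n \to v_\infty$ in $L^2(\Omega)$, one has $\int_\Omega a v_n^2 \to A_\infty := \int_\Omega a v_\infty^2$. For the weighted term, note $U_{0,1}^{p-2} = (1+|\cdot|^2)^{-2s} \in L^{N/2s}(\R^N)$ and $\int_{|y|>R} U_{0,1}^p \diff y \to 0$ as $R \to \infty$; this tightness, combined with $w_n \to w$ in $L^2_{\loc}$, gives $c_{N,s}(p-1)\int_\Omega U_{x_n,\lambda_n}^{p-2} v_n^2 = c_{N,s}(p-1)\int_{\Omega_n} U_{0,1}^{p-2} w_n^2 \to B_\infty := c_{N,s}(p-1)\int_{\R^N} U_{0,1}^{p-2} w^2$, so $Q_n(v_n) \to 1 + A_\infty - B_\infty$. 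The structural input is an \emph{energy decoupling}: a standard Brezis--Lieb / global-compactness computation (the cross terms between the fixed $v_\infty$ and the concentrating bubble $\lambda_n^{\ns} w(\lambda_n(\cdot - x_n))$ vanishing in $\dhs$) gives $\nu + \mu \le \liminf_n \|\ds v_n\|_2^2 = 1$, where $\nu := \|\ds v_\infty\|_2^2$ and $\mu := \|\ds w\|_2^2$.

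Finally I would feed in the two coercivity estimates. Applying the $\R^N$-version of Proposition~\ref{proposition coercivity} to $w \in T_{0,1}^\bot$ (this is the form in which that inequality is actually proved, Prop.~\ref{proposition coercivity} following from it by rescaling and density) gives $B_\infty \le (1-c_0)\mu$ with $c_0 := \tfrac{4s}{N+2s+2}$; applying coercivity of $\Ds + a$ to $v_\infty \in \ths$ gives $\nu + A_\infty \ge \bar c\,\nu$ for some $\bar c \in (0,1)$, i.e. $A_\infty \ge -(1-\bar c)\nu$. Setting $m := \min(\bar c, c_0) \in (0,1)$ and using $\mu + \nu \le 1$,
\[ \lim_n Q_n(v_n) = 1 + A_\infty - B_\infty \ge 1 - (1-\bar c)\nu - (1-c_0)\mu \ge 1 - (1-m)(\mu+\nu) \ge m > 0, \]
contradicting $\limsup_n Q_n(v_n) \le 0$. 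This proves the proposition, with any $\rho < m$.

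The main obstacle, and the reason this cannot be a soft perturbation argument, is that the potential term $\int_\Omega a v^2$ is \emph{not} small relative to $\|\ds v\|_2^2$: $\|a\|_\infty$ is arbitrary, so the Sobolev inequality only bounds $|\int_\Omega a v^2|$ by a constant, possibly $\gg 1$, times $\|\ds v\|_2^2$. The resolution is precisely the two-scale split above: the ``dangerous'' part of $v_n$, where $\|\ds v\|_2^2 - c_{N,s}(p-1)\int U_{x_n,\lambda_n}^{p-2}v^2$ nearly degenerates, lives at scale $\lambda_n^{-1} \to 0$, where the potential contributes only $O(\lambda_n^{-2s})\|\ds v\|_2^2$ — this is encoded in $B_\infty \le (1-c_0)\mu$ — while the ``macroscopic'' part $v_\infty$, where the potential may contribute at order one, is controlled by coercivity of $\Ds + a$; the bound $\mu + \nu \le 1$ lets the two be combined. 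Making the decoupling and the convergence $w \perp T_{0,1}$ rigorous under the sole hypothesis $\lambda_n\dist(x_n,\partial\Omega) \to \infty$ (so $v_\infty$ and $w$ may both be nontrivial, and $x_n$ need not converge to an interior point) is the most delicate technical point.
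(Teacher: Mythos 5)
Your argument is correct, but it takes a genuinely different route from the paper. The paper's proof is variational: it introduces the best constant $C_n$ of the quadratic form over the unit sphere of $T_{x_n,\lambda_n}^\bot$, shows that $C_n$ is attained whenever $C_n<1$, writes the Euler--Lagrange equation for a minimizer $v_n$, and passes to the limit in that equation to show that the weak limit $v_\infty$ solves $(1-L)\Ds v_\infty + a v_\infty = 0$, hence vanishes by coercivity of $\Ds + a$; only then does it invoke Proposition \ref{proposition coercivity} for the (unrescaled) sequence $v_n$, whose potential term has become negligible. You instead avoid minimizers and their equations entirely, allow $v_\infty \neq 0$, and absorb its contribution via the two-scale energy decoupling $\nu+\mu\le 1$ together with the coercivity of $\Ds+a$ applied to $v_\infty$ and the full-space (spherical-harmonics) form of Proposition \ref{proposition coercivity} applied to the bubble profile $w$. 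Both proofs rest on the same two inputs (the unperturbed coercivity inequality and the coercivity of $\Ds+a$); yours trades the Euler--Lagrange analysis (including the somewhat delicate verification that the tangential components $u_n$ of test functions vanish) for the standard but nontrivial profile-decomposition bookkeeping. Two small remarks: the convergence $\int_\Omega U_{x_n,\lambda_n}^{p-2}v_n^2 \to \int_{\R^N}U_{0,1}^{p-2}w^2$ via tightness of the weight in $L^{N/2s}$ plus $L^2_{\loc}$-compactness is fine; and the orthogonality $w\in T_{0,1}^\bot$ is in fact \emph{exact} for each $n$ after rescaling (since for $v\in\ths$ one has $\int \ds v\,\ds PU_\xl=\int \ds v\,\ds U_\xl$ and likewise for the derivatives), so you do not even need the decay estimates of Lemma \ref{lemma decomp PU} for that step.
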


\begin{proof}
Abbreviate $U_n := U_{x_n, \lambda_n}$ and $T_n := T_{x_n, \lambda_n}$. We follow the proof of \cite{Esposito2004} and define
\[ C_n := \inf \left\{ 1 + \int_\Omega a v^2 \diff y - c_{N,s} (p-1) \int_\Omega U_n ^{p-2} v^2 \diff y \, : \, v \in T_n^\bot ,\,  \|\ds v\| = 1 \right \}. \]
Then $C_n$ is bounded from below, uniformly in $n$. We first claim that $C_n$ is achieved whenever $C_n <1$. Indeed, fix $n$ and let $v_k$ be a minimizing sequence. Up to a subsequence, $v_k \rightharpoonup v_\infty$ in $\ths$ and consequently $\|\ds v_\infty \| \leq 1$ and $\int_\Omega a v_k^2 - c_{N,s} (p-1) \int_\Omega U_n^{p-2} v_k^2 \diff y \to \int_\Omega a v_\infty^2 - c_{N,s} (p-1) \int_\Omega U_n^{p-2} v_\infty^2 \diff y$, by compact embedding $\ths \hookrightarrow L^2(\Omega)$. Thus 
\begin{align*}
&\quad (1- C_n) \|\ds v_\infty\|^2 + \int_\Omega a v_\infty^2 \diff y  - c_{N,s} (p-1) \int_\Omega U_n ^{p-2} v_\infty^2 \diff y \\
&\leq (1 - C_n) + \int_\Omega a v_\infty^2 \diff y - c_{N,s} (p-1) \int_\Omega U_n ^{p-2} v_\infty^2 \diff y = 0.
\end{align*}
On the other hand, the left hand side of the above inequality must itself be non-negative, for otherwise $\tilde{v} := v_\infty/\|\ds v_\infty\|$ (notice that $C_n < 1$ enforces $v_\infty \not \equiv 0$) yields a contradiction to the definition of $C_n$ as an infimum. Thus the above inequality must be in fact an equality, whence $\|\ds v_\infty\| = 1$. We have thus proved that $C_n$ is achieved if $C_n < 1$. 

Now, assume for contradiction, up to passing to a subsequence, that $\lim_{n \to \infty} C_n =: L \leq 0$. By the first part of the proof, let $v_n$ be a minimizer satisfying 
\begin{equation}
    \label{eulereq vn}
(1- C_n) \int_{\R^N} \ds v_n \ds w \diff y + \int_\Omega a v_n w \diff y  - c_{N,s} (p-1) \int_\Omega U_n^{p-2} v_n w \diff y = 0 
\end{equation}
for all $w \in T_n^\bot$. Up to passing to a subsequence, we may assume $v_n \rightharpoonup v \in \ths$. We claim that 
\begin{equation}
    \label{eulereq limit v}
    (1-L) \Ds v + a v = 0 \qquad \text{ in } (\ths)'.  
\end{equation}
Assuming \eqref{eulereq limit v} for the moment, we obtain a contradiction as follows. Testing \eqref{eulereq limit v} against $v \in \ths$ gives
\[ \| \ds v\|^2 + \int_\Omega a v^2 \diff y = L \|\ds v\|^2 \leq 0. \]
On the other hand, by coercivity of $\Ds + a$, the left hand side must be nonnegative and hence $v \equiv 0$. By compact embedding, we deduce $v_n \to 0$ strongly in $L^2(\Omega)$ and thus 
\[ C_n = 1 - c_{N,s} (p-1) \int_\Omega U_n^2 v_n^2 \diff y + o(1) \geq \frac{4s}{N+2s+2} + o(1). \]
This is the desired contradiction to $\lim_{n \to \infty} C_n \leq 0$. 

At last, we prove \eqref{eulereq limit v}. Let $\varphi \in \ths$ be given and write $\varphi = u_n + w_n$, with $u_n \in T_n$ and $w_n \in T_n^\bot$. By \eqref{eulereq vn} and using $ \int_{\R^N} \ds v_n \ds u_n = 0$,
\begin{align}
    &\quad  (1- C_n) \int_{\R^N} \ds v_n \ds \varphi \diff y  + \int_\Omega a v_n \varphi \diff y - c_{N,s} (p-1) \int_\Omega U_n^{p-2} v_n \varphi \diff y  \label{euler with varphi} \\
    &=  \int_\Omega a v_n u_n \diff y - c_{N,s} (p-1) \int_\Omega U_n^{p-2} v_n u_n \diff y = \mathcal O(\|\ds u_n\|). \label{euler Hs bound}
\end{align}
On the one hand, we have 
\[ \left|\int_\Omega U_n^{p-2} v_n \varphi \diff y \right| \leq \| U_n^{p-2} \varphi\|_\frac{p}{p-1} \to 0 \]
because $\varphi^\frac{p}{p-1} \in L^{p-1} = (L^\frac{p-1}{p-2}(\Omega))'$ and $U_n^\frac{(p-2)p}{p-1} \rightharpoonup 0$ weakly in $L^\frac{p-1}{p-2}(\Omega)$. Thus, by weak convergence, the expression in \eqref{euler with varphi} tends to
\[  (1- L) \int_{\R^N} \ds v \ds \varphi \diff y + \int_\Omega a v \varphi \diff y \]
as desired. In view of \eqref{euler Hs bound}, the proof of \eqref{eulereq limit v} is thus complete if we can show $\|\ds u_n \| \to 0$. This is again a consequence of weak convergence to zero of the $U_n$. Indeed, by Lemmas  \ref{lemma int Uq} and \ref{lemma Hs norms PU} we have 
\begin{align*}
\left| \int_{\R^N} \ds \frac{PU_n}{\|\ds PU_n\|} \ds \varphi \diff y \right|  &\lesssim  \int_{\R^N} U_n^{p-1} |\varphi| \diff y = o(1) , \\
\left| \int_{\R^N} \ds  \frac{\partial_\lambda PU_n}{\| \ds \pl PU_n\| } \ds \varphi \diff y \right| &\lesssim \lambda \int_{\R^N} U_n^{p-2}  \partial_\lambda U_n \varphi \diff y \lesssim \|U_n^{p-2}\varphi\|_\frac{p}{p-1} = o(1),
\end{align*}
and similarly 
\[ \int_{\R^N} \ds (\lambda^{-N+2s} \partial_{x_i} PU_n) \ds \varphi \diff y = o(1). \]
Here we used again that $U_n^{p-1} \rightharpoonup 0$ in $L^\frac{p}{p-1}$ and $U_n^\frac{(p-2)p}{p-1} \rightharpoonup 0$ in $L^\frac{p-1}{p-2}$ weakly.  $\int_\Omega U^\frac{(p-2)p}{p-1} \varphi^\frac{p}{p-1} \diff y = o(1)$ by weak convergence. 

From the convergence to zero of these scalar products, one can conclude $u_n \to 0$ by using the fact that the $PU_n$, $\pl PU_n$, $\pxi PU_n$ are 'asymptotically orthogonal' by the bounds of Lemma \ref{lemma Hs norms PU}. For a detailed argument, we refer to Lemma \ref{lemma beta gamma delta} below, see also \cite[Lemma 6.1]{FrKoKo2021}. 
\end{proof}

Let us now prepare the proof of Proposition \ref{proposition coercivity}. 
We recall that $\Ste: \R^N \to \Sph \setminus \{S\}$ (where $S = -e_{N+1}$ is the southpole) denotes the inverse stereographic projection defined in \eqref{ste pro definition}, with Jacobian $J_\Ste(x) := \det D \Ste(x) = \left(\frac{2}{1+|x|^2}\right)^N$. 

Given a function $v$ on $\R^N$, we may define a function $u$ on $\Sph$ by setting 
\[ u(\omega) := v_{\Ste^{-1}}(\omega) := v(\Ste^{-1}(\omega)) J_{\Ste^{-1}}(\omega)^\frac{N-2s}{2N}, \qquad \omega \in \Sph \setminus \{S\}. \]
The inverse of this map is of course given by 
\[ v(y) := u_\Ste(y) := u(\Ste(y)) J_\Ste(y)^\frac{N-2s}{2N}, \qquad y \in \R^N. \]
The exponent in the determinantal factor is chosen such that $\|v\|_{L^p(\R^N)} = \|u\|_{L^p(\Sph)}$.

For a basis $(Y_{l,m})$ of $L^2(\Sph)$ consisting of $L^2$-normalized spherical harmonics, write $u \in L^2(\Sph)$ as $u = \sum_{l,m} u_{l,m} Y_{l,m}$ with coefficients $u_{l,m} \in \R$. With 
\begin{equation}
    \label{paneitz eigenvalues} 
    \lambda_l = \frac{\Gamma(l + \frac{N}{2}+ s)}{\Gamma(l + \frac{N}{2}- s)}. 
\end{equation} 
the Paneitz operator $\mathcal P_{2s}$ is defined by
\[ \Ps u := \sum_{l,m} \lambda_l u_{l,m} Y_{l,m} \]
for every $u \in L^2(\Sph)$ such that $\sum_{l,m} \lambda_l u_{l,m}^2 < \infty$. 

It is well-known (see \cite{Beckner1993}) that, for every $v \in C^\infty_0(\R^N)$, we have,
\begin{equation}
\label{relation Ds P2s}
\Ds v(x) = J_\Ste (x)^\frac{N+2s}{2N}  \Ps u (\Ste(x)),
\end{equation}
where $u = v_\Ste$. Thus we have 
\[
    \int_{\R^N} |\ds v|^2 \diff y= \int_{\R^N} v \Ds v \diff y = \int_{\Sph} u \Ps u \diff y = \sum_{l,m} \lambda_l u_{l,m}^2. 
\]
Since $C^\infty_0(\R^N)$ is dense in the space $\mathcal D^{s,2}(\R^N) := \{ v \in L^\frac{2N}{N-2s}(\R^N) \, : \, \ds v \in L^2(\R^N) \}$ (see e.g. \cite{BrGoVa2021}), the equality 
\begin{equation}
    \label{ds Ps norm equality}
    \int_{\R^N} |\ds v|^2 \diff y= \sum_{l,m}^2 \lambda_l u_{l,m}^2 
\end{equation} 
extends to all $v \in \mathcal D^{s,2}(\R^N)$. In particular it holds for $v \in \ths$. 

\begin{proof}
[Proof of Proposition \ref{proposition coercivity}]
We first prove \eqref{coerc_ineq_rey} for $(x,\lambda) = (0,1)$. Let $v \in T_{0,1}^\perp$ and denote $u = v_\Ste$. We claim that the orthogonality conditions on $v$ imply that $u_{l,m} = 0$ for $l = 0,1$. Indeed, e.g. from $v \perp \pl PU_{0,1}$ and recalling $J_\Ste(y) = (\frac{2}{1+|y|^2})^N = 2^N U_{0,1}^\frac{2N}{N-2s}$ we compute 
\begin{align*}
    0 &= \int_{\R^N} \ds v \ds \pl PU_{0,1} \diff y = c_{N,s} \frac{N+2s}{N-2s} \int_{\R^N} v U_{0,1}^\frac{4s}{N-2s} \pl U_{0,1} \diff y \\
    &= c_{N,s} \frac{N+2s}{N-2s} 2^{-\frac{N+2s}{2}} \int_{\R^N} v(y) J_\Ste(y)^\frac{N+2s}{2N} \frac{1-|y|^2}{1+|y|^2} \diff y = c_{N,s} \frac{N+2s}{N-2s} 2^{-\frac{N+2s}{2}} \int_\Sph u(\omega) \omega_{N+1} \diff \sigma(\omega). 
\end{align*}
Analogous calculations show that $v \perp PU_{0,1}$ implies $\int_\Sph u = 0$ and that $v \perp \pxi PU_{0,1}$ implies $\int_\Sph u \omega_i = 0$ for $i = 1,...,N$. Since the functions $1$ and $\omega_i$ ($i = 1,...,N+1$) form a basis of the space of spherical harmonics of angular momenta $l = ´0$ and $l=1$ respectively, we have proved our claim. 

Since the eigenvalues $\lambda_l$ of $\Ps$ are increasing in $l$, changing back variables to $\R^N$, we deduce from \eqref{ds Ps norm equality} that 
\[ 
\int_{\R^N} |\ds v|^2 \diff y = \sum_{l,m} \lambda_l u_{l,m}^2  \geq \lambda_2 \int_\Sph u(\omega)^2 \diff \sigma(\omega) =  2^{2s} \lambda_2 \int_{\R^N} v^2(y) U_{0,1}^{p-2}(y) \diff y.
\]
By an explicit computation using the numerical values of $\lambda_2$ given by \eqref{paneitz eigenvalues} and $c_{N,s}$ given in Lemma \ref{lemma constants}, this is equivalent to 
\begin{equation}
\label{coerc v U01 final}
\|(-\Delta)^{s/2} v\|_2^2 - c_{N,s} (p-1) \int_\Omega U_{0,1}^{p-2} v^2 \diff y \geq \frac{4s}{N + 2s +2}  \|(-\Delta)^{s/2} v\|_2^2,
\end{equation}
which is the desired inequality. 

The case of general $(x, \lambda) \in \Omega \times (0, \infty)$ can be deduced from this by scaling. Indeed, for $v \in T_\xl^\bot$, set $v_\xl(y):= v(x - \lambda^{-1}y)$. Then $v_\xl \in T_{0,1}^\bot$ with respect to the set $\lambda(x - \Omega)$, so that by the above $v_\xl$ satisfies 
\[ \|(-\Delta)^{s/2} v_\xl\|_2^2 - c_{N,s} (p-1) \int_{\lambda (x -\Omega)} U_{0,1} ^{p-2} v_\xl^2 \diff y \geq \frac{4s}{N + 2s +2}  \|(-\Delta)^{s/2} v_\xl\|_2^2. \]
Changing back variables now yields \eqref{coerc v U01 final}.  
\end{proof}

\subsection{Improved a priori bounds}
\label{ssec:improvedbounds}

The main section of this section is the following proposition, which improves Proposition \ref{proposition decomposition}. It states that the concentration point $x_0$ does not lie on the boundary of $\Omega$ and gives an optimal quantitative bound on $w$.  

\begin{proposition}
\label{prop w and d}
As $\eps \to 0$, 
\begin{equation}
\label{bound w}
\|\ds w \| = \mathcal O(\lambda^{-\frac{N-2s}{2}})
\end{equation}
and 
\begin{equation}
\label{bound d-1}
d^{-1} = \mathcal O(1). 
\end{equation}
In particular, $x_0 \in \Omega$. 
\end{proposition}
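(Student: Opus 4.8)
\emph{The plan.} I would follow the Rey--Bahri--Coron scheme: feed the orthogonal decomposition of Proposition~\ref{proposition decomposition} into the almost-minimality of $u_\eps$, and control the remainder $w$ by the coercivity inequality of Proposition~\ref{proposition coerc with a}. Throughout abbreviate $x=x_\eps$, $\lambda=\lambda_\eps$, $w=w_\eps$, $d=d(x_\eps)$. Since $\mathcal S_{a+\eps V}$ is $0$-homogeneous and $u_\eps=\alpha_\eps(PU_\xl+w)$, assumption \eqref{almost_min} together with $S(a+\eps V)<S$ gives $\mathcal S_{a+\eps V}[PU_\xl+w]=\mathcal S_{a+\eps V}[u_\eps]\le S$ for all small $\eps$. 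I would expand the numerator and denominator of $\mathcal S_{a+\eps V}[PU_\xl+w]$ in powers of $w$, using three facts: (i) the defining property of $PU_\xl$ gives $\int_{\R^N}\ds PU_\xl\,\ds w=c_{N,s}\int_\Omega U_\xl^{p-1}w=0$ because $w\perp PU_\xl$, so the term linear in $w$ drops out of the numerator; (ii) Taylor expansion of $t\mapsto t^{2/p}$ applied to $\|PU_\xl+w\|_p^p=\|PU_\xl\|_p^p+p\int_\Omega PU_\xl^{p-1}w+\tfrac{p(p-1)}{2}\int_\Omega PU_\xl^{p-2}w^2+o(\|\ds w\|^2)$; (iii) the identity $SA_{N,s}^{2s/N}=c_{N,s}$ (already used for Theorem~\ref{theorem upper bound}), by which the purely bubble-dependent part of the quotient equals $\mathcal S_a[PU_\xl]=\bigl(\|\ds PU_\xl\|^2+\int_\Omega aPU_\xl^2\bigr)\big/\|PU_\xl\|_p^2\ge S(a)=S$ because $a$ is critical. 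Setting $R:=(\mathcal S_a[PU_\xl]-S)\,\|PU_\xl\|_p^2\ge 0$ and using $0\le PU_\xl\le U_\xl$ pointwise, one is led to an inequality of the shape
\[
\mathcal S_{a+\eps V}[u_\eps]\ \ge\ S+\frac{R+\bigl(\|\ds w\|^2+\int_\Omega aw^2-c_{N,s}(p-1)\int_\Omega U_\xl^{p-2}w^2\bigr)+2\,\mathcal C_\eps(w)+o(\|\ds w\|^2)}{\|PU_\xl\|_p^2+o(1)},
\]
where $\mathcal C_\eps(w)$ gathers the cross terms $\int_\Omega(a+\eps V)PU_\xl w$ and $\int_\Omega(PU_\xl^{p-1}-U_\xl^{p-1})w$ and the piece $\eps\int_\Omega VPU_\xl^2=O(\eps\lambda^{-(N-2s)})$. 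By Proposition~\ref{proposition coerc with a} (applicable since $d\lambda\to\infty$) the parenthesised quadratic form is $\ge\rho\|\ds w\|^2$; the contribution $\eps\int_\Omega Vw^2=O(\eps\|\ds w\|^2)$ and the replacement of $PU_\xl^{p-2}$ by the pointwise-larger $U_\xl^{p-2}$ either help or are $o(\|\ds w\|^2)$, using that $\|U_\xl^{p-2}\|_{L^{\frac{N}{2s}}(\R^N)}=A_{N,s}^{2s/N}$ is a dimensional constant.

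To prove \eqref{bound w} I would estimate each term in $\mathcal C_\eps(w)$ by $C\lambda^{-\ns}\|\ds w\|$: for $\int_\Omega(a+\eps V)PU_\xl w$ via $|PU_\xl|\le U_\xl$, Cauchy--Schwarz, and the low-dimensional bound $\|U_\xl\|_{L^2(\Omega)}=O(\lambda^{-\ns})$, which holds precisely because $N<4s$ (indeed $\int_\Omega U_\xl^2=\lambda^{-2s}\int_{\lambda(\Omega-x)}(1+|z|^2)^{-(N-2s)}\,dz=O(\lambda^{-2s}\lambda^{4s-N})=O(\lambda^{-(N-2s)})$); and for $\int_\Omega(PU_\xl^{p-1}-U_\xl^{p-1})w$ via $|PU_\xl^{p-1}-U_\xl^{p-1}|\lesssim\lambda^{-\ns}U_\xl^{p-2}H_0(x,\cdot)+U_\xl^{p-2}\|f_\xl\|_\infty$, Hölder, $\|U_\xl^{p-2}\|_{L^{\frac{p}{p-1}}(\Omega)}=O(1)$ (again since $N<4s$), $\sup_\Omega H_0(x,\cdot)=d^{-(N-2s)}$ and $d\lambda\to\infty$. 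Young's inequality then yields $2\,\mathcal C_\eps(w)\ge-\tfrac\rho2\|\ds w\|^2-C\lambda^{-(N-2s)}$. Inserting this and $\|PU_\xl\|_p^2=A_{N,s}^{2/p}+o(1)$ gives $\mathcal S_{a+\eps V}[u_\eps]\ge S+c\,(R+\|\ds w\|^2)-C\lambda^{-(N-2s)}$ for some $c>0$; combined with $\mathcal S_{a+\eps V}[u_\eps]\le S$ this forces $R+\|\ds w\|^2\lesssim\lambda^{-(N-2s)}$, which contains \eqref{bound w}.

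For \eqref{bound d-1} it remains to deduce $d^{-1}=O(1)$ from $R\lesssim\lambda^{-(N-2s)}$. Arguing as in the proof of Theorem~\ref{theorem upper bound} but with $PU_\xl$ in place of $\psi_\xl$, and now keeping track of the dependence on $d$ through the Green's function estimates of Appendix~\ref{sec:app:greens}, one finds $\|\ds PU_\xl\|^2-S\|PU_\xl\|_p^2=c_{N,s}a_{N,s}\,\phi_0(x)\,\lambda^{-(N-2s)}(1+o(1))$ while $\int_\Omega aPU_\xl^2=O(\lambda^{-(N-2s)})$, hence $R\ge c'\,\phi_0(x)\,\lambda^{-(N-2s)}$ once $\phi_0(x)$ is large. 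But $H_0$ is nonnegative (maximum principle for $\Ds$) and its Robin function blows up at $\partial\Omega$: $\phi_0(x)=H_0(x,x)\gtrsim d(x)^{-(N-2s)}$ (Appendix~\ref{sec:app:greens}). If $d_\eps\to0$ along a subsequence, then $\phi_0(x_\eps)\to\infty$, so $R\gg\lambda^{-(N-2s)}$, contradicting $R\lesssim\lambda^{-(N-2s)}$. Therefore $\liminf_\eps d_\eps>0$, i.e. $d^{-1}=O(1)$, and consequently $x_0=\lim x_\eps\in\Omega$.

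\emph{Main obstacle.} The crux is making all the error terms above uniform in $x$ up to $\partial\Omega$, since the expansions of Theorem~\ref{theorem upper bound} are only stated for $x$ in compact subsets of $\Omega$. One must either re-derive the needed parts with explicit $d(x)$-dependence (using the Green's function bounds and interior estimates for $\Ds$-harmonic functions from the appendix), or first establish $d^{-1}=O(1)$ by the contradiction above using only the crude bound $\|\ds w\|\to0$ of Proposition~\ref{proposition decomposition}, and then run the sharp estimate \eqref{bound w} in the interior regime $d\gtrsim 1$.
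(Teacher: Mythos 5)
Your proposal is correct and follows essentially the same route as the paper: the paper's Lemma \ref{lemma expansion PU+w} is exactly your expansion of $\mathcal S_{a+\eps V}[PU_\xl+w]$ with explicit $d$-dependence, the coercivity of Proposition \ref{proposition coerc with a} controls the quadratic form, the cross terms are absorbed by Young's inequality, and $\phi_0(x)\gtrsim d^{-(N-2s)}$ yields \eqref{bound d-1}. The only difference is organizational: the paper keeps the nonnegative bubble excess explicitly as $c\,\phi_0(x)\lambda^{-N+2s}+o((d\lambda)^{-N+2s})$ in the lower bound from the start (rather than first invoking only $R\ge 0$), which is precisely the fix you identify in your last paragraph for absorbing the $o((d\lambda)^{-N+2s})$ errors before $d^{-1}=\mathcal O(1)$ is known.
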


The proposition will readily follow from the following expansion of $\mathcal S_{a + \eps V}[u_\eps]$ with respect to the decomposition $u_\eps = \alpha (PU_\xl + w)$ obtained in the previous section. 

\begin{lemma}
\label{lemma expansion PU+w}
As $\eps \to 0$, we have 
\begin{align*}
 \mathcal S_{a + \eps V}[u_\eps] &= S + 2^{2s} \pi^{N/2} \frac{\Gamma(s)}{\Gamma(\frac{N-2s}{2})} \left( \frac{S}{c_{N,s}} \right)^{- \frac{N-2s}{2s}} \phi_0(x) \lambda^{-N + 2s} \\
 &\quad + \left( \frac{S}{c_{N,s}} \right)^{- \frac{N-2s}{2s}} \left( \|\ds w\|^2 + \int_\Omega a w^2 \diff y - c_{N,s} (p-1) \int_\Omega U_\xl^{p-2} w^2 \diff y \right) \\
 & \quad + \mathcal O \left(\lambda^{-\frac{N-2s}{2}} \|\ds w\|\right) + o\left((d \lambda)^{-N+2s}\right) + o\left(\|\ds w\|^2\right). 
 \end{align*}
\end{lemma}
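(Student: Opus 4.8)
The plan is to expand $\mathcal S_{a+\eps V}[u_\eps] = \mathcal S_{a+\eps V}[\alpha(PU_\xl + w)]$ directly, exploiting homogeneity (the factor $\alpha$ cancels in the quotient) and the fact that $\|\ds w\| \to 0$ and $d\lambda \to \infty$ from Proposition~\ref{proposition decomposition}. Writing the numerator as
\[
\|\ds(PU_\xl + w)\|_2^2 + \int_\Omega (a+\eps V)(PU_\xl + w)^2 \diff y,
\]
I would expand the square, producing the ``diagonal'' terms in $PU_\xl$ and in $w$ together with the cross term $2\int_{\R^N}\ds PU_\xl \ds w + 2\int_\Omega (a+\eps V) PU_\xl w$. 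The $PU_\xl$-diagonal part is handled by a computation analogous to (but simpler than) Step~1 of the proof of Theorem~\ref{theorem upper bound}: using $\Ds PU_\xl = c_{N,s} U_\xl^{p-1}$, Lemma~\ref{lemma decomp PU} (the decomposition $PU_\xl = U_\xl - \lambda^{-\ns}H_0(x,\cdot) - f_\xl$ with the stated bounds on $f_\xl$), and Lemma~\ref{lemma Up-k Hk}, one gets $c_{N,s}A_{N,s} - c_{N,s}a_{N,s}\phi_0(x)\lambda^{-N+2s} + o((d\lambda)^{-N+2s})$ plus a potential contribution that is $\mathcal O(\int_\Omega U_\xl^2) = \mathcal O(\lambda^{-2s}) = o((d\lambda)^{-N+2s})$ in this low-dimensional range, and an $\eps$-contribution that is similarly negligible (here one uses that $\eps^{2s/(4s-N)}\sim \lambda^{-2s}$ along the relevant scale, or just absorbs it into the error). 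Note the key point that here $\phi_0$, \emph{not} $\phi_a$, appears, because we project $U_\xl$ onto $\ths$ rather than using the modified test function $\psi_\xl$.

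Next I would treat the cross term. Since $w \in T_\xl^\perp$ and $PU_\xl \in T_\xl$, the leading piece $\int_{\R^N}\ds PU_\xl\,\ds w$ vanishes \emph{exactly} by orthogonality. What remains is $2\int_\Omega (a+\eps V) PU_\xl w \diff y$, which by Cauchy--Schwarz is $\mathcal O(\|PU_\xl\|_{L^2(\Omega)}\|w\|_{L^2(\Omega)}) = \mathcal O(\lambda^{-s}\|\ds w\|)$; combined with the quantitative bound $\|\ds w\| = o(1)$ this is $o(\|\ds w\|\lambda^{-s})$, which I would fold into the $\mathcal O(\lambda^{-\frac{N-2s}{2}}\|\ds w\|) + o(\|\ds w\|^2)$ error (using $\lambda^{-s}\lesssim \lambda^{-\frac{N-2s}{2}}$ since $N<4s$, or Young's inequality to split the product). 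The $w$-diagonal part is exactly $\|\ds w\|^2 + \int_\Omega a w^2 - c_{N,s}(p-1)\int_\Omega U_\xl^{p-2}w^2$ up to the extra pieces $\int_\Omega \eps V w^2 = o(\|\ds w\|^2)$ (small since $\eps \to 0$) and $c_{N,s}(p-1)\int U_\xl^{p-2}w^2$ which I deliberately keep, plus higher-order corrections from expanding $(PU_\xl + w)^{p}$ that are controlled by $\|\ds w\|^2$ via Sobolev and the coercivity-type bound of Proposition~\ref{proposition coercivity}.

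The denominator $\left(\int_\Omega (PU_\xl + w)^p \diff y\right)^{2/p}$ is expanded by Taylor in the same spirit as Step~2 of Theorem~\ref{theorem upper bound}: the main term $\int_\Omega PU_\xl^p = A_{N,s} - p a_{N,s}\phi_0(x)\lambda^{-N+2s} + o((d\lambda)^{-N+2s})$ (again with $\phi_0$), the linear-in-$w$ term $p\int_\Omega PU_\xl^{p-1}w$ which equals $\frac{p}{c_{N,s}}\int_{\R^N}\ds PU_\xl \ds w = 0$ by orthogonality of $w$ to $PU_\xl$, the quadratic term $\binom{p}{2}\int PU_\xl^{p-2}w^2 \sim \frac{p(p-1)}{2}\int U_\xl^{p-2}w^2 = \mathcal O(\|\ds w\|^2)$ which will convert into the $-c_{N,s}(p-1)\int U_\xl^{p-2}w^2$ correction after multiplying numerator by denominator$^{-2/p}$, and remainder terms bounded by $\|\ds w\|^p = o(\|\ds w\|^2)$ and $\mathcal O(\lambda^{-\frac{N-2s}{2}}\|\ds w\|)$ from the $f_\xl$ and $H_0$ pieces. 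Finally, multiplying the numerator expansion by the $(-2/p)$-power expansion of the denominator and collecting, using $c_{N,s}A_{N,s}^{2s/N} = S$ and $A_{N,s}^{-(N-2s)/N} = (S/c_{N,s})^{-(N-2s)/2s}$ (a check on the explicit constants in Lemma~\ref{lemma constants}), one arrives at the claimed identity, where the coefficient $2^{2s}\pi^{N/2}\frac{\Gamma(s)}{\Gamma(\frac{N-2s}{2})}(S/c_{N,s})^{-(N-2s)/2s}$ is just $A_{N,s}^{-(N-2s)/N}a_{N,s}c_{N,s}$ rewritten. The main obstacle is bookkeeping: tracking which error terms are genuinely $o((d\lambda)^{-N+2s})$ versus $o(\|\ds w\|^2)$ versus $\mathcal O(\lambda^{-\frac{N-2s}{2}}\|\ds w\|)$ — in particular making sure the cross-term contributions from the potential and from the $f_\xl$/$H_0$ pieces of $PU_\xl$ never exceed these, which forces the dimensional restriction $N<4s$ to be used (so that $\lambda^{-2s}$-type terms are absorbed), and being careful that the coercivity combination is kept intact and not prematurely estimated.
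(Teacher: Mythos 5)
Your overall strategy coincides with the paper's: expand numerator and denominator separately around $PU_\xl$ using the decomposition $PU_\xl = U_\xl - \lambda^{-\ns}H_0(x,\cdot) - f_\xl$ from Lemma \ref{lemma decomp PU}, exploit the orthogonality $w \in T_\xl^\perp$ to kill the leading cross terms, Taylor-expand $(\,\cdot\,)^p$ and $(\,\cdot\,)^{-2/p}$, and multiply. The identification of the constants at the end is also correct. However, there is one concrete quantitative error that recurs twice and is not a mere bookkeeping slip: you assert $\int_\Omega U_\xl^2\,\diff y = \mathcal O(\lambda^{-2s})$ "in this low-dimensional range". That is the asymptotic valid for $N>4s$; for $2s<N<4s$ Lemma \ref{lemma int Uq} (with $q=2<\tfrac{N}{N-2s}$) gives $\|U_\xl\|_{L^2(\Omega)}\sim\lambda^{-\ns}$, hence $\int_\Omega U_\xl^2\,\diff y\sim\lambda^{-(N-2s)}\gg\lambda^{-2s}$. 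Consequently the potential term $\int_\Omega(a+\eps V)PU_\xl^2\,\diff y$ is of order $\lambda^{-N+2s}$ — the \emph{same} order as the Robin term $\phi_0(x)\lambda^{-N+2s}$ when $d(x)\not\to 0$ — and it is certainly not $o((d\lambda)^{-N+2s})$ as you claim. The correct treatment (and the one the paper uses) is to carry this term as an explicit $\mathcal O(\lambda^{-N+2s})$ error; this is harmless for the intended application in Proposition \ref{prop w and d}, where it is absorbed together with the $C_\delta\lambda^{-N+2s}$ produced by Young's inequality, but your justification for discarding it fails. The same mis-sizing appears in your cross-term bound $\|PU_\xl\|_{L^2}\|w\|_{L^2}=\mathcal O(\lambda^{-s}\|\ds w\|)$: the true bound is $\mathcal O(\lambda^{-\ns}\|\ds w\|)$, which happens to be exactly the allowed error term, so no damage is done there, but the exponent $s$ again betrays the $N=4s$ scaling.

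A secondary imprecision: in the denominator you claim $\int_\Omega PU_\xl^{p-1}w\,\diff y = c_{N,s}^{-1}\int_{\R^N}\ds PU_\xl\,\ds w\,\diff y=0$. This identity is false, because $\Ds PU_\xl = c_{N,s}U_\xl^{p-1}$ involves $U_\xl$, not $PU_\xl$; orthogonality only gives $\int_\Omega U_\xl^{p-1}w\,\diff y = 0$. The difference produces the term $\int_\Omega U_\xl^{p-2}\varphi_\xl|w|\,\diff y$ with $\varphi_\xl=U_\xl-PU_\xl$, which requires the mixed-norm estimate $\|U_\xl^{p-2}\varphi_\xl\|_{p/(p-1)}\lesssim(d\lambda)^{-N+2s}$ (the paper invokes the analogue of \cite[Lemma A.1]{FrKoKo2020}) and then yields an $\mathcal O((d\lambda)^{-N+2s}\|\ds w\|)$ contribution, absorbable by Young into $o(\|\ds w\|^2)+o((d\lambda)^{-N+2s})$. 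You gesture at "corrections from the $f_\xl$ and $H_0$ pieces", so the idea is present, but the vanishing statement as written is wrong and the key estimate is missing. With these two points repaired, the argument goes through as in the paper.
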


\begin{proof}
[Proof of Proposition \ref{prop w and d}]
Using the almost minimality assumption \eqref{almost_min} and the coercivity inequality from Proposition \ref{proposition coerc with a}, the expansion from Lemma \ref{lemma expansion PU+w} yields the inequality
\[ 0 \geq (1+ o(1))( S - S(a + \eps V)) + c \phi_0(x) \lambda^{-N+2s} + c \|\ds w\|^2 +  \mathcal O \left(\lambda^{-\frac{N-2s}{2}} \|\ds w\|\right) + o\left((d \lambda)^{-N+2s}\right) \]
for some $c > 0$. By Lemma \ref{lemma greens fct}, we have the lower bound $\phi_0(x) \gtrsim d^{-N+2s}$. Together with the estimate 
\[ \mathcal O\left(\lambda^{-\frac{N-2s}{2}} \|\ds w\|\right) \leq \delta \|\ds w \|^2 + C_\delta \lambda^{-N+2s}, \]
we obtain by taking $\delta$ small enough
\[ C_\delta \lambda^{-N+2s}  \geq (1+ o(1))( S - S(a + \eps V)) + c (d \lambda)^{-N+2s} + c \|\ds w\|^2. \]
Since all three terms on the right side are nonnegative, the proposition follows. 
\end{proof}

\begin{proof}
[Proof of Lemma \ref{lemma expansion PU+w}]
\textbf{Step 1:} \textit{Expansion of the numerator.  } By orthogonality, we have 
\[ \|\ds (PU_\xl+w)\|^2 = \|\ds PU_\xl\|^2 + \|\ds w\|^2. \]
The main term can be written as 
\begin{align*}
    \| \ds PU_\xl\|^2 &= \int_\Omega PU_\xl \Ds PU_\xl \diff y = c_{N,s} \int_\Omega U_\xl^{p-1} PU_\xl \diff y \\
    &= c_{N,s} \int_\Omega U_\xl^p \diff y + c_{N,s}  \lambda^{-\frac{N-2s}{2}} \int_\Omega U_\xl^{p-1} H_0(x,\cdot) \diff y + \mathcal O(\|f_\xl\|_\infty \int_\Omega U_\xl^{p-1} \diff y),
\end{align*}
where we used $PU_\xl = U_\xl - \lambda^{-\frac{N-2s}{2}} H_0(x,\cdot) + f_\xl$ with $\|f_\xl\|_\infty \lesssim \lambda^\frac{-N+4-2s}{2} d^{-N -2 +2s}$, by Lemma \ref{lemma decomp PU}. Thus 
\[ \|f\|_\infty \int_\Omega U_\xl^{p-1} \diff y \lesssim (d\lambda)^{-N+2s -2} = o((d\lambda)^{-N+2s}). \]
Next, we have 
\[ \int_{\R^N \setminus \Omega} U_\xl^p \diff y \leq \int_{\R^N \setminus B_d} U_\xl^p \diff y \lesssim \int_{d\lambda}^\infty \frac{r^{N-1}}{(1+r^2)^N} \diff r \lesssim (d\lambda)^{-N} = o((d\lambda)^{-N+2s}) \]
and thus
\[  c_{N,s} \int_\Omega U_\xl^p \diff y  = c_{N,s} \|U_{0,1}\|_p^p +   o((d\lambda)^{-N+2s}). \]
Finally, using that $H_0(x,y) = \phi_0(x) + \mathcal O( \|\nabla_y H_0(x,\cdot)\|_\infty |x-y|) = \phi_0(x) + \mathcal O( d^{-N+2s-1} |x-y|)$ by Lemma \ref{lemma greens fct}, we have
\begin{align*}
 \lambda^{-\frac{N-2s}{2}}  \int_\Omega U_\xl^{p-1} H_0(x,\cdot) \diff y
 &=  \lambda^{-\frac{N-2s}{2}} \phi_0(x) \int_{B_d} U_\xl^{p-1} \diff y + \mathcal O(d^{-N+2s-1}  \lambda^{-\frac{N-2s}{2}} \int_{B_d} U_\xl^{p-1} |x-y| \diff y ) \\
 &\quad +  \lambda^{-\frac{N-2s}{2}} \int_{\Omega \setminus B_d} U_\xl^{p-1} H_0(x,y)\diff y .
\end{align*}
Since $H_0(x,y) \lesssim d^{-N+2s}$ by Lemma \ref{lemma greens fct}, the last term is
\[  \lambda^{-\frac{N-2s}{2}} \int_{\Omega \setminus B_d} U_\xl^{p-1} H_0(x,\cdot) \diff y \lesssim d^{-N+2s}  \lambda^{-\frac{N-2s}{2}} \int_{\R^N \setminus B_d} U_\xl^{p-1} \diff y \lesssim (d\lambda)^{-N} = o((d\lambda)^{-N+2s}). \]
Similarly, 
\begin{align*} \lambda^{-\frac{N-2s}{2}} \phi_0(x) \int_{B_d} U_\xl^{p-1} \diff y& = \lambda^{-\frac{N-2s}{2}} \phi_0(x) \|U_{0,1}\|_{p-1}^{p-1} + \mathcal O\left(\phi_0(x) \lambda^{-\frac{N-2s}{2}} \int_{\R^N \setminus B_d} U_\xl^{p-1} \diff y \right)  \\
&= \lambda^{-\frac{N-2s}{2}} \phi_0(x) \|U_{0,1}\|_{p-1}^{p-1}  + \mathcal O((d\lambda)^{-N}) \\
&=   \lambda^{-\frac{N-2s}{2}} \phi_0(x) \|U_{0,1}\|_{p-1}^{p-1}  + o((d\lambda)^{-N+2s}).
\end{align*}
Finally, 
\begin{align*}
    d^{-N+2s-1}  \lambda^{-\frac{N-2s}{2}} \int_{B_d} U_\xl^{p-1} |x-y| &\lesssim (d\lambda)^{-N+2s-1} \int_0^{d\lambda} \frac{r^N}{(1+r^2)^\frac{N+2s}{2}}\diff r = o((d\lambda)^{-N+2s})
\end{align*}
(where one needs to distinguish the cases where $1-2s$ is positive, negative or zero because the $\diff r$-integral is divergent if $1-2s \geq 0$).

Collecting all the previous estimates, we have proved
\begin{align}
\label{ds PU expansion}
\| \ds PU_\xl\|^2 &= c_{N,s} \|U_\xl\|_p^p + c_{N,s} \lambda^{-N+2s} \|U_\xl\|_{p-1}^{p-1} \phi_0(x) + o( (d\lambda)^{-N +2s}).
\end{align} 

The potential term splits as 
\[ \int_\Omega (a+\eps V) (PU_\xl+w)^2 \diff y = \int_\Omega (a+ \eps V) PU_\xl^2 \diff y + \int_\Omega a w^2 \diff y + \int_\Omega \left( (a+ \eps V) PU_\xl w + \eps V w^2 \right) \diff y \]
and we can estimate 
\[ \left | \int_\Omega (a+ \eps V) PU_\xl^2 \diff y \right| \lesssim \|U_\xl\|_2^2 \lesssim \lambda^{-N + 2s} \]
as well as 
\[ \int_\Omega \left( (a+ \eps V) PU_\xl w + \eps V w^2 \right) \diff y  \lesssim \|PU_\xl\|_{p'} \|w\|_p + \eps \|w\|^2 = \mathcal O\left( \lambda^{-\frac{N-2s}{2}} \|\ds w\|) + o(\|\ds w\|^2\right). \]
In summary we have, for the numerator of $\mathcal S_{a+ \eps V}[u_\eps]$,
\begin{align*}
& \qquad \alpha^{-2}  \left( \|\ds u\|^2 + \int_\Omega (a + \eps V) u^2 \diff y \right) \\
 &= c_{N,s} \|U_\xl\|_p^p + c_{N,s} \lambda^{-N+2s} \|U_\xl\|_{p-1}^{p-1} \phi_0(x) + \|\ds w\|^2 + \int_\Omega a w^2 \diff y  \\
 & \quad +\mathcal O(\lambda^{-N+2s})  +\mathcal O( \lambda^{-\frac{N-2s}{2}} \|\ds w\| + o( (d\lambda)^{-N +2s}) + o(\|\ds w\|^2) .
\end{align*}
\textbf{Step 2:} \textit{Expansion of the denominator.  }
By Taylor's formula,
\[ (PU_\xl+w)^p = PU_\xl^p + p PU_\xl^{p-1} w + \frac{p(p-1)}{2} PU_\xl^{p-2} w^2 + \mathcal O(PU_\xl^{p-3} |w|^3  + |w|^p). \]
Note that, strictly speaking, we use this formula if $p \geq 3$. If $2< p \leq 3$, the same is true without the remainder term $PU^{p-3} |w|^3$, which does not affect the rest of the proof.
To evaluate the main term, write $PU_\xl = U_\xl - \varphi_\xl$ with $\varphi_\xl := \lambda^{-1/2} H_0(x, \cdot) + f_\xl$, see Lemma \ref{lemma decomp PU}. Then
\begin{align*} \int_\Omega PU_\xl^p \diff y &= \int_\Omega U_\xl^p \diff y - p \int_\Omega U_\xl^{p-1} \varphi_\xl  \diff y + \mathcal O\left(\int_\Omega (U_\xl^{p-2} \varphi_\xl^2 + \varphi_\xl^p) \diff y \right) \\
& \|U_{0,1}\|_p^p - p \lambda^{-N+2s} \|U_{0,1}\|_{p-1}^{p-1} \phi_0(x) + o((d\lambda)^{-N+2s}) 
\end{align*}
where we used that by Lemmas \ref{lemma decomp PU} and \ref{lemma int Uq} $\int_\Omega U_\xl^{p-2} \varphi_\xl^2 \diff y \leq \|U_\xl\|_{p-2}^{p-2} \|\varphi_\xl\|^2_\infty \lesssim (d\lambda)^{-2N+4s} = o((d\lambda)^{-N+2s})$  and $\|\varphi_\xl\|_p^p \lesssim (d\lambda)^{-N} = o((d\lambda)^{-N+2s})$.

Next, the integral of the remainder term is controlled by 
\[ \int_\Omega (PU_\xl^{p-3} |w|^3  + |w|^p) \diff y \lesssim \|PU_\xl\|^{p-3}_p \|w\|_p^3 + \|w\|_p^p = o(\|\ds w\|^2). \]
The term linear in $w$ is 
\[  \int_\Omega PU_\xl^{p-1} w \diff y = \int_\Omega U_\xl^{p-1} w \diff y + \mathcal O \left(\int_\Omega (U_\xl^{p-2} \varphi |w| + \varphi^{p-1} |w|) \diff y \right). \]
Now by orthogonality of $w$, we have
\[ \int_\Omega U_\xl^{p-1} w \diff y = c_{N,s}^{-1} \int_\Omega (-\Delta)^s U_\xl w \diff y = \int_{\R^N} \ds U_\xl \ds w \diff y = 0. \]
Moreover, using $\|\varphi_\xl\|_p  \lesssim (d \lambda)^{-\frac{N-2s}{2}}$ by Lemma \ref{lemma decomp PU}, we get 
\[ \left|\int_\Omega \varphi_\xl^{p-1} w \diff y \right| \leq \|\varphi_\xl\|_p^{p-1} \|w\|_p \leq (d\lambda)^{-\frac{N+2s}{2}} \|\ds w\| = o((d\lambda)^{-N+2s}). \]
Using additionally that $\|\varphi_\xl\|_\infty \lesssim d^{-N+2s} \lambda^{-\frac{N-2s}{2}}$ by Lemma \ref{lemma decomp PU}, by the same computation as in \cite[Lemma A.1]{FrKoKo2020} we get $\|U_\xl^{p-2} \varphi_\xl\|_{\frac{p}{p-1}} \lesssim (d\lambda)^{-N+2s}$ and therefore 
\[ \int_\Omega U_\xl^{p-2} \varphi_\xl |w| \diff y \lesssim  (d\lambda)^{-N+2s} \|\ds w\|. \]
In summary we have, for the denominator of $\mathcal S_{a+ \eps V}[u_\eps]$, 
\begin{align*}
     \alpha^{-p} \int_\Omega u_\eps^p \diff y &=  \|U_{0,1}\|_p^p - p \lambda^{-N+2s} \|U_{0,1}\|_{p-1}^{p-1} \phi_0(x) + \frac{p(p-1)}{2} \int_\Omega U_\xl^{p-2} w^2 \diff y \\
     & + \mathcal O((d\lambda)^{-N+2s} \|\ds w\|) + o(\|\ds w \|^2) +  o((d\lambda)^{-N+2s}). 
\end{align*}

\textbf{Step 3:} \textit{Expansion of the quotient.  }
Using Taylor's formula, we find, for the denominator,
\begin{align*}
\alpha^{-2} \left( \int_\Omega u_\eps^p \diff y \right)^{-2/p} &= \|U_{0,1}\|_p^{-2} + 2 \|U_{0,1}\|_p^{-p-2} \|U_{0,1}\|_{p-1}^{p-1} \lambda^{-N+2s} \phi_0(x) \\
& \qquad - c_{N,s} (p-1) \|U_{0,1}\|_p^{-p-2} \int_\Omega U_\xl^{p-2} w^2 \diff y   \\
& \qquad + \mathcal O((d\lambda)^{-N+2s} \|\ds w\|) +o(\|\ds w \|^2) +  o((d\lambda)^{-N+2s}). 
\end{align*}
Multiplying this with the expansion for the denominator found above, we obtain 
\begin{align*}
\mathcal S_{a+ \eps V}[u_\eps] &= c_{N,s} \|U_{0,1}\|_p^{p-2} + \lambda^{-N+2s} c_{N,s} \|U_{0,1}\|_p^{-2} \|U_{0,1}\|_{p-1}^{p-1} \phi_0(x)  \\
& \qquad \|U_{0,1}\|_p^{-2} \left(\|\ds w\|^2 + \int_\Omega a w^2 \diff y -  c_{N,s} (p-1)  \int_\Omega U_\xl^{p-2} w^2 \diff y \right) \\
&\qquad +  \mathcal O((d\lambda)^{-N+2s} \|\ds w\|) +o(\|\ds w \|^2) +  o((d\lambda)^{-N+2s}). 
\end{align*}
Expressing the various constants using Lemma \ref{lemma constants}, we find 
\begin{align*}
c_{N,s} \|U_{0,1}\|_p^{p-2} &= S , \\
\|U_{0,1}\|_p^{-2} &= \left( \frac{S}{c_{N,s}} \right)^{- \frac{N-2s}{2s}}, \\
c_{N,s} \|U_{0,1}\|_p^{-2} \|U_{0,1}\|_{p-1}^{p-1} &= 2^{2s} \pi^{N/2} \frac{\Gamma(s)}{\Gamma(\frac{N-2s}{2})} \left( \frac{S}{c_{N,s}} \right)^{- \frac{N-2s}{2s}}.
\end{align*}
This yields the expansion claimed in the lemma. 
\end{proof}

\section{Proof of Theorem \ref{theorem critical}}
\label{ssec:criticality}

At this point, we have collected sufficiently precise information on the behavior of a general almost minimizing sequence to prove Theorem \ref{theorem critical}.

The main difficulty of the argument consists in constructing, for a critical potential $a$, a point $x_0 \in \Omega$ at which $\phi_a(x_0) = 0$. To do so, we carry out some additional analysis for a sequence $u_\eps$ which we assume to consist of true minimizers of $S(a - \eps)$, not only almost minimizers as in the rest of this paper. We make this additional assumption essentially for convenience and brevity of the argument, see the remark below Lemma \ref{lemma esposito remainder}. 

Indeed, since $a$ is critical, we have $S(a - \eps) < S$ for every $\eps > 0$. By the results of \cite{SeVa2015}, which adapts the classical lemma of Lieb contained in \cite{BrNi} to the fractional case, this strict inequality implies existence of a minimizer $u_\eps$ of $S(a - \eps)$. Normalizing $\int_\Omega u_\eps^\frac{2N}{N-2s} \diff y = A_{N,s}$ as in \eqref{almost_min}, $u_\eps$ satisfies the equation
\begin{align}
\label{u eps equation esposito}
    \Ds u_\eps + (a - \eps) u_\eps = \frac{S(a-\eps)}{A_{N,s}^{2s/N}} u_\eps^\frac{N+2s}{N-2s} \quad \text{on } \Omega, \qquad u \equiv 0 \quad \text{ on } \R^N \setminus \Omega.
\end{align} 

By using equation \eqref{u eps equation esposito}, we can conveniently extract the leading term of the remainder term $w_\eps$. We do this in the following lemma, which is the key step in the proof of Theorem \ref{theorem critical}. 

\begin{lemma}
    \label{lemma esposito remainder}
    Let $u_\eps$ be minimizers of $S(a - \eps)$ which satisfy \eqref{u eps equation esposito}. Then we have 
\begin{equation}
    \label{esposito expansion}
    S(a-\eps) = S + c_{N,s} A_{N,s}^{-2/p} \phi_a(x) \lambda^{-N+2s} + o(\lambda^{-N+2s}). 
\end{equation}
\end{lemma}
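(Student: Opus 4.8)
The plan is to refine the profile decomposition of Propositions \ref{proposition decomposition} and \ref{prop w and d} by extracting the leading term of the remainder $w_\eps$ from the Euler--Lagrange equation \eqref{u eps equation esposito}, and then to compare the resulting function with the test function $\psi_{x_\eps,\lambda_\eps}$ from \eqref{definition psi}, whose expansion is already given by Theorem \ref{theorem upper bound}. Since $a$ is critical, $S(a-\eps)<S$ for every $\eps>0$, so a minimizer $u_\eps$ exists, and, taking $V\equiv -1$ (so $a+\eps V=a-\eps$), Propositions \ref{proposition decomposition} and \ref{prop w and d} give $u_\eps=\alpha_\eps(PU_{x_\eps,\lambda_\eps}+w_\eps)$ with $w_\eps\in T_{x_\eps,\lambda_\eps}^\perp$, $\|\ds w_\eps\|=\mathcal O(\lambda_\eps^{-\ns})$, $d(x_\eps)\gtrsim 1$, $\alpha_\eps\to 1$, $\lambda_\eps\to\infty$. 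Write $x=x_\eps$, $\lambda=\lambda_\eps$, $U=U_{x,\lambda}$. Note that Lemma \ref{lemma expansion PU+w} together with these bounds only yields $S(a-\eps)=S+\mathcal O(\lambda^{-N+2s})$, since there the $\phi_0(x)$-term, the $\mathcal O(\lambda^{-\ns}\|\ds w\|)$-term and the coercivity quadratic form of $w$ all contribute at the same order $\lambda^{-N+2s}$; the point of the lemma is to pin down their sum.

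Inserting $u_\eps=\alpha_\eps(PU+w_\eps)$ into \eqref{u eps equation esposito} and using $\Ds PU=c_{N,s}U^{p-1}$ in $\Omega$, one obtains that $w_\eps$ solves $(\Ds+a)w_\eps-c_{N,s}(p-1)U^{p-2}w_\eps=g_\eps$ in $\Omega$, $w_\eps=0$ on $\R^N\setminus\Omega$, with $g_\eps$ gathering the remaining terms. Using $PU=U-\lambda^{-\ns}H_0(x,\cdot)+f_{x,\lambda}$, the identity \eqref{U Ga h identity} and the splitting of $U$ from Lemma \ref{lemma h}, one checks that the leading part of $g_\eps$ is exactly $(\Ds+a)$ applied to $\lambda^{-\ns}\bigl(H_0(x,\cdot)-H_a(x,\cdot)\bigr)$; the key input is $\Ds H_0(x,\cdot)=0$ and $\Ds H_a(x,\cdot)=a\,G_a(x,\cdot)$ in $\Omega$, and $H_0(x,\cdot)-H_a(x,\cdot)=0$ off $\Omega$. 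I would therefore set $r_\eps:=w_\eps-\lambda^{-\ns}\Pi^\perp_{x,\lambda}\bigl(H_0(x,\cdot)-H_a(x,\cdot)\bigr)\in T_{x,\lambda}^\perp$, write down the equation it satisfies, and verify that its inhomogeneity — the remainder of $g_\eps$, the term $c_{N,s}(p-1)\lambda^{-\ns}U^{p-2}\bigl(H_0(x,\cdot)-H_a(x,\cdot)\bigr)$, the tail $\lambda^{\ns}h(\lambda(x-\cdot))$ of $U$, the term $\eps w_\eps$, the quadratic remainder of the nonlinearity, and the terms produced by $\Pi_{x,\lambda}$ — is $o(\lambda^{-\ns})$ in $L^{p'}(\Omega)$, using Lemmas \ref{lemma int Uq}, \ref{lemma decomp PU}, \ref{lemma Hs norms PU}, \ref{lemma h} and $d\gtrsim 1$. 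Testing the equation for $r_\eps$ against $r_\eps$ and invoking Proposition \ref{proposition coerc with a} — which guarantees that $\langle(\Ds+a)r_\eps-c_{N,s}(p-1)U^{p-2}r_\eps,r_\eps\rangle\gtrsim\|\ds r_\eps\|^2$ despite the negative zeroth-order term, precisely because $r_\eps\in T_{x,\lambda}^\perp$ — then yields $\|\ds r_\eps\|=o(\lambda^{-\ns})$.

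With $w_\eps=\lambda^{-\ns}\Pi^\perp_{x,\lambda}(H_0(x,\cdot)-H_a(x,\cdot))+r_\eps$ established and since $\psi_{x,\lambda}-PU=\lambda^{-\ns}(H_0(x,\cdot)-H_a(x,\cdot))$, we can write $\alpha_\eps^{-1}u_\eps=\psi_{x,\lambda}+\rho_\eps$ with $\rho_\eps=-\lambda^{-\ns}\Pi_{x,\lambda}(H_0(x,\cdot)-H_a(x,\cdot))+r_\eps$; since $\Pi_{x,\lambda}$ maps into $\mathrm{span}\{PU,\partial_\lambda PU,\partial_{x_i}PU\}$ and $\langle\ds(H_0(x,\cdot)-H_a(x,\cdot)),\ds PU\rangle=\mathcal O(\lambda^{-\ns})$, and likewise for $\partial_\lambda PU$ and $\partial_{x_i}PU$, by Lemmas \ref{lemma int Uq} and \ref{lemma Hs norms PU} one gets $\|\ds\rho_\eps\|=o(\lambda^{-\ns})$. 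Expanding the Rayleigh quotient $\mathcal S_{a-\eps}$ about $\psi_{x,\lambda}$, the term linear in $\rho_\eps$ equals $2\bigl(\int_\Omega\psi_{x,\lambda}^p\bigr)^{-2/p}\int_\Omega\mathcal R_\eps\,\rho_\eps$, where $\mathcal R_\eps:=\Ds\psi_{x,\lambda}+(a-\eps)\psi_{x,\lambda}-\mathcal S_{a-\eps}[\psi_{x,\lambda}]\bigl(\int_\Omega\psi_{x,\lambda}^p\bigr)^{2/p-1}\psi_{x,\lambda}^{p-1}$ is the residual of $\psi_{x,\lambda}$ in the Euler--Lagrange equation; using $\Ds\psi_{x,\lambda}=c_{N,s}U^{p-1}-\lambda^{-\ns}a\,G_a(x,\cdot)$ and the expansions of Theorem \ref{theorem upper bound} one checks $\|\mathcal R_\eps\|_{p'}=o(\lambda^{-\ns})$, so the linear term is $o(\lambda^{-N+2s})$, and the quadratic term is $\mathcal O(\|\ds\rho_\eps\|^2)=o(\lambda^{-N+2s})$. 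Hence $S(a-\eps)=\mathcal S_{a-\eps}[u_\eps]=\mathcal S_{a-\eps}[\psi_{x,\lambda}]+o(\lambda^{-N+2s})$, and \eqref{esposito expansion} follows from \eqref{exp_quot} (applied with $\eps V=-\eps$) upon noting that, for $2s<N<4s$, the terms $\mathcal T_2(\phi_a(x),\lambda)$ (orders $\lambda^{-k(N-2s)}$ with $k\geq 2$), $a(x)\lambda^{-2s}$ and $\eps\lambda^{-N+2s}$ are all $o(\lambda^{-N+2s})$.

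The main obstacle is the second step: extracting the leading term of $w_\eps$, and especially upgrading the a priori bound $\mathcal O(\lambda^{-\ns})$ to the strict gain $o(\lambda^{-\ns})$ on $r_\eps$. This requires a careful accounting of the many lower-order terms in the equation for $r_\eps$ and relies crucially on the coercivity inequality of Proposition \ref{proposition coerc with a}. As an alternative one could dispense with the comparison to $\psi_{x,\lambda}$ and substitute the refined decomposition directly into Lemma \ref{lemma expansion PU+w}, evaluating the $\mathcal O(\lambda^{-\ns}\|\ds w\|)$- and $\int_\Omega U^{p-2}w^2$-contributions with the explicit leading part of $w_\eps$; the replacement of $\phi_0(x)$ by $\phi_a(x)$ then encodes the Green's function identity $\Ds(H_0(x,\cdot)-H_a(x,\cdot))=-a\,G_a(x,\cdot)$ on $\Omega$, equivalently $\gamma_{N,s}\bigl(\phi_a(x)-\phi_0(x)\bigr)=\int_\Omega a(y)\,G_0(x,y)\,G_a(x,y)\,\dd y$. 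This route is more computational but conceptually the same.
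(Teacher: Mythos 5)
Your proposal is correct in strategy and would prove the lemma, but it follows a genuinely different route from the paper's. The paper's proof is softer: it never establishes the quantitative gain $\|\ds r_\eps\|=o(\lambda^{-\ns})$ and never invokes the coercivity inequality here. Instead it sets $\tilde w_\eps:=\lambda^{\ns}w_\eps$, uses only the a priori bound $\|\ds w_\eps\|=\mathcal O(\lambda^{-\ns})$ to extract a weak limit, identifies $\tilde w_0=H_0(x_0,\cdot)-H_a(x_0,\cdot)$ by passing to the limit in the equation, and then — this is the key device you do not use — tests the Euler--Lagrange equation for $w_\eps$ against $w_\eps$ to obtain $\|\ds w\|^2+\int_\Omega aw^2=c_{N,s}(p-1)\int_\Omega U^{p-2}w^2-\int_\Omega aPU_\xl w+o(\lambda^{-N+2s})$. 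This identity makes the $\int U^{p-2}w^2$ contributions cancel exactly between numerator and denominator and reduces everything to the linear functional $\int_\Omega a\bigl((\lambda^{\ns}PU)^2+\lambda^{\ns}PU\,\tilde w\bigr)$, which is evaluated by weak convergence alone and produces $\gamma_{N,s}(\phi_a(x)-\phi_0(x))$, i.e.\ the replacement of $\phi_0$ by $\phi_a$. Your route — extracting the leading term of $w_\eps$ with a rate via the coercivity inequality, then Taylor-expanding the quotient around $\psi_\xl$ and quoting Theorem \ref{theorem upper bound} — is essentially the Section \ref{sec:LB2} strategy run at the lower precision $o(\lambda^{-N+2s})$, where the restriction $N>8s/3$ is indeed not needed; it buys a quantitative remainder estimate at the cost of substantially more bookkeeping. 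One point you must add to make your first step airtight: after dividing \eqref{u eps equation esposito} by $\alpha_\eps$, the effective Lagrange multiplier is $\mu_\eps=S(a-\eps)\alpha_\eps^{p-2}A_{N,s}^{-2s/N}=c_{N,s}(1+\delta_\eps)$, and the resulting term $\delta_\eps\,c_{N,s}PU_\xl^{p-1}$ in the inhomogeneity is \emph{not} $o(\lambda^{-\ns})$ in $L^{p'}$ unless you first show $\delta_\eps=o(\lambda^{-\ns})$ (which follows from $S-S(a-\eps)=\mathcal O(\lambda^{-N+2s})$ and $|\alpha_\eps|^p=1+\mathcal O(\lambda^{-N+2s})$ via the normalization), or alternatively observe that its pairing with $r_\eps\in T_\xl^\perp$ is $\delta_\eps\,\mathcal O(\lambda^{-N+2s})\|\ds r_\eps\|$ because $\int_\Omega U_\xl^{p-1}r_\eps=0$. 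Finally, note that the coefficient you obtain from \eqref{exp_quot} carries the additional factor $a_{N,s}$, matching the paper's displayed computation rather than the statement \eqref{esposito expansion} as printed.
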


If $8s/3 < N$, Lemma \ref{lemma esposito remainder} is in fact implied by the more refined analysis carried out in Section \ref{sec:LB2} below, which does not use the equation \eqref{u eps equation esposito}. If $2s < N \leq 8s/3$, we speculate than one can prove Lemma \ref{lemma esposito remainder} for almost minimizers not satisfying \eqref{u eps equation esposito} by arguing like in \cite[Section 5]{FrKoKo2021}, but we do not pursue this explicitly here. 

\begin{proof}
[Proof of Lemma \ref{lemma esposito remainder}]
Clearly, the analysis carried out in Section \ref{sec:LB1} so far applies to the sequence $(u_\eps)$. Thus, up to passing to a subsequence, we may assume that $u_\eps = \alpha_\eps (PU_{x_\eps, \lambda_\eps} + w_\eps)$ with $\alpha_\eps \to 1$, $x_\eps \to x_0 \in \Omega$ and $\|\ds w_\eps\|_2 \lesssim \lambda^{-\ns}$ as $\eps \to 0$. 

Thus the sequence $\tilde{w}_\eps := \lambda_\eps^{\ns} w_\eps$ is bounded in $\ths$ and converges weakly in $\ths$, up to a subsequence, to some $\tilde{w}_0 \in \ths$. Inserting the expansion $u = \alpha( PU_\xl + w)$ in \eqref{u eps equation esposito}, the equation fulfilled by $\tilde{w}_\eps$ reads
\begin{equation}
    \label{tilde w equation}
    \Ds \tilde{w}_\eps + (a - \eps) \tilde{w}_\eps = -(a - \eps) PU_\xl \lambda^\ns +  \lambda^{-2s} \frac{S(a-\eps)}{A_{N,s}^{2s/N}} \left( PU_\xl \lambda^\ns + \tilde{w}_\eps \right)^\frac{N+2s}{N-2s}. 
\end{equation} 
By Lemma \ref{lemma decomp PU}, we can write
\[ PU_\xl \lambda^{\ns} = G_0(x, \cdot) - \lambda^{N-2s} h(\lambda(x - \cdot)) - \lambda^{N-2s} f_\xl \]
with $h$ as in Lemma \ref{lemma h}. By the bounds on $h$ and $f_\xl$ from Lemmas \ref{lemma decomp PU} and \ref{lemma h}, this yields
\begin{equation}
    \label{PU to G0}
    PU_\xl \lambda^{\ns} \to G_0 (x_0, \cdot) \, \text{ uniformly on compacts of } \Omega \setminus \{x_0\} \, \text{ and in } L^q(\Omega), \,   q < \tfrac{N}{N-2s}. 
\end{equation}
Letting $\eps \to 0$ in \eqref{tilde w equation}, we obtain 
\begin{equation}
    \label{tilde w 0 weak eq}
    \int_\Omega \ds \tilde{w}_0 \ds \varphi \diff y + \int_\Omega a \tilde{w}_0 \varphi \diff y = - \int_\Omega a G_0(x, \cdot) \varphi \diff y 
\end{equation}
for every $\varphi \in C^\infty_c(\Omega \setminus \{x_0\})$. Now it is straightforward to show that $C^\infty_c(\Omega \setminus \{x_0\})$ is dense in $\ths$, by using a cutoff function argument together with identity \eqref{frac prod rule}. Thus by approximation, \eqref{tilde w 0 weak eq} even holds for every $\varphi \in \ths$. In other words, $\tilde{w}_0$ weakly solves the equation 
\[ (-\Delta)^s \tilde{w}_0 + a \tilde{w}_0 = - a G_0(x_0, \cdot) \quad \text{ on } \Omega, \qquad \tilde{w}_0 \equiv 0 \quad \text{ on } \R^N \setminus \Omega.  \]
By uniqueness of solutions, we conclude $\tilde{w}_0 = H_0(x_0, \cdot) - H_a(x_0, \cdot)$. 

We will now use this information to prove the desired expansion \eqref{esposito expansion} of the energy $S(a-\eps) = \mathcal S_{a -\eps} [PU_\xl + w]$. Indeed, using the already established bound $\|\ds w\| \lesssim \lambda^{-\ns}$, the numerator is
\begin{align}
\label{esposito num 1}
    \|\ds PU_\xl\|^2 + \int_\Omega a (PU_\xl^2 + 2 PU_\xl w) \diff y + \|\ds w\|^2 + \int_\Omega a w^2 \diff y + o(\lambda^{-N+2s}).  
\end{align}
By integrating the equation for $w$ against $w$ and recalling $\frac{S(a-\eps)}{A_{N,s}^{2s/N}} = c_{N,s} + o(1)$ , we easily find the asymptotic identity (compare \cite[eq. (8)]{Esposito2004} for $s = 1$)
\[ \|\ds w\|^2 + \int_\Omega a w^2 \diff y = c_{N,s} (p-1) \int_\Omega U_\xl^{p-2} w^2 \diff y - \int_\Omega a PU_\xl w \diff y. \]
Inserting this in \eqref{esposito num 1}, together with the expansion of $\|\ds PU_\xl\|^2$ given in \eqref{ds PU expansion}, the numerator of $\mathcal S_{a -\eps} [PU_\xl + w]$ becomes
\begin{equation}
    \label{esposito num 2}
   c_{N,s} A_{N,s}  - c_{N,s} a_{N,s} \phi_0(x) \lambda^{-N+2s} + \int_\Omega a (PU_\xl^2 + PU_\xl w) \diff y + c_{N,s} (p-1) \int_\Omega U_\xl^{p-2} w^2 \diff y  + o(\lambda^{-N+2s}).
\end{equation}
The numerator of $\mathcal S_{a -\eps} [PU_\xl + w]$, by the computations in the proof of Lemma \ref{lemma expansion PU+w}, is given by 
\begin{equation}
    \label{esposito den 1}
    \left( \int_\Omega (PU_\xl + w)^p \diff y \right)^{-2/p} = A_{N,s}^{-\frac{2}{p}} - A_{N,s}^{-\frac{2}{p} - 1} \left( - 2 \phi_0(x) \lambda^{-N+2s} + (p-1) \int_\Omega U_\xl^{p-2} w^2 \diff y \right) . 
\end{equation}
Multiplying out \eqref{esposito num 2} and \eqref{esposito den 1}, the terms in $\int_\Omega U^{p-2} w^2 \diff y$ cancel precisely and we obtain 
\begin{align}
    \label{esposito quot 1}
    S(a-\eps) &= S + c_{N,s} A_{N,s}^{-2/p} a_{N,s} \phi_0(x) \lambda^{-N+2s}  \\
    & \qquad + A_{N,s}^{-2/p} \lambda^{-N+2s} \left( \int_\Omega a \left( (\lambda^\ns PU_\xl)^2 + \lambda^\ns PU_\xl \tilde{w}\right) \diff y \right)  + o(\lambda^{-N+2s}).  \nonumber
\end{align}
Now we are ready to return to our findings about $\tilde{w}_0$. Indeed, by \eqref{PU to G0}, and observing that $G_0(x, \cdot)$ is an admissible test function in \eqref{tilde w 0 weak eq}, we get
\begin{align}
     &\qquad \int_\Omega a \left( (\lambda^\ns PU_\xl)^2 + \lambda^\ns PU_\xl \tilde{w}\right) \diff y = \int_\Omega a \left( G_0(x, \cdot)^2 + G_0(x, \cdot) \tilde{w}_0 \right) \diff y + o(1) \\
     &=- \int_\Omega \ds \tilde{w}_0 \ds G_0(x, \cdot) \diff y +o(1)= - \tilde{w}_0(x) = \gamma_{N,s} (\phi_a(x) - \phi_0(x)) + o(1).
\end{align}
By inserting this into \eqref{esposito quot 1} and observing that $\gamma_{N,s} = c_{N,s} a_{N,s}$ by the numerical values given in Lemma \ref{lemma constants}, the proof is complete. 
\end{proof}

Now we have all the ingredients to give a quick proof of our first main result. 

\begin{proof}
[Proof of Theorem \ref{theorem critical}]
As explained after the statement of the theorem, it only remains to prove the implication $(ii) \Rightarrow (i)$. Suppose thus $S(a) < S$ and let $c > 0$ be the smallest number such that $\bar{a}:= a +c$ satisfies $S(\bar{a}) = S$. For $\eps > 0$, let $u_\eps$ be the sequence of minimizers $S(\bar{a} - \eps)$, normalized to satisfy \eqref{u eps equation esposito}. By Lemma \ref{lemma esposito remainder}, we have 
\[ S > S(\bar{a} - \eps) = S - c_{N,s} A_{N,s}^{-2/p} \phi_a(x) \lambda^{-N+2s} + o(\lambda^{-N+2s}). \]
Letting $\eps \to 0$, this shows $\phi_{\bar{a}}(x_0) \leq 0$. By the resolvent identity, we have for every $x \in \Omega$
\[ \phi_{\tilde{a}}(x) = \phi_a(x) +  \int_{\Omega} (\tilde{a}-a)(z) G_a(x,z) G_{a+c}(z,x) \diff z < \phi_a(x), \]
and hence $\phi_a(x_0)$ is strictly monotone in $a$. Thus $\phi_a(x_0) < \phi_{a+c}(x_0) = 0$, and the proof is complete. 
\end{proof}

\section{Proof of the lower bound II: a refined expansion}
\label{sec:LB2}

This section is the most technical of the paper. It is devoted to extracting the leading term of the remainder $w$ and to obtaining sufficiently good bounds on the new error term. In Section \ref{ssec:refinedex} we will need to work under the additional assumption $8s/3 < N$ in order to obtain the required precision. 

Concretely, we write 
\[ w = \lambda^{-\frac{N-2s}{2}} (H_0(x, \cdot) - H_a(x, \cdot)) + q \]
and decompose the remainder further into a tangential and an orthogonal part
\[ q = t +r, \qquad t \in T_\xl, \quad r \in T_\xl^\bot. \]
(We keep omitting the subscript $\eps$.) A refined expansion of $\mathcal S_{a + \eps V}[u_\eps]$ then yields an error term in $r$ which can be controlled using the coercivity inequality of Proposition \ref{proposition coerc with a}. The refined expansion is derived in Section \ref{ssec:refinedex} below. 

On the other hand, since $t$ is an element of the $(N+2)$-dimensional space $T_\xl$, it can be bounded by essentially explicit computations. This is achieved in Section \ref{ssec:s}.

\begin{remark}
\label{remark section 5 s=1}
The present Section \ref{sec:LB2} thus constitutes the analogon of \cite[Section 6]{FrKoKo2021}, where the same analysis is carried out for the case $s=1$ and $N=3$. We emphasize that, despite these similarities, our approach is conceptually somewhat simpler than that of \cite{FrKoKo2021}. Indeed, the argument in \cite{FrKoKo2021} relies on an intermediate step involving a spectral cutoff construction, through which the apriori bound $\|\nabla q\| =o(\lambda^{1/2}) = o(\lambda^{-\ns})$ is obtained. 

On the contrary, we are able to conduct the following analysis with only the weaker bound $\|\nabla q\| = \mathcal O(\lambda^{-\ns})$ at hand (which follows from Proposition \ref{prop w and d}). This comes at the price of some additional explicit error terms in $r$, which can however be conveniently absorbed (see Lemmas \ref{lemma expansion refined denominator} and \ref{lemma main results coercivity}). Since $N > 8s/3$ is fulfilled when $N=3$, $s=1$, this simplified proof of course also works in the particular situation of \cite{FrKoKo2021}. 
\end{remark}

\subsection{A precise description of $t$}
\label{ssec:s}

For $\lambda$ large enough, the functions $PU_\xl$, $\pl PU_\xl$ and $\pxi PU_\xl$, $i = 1,...,N$ are linearly independent. There are therefore uniquely determined coefficients $\beta, \gamma, \delta_i$, $i = 1,...,N$, such that 
\begin{equation}
\label{expansion s} 
t = \beta \lambda^{-N+2s} PU_\xl + \gamma \lambda^{-N+2s+1}  \pl PU_\xl + \sum_{i = 1}^N \delta_i \lambda^{-N+2s-2} \pxi PU_\xl. 
\end{equation}
Here the choice of the different powers of $\lambda$ multiplying the coefficients is justified by the following result. 

\begin{lemma}
    \label{lemma beta gamma delta}
    As $\eps \to 0$, we have $\beta, \gamma, \delta_i = \mathcal O(1)$. 
\end{lemma}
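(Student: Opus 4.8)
The plan is to observe first that $t$ is in fact explicit. In the decomposition of Proposition~\ref{proposition decomposition} the remainder satisfies $w\in T_\xl^\bot$, so projecting the identity $w=\lambda^{-\ns}\big(H_0(x,\cdot)-H_a(x,\cdot)\big)+q$ onto $T_\xl$ gives
\[
t=\Pi_\xl q=-\lambda^{-\ns}\,\Pi_\xl\big(H_0(x,\cdot)-H_a(x,\cdot)\big).
\]
Since $H_0(x,\cdot)-H_a(x,\cdot)=G_a(x,\cdot)-G_0(x,\cdot)$ lies in $\ths$ and, by Lemma~\ref{lemma Ha expansion} and the regularity results of Appendix~\ref{sec:app:greens}, is continuous (indeed Lipschitz) near the diagonal with value $\phi_0(x)-\phi_a(x)$ at $y=x$, this is well defined; moreover by Proposition~\ref{prop w and d} the concentration points $x=x_\eps$ eventually stay in a compact subset of $\Omega$ with $d(x_\eps)^{-1}=\mathcal O(1)$, so all quantities built from $H_0,H_a$ near the diagonal are uniformly controlled.

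Next I would reduce the claim to a linear-algebra computation. Pairing \eqref{expansion s} with $PU_\xl$, $\pl PU_\xl$, $\pxi PU_\xl$ in the scalar product $\langle u,v\rangle=\int_{\R^N}\ds u\,\ds v\diff y$ produces a linear system for $(\beta,\gamma,\delta_1,\dots,\delta_N)$ whose matrix is the Gram matrix $G$ of $\{PU_\xl,\pl PU_\xl,\pxi PU_\xl\}$. By Lemma~\ref{lemma Hs norms PU} one has $\|PU_\xl\|\sim1$, $\|\pl PU_\xl\|\sim\lambda^{-1}$, $\|\pxi PU_\xl\|\sim\lambda$, and the mutual scalar products of these functions decay like negative powers of $\lambda$ relative to the products of the norms; hence, after dividing the $j$-th row and $k$-th column of $G$ by the norms of the corresponding basis vectors, $G$ becomes $\mathrm{Id}$ plus a matrix of size $\mathcal O(\lambda^{-(N-2s)})$, and is therefore invertible with uniformly bounded inverse for $\lambda$ large. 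For the right-hand sides, $\langle t,\cdot\,\rangle=-\lambda^{-\ns}\langle H_0(x,\cdot)-H_a(x,\cdot),\cdot\,\rangle$ on $T_\xl$ (self-adjointness of $\Pi_\xl$), so using $\Ds PU_\xl=c_{N,s}U_\xl^{p-1}$ on $\Omega$, differentiating this identity in $\lambda$ and in $x_i$, and using $\partial_{x_i}U_\xl=-\partial_{y_i}U_\xl$ together with an integration by parts, one obtains $\langle t,PU_\xl\rangle=-\lambda^{-\ns}c_{N,s}\int_\Omega\!\big(H_0(x,y)-H_a(x,y)\big)U_\xl(y)^{p-1}\diff y$ and analogous formulas with $U_\xl^{p-2}\pl U_\xl$, respectively $\partial_{y_i}(H_0-H_a)\,U_\xl^{p-1}$, in place of the integrand. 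Since $H_0(x,\cdot)-H_a(x,\cdot)$ (and its $y$-gradient) is bounded near $x$ and $|\pl U_\xl|\lesssim\lambda^{-1}U_\xl$, the concentration estimates of Lemmas~\ref{lemma Up-k Hk} and~\ref{lemma int Uq} give $\int_\Omega U_\xl^{p-1}\diff y=\mathcal O(\lambda^{-\ns})$, whence, recalling $\ns=\tfrac{N-2s}{2}$,
\[
\langle t,PU_\xl\rangle=\mathcal O(\lambda^{-(N-2s)}),\qquad \langle t,\pl PU_\xl\rangle=\mathcal O(\lambda^{-(N-2s)-1}),\qquad \langle t,\pxi PU_\xl\rangle=\mathcal O(\lambda^{-(N-2s)}).
\]

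Feeding these bounds into the rescaled, uniformly well-conditioned Gram system yields $\beta,\gamma,\delta_i=\mathcal O(1)$, and this final inversion step is where the only genuine subtlety lies. Because $\|\pxi PU_\xl\|\sim\lambda$ whereas $\|PU_\xl\|\sim1$ and $\|\pl PU_\xl\|\sim\lambda^{-1}$, the $\delta_i$-terms in \eqref{expansion s} contribute a factor $\lambda^{-1}$ less than the $\beta$- and $\gamma$-terms, so bounding $\delta_i$ in terms of $\|t\|$ alone — even with the sharp $\|t\|=\mathcal O(\lambda^{-(N-2s)})$ implied above — only yields a positive power of $\lambda$, not $\mathcal O(1)$. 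One must instead isolate the $\pxi PU_\xl$-components: solving the system by a Neumann series, the coefficient multiplying $\pxi PU_\xl$ equals $\langle t,\pxi PU_\xl\rangle/\|\pxi PU_\xl\|^2$ up to corrections involving the off-diagonal entries $\langle PU_\xl,\pxi PU_\xl\rangle$ and $\langle\pl PU_\xl,\pxi PU_\xl\rangle$, and it is the quantitative (power-law) smallness of these entries from Lemma~\ref{lemma Hs norms PU} — not merely asymptotic orthonormality — that makes those corrections negligible and forces $\delta_i=\mathcal O(1)$; the same refinement controls $\gamma$ via $\langle t,\pl PU_\xl\rangle=\mathcal O(\lambda^{-(N-2s)-1})$. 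This is exactly the mechanism used for $s=1$, $N=3$ in \cite[Lemma 6.1]{FrKoKo2021}, and the same asymptotic-orthogonality argument already appears at the end of the proof of Proposition~\ref{proposition coerc with a}.
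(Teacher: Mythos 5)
Your strategy coincides with the paper's: you identify $t=-\lambda^{-\ns}\Pi_\xl\bigl(H_0(x,\cdot)-H_a(x,\cdot)\bigr)$ from the two orthogonality conditions, compute the pairings of $t$ with the basis of $T_\xl$, and invert the Gram matrix, which after normalization is $\mathrm{Id}+\mathcal O(\lambda^{-N+2s})$; your closing remarks about why the \emph{quantitative} off-diagonal decay from Lemma \ref{lemma Hs norms PU} (and not mere asymptotic orthonormality) is needed to recover $\gamma$ and $\delta_i$ reproduce exactly Step 2 of the paper's proof. The bounds you claim for $\langle t, PU_\xl\rangle$ and $\langle t,\pl PU_\xl\rangle$ are correct and proved as in the paper.

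The gap is in your justification of $\langle t,\pxi PU_\xl\rangle=\mathcal O(\lambda^{-(N-2s)})$, which is the one pairing where a gain of $\lambda^{-1}$ over the naive bound $|\pxi U_\xl|\lesssim\lambda U_\xl$ is essential. You obtain it by integrating by parts and asserting that $H_0(x,\cdot)-H_a(x,\cdot)$ is Lipschitz with bounded $y$-gradient near the diagonal. By the paper's own Lemma \ref{lemma Ha expansion}, $H_a(x,y)=\phi_a(x)+\xi_x\cdot(y-x)-d_{N,s}a(x)|x-y|^{4s-N}+o(|x-y|^{4s-N})$, so whenever $4s-N<1$ the regular part is only $C^{0,4s-N}$ at the diagonal, $|\nabla_y H_a(x,y)|\sim|x-y|^{4s-N-1}$ is unbounded, and the $C^{1,\alpha}$ control needed to differentiate the $o(|x-y|^{4s-N})$ remainder is only established in case (ii) of that lemma. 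This is not an edge case: for integer $N$ the hypotheses $\tfrac{8}{3}s<N<4s$ of this section force $4s-N<1$, so you are \emph{always} in case (i). Inserting the honest singular bound $|x-y|^{4s-N-1}$ into your integral yields only $\langle t,\pxi PU_\xl\rangle=\mathcal O(\lambda^{1-2s})$, which is larger than $\lambda^{-(N-2s)}$ precisely when $4s-N<1$ and then gives merely $\delta_i=\mathcal O(\lambda^{N-4s+1})$. The mechanism that actually produces the gain — and the one the paper uses — is parity: $U_\xl^{p-2}\pxi U_\xl$ is odd about $x$, while the constant $\phi_a(x)-\phi_0(x)$ and the radial term $|x-y|^{4s-N}$ are even, so their contributions over $B_{d(x)}(x)$ vanish identically, leaving only the genuinely subleading pieces (the smooth linear part coming from $H_0$, the $o(|x-y|^{4s-N})$ remainder contributing $o(\lambda^{-2s})=o(\lambda^{-N+2s})$, and the integral over $\Omega\setminus B_{d(x)}(x)$). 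Your proof needs this cancellation (or its analogue for the odd singular part of $\nabla_yH_a$ after your integration by parts) stated explicitly; the bounded-gradient claim as written is false in the entire parameter range of the lemma.
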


As a corollary, we obtain estimates on $t$ in various norms. 

 \begin{lemma}
 \label{lemma bounds on s}
As $\eps \to 0$, 
\[ \|\ds t \|_2 \lesssim \lambda^{-N+2s} \quad \text{ and } \quad  \|t\|_\frac{2N}{N+2s} \lesssim \|t\|_2 \lesssim \lambda^{-\frac{3N -6s}{2}}. \]
 \end{lemma}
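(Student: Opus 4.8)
The plan is to feed the coefficient bounds from Lemma \ref{lemma beta gamma delta} into the representation \eqref{expansion s} and then use the norm estimates for $PU_\xl$ and its derivatives collected in Lemma \ref{lemma Hs norms PU} (and Lemma \ref{lemma decomp PU}). Concretely, writing $t = \beta \lambda^{-N+2s} PU_\xl + \gamma \lambda^{-N+2s+1} \pl PU_\xl + \sum_i \delta_i \lambda^{-N+2s-2} \pxi PU_\xl$, I would apply the triangle inequality in $\dot H^s$ and bound each of the three contributions separately. Since $\beta,\gamma,\delta_i = \mathcal O(1)$ by Lemma \ref{lemma beta gamma delta}, what remains is to insert $\|\ds PU_\xl\|_2 = \mathcal O(1)$, $\|\ds \pl PU_\xl\|_2 = \mathcal O(\lambda^{-1})$, and $\|\ds \pxi PU_\xl\|_2 = \mathcal O(\lambda)$ (these are the standard scaling orders of the $PU$-basis, which should be exactly the content of Lemma \ref{lemma Hs norms PU}). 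Then each term is $\mathcal O(\lambda^{-N+2s})$, giving $\|\ds t\|_2 \lesssim \lambda^{-N+2s}$.

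For the $L^2$ and $L^{\frac{2N}{N+2s}}$ bounds, the first inequality $\|t\|_{\frac{2N}{N+2s}} \lesssim \|t\|_2$ is just Hölder's inequality on the bounded set $\Omega$ (note $\frac{2N}{N+2s} < 2$). For $\|t\|_2 \lesssim \lambda^{-\frac{3N-6s}{2}} = \lambda^{-\frac32(N-2s)}$, I would again go termwise in \eqref{expansion s}, now using the $L^2(\R^N)$ norms: $\|U_\xl\|_2 \sim \lambda^{-\frac{N-2s}{2}}$ (hence $\|PU_\xl\|_2 \lesssim \lambda^{-\frac{N-2s}{2}}$ by Lemma \ref{lemma decomp PU}), $\|\pl PU_\xl\|_2 \sim \lambda^{-\frac{N-2s}{2}-1}$, and $\|\pxi PU_\xl\|_2 \sim \lambda^{-\frac{N-2s}{2}+1}$. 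Multiplying by the prefactors $\lambda^{-N+2s}$, $\lambda^{-N+2s+1}$, $\lambda^{-N+2s-2}$ respectively, each term is $\mathcal O(\lambda^{-\frac32(N-2s)})$, and summing gives the claim. (One small caveat: $\|U_\xl\|_2$ is only of order $\lambda^{-\frac{N-2s}{2}}$ when $N > 4s$; for $2s < N < 4s$ the integral $\int_{\R^N} U_\xl^2$ is infinite, so here $\|U_\xl\|_{L^2(\R^N)}$ must be replaced by $\|U_\xl\|_{L^2(\Omega)}$ or $\|PU_\xl\|_{L^2(\R^N)}$, which is genuinely $\sim \lambda^{-\frac{N-2s}{2}}$ because the projection kills the slow tail; this is presumably exactly why the lemma is stated with $PU$ and why Lemma \ref{lemma decomp PU} / Lemma \ref{lemma Hs norms PU} are invoked rather than naive $U_\xl$ estimates.)

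I do not expect any serious obstacle: this is a bookkeeping lemma, and the only thing to be careful about is matching each term's power of $\lambda$ correctly and making sure the $L^2$ estimates are applied to the projected functions $PU_\xl$ (where the relevant norms are finite and have the stated orders) rather than to $U_\xl$ on all of $\R^N$. The mild subtlety of whether one writes the proof term-by-term or invokes an already-stated omnibus estimate on $\|t\|$ in terms of $|\beta|+|\gamma|+\sum_i|\delta_i|$ is cosmetic.
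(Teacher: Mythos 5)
Your proof is correct and is essentially the paper's own argument: the paper likewise obtains all bounds by inserting the $\mathcal O(1)$ coefficient bounds of Lemma \ref{lemma beta gamma delta} into \eqref{expansion s} and invoking the standard norm estimates of Lemmas \ref{lemma decomp PU}, \ref{lemma int Uq} and \ref{lemma Hs norms PU}. Your caveat about working with $L^2(\Omega)$ rather than $L^2(\R^N)$ norms of $U_{x,\lambda}$ in the range $2s<N<4s$ is exactly the right point to flag (the only slip is the aside that $\|U_{x,\lambda}\|_{L^2(\R^N)}\sim\lambda^{-(N-2s)/2}$ for $N>4s$ --- there it is in fact $\sim\lambda^{-s}$ --- but that regime is irrelevant here and does not affect the argument).
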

 
 \begin{proof}
 Recall that $PU_\xl = U_\xl - \lambda^{-\ns} H_0(x,\cdot) + f_\xl$. Then all bounds follow in a straightforward way from \eqref{expansion s} together with Lemma \ref{lemma beta gamma delta} and the standard bounds from Lemmas \ref{lemma greens fct}, \ref{lemma decomp PU}, \ref{lemma int Uq} and  \ref{lemma Hs norms PU}. 
 \end{proof}

\begin{proof}
[Proof of Lemma \ref{lemma beta gamma delta}]
\textbf{Step 1.} We introduce the normalized basis functions
\begin{equation}
\label{tilde varphi}
    \tilde{\varphi}_1 := \frac{PU_\xl}{\|\ds PU_\xl\|}, \quad \tilde{\varphi}_2 :=  \frac{\pl PU_\xl}{\|\ds \pl PU_\xl\|}, \quad \tilde{\varphi}_j :=  \frac{\partial_{x_{j-2}} PU_\xl}{\|\ds \partial_{x_{j-2}}  PU_\xl\|},
\end{equation}
and prove that 
\begin{equation}
    \label{tilde scalar product}
    a_j := \int_{\R^N} \ds \tilde{\varphi}_j \ds t \diff y = \begin{cases}
    \mathcal O(\lambda^{-N+2s}), & j = 1,2, \\
    \mathcal O(\lambda^{-N+2s-1}), & j = 3,..., N+2.
    \end{cases}
\end{equation}
Since $\lambda^{-\frac{N-2s}{2}} (H_0(x,\cdot) - H_a(x,\cdot)) + t + r = w \in T_\xl^\bot$, and $r \in T_\xl^\bot$, we have
\begin{align*} a_j & = \lambda^{-\frac{N-2s}{2}} \int_{\R^N} \ds \tilde{\varphi}_j  \ds \left(H_a(x,\cdot) - H_0(x,\cdot)\right) \diff y \\
&= \lambda^{-\ns} \int_{\R^N} \Ds \tilde{\varphi_j} \left(H_a(x,\cdot) - H_0(x,\cdot)\right) \diff y .
\end{align*}
Thus, 
\begin{align*}
    a_1 &=  \lambda^{-\ns} \|\ds PU_\xl\|^{-1} c_{N,s} \int_\Omega U_\xl^\frac{N+2s}{N-2s} \left(H_a(x,\cdot) - H_0(x,\cdot)\right) \diff y  \\
    &\lesssim \lambda^{-N+2s},
\end{align*}
where we used that by Lemma \ref{lemma Hs norms PU}, $\|\ds PU_\xl\|^{-1} \lesssim 1$. The bound for $a_2$ follows similarly. To obtain the claimed improved bound for $a_j$, $j = 3,...,N+2$, we write 
\begin{align*}
    a_{i+2} &\lesssim  \lambda^{-\ns}  \|\ds \pxi PU_\xl\|^{-1}  \int_\Omega U_\xl^\frac{4s}{N-2s} \pxi U \left(H_a(x,\cdot) - H_0(x,\cdot)\right) \diff y   \\
    & \lambda^{-\ns} \|\ds \pxi PU_\xl\|^{-1} \mathcal O \left(  \int_{\R^N \setminus B_d}  U_\xl^\frac{4s}{N-2s} |\pxi U_\xl| \diff y +  \int_\Omega  U_\xl^\frac{4s}{N-2s} |\pxi U_\xl| |x-y| \diff y  \right) \\
    &\lesssim \lambda^{-N+2s-1}.
\end{align*}
Here we wrote $H_a(x,y) - H_0(x,y) = \phi_a(x) -\phi_0(x) + \mathcal O(|x-y|)$ and used that by oddness of $\pxi U$, 
\[ (\phi_a(x) -\phi_0(x)) \int_{B_d} U_\xl^\frac{4s}{N-2s} \pxi U_\xl = 0. \]
This concludes the proof of \eqref{tilde scalar product}. 

\textbf{Step 2.} We write 
\[ t = \sum_{j = 1}^{N+2} b_j \tilde{\varphi}_j, \]
with 
\begin{align*} b_1 &= \beta \lambda^{-N+2s} \|\ds PU_\xl\|, \quad b_2 = \gamma \|\ds \pl PU_\xl\|, \\
 b_j &= \delta_j \lambda^{-N+2s-2} \|\ds \partial_{x_{j-2}} PU_\xl\|, \qquad j= 3,...,N+2.
 \end{align*}
Our goal is to show that 
\begin{equation}
    \label{bj and aj}
    b_j = a_j + \mathcal O(\lambda^{-N+2s}) \sup_k a_k, \qquad j = 1,..,N+2. 
\end{equation}
From \eqref{bj and aj} we conclude by the estimates on the $a_j$ from \eqref{tilde scalar product} and Lemma \ref{lemma Hs norms PU}. 

To prove \eqref{bj and aj}, we define the Gram matrix $G$ by 
\[ G_{j,k} := (\ds \tilde{\varphi}_j, \ds \tilde{\varphi}_k). \]
By Lemma \ref{lemma Hs norms PU} and the definition of the $\tilde{\varphi}_j$ it is easily checked that  
\[ G_{j,k} = \delta_{j,k} + \mathcal O(\lambda^{-N+2s}), \qquad j,k = 1,..., N+2. \]
Thus for sufficiently large $\lambda$, $G$ is invertible with 
\begin{equation}
    \label{G-1bound}
    (G^{-1})_{j,k} = \delta_{j,k} +  \mathcal O(\lambda^{-N+2s}). 
\end{equation} 
By definition of $G$, 
\begin{equation}
    \label{gram onb}
\psi_j := \sum_{k=1}^{N+2}  (G^{-1/2})_{j,k} \tilde{\varphi}_k 
\end{equation}
is an orthonormal basis of $T_\xl$. We can therefore write
\begin{align*}
    t &= \sum_j \left(\ds \psi_j, \ds t \right) \psi_j = \sum_j \sum_k (G^{-1/2})_{j,k} \left(\ds \tilde{\varphi}_k, \ds t \right) \phi_j \\
    &= \sum_j \sum_k (G^{-1/2})_{j,k} a_k \sum_l (G^{-1/2})_{j,l} \tilde{\varphi}_l \\
    &= \sum_l \left( \sum_k \left( \sum_j (G^{-1/2})_{l,j} (G^{-1/2})_{j,k} \right) a_k \right) \tilde{\varphi}_l \\
    &= \sum_l \left( \sum_k  (G^{-1})_{l,k} a_k \right) \tilde{\varphi}_l
\end{align*}
Thus $b_l =  \sum_k  (G^{-1})_{l,k} a_k$ and \eqref{bj and aj} follows from \eqref{G-1bound}. 
 \end{proof}
 
 \begin{remark}
By treating the terms in the above proof more carefully, it can be shown in fact that $\lambda^{N - 2s} \beta$, $\lambda^{N- 2s-1}\gamma$ and $\lambda^{N-2s+2} \delta_i$ have a limit as $\lambda \to \infty$. Indeed, for instance, the leading orders of the expressions $\int_\Omega U_\xl^\frac{N+2s}{N-2s} (H_a(x, \cdot) - H_0(x, \cdot)) \diff y$ and $\|\ds PU_\xl\|$ going into the leading behavior of $\beta$ can be explicitly evaluated, see Lemma \ref{lemma Up-k Hk} and the proof of Lemma \ref{lemma Hs norms PU} respectively. We do not need the behavior of the coefficients $\beta, \gamma, \delta_i$ to that precision in the following, so we do not state them explicitly. 
 \end{remark}
 
\subsection{The new expansion of $\mathcal S_{a+\eps V} [u]$}
\label{ssec:refinedex}

Our goal is now to expand the value of the energy functional $\mathcal S_{a + \eps V} [u_\eps]$ with respect to the refined decomposition introduced above, namely
\[ u = \alpha(\psi_\xl + q) = \alpha\left( PU_\xl + \left(\lambda^{-\ns} H_a(x, \dot) - H_0(x, \cdot)\right) + t + r \right). \]

In all that follows, we work under the important assumption that 
\begin{equation}
    \label{-3N+6s < -2s}
    -3N + 6s < -2s, \quad \text{ i.e. } \tfrac{8}{3} s < N
\end{equation}
so that $\lambda^{-3N + 6s} = o(\lambda^{-2s})$. Assumption \eqref{-3N+6s < -2s} has the consequence that, using the available bounds on $t$ and $r$, we can expand the energy $\mathcal S_{a + \eps V}[u]$ up to $o(\lambda^{-2s})$ errors in a way that does not depend on $t$. This is the content of the next lemma. 

\begin{lemma}
    \label{lemma expansion refined quotient}
    As $\eps \to 0$, we have
 \begin{align*}
     \mathcal S_{a + \eps V}[u_\eps] &= \mathcal S_{a + \eps V} [\psi_\xl]  + D_0^{-2/p} \left( \mathcal E_0[r] - \frac{2 N_0}{p D_0} I[r] + o(\|\ds r\|^2) \right)  \\
     &\quad + o(\lambda^{-2s}) + o(\eps \lambda^{-N+ 2s}) + o(\phi_a(x) \lambda^{-N+2s}). 
 \end{align*}
 Here, 
 \begin{equation}
     \label{definition N0 D0}
     N_0 := \|\ds \psi_\xl\|_2^2 + \int_\Omega (a+ \eps V) \psi_\xl^2 \diff y , \qquad D_0 := \int_\Omega \psi_\xl^p \diff y, 
 \end{equation}
 and $I[r]$ is as defined in \eqref{definition Ir} below.
\end{lemma}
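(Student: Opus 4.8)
The plan is to expand the Sobolev quotient $\mathcal{S}_{a+\eps V}[u_\eps]$ around $\psi_\xl$. Since $\mathcal{S}_{a+\eps V}$ is invariant under $u\mapsto\alpha u$, we may discard the factor $\alpha$ and work with $u_\eps=\psi_\xl+t+r$, $t\in T_\xl$, $r\in T_\xl^\bot$. First I would expand the numerator $\mathcal{N}:=\|\ds(\psi_\xl+t+r)\|_2^2+\int_\Omega(a+\eps V)(\psi_\xl+t+r)^2\diff y$. Expanding the square and using $(\ds t,\ds r)=0$ (as $t\perp r$), this splits as $\mathcal{N}=N_0+N_1+(\text{$r$-only part})+2\int_\Omega(a+\eps V)tr\diff y$, where $N_1$ collects all contributions linear or quadratic in $t$ alone. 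The only delicate piece of bookkeeping is the cross term $(\ds\psi_\xl,\ds r)$, which is nonzero since $\psi_\xl\notin T_\xl$: writing $\psi_\xl=PU_\xl-\lambda^{-\ns}(H_a(x,\cdot)-H_0(x,\cdot))$ and using $PU_\xl\perp r$ together with $\Ds H_a(x,\cdot)=aG_a(x,\cdot)$ and $\Ds H_0(x,\cdot)=0$ on $\Omega$, it reduces to $-\lambda^{-\ns}\int_\Omega aG_a(x,\cdot)\,r\diff y$, of the size absorbed into the $r$-only part. Similarly, by Taylor's formula, $\mathcal{D}:=\int_\Omega(\psi_\xl+t+r)^p\diff y=D_0+D_1+(\text{$r$-only part})+(\text{mixed }t\text{-}r)$, with $D_1$ keeping $p\int_\Omega\psi_\xl^{p-1}t\diff y$ and $\frac{p(p-1)}{2}\int_\Omega\psi_\xl^{p-2}t^2\diff y$ (higher powers of $t$ being negligible by Lemma \ref{lemma bounds on s}). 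The $r$-only parts of $\mathcal{N}$ and $\mathcal{D}$ are, up to errors $o(\|\ds r\|^2)$, precisely the quantities $\mathcal{E}_0[r]$ and $I[r]$ appearing in the statement.

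Next I would form the quotient by a Taylor expansion of $\tau\mapsto\tau^{-2/p}$ around $D_0$:
\[
\mathcal{S}_{a+\eps V}[u_\eps]=N_0 D_0^{-2/p}+D_0^{-2/p}\Bigl(N_1-\tfrac{2N_0}{pD_0}D_1\Bigr)+D_0^{-2/p}\Bigl(\mathcal{E}_0[r]-\tfrac{2N_0}{pD_0}I[r]\Bigr)+(\text{errors}),
\]
where $N_0 D_0^{-2/p}=\mathcal{S}_{a+\eps V}[\psi_\xl]$ by the definitions \eqref{definition N0 D0}. The errors are of three kinds. (a) The mixed $t$-$r$ contributions, e.g.\ $\int_\Omega(a+\eps V)tr\diff y$ and $\int_\Omega\psi_\xl^{p-2}tr\diff y$: by Cauchy--Schwarz, $\|t\|_2\lesssim\lambda^{-\frac{3N-6s}{2}}$ (Lemma \ref{lemma bounds on s}), $\|r\|_2\lesssim\|\ds r\|$, and a weighted Young inequality, these are $o(\lambda^{-2s})+o(\|\ds r\|^2)$, since $\lambda^{-3N+6s}=o(\lambda^{-2s})$ by \eqref{-3N+6s < -2s}. (b) The super-quadratic-in-$r$ terms from the Taylor expansions of $\mathcal{D}$ and of $\tau^{-2/p}$, which are $o(\|\ds r\|^2)$ because $\|\ds r\|\to 0$ (Proposition \ref{prop w and d} and the decomposition $w=\lambda^{-\ns}(H_0-H_a)+t+r$). (c) Residual errors of sizes $o(\lambda^{-2s})$, $o(\eps\lambda^{-N+2s})$ and $o(\phi_a(x)\lambda^{-N+2s})$ arising when the coefficients $D_0^{-2/p}$, $\frac{2N_0}{pD_0}$ are replaced by their leading values, and from the cross terms of $t$ (resp.\ $r$) with the $H_a-H_0$ correction in $\psi_\xl$, which carry extra small factors $\eps$ or $\phi_a(x)$.

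The heart of the matter, and the main obstacle, is to show $N_1-\frac{2N_0}{pD_0}D_1=o(\lambda^{-2s})$ (plus the admissible errors). For this one substitutes the explicit form $t=\beta\lambda^{-N+2s}PU_\xl+\gamma\lambda^{-N+2s+1}\pl PU_\xl+\sum_i\delta_i\lambda^{-N+2s-2}\pxi PU_\xl$ with $\beta,\gamma,\delta_i=\mathcal{O}(1)$ (Lemma \ref{lemma beta gamma delta}), uses $\Ds PU_\xl=c_{N,s}U_\xl^{p-1}$ on $\Omega$, and evaluates $\|\ds PU_\xl\|_2^2$, $\int_\Omega U_\xl^{p-1}PU_\xl\diff y$, $\int_\Omega U_\xl^{p-2}PU_\xl^2\diff y$ and the corresponding mixed-derivative integrals via the appendix. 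The $t$-contributions enter $N_1$ and $D_1$ at orders $\lambda^{-(N-2s)}$ and $\lambda^{-2(N-2s)}$, and the point — morally, that $PU_\xl$ and its scaling/translation derivatives are approximate critical points of the Sobolev quotient, so moving along $T_\xl$ is free to leading order — is that the coefficient combination $N_1-\frac{2N_0}{pD_0}D_1$ cancels both of these orders exactly; I would isolate this cancellation as a separate lemma (the analogue of \cite[Section 6]{FrKoKo2021}). What remains is of order $\lambda^{-3(N-2s)}=\lambda^{-3N+6s}$, which is $o(\lambda^{-2s})$ precisely under \eqref{-3N+6s < -2s}; this is exactly where the restriction $N>\tfrac{8}{3}s$ becomes unavoidable, since for smaller $N$ the same type of cancellation would additionally be needed at all orders $\lambda^{-k(N-2s)}$ with $k(N-2s)<2s$. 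Collecting (a), (b), (c) and this cancellation yields the asserted expansion.
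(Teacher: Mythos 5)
Your overall architecture matches the paper's: expand numerator and denominator around $\psi_\xl$, isolate the $t$-only blocks $N_1$, $D_1$ and the $r$-only blocks $\mathcal E_0[r]$, $I[r]$, kill the mixed $t$--$r$ terms with Lemma \ref{lemma bounds on s} and assumption \eqref{-3N+6s < -2s}, Taylor-expand $\tau\mapsto\tau^{-2/p}$, and reduce everything to a cancellation lemma for the $t$-contributions. The treatment of the cross term $(\ds\psi_\xl,\ds r)$ via $\Ds H_a(x,\cdot)=aG_a(x,\cdot)$ and the error bookkeeping in (a), (b), (c) are all in line with the paper's Lemmas \ref{lemma expansion refined numerator} and \ref{lemma expansion refined denominator}.

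However, the cancellation identity you single out as the heart of the matter is stated incorrectly, and for part of the dimensional range this is a genuine gap, not a cosmetic one. You claim that $N_1-\frac{2N_0}{pD_0}D_1$ cancels \emph{both} orders $\lambda^{-(N-2s)}$ and $\lambda^{-2(N-2s)}$. It cancels only the first. At order $\lambda^{-2N+4s}$ one computes, using the expansions of the paper's Lemma \ref{lemma N_1 D_1} and $N_0/D_0\to c_{N,s}$,
\begin{equation*}
N_1-\frac{2N_0}{pD_0}D_1 = -c_{N,s}(p-2)\,\beta^2 A_{N,s}\,\lambda^{-2N+4s}+o(\lambda^{-2s})+o(\phi_a(x)\lambda^{-N+2s}),
\end{equation*}
which is \emph{not} $o(\lambda^{-2s})$ when $\tfrac{8}{3}s<N\leq 3s$ (since $\lambda^{-2N+4s}\gg\lambda^{-2s}$ there, and $\beta$ is generically of order one). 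The cancellation at order $\lambda^{-2N+4s}$ only occurs after including the \emph{quadratic} terms of the Taylor expansion of the quotient, namely $-\frac{2}{p}\frac{N_1D_1}{D_0}+\frac{p+2}{p^2}\frac{D_1^2N_0}{D_0^2}$, whose leading contribution is $+c_{N,s}(p-2)\beta^2A_{N,s}\lambda^{-2N+4s}$ and exactly offsets the defect above. Your displayed quotient expansion truncates at first order in $(N_1+\dots,D_1+\dots)$ and relegates $N_1D_1$, $D_1^2$ to ``(errors)''; since these products are of size $\lambda^{-2N+4s}$, they are admissible errors only for $N>3s$. The correct statement to prove (the paper's Lemma \ref{lemma N_1 D_1}) is that the full second-order combination $N_1-\frac{2}{p}\frac{N_0}{D_0}D_1-\frac{2}{p}\frac{N_1D_1}{D_0}+\frac{p+2}{p^2}\frac{D_1^2N_0}{D_0^2}$ is $o(\lambda^{-2s})+o(\phi_a(x)\lambda^{-N+2s})$. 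With that correction, the rest of your argument goes through.
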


We emphasize that the contribution of $t$ enters only into the remainders  $o(\lambda^{-2s}) + o(\eps \lambda^{-N+ 2s})+ o(\phi_a(x) \lambda^{-N+2s})$. This is remarkable because $t$ enters to orders $\lambda^{-N+2s} >> \lambda^{-2s}$ and $\lambda^{-2N+4s} >> \lambda^{-2s}$ (if $N< 3s$) into both the numerator and the denominator of $ \mathcal S_{a + \eps V}[u_\eps]$, see Lemmas \ref{lemma expansion refined numerator} and \ref{lemma expansion refined denominator} below. When calculating the quotients, these contributions cancel precisely, as we verify in Lemma \ref{lemma N_1 D_1} below. Heuristically, such a phenomenon is to be expected because (up to projection onto $\ths$ and perturbation by $a + \eps V$) by definition $t$ represents the directions along which the quotient functional is invariant. As already pointed out in the introduction, we suspect, but cannot prove, that in the absence of assumption \eqref{-3N+6s < -2s} the contributions of $t$ to the higher order coefficients $\lambda^{-kN + 2ks}$ for $3 \leq k \leq \tfrac{2N}{N-2s}$ would continue to cancel. 

We prove Lemma \ref{lemma expansion refined quotient} by separately expanding the numerator and the denominator of $\mathcal S_{a + \eps V}[u_\eps]$. We abbreviate
\[ \mathcal E_\eps[u]:= \|\ds u\|^2 + \int_\Omega (a + \eps V)u^2 \diff y \]
and write $\mathcal E_\eps[u,v]$ for the associated bilinear form. 

\begin{lemma}[Expanding the numerator]
    \label{lemma expansion refined numerator}
    As $\eps \to 0$, 
    \begin{align*}
    |\alpha|^{-2} \mathcal E_\eps [u_\eps] &= \mathcal E_\eps [\psi_\xl] + \left( 2 \mathcal E_0[\psi_\xl, t]  + \| \ds t\|^2  \right) + \mathcal E_0[r]  +  o(\lambda^{-2s}) + o(\eps \lambda^{-N+ 2s}).
    \end{align*}
\end{lemma}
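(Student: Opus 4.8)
The plan is to substitute $u_\eps = \alpha(\psi_\xl + t + r)$ and expand the quadratic form $\mathcal E_\eps$ into its six pieces
\[ |\alpha|^{-2}\mathcal E_\eps[u_\eps] = \mathcal E_\eps[\psi_\xl] + \mathcal E_\eps[t] + \mathcal E_\eps[r] + 2\mathcal E_\eps[\psi_\xl,t] + 2\mathcal E_\eps[\psi_\xl,r] + 2\mathcal E_\eps[t,r], \]
and to show that, apart from $\mathcal E_\eps[\psi_\xl]$, $2\mathcal E_0[\psi_\xl,t]$, $\|\ds t\|^2$ and $\mathcal E_0[r]$, every remaining contribution is $o(\lambda^{-2s}) + o(\eps\lambda^{-N+2s})$, up to a multiple of $\|\ds r\|^2$ that can be made arbitrarily small and is absorbed into the coercive form $\mathcal E_0[r]$. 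The working tools are the bounds $\|\ds t\|_2 \lesssim \lambda^{-N+2s}$ and $\|t\|_{2N/(N+2s)} \lesssim \|t\|_2 \lesssim \lambda^{-(3N-6s)/2}$ from Lemma~\ref{lemma bounds on s}, the standing hypothesis \eqref{-3N+6s < -2s} (so $\lambda^{-3N+6s} = o(\lambda^{-2s})$), the a priori estimate $\|\ds r\| = \mathcal O(\lambda^{-\ns})$ (a consequence of Proposition~\ref{prop w and d}, Lemma~\ref{lemma bounds on s}, and the standard bound $\|\ds(H_0(x,\cdot)-H_a(x,\cdot))\|_2 = \mathcal O(1)$ for $x$ in compact subsets of $\Omega$), and the Sobolev inequality $\|r\|_p \lesssim \|\ds r\|$.

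The routine pieces are disposed of as follows. In $\mathcal E_\eps[t] = \|\ds t\|^2 + \int_\Omega(a+\eps V)t^2$ the potential integral is $\lesssim \|t\|_2^2 \lesssim \lambda^{-3N+6s} = o(\lambda^{-2s})$. In $\mathcal E_\eps[r] = \mathcal E_0[r] + \eps\int_\Omega V r^2$ the last term is $\lesssim \eps\|\ds r\|^2 = o(\|\ds r\|^2)$. In $2\mathcal E_\eps[\psi_\xl,t] = 2\mathcal E_0[\psi_\xl,t] + 2\eps\int_\Omega V\psi_\xl t$ the correction is $\lesssim \eps\|\psi_\xl\|_p\|t\|_{2N/(N+2s)} \lesssim \eps\lambda^{-(3N-6s)/2} = o(\eps\lambda^{-N+2s})$ (here $N>2s$ is used). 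Finally, since $(\ds t,\ds r)=0$, $2\mathcal E_\eps[t,r] = 2\int_\Omega(a+\eps V)tr \lesssim \|t\|_{2N/(N+2s)}\|\ds r\| \leq \delta\|\ds r\|^2 + C_\delta\lambda^{-3N+6s}$ for every $\delta>0$; the first summand is absorbed into $\mathcal E_0[r]$ and the second is $o(\lambda^{-2s})$.

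The delicate term is $2\mathcal E_\eps[\psi_\xl,r]$: the crude estimate $|(\ds\psi_\xl,\ds r)| \leq \|\ds(\psi_\xl - \Pi_\xl\psi_\xl)\|\,\|\ds r\| = \mathcal O(\lambda^{-N+2s})$ is of the same order as the main correction and hence useless. The plan is to exploit that $\psi_\xl$ almost solves the equation $(\Ds+a)\psi_\xl = c_{N,s}U_\xl^{p-1}$. Combining $\Ds\psi_\xl = c_{N,s}U_\xl^{p-1} - \lambda^{-\ns}aG_a(x,\cdot)$ (from the proof of Theorem~\ref{theorem upper bound}), $\psi_\xl = U_\xl - \lambda^{-\ns}H_a(x,\cdot) - f_\xl$, the identity $G_a + H_a = |x-\cdot|^{-(N-2s)}$, and \eqref{U Ga h identity}, one finds $(\Ds+a)\psi_\xl = c_{N,s}U_\xl^{p-1} - \lambda^{\ns}a\,h(\lambda(x-\cdot)) - af_\xl$, hence, pairing against $r$,
\[ \mathcal E_0[\psi_\xl,r] = c_{N,s}\int_\Omega U_\xl^{p-1}r\diff y - \lambda^{\ns}\int_\Omega a(y)\,h(\lambda(x-y))\,r(y)\diff y - \int_\Omega af_\xl r\diff y. \]
The first integral vanishes because $c_{N,s}\int_\Omega U_\xl^{p-1}r = \int_{\R^N}\ds PU_\xl\,\ds r = 0$ by the orthogonality $r\in T_\xl^\bot$. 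The remaining two are $o(\lambda^{-2s})$: for the $h$-term one uses $h\in L^1\cap L^2\subset L^{2N/(N+2s)}$, so $\lambda^{\ns}\|h(\lambda(x-\cdot))\|_{2N/(N+2s)}\|r\|_p \lesssim \lambda^{\ns}\lambda^{-(N+2s)/2}\lambda^{-\ns} = \lambda^{-2s}\,\mathcal O(\lambda^{-\ns})$; for the $f_\xl$-term one uses $\|f_\xl\|_\infty \lesssim \lambda^{-(N+4-2s)/2}$ and $N+2 > 2s+2 > 4s$ to get $\lambda^{-(N+2-2s)} = o(\lambda^{-2s})$. The $\eps$-correction $2\mathcal E_\eps[\psi_\xl,r] - 2\mathcal E_0[\psi_\xl,r] = 2\eps\int_\Omega V\psi_\xl r = \mathcal O(\eps\lambda^{-\ns}\|\ds r\|)$ becomes, by a weighted Young inequality, $o(\|\ds r\|^2) + o(\eps\lambda^{-N+2s})$.

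I expect the genuine obstacle to be exactly this cancellation inside $2\mathcal E_\eps[\psi_\xl,r]$: one must recognise that the term of order $\lambda^{-N+2s}$, namely $-\lambda^{-\ns}\int_\Omega aG_a(x,\cdot)r$ coming from $(\ds\psi_\xl,\ds r)$, is precisely annihilated by the $-\lambda^{-\ns}\int_\Omega aH_a(x,\cdot)r$ piece of $\int_\Omega a\psi_\xl r$ (the two assemble into the full pole $|x-\cdot|^{-(N-2s)}$, which then cancels against $U_\xl$ up to the $L^1\cap L^2$-concentrating remainder $\lambda^{\ns}h(\lambda(x-\cdot))$). The rest is careful bookkeeping of which error terms are genuinely $o(\lambda^{-2s})$, which are $o(\eps\lambda^{-N+2s})$, and which are merely $\mathcal O(\|\ds r\|^2)$ and must be carried along so as to be absorbed into $\mathcal E_0[r]$ (or, as in the downstream coercivity argument, into the coercivity constant).
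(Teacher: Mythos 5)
Your proposal is correct and follows essentially the same route as the paper: the same bilinear expansion, the same use of Lemma \ref{lemma bounds on s} and assumption \eqref{-3N+6s < -2s} for the terms involving $t$, and the same key cancellation in $\mathcal E_0[\psi_\xl,r]$ (your computation via $(\Ds+a)\psi_\xl = c_{N,s}U_\xl^{p-1} - \lambda^{\ns}a\,h(\lambda(x-\cdot)) - af_\xl$ paired against $r$, with $\int_\Omega U_\xl^{p-1}r = 0$ by orthogonality, is exactly the paper's identity $\mathcal E_0[\psi_\xl,r] = \int_\Omega a(\psi_\xl - \lambda^{-\ns}G_a(x,\cdot))r$ in different packaging). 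Your explicit acknowledgement that the $o(\|\ds r\|^2)$ and $\delta\|\ds r\|^2$ remainders must be carried along and absorbed by coercivity downstream matches how the paper handles them.
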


\begin{proof}
We write $\alpha^{-1} u_\eps = \psi_\xl + t + r$ and therefore 
\begin{equation}
 \label{E eps u}
 \mathcal E_\eps[u_\eps] =  \mathcal E_\eps[\psi_\xl] +  2 \mathcal E_\eps[\psi_\xl, t+r]  +  \mathcal E_\eps[t+r] .
\end{equation} 
The third term on the right side is 
\[  \mathcal E_\eps[t+r] =  \mathcal E_0[t] +  2\mathcal E_0[t,r] + \mathcal E_0[r] + \eps \int_\Omega V (t+r)^2 \diff y . \]
Now $\int_{\R^N} \ds t \ds r \diff y = 0$ by orthogonality and therefore, by Lemma \ref{lemma bounds on s},
\[ \mathcal E_0[t,r] = \int_\Omega a t r \diff y  = \mathcal O(\|\ds r\| \|t\|_\frac{2N}{N+2s}) = \mathcal O(\lambda^\frac{-3N+6s}{2} \|\ds r\|) = o(\lambda^{-2s}) + o(\|\ds r\|^2), \]
where the last equality is a consequence of assumption \eqref{-3N+6s < -2s} and Young's inequality. 
Finally, again by Lemma \ref{lemma bounds on s},
\[ \eps \int_\Omega V (t+r)^2 \diff y =  \mathcal O( \eps (\|t\|^2 + \|\ds r\|^2)) =   o(\eps \lambda^{-N+2s}) + o( \|\ds r\|^2). \] 

The second term on the right side of \eqref{E eps u} is
\[  2 \mathcal E_\eps[\psi_\xl, t+r] = 2  \mathcal E_0[\psi_\xl, t] + 2  \mathcal E_0[\psi_\xl, r ] + 2 \eps  \int_\Omega V \psi_\xl (t + r) \diff y . \]
To start with, using Lemma \ref{lemma bounds on s},
\begin{align*} & \qquad \eps  \int_\Omega V \psi_\xl (t + r) \diff y = \mathcal O( \eps ( \|t\|_\frac{2N}{N+2s} + \|\psi\|_\frac{2N}{N+2s} \|\ds r\|)) \\
&= \mathcal O( \eps \lambda^{-\ns} \|\ds r\|) + o(\eps \lambda^{-N+2s}) = o(\eps \lambda^{-N+2s}) + o(\|\ds r\|^2)),
\end{align*}
again by Young's inequality. Moreover, using that $r \in T_\xl^\bot$, that $\Ds H_a(x, \cdot) = a G_a(x, \cdot)$ and $\Ds H_0(x,\cdot) = 0$, and integrating by parts,
\begin{align*}
    \mathcal E_0[\psi_\xl, r ] &= \lambda^{-\ns} \int_\Omega \ds (H_0 - H_a)(x, \cdot) \ds r \diff y + \int_\Omega a \psi_\xl r \diff y \\
    &= \int_\Omega a (- \lambda^{-\ns} G_a(x, \cdot) +  \psi_\xl) r \diff y.
    \end{align*}
Since we can write $\lambda^{-\ns} G_a(x,y) - \psi_\xl(y) = \lambda^\ns h(\lambda(x-y)) + f_\xl$ with $h$ as in Lemma \ref{lemma h} and $f_\xl$ as in Lemma \ref{lemma decomp PU}, we get 
  \begin{align*}  
   \mathcal E_0[\psi_\xl, r ]  & \lesssim \left(\lambda^{-\ns} \|h(\lambda \cdot)\|_\frac{2N}{N+2s} + \|f_\xl\|_\infty\right) \|\ds r\| \lesssim \lambda^{-2s} \|\ds r\| = o(\lambda^{-2s})
\end{align*}
by the bounds in those lemmas. Finally, 
\[ \mathcal E_0[ t] = \|\ds t\|^2 + \int_\Omega a t^2 \diff y \]
and $\int_\Omega a t^2 \diff y \lesssim \|t\|_2^2 \lesssim \lambda^{-3N+6s} = o(\lambda^{-2s})$ by Lemma \ref{lemma bounds on s} and Assumption \eqref{-3N+6s < -2s}. 
\end{proof}

\begin{lemma}[Expanding the denominator]
    \label{lemma expansion refined denominator}
    As $\eps \to 0$, 
    \begin{align*}
        |\alpha|^{-p} \int_\Omega u_\eps^p \diff y &= \int_\Omega \psi_\xl^p \diff y + \left( p \int_\Omega \psi_\xl^{p-1} t \diff y + \frac{p(p-1)}{2} \int_\Omega \psi_\xl^{p-2} t^2 \diff y \right) + \frac{p(p-1)}{2} \int_\Omega \psi_\xl^{p-2} r^2  \diff y \\
        & \quad +  \mathcal O\left(  \lambda^{-\ns}  \int_\Omega U^{p-2} |H_a| |r| \diff y  \right) + o(\|\ds r\|^2)  + o(\lambda^{-2s}).
    \end{align*}
\end{lemma}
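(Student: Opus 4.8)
The plan is to Taylor-expand $|\alpha|^{-p}\int_\Omega u_\eps^p\diff y=\int_\Omega(\psi_\xl+t+r)^p\diff y$ about $\psi_\xl$ and to show that, apart from the three terms $p\int_\Omega\psi_\xl^{p-1}t\diff y$, $\tfrac{p(p-1)}{2}\int_\Omega\psi_\xl^{p-2}t^2\diff y$ and $\tfrac{p(p-1)}{2}\int_\Omega\psi_\xl^{p-2}r^2\diff y$ kept explicitly in the statement (the first two will later cancel against numerator terms, the third will be fed into the coercivity inequality), every remaining contribution is either of the retained form $\mathcal O(\lambda^{-\ns}\int_\Omega U_\xl^{p-2}|H_a||r|\diff y)$ or lands in $o(\|\ds r\|^2)+o(\lambda^{-2s})$. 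Since $N<4s$ we have $p>4$, so $\psi_\xl^{p-3}$ is a genuine positive power and Taylor's formula gives $(\psi_\xl+(t+r))^p=\psi_\xl^p+p\psi_\xl^{p-1}(t+r)+\tfrac{p(p-1)}{2}\psi_\xl^{p-2}(t+r)^2+\mathcal O(\psi_\xl^{p-3}|t+r|^3+|t+r|^p)$. Expanding $(t+r)^2=t^2+2tr+r^2$ produces the three main terms together with $p\int_\Omega\psi_\xl^{p-1}r\diff y$, the mixed term $p(p-1)\int_\Omega\psi_\xl^{p-2}tr\diff y$, and the cubic/$p$-th power remainders, which I treat in turn.

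For the term $p\int_\Omega\psi_\xl^{p-1}r\diff y$ I use $\psi_\xl=U_\xl-\lambda^{-\ns}H_a(x,\cdot)+f_\xl$ (with $f_\xl$ as in Lemma \ref{lemma decomp PU}), so that $\psi_\xl^{p-1}-U_\xl^{p-1}=\mathcal O(U_\xl^{p-2}(\lambda^{-\ns}|H_a|+|f_\xl|)+(\lambda^{-\ns}|H_a|+|f_\xl|)^{p-1})$. The leading piece vanishes: by the weak formulation of \eqref{eq:projintro} and \eqref{eq_U_xl} one has $\int_\Omega U_\xl^{p-1}r\diff y=c_{N,s}^{-1}\int_{\R^N}\ds PU_\xl\,\ds r\diff y=0$ because $r\in T_\xl^\bot$ and $PU_\xl\in T_\xl$. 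The term $-(p-1)\lambda^{-\ns}\int_\Omega U_\xl^{p-2}H_a r\diff y$ is exactly the retained error $\mathcal O(\lambda^{-\ns}\int_\Omega U_\xl^{p-2}|H_a||r|\diff y)$, while the remaining pieces are estimated by Hölder (using $\|U_\xl^{p-2}\|_{p/(p-1)}\lesssim\lambda^{-\ns}$ from Lemma \ref{lemma int Uq}), the bound $\|f_\xl\|_\infty\lesssim\lambda^{-\frac{N+4-2s}{2}}$ and Young's inequality, yielding $o(\|\ds r\|^2)+o(\lambda^{-2s})$.

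The delicate term is $\int_\Omega\psi_\xl^{p-2}tr\diff y$, which I split as $\int_\Omega U_\xl^{p-2}tr\diff y+\int_\Omega(\psi_\xl^{p-2}-U_\xl^{p-2})tr\diff y$. On the second integral the elementary inequality $|\psi_\xl^{p-2}-U_\xl^{p-2}|\lesssim U_\xl^{p-3}|\varphi_\xl|+|\varphi_\xl|^{p-2}$ (valid as $p-2>1$), with $\varphi_\xl:=U_\xl-\psi_\xl$, $\|\varphi_\xl\|_\infty\lesssim\lambda^{-\ns}$, together with Lemma \ref{lemma bounds on s}, Hölder and Young, gives $o(\|\ds r\|^2)+o(\lambda^{-2s})$ (here the standing assumption $\tfrac{8}{3}s<N$ enters). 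On the first integral I expand $t$ in the basis $\{PU_\xl,\partial_\lambda PU_\xl,\partial_{x_i}PU_\xl\}$ according to \eqref{expansion s}, whose coefficients are $\mathcal O(\lambda^{-N+2s})$, $\mathcal O(\lambda^{-N+2s+1})$, $\mathcal O(\lambda^{-N+2s-2})$ by Lemma \ref{lemma beta gamma delta}. Differentiating $\Ds PU_\xl=c_{N,s}U_\xl^{p-1}$ in $\lambda$ and $x_i$ and pairing with $r\in T_\xl^\bot$ gives $\int_\Omega U_\xl^{p-1}r\diff y=\int_\Omega U_\xl^{p-2}\partial_\lambda U_\xl\,r\diff y=\int_\Omega U_\xl^{p-2}\partial_{x_i}U_\xl\,r\diff y=0$, so only the lower-order pieces involving $H_0(x,\cdot)$ and its parameter derivatives (bounded by Lemma \ref{lemma greens fct} using $d^{-1}=\mathcal O(1)$ from Proposition \ref{prop w and d}) and $f_\xl$ survive; these total $\mathcal O(\lambda^{-2(N-2s)}\|\ds r\|)=o(\|\ds r\|^2)+o(\lambda^{-2s})$ by Young.

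Finally, the remainder $\mathcal O(\psi_\xl^{p-3}|t+r|^3+|t+r|^p)$ is estimated by Hölder (with $\psi_\xl^{p-3}\in L^{p/(p-3)}$, legitimate since $p>3$) together with Lemma \ref{lemma bounds on s} and $\|\ds r\|=\mathcal O(\lambda^{-\ns})\to0$ from Proposition \ref{prop w and d}: the terms containing at least one factor $r$ are $o(\|\ds r\|^2)$, $\int_\Omega|t+r|^p\diff y=o(\lambda^{-2s})+o(\|\ds r\|^2)$, and the borderline term is the pure power $\int_\Omega\psi_\xl^{p-3}|t|^3\diff y\lesssim\|t\|_p^3\lesssim\|\ds t\|^3\lesssim\lambda^{-3(N-2s)}$, which is $o(\lambda^{-2s})$ precisely because of assumption \eqref{-3N+6s < -2s}. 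Collecting these estimates gives the claimed expansion. The main obstacle is the mixed term $\int_\Omega\psi_\xl^{p-2}tr\diff y$: a direct Hölder/Young bound only yields $o(\lambda^{-2s})$ for $N>3s$, so one genuinely has to exploit the orthogonality $r\perp T_\xl$ through the parameter-differentiated equations for $PU_\xl$ in order to cover the whole range $\tfrac{8}{3}s<N<4s$.
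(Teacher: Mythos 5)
Your proposal is correct and follows essentially the same route as the paper: Taylor expansion of $(\psi_\xl+t+r)^p$, use of the orthogonality $r\in T_\xl^\bot$ (via $\int_\Omega U_\xl^{p-1}r\,\mathrm{d}y=0$ and its analogues for the $\lambda$- and $x_i$-derivatives) to kill the leading linear and mixed terms, and the assumption $8s/3<N$ to dispose of the cubic term $\lambda^{-3(N-2s)}$ and of the mixed term $\int_\Omega\psi_\xl^{p-2}tr\,\mathrm{d}y$. The only differences are organizational (you expand jointly in $t+r$ where the paper expands first in $t$ and then in $r$), and you actually supply more detail than the paper on the term $\int_\Omega U_\xl^{p-2}tr\,\mathrm{d}y$, for which the paper only writes "by some standard calculations, whose details we omit".
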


\begin{proof}
Write $\alpha^{-1} u_\eps = \psi_\xl +t+ r$. We expand
\begin{align*} \int_\Omega (\psi_\xl+ t +r)^p \diff y &= \int_\Omega (\psi_\xl+r)^p \diff y+ p \int_\Omega (\psi_\xl + r)^{p-1} t \diff y + \frac{p(p-1)}{2} \int_\Omega (\psi_\xl + r)^{p-2} t^2 \diff y \\
& \qquad + \mathcal O\left( \|\psi_\xl + r\|_p^{p-3} \|\ds t\|^3 + \|\ds t\|^p \right).
\end{align*}
By Lemma \ref{lemma bounds on s} together with assumption \eqref{-3N+6s < -2s}, the last term is $o(\lambda^{-2s})$. The third term is, by Lemma \ref{lemma bounds on s},
\[ \int_\Omega (\psi_\xl + r)^{p-2} t^2 \diff y = \int_\Omega \psi_\xl^{p-2} t^2 \diff y+ \mathcal O\left(\lambda^{-2N+4s} \|\ds r\| \right). \]
The second term is
\[ \int_\Omega (\psi_\xl + r)^{p-1} t \diff y = \int_\Omega \psi_\xl^{p-1} t \diff y+ (p-1) \int_\Omega  \psi_\xl^{p-2} r t \diff y+ o( \|\ds r\|^2 ). \]
The remaining term $\int_\Omega \psi_\xl^{p-2} r t\diff y$ needs to be expanded more carefully. Using $\psi_\xl = U_\xl - \lambda^{-\ns} H_a(x, \cdot) - f_\xl$ with $\| \lambda^{-\ns} H_a(x, \cdot) + f_\xl\|_\infty \lesssim \lambda^{-\ns}$, we write 
\begin{align*}  \int_\Omega \psi_\xl^{p-2} r t \diff y&= \int_\Omega U_\xl^{p-2} r t \diff y+ \mathcal O \left( \lambda^{-\ns} \| \ds r\| \|\ds t\|   \right)
\end{align*}
and using assumption \eqref{-3N+6s < -2s}, the remainder is bounded by
\[ \lambda^{-\ns} \| \ds r\| \|\ds t\|  \lesssim \lambda^{\frac{-3N+6s}{2}} \|\ds r\| = o(\lambda^{-2s}) + o(\|\ds r\|^2). \]
Now using orthogonality of $r$ and the expansion \eqref{expansion s} of $s$, by some standard calculations, whose details we omit, one obtains 
\begin{align*}
    \int_\Omega U_\xl^{p-2} r t \diff y &= \mathcal O \left( \lambda^{\frac{-3N+6s}{2}}\right \| \ds r\| =  o(\lambda^{-2s}) + o(\|\ds r\|^2),
\end{align*} 
where we used again assumption \eqref{-3N+6s < -2s} for the last equality. 

It remains only to treat the $t$-independent term $\int_\Omega (\psi_\xl + r)^p\diff y$. We find
\begin{align*} \int_\Omega (\psi_\xl + r)^p \diff y&= \int_\Omega \psi_\xl^p \diff y + p \int_\Omega \psi_\xl^{p-1} r \diff y + \frac{p(p-1)}{2} \int_\Omega \psi_\xl^{p-2} r^2 \diff y + o( \|\ds r\|^2). 
\end{align*}
Using orthogonality of $r$, we get that $\int_\Omega U_\xl^{p-1} r \diff y = 0$ and hence
\[ \int_\Omega \psi_\xl^{p-1} r \diff y= \mathcal O\left( \lambda^{-\ns}  \int_\Omega U_\xl^{p-2} |H_a(x, \cdot)| |r|  \diff y + \lambda^{-\frac{N+2s}{2}} \|\ds r\| \right) \]
and $\lambda^{-\frac{N+2s}{2}} \|\ds r\| \lesssim \lambda^{-N} = o(\lambda^{-2s})$. Finally, we have 
\[ \int_\Omega \psi_\xl^{p-2} r^2 \diff y = \int_\Omega U_\xl^{p-2} r^2 \diff y + o(\|\ds r\|^2). \]
Collecting all the estimates gives the claim of the lemma. 
\end{proof}

We can now prove the claimed expansion of the energy functional.

\begin{proof}
[Proof of Lemma \ref{lemma expansion refined quotient}]
We write the expansions of the numerator and the denominator as 
\[ \mathcal E_\eps [u_\eps] = N_0 + N_1 + \mathcal E_0(r) + o(\lambda^{-2s} + (\eps + \phi_a(x)) \lambda^{-N+2s}, \]
where 
\[ N_0 = \mathcal E_\eps [\psi], \quad N_1 := 2 \mathcal E_0[\psi_\xl, t]  + \| \ds t\|^2 , \]
and 
\[ \int_\Omega u_\eps^p = D_0 + D_1 + \mathcal I[r] + o(\lambda^{-2s}), \]
where 
\[ D_0 = \int_\Omega \psi^p , \qquad D_1 := p \int_\Omega \psi_\xl^{p-1} t + \frac{p(p-1)}{2} \int_\Omega \psi_\xl^{p-2} t^2, \]
and 
\begin{equation}
    \label{definition Ir}
\mathcal I[r] := \frac{p(p-1)}{2} \int_\Omega \psi_\xl^{p-2} r^2 + \mathcal O(\lambda^{-\ns}  \int_\Omega U^{p-2} |H_a| |r| \diff y ). 
\end{equation}
Taylor expanding up to and including second order, we find 
\begin{align*}
    \left( \int_\Omega u_\eps^p \right)^{-2/p} &= D_0^{-2/p} \left( 1 - \frac{2}{p} \frac{D_1 + I[r]}{D_0} +  \frac{p+2}{p^2} \frac{(D_1 + I[r])^2}{D_0^2} \right) + o(\lambda^{-2s}). 
\end{align*}
We now observe $\mathcal I[r] \lesssim \|\ds r\|^2 + o(\phi_a(x) \lambda^{-N+2s} + \lambda^{-2s})$ since
\[ \lambda^{-\ns} \int_\Omega U^{p-2} |H_a| |r| \lesssim \int_\Omega U^{p-2} r^2 + \lambda^{-N+2s} \int_\Omega U^{p-2} H_a^2 \lesssim \|\ds r\|^2 + o(\lambda^{-N+2s} \phi_a(x)) \]
Hence we can simplify the expression of the denominator to 
\[    \left( \int_\Omega u_\eps^p \right)^{-2/p} = D_0^{-2/p} \left( 1 - \frac{2}{p} \frac{D_1 + I[r]}{D_0} +  \frac{p+2}{p^2} \frac{D_1^2}{D_0^2} \right)  + o(\|\ds r\|^2)  + o(\phi_a(x) \lambda^{-N+2s})+ o(\lambda^{-2s}).  \]
Multiplying this with the expansion of the numerator from above, we find 
\begin{align*}
    \mathcal S_{a + \eps V}[u_\eps] &= D_0^{-2/p} N_0 + D_0^{-2/p} \left( N_1 - \frac{2}{p} \frac{N_0}{D_0} D_1 - \frac{2}{p} \frac{N_1 D_1}{D_0} + \frac{p+2}{p^2} \frac{D_1^2 N_0}{D_0^2} \right) \\
    & \qquad + D_0^{-2/p} \left( \mathcal E_0[r] - \frac{2 N_0}{p D_0} I[r] + o(\|\ds r\|^2) \right) + o(\lambda^{-2s}) + o(\phi_a(x)\lambda^{-N+2s}).  
    \end{align*}
We show in Lemma \ref{lemma N_1 D_1} below that the bracket involving the terms $N_1$ and $D_1$ involving $s$ vanishes up to order $o(\lambda^{-2s})$, due to cancellations. Noting that $D_0^{-2/p} N_0$ is nothing but $\mathcal S_{a+ \eps V}[\psi]$, the expansion claimed in Lemma \ref{lemma expansion refined quotient} follows. 
\end{proof}

\begin{lemma}
    \label{lemma N_1 D_1}
    Assume \eqref{-3N+6s < -2s} and let $N_0$, $N_1$, $D_0$, $D_1$ be defined as in the proof of Lemma \ref{lemma expansion refined quotient}. Then
\begin{align*}
    N_1 &= 2 \beta c_{N,s}A_{N,s} \lambda^{-N+2s} \\
    & \quad + c_{N,s} \lambda^{-2N + 4s} \left( \beta^2 A_{N,s} + \gamma^2 (p-1) B_{N,s}  - 2  a_{N,s} \phi_0(x) (\beta - \ns \gamma)  \right) + o(\lambda^{-2s})
    \end{align*} 
    and 
    \begin{align*}
        D_1 &= \lambda^{-N+2s} p \beta A_{N,s} + \lambda^{-2N + 4s} \left( \frac{p(p-1)}{2} (\beta^2 A_{N,s} + \gamma^2 B_{N,s}) -p (\beta - \frac{N-2s}{2} \gamma) a_{N,s} \phi_0(x) \right) \\
        &\qquad + o(\phi_a(x) \lambda^{-N+2s}) + o(\lambda^{-2s}). 
    \end{align*}
    where we abbreviated $B_{N,s} := \int_{\R^N} U_{0,1}^{p-2} |\pl U_{0,1}|^2 \diff y$. 
    
    In particular,
    \[ N_1 - \frac{2}{p} \frac{N_0}{D_0} D_1 - \frac{2}{p} \frac{N_1 D_1}{D_0} + \frac{p+2}{p^2} \frac{D_1^2 N_0}{D_0^2} = o(\lambda^{-2s}) + o(\phi_a(x) \lambda^{-N+2s}) . \]
\end{lemma}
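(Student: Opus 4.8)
The plan is to make the $\lambda^{-N+2s}$- and $\lambda^{-2N+4s}$-contributions of $N_1$ and $D_1$ completely explicit, and then to check that the bracket $N_1 - \tfrac2p\tfrac{N_0}{D_0}D_1 - \tfrac2p\tfrac{N_1D_1}{D_0} + \tfrac{p+2}{p^2}\tfrac{D_1^2N_0}{D_0^2}$ collapses. Throughout I use the representation \eqref{expansion s} of $t$ with $\beta,\gamma,\delta_i = \mathcal O(1)$ (Lemma \ref{lemma beta gamma delta}), the decompositions $PU_\xl = U_\xl - \lambda^{-\ns}H_0(x,\cdot) + f_\xl$ and $\psi_\xl = U_\xl - \lambda^{-\ns}H_a(x,\cdot) + f_\xl$ with the $f_\xl$-bounds of Lemma \ref{lemma decomp PU}, the norm estimates of Lemma \ref{lemma Hs norms PU}, and the standing assumption \eqref{-3N+6s < -2s} (together with $N > 2s$, which already gives both $3s - 1 < N$ and $\lambda^{-2N+4s} = o(\lambda^{-N+2s})$).

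\textbf{Step 1: expansion of $N_1 = 2\mathcal E_0[\psi_\xl, t] + \|\ds t\|^2$.} From $\Ds\psi_\xl = c_{N,s}U_\xl^{p-1} - \lambda^{-\ns}aG_a(x,\cdot)$ and integration by parts, $\mathcal E_0[\psi_\xl, v] = c_{N,s}\int_\Omega U_\xl^{p-1}v\diff y + \int_\Omega a(\psi_\xl - \lambda^{-\ns}G_a(x,\cdot))v\diff y$ for $v \in \ths$; by \eqref{U Ga h identity} the last integrand is $-a\lambda^\ns h(\lambda(x-\cdot))v$ up to an $f_\xl$-remainder, so $\mathcal E_0[\psi_\xl, v] = c_{N,s}\int_\Omega U_\xl^{p-1}v\diff y + \mathcal O(\lambda^{-2s}\|v\|_p)$. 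Applying this with $v = PU_\xl, \pl PU_\xl, \pxi PU_\xl$, using $c_{N,s}\int_\Omega U_\xl^{p-1}PU_\xl\diff y = \|\ds PU_\xl\|^2 = c_{N,s}A_{N,s} - c_{N,s}a_{N,s}\phi_0(x)\lambda^{-N+2s} + o(\lambda^{-2s})$ (this is \eqref{exp_num} with $a = 0$, $\eps = 0$) together with $\int_{\R^N}U_\xl^{p-1}\pl U_\xl\diff y = \tfrac1p\pl\!\int U_\xl^p\diff y = 0$ and $\int_{\R^N}U_\xl^{p-1}\pxi U_\xl\diff y = 0$ (oddness), which single out $c_{N,s}\int_\Omega U_\xl^{p-1}\pl PU_\xl\diff y = c_{N,s}\ns a_{N,s}\phi_0(x)\lambda^{-N+2s-1} + o(\lambda^{-N+2s-1})$ (from $\pl(\lambda^{-\ns}H_0) = -\ns\lambda^{-\ns-1}H_0$) and $c_{N,s}\int_\Omega U_\xl^{p-1}\pxi PU_\xl\diff y = \mathcal O(\lambda^{-N+2s})$, and multiplying by the prefactors of \eqref{expansion s}, the $\delta_i$-directions drop out at the level $\mathcal O(\lambda^{-2N+4s-2}) = o(\lambda^{-2s})$. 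For $\|\ds t\|^2$, Lemma \ref{lemma Hs norms PU} leaves only the diagonal terms $\beta^2\lambda^{-2N+4s}\|\ds PU_\xl\|^2 + \gamma^2\lambda^{-2N+4s+2}\|\ds\pl PU_\xl\|^2 = c_{N,s}\lambda^{-2N+4s}(\beta^2A_{N,s} + \gamma^2(p-1)B_{N,s}) + o(\lambda^{-2s})$, the $\beta\gamma$ cross term being $\mathcal O(\lambda^{-3N+6s}) = o(\lambda^{-2s})$ by \eqref{-3N+6s < -2s}. This gives the asserted formula for $N_1$.

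\textbf{Step 2: expansion of $D_1 = p\int_\Omega\psi_\xl^{p-1}t\diff y + \tfrac{p(p-1)}{2}\int_\Omega\psi_\xl^{p-2}t^2\diff y$.} Taylor expanding $\psi_\xl = U_\xl - \lambda^{-\ns}H_a(x,\cdot) + f_\xl$ gives $\int_\Omega\psi_\xl^{p-1}PU_\xl\diff y = A_{N,s} - a_{N,s}\phi_0(x)\lambda^{-N+2s} + o(\lambda^{-2s}) + \mathcal O(\phi_a(x)\lambda^{-N+2s})$, $\int_\Omega\psi_\xl^{p-1}\pl PU_\xl\diff y = \ns a_{N,s}\phi_0(x)\lambda^{-N+2s-1} + o(\lambda^{-N+2s-1}) + \mathcal O(\phi_a(x)\lambda^{-N+2s-1})$, $\int_\Omega\psi_\xl^{p-1}\pxi PU_\xl\diff y = \mathcal O(\lambda^{-N+2s})$, and $\int_\Omega\psi_\xl^{p-2}PU_\xl^2\diff y = A_{N,s} + o(1)$, $\int_\Omega\psi_\xl^{p-2}(\pl PU_\xl)^2\diff y = \lambda^{-2}B_{N,s} + o(\lambda^{-2})$, $\int_\Omega\psi_\xl^{p-2}PU_\xl\pl PU_\xl\diff y = \mathcal O(\lambda^{-N+2s-1})$ (again using $\int U_\xl^{p-1}\pl U_\xl = 0$). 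Multiplying by $p$, resp.\ $\tfrac{p(p-1)}{2}$, and by the prefactors of \eqref{expansion s}, with the $\delta_i$-directions and $\beta\gamma$ cross terms absorbed into $o(\lambda^{-2s}) + o(\phi_a(x)\lambda^{-N+2s})$ via \eqref{-3N+6s < -2s} and $N > 2s$, one obtains the asserted formula for $D_1$.

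\textbf{Step 3: the cancellation.} By Theorem \ref{theorem upper bound}, $N_0 = c_{N,s}A_{N,s} + \mathcal O(\lambda^{-2s} + (\phi_a(x) + \eps)\lambda^{-N+2s})$ and $D_0 = A_{N,s} + \mathcal O(\lambda^{-2s} + (\phi_a(x) + \eps)\lambda^{-N+2s})$, whence $\tfrac{N_0}{D_0} = c_{N,s} + \mathcal O(\lambda^{-2s} + (\phi_a(x) + \eps)\lambda^{-N+2s})$. The leading $\lambda^{-N+2s}$-coefficients of $N_1$ and $D_1$ are $2\beta c_{N,s}A_{N,s}$ and $p\beta A_{N,s}$, which satisfy $2\beta c_{N,s}A_{N,s} = \tfrac2p c_{N,s}\cdot p\beta A_{N,s}$, so the order-$\lambda^{-N+2s}$ part of $N_1 - \tfrac2p\tfrac{N_0}{D_0}D_1$ cancels modulo $\mathcal O(\lambda^{-N+2s}(\lambda^{-2s} + (\phi_a(x) + \eps)\lambda^{-N+2s})) = o(\lambda^{-2s}) + o(\phi_a(x)\lambda^{-N+2s})$, while $-\tfrac2p\tfrac{N_1D_1}{D_0}$ and $\tfrac{p+2}{p^2}\tfrac{D_1^2N_0}{D_0^2}$ are already of order $\lambda^{-2N+4s}$. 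Plugging the formulas of Steps 1--2 into the bracket and using $N_0/D_0 = c_{N,s} + (\text{lower order})$, the coefficient of $\lambda^{-2N+4s}$ equals
\[ c_{N,s}\Big[\big(1 - (p-1) - 4 + (p+2)\big)\beta^2A_{N,s} + \big((p-1) - (p-1)\big)\gamma^2B_{N,s} + \big(-2 + 2\big)a_{N,s}\phi_0(x)(\beta - \ns\gamma)\Big] = 0, \]
each of the three brackets vanishing identically (the $\beta^2A_{N,s}$-entry $1-(p-1)-4+(p+2)$ collects one contribution from $\|\ds t\|^2$, one from $-\tfrac2p c_{N,s}D_1$, one from $-\tfrac2p\tfrac{N_1D_1}{D_0}$ and one from $\tfrac{p+2}{p^2}\tfrac{D_1^2N_0}{D_0^2}$). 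Hence the whole bracket is $o(\lambda^{-2s}) + o(\phi_a(x)\lambda^{-N+2s})$.

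\textbf{Expected main obstacle.} Conceptually nothing is surprising: modulo the projection onto $\ths$ and the perturbation $a + \eps V$, the function $t$ lies in the span of $U_\xl$, $\pl U_\xl$, $\pxi U_\xl$ — the directions along which the full-space Sobolev quotient is stationary — so $t$ cannot influence $\mathcal S_{a+\eps V}[u_\eps]$ through the orders at stake, and the algebraic identity in Step 3 is then routine. The real difficulty is entirely in Steps 1--2: pinning down the $\lambda^{-2N+4s}$-coefficients of $N_1$ and $D_1$ \emph{exactly}, in particular the coefficients of $\beta^2A_{N,s}$, $\gamma^2B_{N,s}$ and $a_{N,s}\phi_0(x)$, while repeatedly invoking \eqref{-3N+6s < -2s} and the oddness of $\pxi U_\xl$ to discard the numerous genuinely lower-order contributions (the $\delta_i$-directions, all $\beta\gamma$ cross terms, and the $h$- and $f_\xl$-remainders).
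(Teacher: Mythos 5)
Your proposal is correct and follows essentially the same route as the paper: the same decomposition of $t$ via \eqref{expansion s}, the same use of $(\Ds+a)\psi_\xl = c_{N,s}U_\xl^{p-1} - a(\lambda^{\ns}h(\lambda(x-\cdot))+f_\xl)$ and of Lemmas \ref{lemma decomp PU}, \ref{lemma Hs norms PU} and \ref{lemma Up-k Hk}, with assumption \eqref{-3N+6s < -2s} discarding the $\delta_i$-directions and all cross terms. You additionally spell out the final coefficient cancellation $1-(p-1)-4+(p+2)=0$, which the paper omits, and your sign $(\beta-\ns\gamma)$ agrees with the lemma's statement.
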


\begin{proof}
We start with expanding $N_1 = 2\mathcal E_0[\psi, t] + \|\ds t\|^2 $. From Lemma \ref{lemma Hs norms PU} and the expansion \eqref{expansion s} for $t$, one easily sees that 
\begin{align*}
     \|\ds t\|^2 &= \beta^2 \lambda^{-2N+4s} \| \ds PU_\xl\|^2 + \gamma \lambda^{-2N+4s+2} \|\ds \pl PU_\xl\|^2 \\
     &= \beta^2 c_{N,s} A_{N,s} \lambda^{-2N + 4s} + \gamma^2 (p-1) c_{N,s}B_{N,s} +  o(\lambda^{-2s}),
\end{align*} 
where we also used assumption \eqref{-3N+6s < -2s}. Next, recalling $(\Ds + a) \psi_\xl = c_{N,s} U_\xl^{p-1} - a (\lambda^{\ns} h(\lambda(x - \cdot) + f_\xl)$ with $h$ as in Lemma \ref{lemma h}, we easily obtain
\begin{align*}
    2 \mathcal E_0[\psi_\xl, t] &= 2 c_{N,s} \int_\Omega U_\xl^{p-1} t \diff y + o(\lambda^{-2s}) =  2 \beta c_{N,s} A_{N,s} \lambda^{-N+2s}  \\
    & \qquad - 2 c_{N,s} a_{N,s} \phi_0(x) \lambda^{-2N+4s}(\beta + \ns \gamma) + o(\lambda^{-2s}).
\end{align*}
(Observe that the leading order term with $\gamma$ vanishes because $\int_{\R^N} U_{0,1}^{p-1} \pl U_{0,1} = 0$.)
This proves the claimed expansion for $N_1$. For $D_1$, we have 
\begin{align*}
    \int_\Omega \psi_\xl^{p-1} t \diff y &= \lambda^{-N+2s} \beta \int_\Omega \psi_\xl^{p-1} PU_\xl \diff y + \gamma \lambda^{-N+2s+1} \int_\Omega \psi_\xl^{p-1} \pl PU_\xl \diff y + o(\lambda^{-2s}).
\end{align*}
Writing out $\psi_\xl = U_\xl - \lambda^{-\ns} H_0(x, \cdot) - f_\xl$ and $PU_\xl = U_\xl - \lambda^{-\ns} H_0(x, \cdot) - f$, by the usual bounds together with assumption \eqref{-3N+6s < -2s} we get 
\begin{align*}
    \lambda^{-N+2s} \beta \int_\Omega \psi^{p-1} PU_\xl \diff y &= \lambda^{-N+2s} \beta A_{N,s} - \lambda^{-2N+4s} \beta  a_{N,s} \phi_0(x) + o(\lambda^{-N+2s} \phi_a(x))  + o(\lambda^{-2s}).
\end{align*}
Similarly, 
\begin{align*}
    \gamma \lambda^{-N+2s+1} \int_\Omega \psi^{p-1} \pl PU_\xl \diff y &= \gamma \frac{N-2s}{2} \lambda^{-2N+4s} a_{N,s} \phi_0(x) + o(\lambda^{-N+2s} \phi_a(x)) + o(\lambda^{-2s}). 
\end{align*}
(Observe that the leading order term with $\gamma$ vanishes because $\int_{\R^N} U_{0,1}^{p-1} \pl U_{0,1} = 0$.) Finally, 
\begin{align*}
    \int_\Omega \psi_\xl^{p-2} t^2 \diff y &= \lambda^{-2N + 4s} \left( \beta^2  A_{N,s} + \gamma^2 B_{N,s} \right) + o(\lambda^{-2s}).
\end{align*}
Putting together the above, we end up with the claimed expansion for $D_1$. 

The last assertion of the lemma follows from the expansions of $N_0$, $D_0$, $N_1$ and $D_1$ by an explicit calculation whose details we omit.
\end{proof}

Based on the refined expansion of $\mathcal S_{a+ \eps V}[u_\eps]$ obtained in Lemma \ref{lemma expansion refined quotient}, we are now in a position to give the proofs of our main results. 

We first use the coercivity inequality from Proposition \ref{proposition coercivity} to control the terms involving $r$ that appear in Lemma \ref{lemma expansion refined quotient}.

\begin{lemma}[Coercivity result]
    \label{lemma main results coercivity}
    There is $\rho > 0$ such that, as $\eps \to 0$, 
    \[ \mathcal E_0[r] - \frac{2 N_0}{p D_0} I[r] \geq \rho \| \ds r\|^2. \] 
\end{lemma}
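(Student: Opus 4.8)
The core of the argument is the coercivity inequality of Proposition \ref{proposition coerc with a}, which (since $d(x_\eps)\lambda_\eps \to \infty$ by Proposition \ref{prop w and d}) provides $\rho_0 > 0$ with
\[ \mathcal E_0[r] - c_{N,s}(p-1) \int_\Omega U_\xl^{p-2} r^2 \diff y \geq \rho_0 \|\ds r\|^2 \qquad \text{for all } r \in T_\xl^\bot. \]
So the plan is to show that the difference between $\frac{2N_0}{p D_0} I[r]$ and $c_{N,s}(p-1)\int_\Omega U_\xl^{p-2} r^2$ is a lower-order perturbation of $\|\ds r\|^2$, i.e. bounded by $\delta\|\ds r\|^2$ for arbitrarily small $\delta$ plus terms that are dominated by $\rho_0\|\ds r\|^2$. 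First I would recall from Theorem \ref{theorem upper bound} (or the $N_0, D_0$ definitions in \eqref{definition N0 D0}) that $N_0 = c_{N,s} A_{N,s} + o(1)$ and $D_0 = A_{N,s} + o(1)$, hence $\frac{2N_0}{p D_0} = \frac{2 c_{N,s}}{p} + o(1) = c_{N,s}(p-1) \cdot \frac{2}{p(p-1)} + o(1)$; one checks with the constants in Lemma \ref{lemma constants} that in fact $\frac{2N_0}{pD_0}\cdot\frac{p(p-1)}{2} = c_{N,s}(p-1) + o(1)$, so the main quadratic term $\frac{p(p-1)}{2}\int_\Omega \psi_\xl^{p-2} r^2$ inside $I[r]$ matches $c_{N,s}(p-1)\int_\Omega U_\xl^{p-2}r^2$ up to lower order.

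The remaining discrepancies to be absorbed are: (i) replacing $\psi_\xl^{p-2}$ by $U_\xl^{p-2}$ in the quadratic term, using $\psi_\xl = U_\xl - \lambda^{-\ns}H_a(x,\cdot) - f_\xl$ with $\|\lambda^{-\ns}H_a(x,\cdot) + f_\xl\|_\infty \lesssim \lambda^{-\ns}$ (Lemmas \ref{lemma decomp PU}, \ref{lemma greens fct}), which produces an error $\lesssim \lambda^{-\ns}\int_\Omega U_\xl^{p-3}|r|^2 + \dots$ controllable by Hölder/Hardy–Littlewood–Sobolev and Young's inequality by $\delta\|\ds r\|^2$; (ii) the $o(1)$ error in the constant $\frac{2N_0}{pD_0}$ times $\int_\Omega U_\xl^{p-2}r^2 \lesssim \|\ds r\|^2$, which is $o(\|\ds r\|^2)$; and (iii) the off-diagonal term $\mathcal O(\lambda^{-\ns}\int_\Omega U^{p-2}|H_a||r|\diff y)$ from the definition \eqref{definition Ir} of $I[r]$ — here I would use exactly the splitting already performed in the proof of Lemma \ref{lemma expansion refined quotient}, namely $\lambda^{-\ns}\int_\Omega U^{p-2}|H_a||r| \lesssim \int_\Omega U^{p-2}r^2 + \lambda^{-N+2s}\int_\Omega U^{p-2}H_a^2 \lesssim \|\ds r\|^2 + o(\phi_a(x)\lambda^{-N+2s})$, but now I must be more careful: the $\int_\Omega U^{p-2}r^2$ piece must come with a \emph{small} constant, which is achieved by applying Young with a parameter so that $\lambda^{-\ns}\int U^{p-2}|H_a||r| \leq \delta\int U^{p-2}r^2 + C_\delta \lambda^{-N+2s}\int U^{p-2}H_a^2$, and then $\int U^{p-2}r^2 \lesssim \|\ds r\|^2$ by Sobolev while $\lambda^{-N+2s}\int U^{p-2}H_a^2 = \mathcal O(\lambda^{-N+2s})$ is harmless as it does not involve $r$.

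Collecting these, I obtain $\mathcal E_0[r] - \frac{2N_0}{pD_0}I[r] \geq \rho_0\|\ds r\|^2 - \delta\|\ds r\|^2 - o(\|\ds r\|^2) - \mathcal O(\lambda^{-N+2s})$; choosing $\delta = \rho_0/4$ and $\eps$ small gives the claim with $\rho = \rho_0/2$, \emph{provided} one notes that the constant-order error $\mathcal O(\lambda^{-N+2s})$ is actually absent — and here is the one subtlety I would flag as the main point to get right: the genuinely $r$-free terms must not appear on this side of the inequality at all, since the lemma claims a clean bound $\geq \rho\|\ds r\|^2$. In fact they do not: in \eqref{definition Ir} the off-diagonal term is $\mathcal O(\lambda^{-\ns}\int U^{p-2}|H_a||r|)$, which vanishes when $r \equiv 0$, so after the Young splitting the $\lambda^{-N+2s}\int U^{p-2}H_a^2$ that appears is multiplied by the outcome of Young's inequality applied to a product \emph{containing} $r$, hence is itself $\mathcal O(\|\ds r\|^2)$ once one keeps the $\|r\|$ factor rather than fully separating — more precisely one writes $\lambda^{-\ns}\int U^{p-2}|H_a||r| \leq \|\ds r\| \cdot \lambda^{-\ns}\|U^{p-2}H_a\|_{\frac{2N}{N+2s}} \lesssim \lambda^{-\ns}\cdot\lambda^{-\ns}\phi_a(x)^{1/2}\cdot\|\ds r\|$-type bounds, or simply absorbs it entirely into $\delta\|\ds r\|^2 + o(\|\ds r\|^2)$ using that its coefficient $\lambda^{-\ns}\|U^{p-2}H_a\|_{\frac{2N}{N+2s}} \to 0$. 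This is the step where I would be most careful to ensure the final bound is homogeneous of degree two in $r$ with no additive constant; everything else is a routine application of Hölder, Sobolev, and Young already rehearsed in the preceding lemmas.
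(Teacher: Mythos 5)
Your overall route coincides with the paper's: invoke Proposition \ref{proposition coerc with a} (legitimate since $d(x_\eps)\lambda_\eps \to \infty$ by Proposition \ref{prop w and d}), identify $\tfrac{2N_0}{pD_0}\cdot\tfrac{p(p-1)}{2}$ with $c_{N,s}(p-1)$ up to $o(1)$, trade $\psi_\xl^{p-2}$ for $U_\xl^{p-2}$ at the cost of $o(\|\ds r\|^2)$, and treat the off-diagonal term $\lambda^{-\ns}\int_\Omega U_\xl^{p-2}|H_a(x,\cdot)||r|\diff y$ by a Young splitting. All of that is exactly what the paper does.

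The one step that does not work is your closing claim that this off-diagonal term can be absorbed \emph{entirely} into $\delta\|\ds r\|^2 + o(\|\ds r\|^2)$ because its coefficient $\lambda^{-\ns}\|U_\xl^{p-2}H_a(x,\cdot)\|_{\frac{2N}{N+2s}}$ tends to zero. A quantity of the form $c_\eps\|\ds r\|$ with $c_\eps\to 0$ is linear in $\|\ds r\|$ and cannot be dominated by a purely quadratic expression: for $\|\ds r\|$ of order $c_\eps$ the left side is $c_\eps^2$ while the right side is $(\delta+o(1))c_\eps^2$, which is smaller once $\delta<1$. Young's inequality unavoidably leaves an additive, $r$-independent remainder of size $c_\eps^2/(4\delta)$. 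This is not something you need to (or can) eliminate: the paper's own proof accepts exactly such a remainder --- after the splitting $\lambda^{-\ns}\int U_\xl^{p-2}|H_a||r| \leq \delta'\int U_\xl^{p-2}r^2 + C\lambda^{-N+2s}\int U_\xl^{p-2}H_a^2$ and Lemma \ref{lemma Up-k Hk}, it is left with an extra $o(\lambda^{-N+2s}\phi_a(x) + \lambda^{-2s})$. The stated inequality is to be read modulo these negligible additive errors; they are harmless because the lemma is only used inside the expansion of Lemma \ref{lemma expansion refined quotient}, whose remainder already contains $o(\lambda^{-2s}) + o(\phi_a(x)\lambda^{-N+2s})$. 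Apart from this over-reach at the end, your argument is the paper's argument.
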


\begin{proof}
Recalling the definition \eqref{definition Ir} of $\mathcal I[r]$ and observing that $N_0/D_0 = c_{N,s}$, we find by Proposition \ref{proposition coerc with a} that
\begin{align*}
 \mathcal E_0[r] - \frac{2 N_0}{p D_0} I[r] &= \|\ds r\|^2 + \int_\Omega a r^2 \diff y - c_{N,s} (p-1) \int_\Omega U_\xl^{p-2} r^2 \diff y \\
&\qquad + \mathcal O\left( \lambda^{-\ns} \int_\Omega U_\xl^{p-2} |H_a(x \cdot)| |r| \diff y \right)  +o(\|\ds r\|^2) \\
&\geq \rho \|\ds r\|^2 +  \mathcal O\left( \lambda^{-\ns} \int_\Omega U_\xl^{p-2} |H_a(x \cdot)| |r| \diff y \right)
\end{align*} 
for some $\rho > 0$. The remaining error term can be bounded as follows.
\begin{align*}
    \lambda^{-\ns} \int_\Omega U_\xl^{p-2} |H_a(x, \cdot)| |r| \diff y &\leq \delta' \int_\Omega U_\xl^{p-2} r^2 \diff y + C  \lambda^{-N+2s} \int_\Omega U_\xl^{p-2} H_a(x,\cdot)^2 \diff y   \\
    &\leq \delta \|\ds r\|^2 + \mathcal O(\lambda^{-2N+4s} \phi_a(x)^2) + o(\lambda^{-2s}) \\
    &\leq \delta \|\ds r\|^2  + o(\lambda^{-N+2s} \phi_a(x) + \lambda^{-2s})
\end{align*}
where we used Lemma \ref{lemma Up-k Hk}. By choosing $\delta > 0$ small enough, we obtain the conclusion. 
\end{proof}

\section{Proof of the main results}
\label{sec:proof}

Combining Lemma \ref{lemma main results coercivity} with Lemma \ref{lemma expansion refined quotient} gives a lower bound on $\mathcal S_{a + \eps V}[u_\eps]$. Using the almost-minimizing assumption \eqref{almost_min} and the expansion from Theorem \ref{theorem upper bound}, this lower bound can be stated as follows: 
 	\begin{align}    \label{S[u] lower bound}
0 &\geq (1 + o(1)) (S - S(a + \eps V))   + \mathcal R \\
& \qquad + A_{N,s}^{-\frac{N-2s}{N}} \left(  (Q_V(x) +o(1))  \eps \lambda^{-N+2s} - (a(x)+o(1))(\alpha_{N,s} + c_{N,s} d_{N,s} b_{N,s} ) \lambda^{-2s}    \right), \nonumber
	\end{align}
where 
\[ \mathcal R := A_{N,s}^\frac{N-2s}{N} \left(a_{N,s} c_{N,s} (1 + o(1)) \phi_a(x) \lambda^{-N + 2s} + \mathcal T_2(\phi_a(x), \lambda)\right) + \rho \| \ds r\|_2^2,
\]
for some $\rho > 0$, and $\mathcal T_2(\phi_a(x), \lambda)$ as in \eqref{T i definition}. 

Recall that $\phi_a \geq 0$ by Corollary \ref{corollary phia geq 0, a leq 0} and that $\phi_a(x)$ is bounded because $x_0 \in \Omega$.  Since $\mathcal T_2$ is a sum of higher powers $(\phi_a(x) \lambda^{-N+2s})^k$ with $k \geq 2$, we have $\mathcal R \geq 0$ for $\eps$ small enough.  

\begin{lemma}
\label{lemma x0 in Na}
As $\eps \to 0$, $\phi_a(x) = o(1)$. In other words, $x_0 \in \mathcal N_a$. 
\end{lemma}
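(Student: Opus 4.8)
The plan is to read the conclusion off directly from the lower bound \eqref{S[u] lower bound}: after discarding the manifestly nonnegative contributions, the inequality pins $c\,\phi_a(x)\lambda^{-N+2s}$ below a quantity which, once rescaled by $\lambda^{-N+2s}$, tends to $0$.

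First I would collect the facts the argument needs. Since $\mathcal N_a(V)\neq\emptyset$, Corollary \ref{corollary upper bound} gives $S(a+\eps V)<S$, so $(1+o(1))(S-S(a+\eps V))>0$ for $\eps$ small and may be dropped from the right-hand side of \eqref{S[u] lower bound}. By Proposition \ref{prop w and d} the concentration point lies in $\Omega$ with $d=d(x_\eps)\sim 1$; together with $d\lambda\to\infty$ from Proposition \ref{proposition decomposition} this forces $\lambda=\lambda_\eps\to\infty$, and it also makes $a(x_\eps)\to a(x_0)$ and $Q_V(x_\eps)\to Q_V(x_0)$ bounded. Finally, $\phi_a\ge 0$ by Corollary \ref{corollary phia geq 0, a leq 0} and $\phi_a(x_\eps)\to\phi_a(x_0)$ is bounded, so $\phi_a(x)\lambda^{-N+2s}\to 0$; consequently the tail $\mathcal T_2(\phi_a(x),\lambda)=\sum_{k\ge 2}\gamma_2(k)\phi_a(x)^k\lambda^{-k(N-2s)}$ is $o(\phi_a(x)\lambda^{-N+2s})$, so that after also discarding $\rho\|\ds r\|_2^2\ge 0$ one gets $\mathcal R\ge c\,\phi_a(x)\lambda^{-N+2s}$ for small $\eps$, with $c:=\tfrac12 A_{N,s}^{\frac{N-2s}{N}}a_{N,s}c_{N,s}>0$ (positivity of these constants from Lemma \ref{lemma constants}).

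Plugging this into \eqref{S[u] lower bound} and rearranging then yields, for all small $\eps$,
\[
 c\,\phi_a(x)\,\lambda^{-N+2s}\;\le\;A_{N,s}^{-\frac{N-2s}{N}}\Big((a(x)+o(1))(\alpha_{N,s}+c_{N,s}d_{N,s}b_{N,s})\,\lambda^{-2s}-(Q_V(x)+o(1))\,\eps\,\lambda^{-N+2s}\Big).
\]
Dividing through by $\lambda^{-N+2s}>0$ and using that $-2s+(N-2s)=N-4s<0$ so $\lambda^{N-4s}\to0$, the first term on the right is a bounded quantity times $\lambda^{N-4s}$, hence $o(1)$, while the second is a bounded quantity times $\eps$, hence $o(1)$. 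Therefore $c\,\phi_a(x)=o(1)$, and since $\phi_a(x)\ge0$ this gives $\phi_a(x)=o(1)$. Letting $\eps\to0$ and using continuity of $\phi_a$ (Lemma \ref{lemma Ha expansion}) together with $x_\eps\to x_0$ yields $\phi_a(x_0)=0$, i.e.\ $x_0\in\mathcal N_a$.

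I do not expect a genuine obstacle here: the point is simply that both perturbative terms live at scales ($\lambda^{-2s}$ and $\eps\lambda^{-N+2s}$) that are small relative to $\lambda^{-N+2s}$ after the division, and the strict inequality $N<4s$ is precisely what makes $\lambda^{N-4s}\to0$. The only items requiring care are the a priori inputs imported above — that $x_0\in\Omega$ (hence $\lambda\to\infty$ and $a(x),Q_V(x),\phi_a(x)$ bounded), which is Proposition \ref{prop w and d}, and the absorption of the tail $\mathcal T_2$ so that $\mathcal R$ dominates a fixed positive multiple of $\phi_a(x)\lambda^{-N+2s}$.
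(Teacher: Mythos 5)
Your proposal is correct and follows essentially the same route as the paper: both rearrange the lower bound \eqref{S[u] lower bound} after discarding the nonnegative terms $S-S(a+\eps V)$ and $\|\ds r\|_2^2$, absorb the higher-order tail $\mathcal T_2$ (the paper via $\mathcal T_2\lesssim\lambda^{-2N+4s}$ using boundedness of $\phi_a(x)$, you via $\mathcal T_2=o(\phi_a(x)\lambda^{-N+2s})$ — both valid), and conclude from $\lambda^{N-4s}\to0$ and $\eps\to0$. The differences are purely cosmetic bookkeeping.
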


\begin{proof}
Since $S - S(a + \eps V) \geq 0$ and $\|\ds r\|_2^2 \geq 0$, the bound \eqref{S[u] lower bound} gives
\[ \phi_a(x) \lesssim \eps + \lambda^{N-4s} + \lambda^{N-2s} \mathcal T_2(\phi_a(x), \lambda). \]
Since $\phi_a(x)$ is uniformly bounded, we can bound $\mathcal T_2(\phi_a(x), \lambda) \lesssim \lambda^{-2N + 4s}$, which concludes.
\end{proof}

\begin{lemma}
\label{lemma x0 in NaV}
    If $\mathcal N_a(V) \neq \emptyset$, then $x_0 \in \mathcal N_a(V)$.
\end{lemma}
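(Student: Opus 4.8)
The plan is to play the lower bound \eqref{S[u] lower bound} off against the sharp upper bound of Corollary~\ref{corollary upper bound}. By Lemma~\ref{lemma x0 in Na} we have $x_0 \in \mathcal N_a$, hence $a(x_0) < 0$ by assumption \eqref{assumption a}, while $\alpha_{N,s} + c_{N,s} d_{N,s} b_{N,s} > 0$ by Lemma~\ref{lemma constants}. Evaluating \eqref{S[u] lower bound} along the sequence (so $x = x_\eps$) and using $(1+o(1))(S-S(a+\eps V)) \geq 0$ and $\mathcal R \geq 0$, it yields
\[
    \bigl(Q_V(x_\eps)+o(1)\bigr)\,\eps\,\lambda^{-N+2s} \;\leq\; \bigl(a(x_\eps)+o(1)\bigr)\,(\alpha_{N,s}+c_{N,s}d_{N,s}b_{N,s})\,\lambda^{-2s} \;=\; -(c_0 + o(1))\,\lambda^{-2s},
\]
with $c_0 := |a(x_0)|\,(\alpha_{N,s}+c_{N,s}d_{N,s}b_{N,s}) > 0$, since $a(x_\eps) \to a(x_0) < 0$. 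Since $x_\eps \to x_0$ and $x \mapsto Q_V(x) = \int_\Omega V(y)G_a(x,y)^2\,\diff y$ is continuous (this follows from $G_a(x,y) = |x-y|^{-(N-2s)} - H_a(x,y)$, continuity of $H_a$, and local integrability of $|x-y|^{-2(N-2s)}$, which holds because $N < 4s$), letting $\eps \to 0$ in the display already forces $Q_V(x_0) \leq 0$. It remains to exclude $Q_V(x_0) = 0$.

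Suppose, for contradiction, $Q_V(x_0) = 0$. Dividing the display by $\eps\lambda^{-N+2s} > 0$ gives $|Q_V(x_\eps)+o(1)| \geq (c_0 + o(1))\lambda^{N-4s}\eps^{-1}$; since the left-hand side tends to $0$ and $N-4s < 0$, this forces $\eps^{\frac{1}{4s-N}}\lambda_\eps \to \infty$. On the other hand, since $\mathcal N_a(V) \neq \emptyset$, Corollary~\ref{corollary upper bound} applies, and the supremum appearing there is strictly positive because it dominates $|Q_V(x^\ast)|^{\frac{2s}{4s-N}}/|a(x^\ast)|^{\frac{N-2s}{4s-N}}$ for any fixed $x^\ast \in \mathcal N_a(V)$ --- a positive number, as $Q_V(x^\ast) < 0$ and $a(x^\ast) < 0$. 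Hence $S - S(a+\eps V) \geq c_1\,\eps^{\frac{2s}{4s-N}}$ for small $\eps$ and some $c_1 > 0$. Feeding this into \eqref{S[u] lower bound} --- again using that $\mathcal R \geq 0$ and that the $a$-term is $\geq 0$ for small $\eps$ --- and noting $\frac{2s}{4s-N}-1 = \frac{N-2s}{4s-N}$, we get
\[
    |Q_V(x_\eps)+o(1)| \;\gtrsim\; \eps^{\frac{N-2s}{4s-N}}\,\lambda_\eps^{\,N-2s} \;=\; \bigl(\eps^{\frac{1}{4s-N}}\lambda_\eps\bigr)^{N-2s} \;\longrightarrow\; +\infty ,
\]
since $N > 2s$ and $\eps^{\frac{1}{4s-N}}\lambda_\eps \to \infty$. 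This contradicts $Q_V(x_\eps) \to Q_V(x_0) = 0$. Therefore $Q_V(x_0) < 0$, and together with $x_0 \in \mathcal N_a$ we conclude $x_0 \in \mathcal N_a(V)$.

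The delicate point is the degenerate case $Q_V(x_0) = 0$: a priori the energy gain $S - S(a+\eps V)$ could decay faster than $\eps^{2s/(4s-N)}$, so \eqref{S[u] lower bound} alone is not enough. The resolution is to invoke it twice --- first against the $\lambda^{-2s}$ term only, which pins the concentration scale down to $\lambda_\eps \gg \eps^{-1/(4s-N)}$, and then against the matching upper bound of Corollary~\ref{corollary upper bound}, which converts this growth of $\lambda_\eps$ into a blow-up of $|Q_V(x_\eps)|$, contradicting $x_\eps \to x_0$.
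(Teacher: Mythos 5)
Your proof is correct and rests on exactly the same two ingredients as the paper's: the lower bound \eqref{S[u] lower bound} (with $\mathcal R \geq 0$ and $a(x_0)<0$ from Lemma \ref{lemma x0 in Na} and \eqref{assumption a}) played against the sharp upper bound of Corollary \ref{corollary upper bound}. The only difference is in the final assembly: the paper keeps both positive terms $C_1\eps^{\frac{2s}{4s-N}}$ and $C_2\lambda^{-2s}$ simultaneously and dominates $\eps\lambda^{-N+2s}$ by a single weighted Young/AM--GM inequality, which yields the quantitative bound $Q_V(x_0)\le -c<0$ in one step and thereby absorbs the degenerate case $Q_V(x_0)=0$ that you handle separately via the growth of $\eps^{\frac{1}{4s-N}}\lambda_\eps$.
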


In the proof of this lemma, we need the assumption \eqref{assumption a}, i.e. that $a(x) < 0$ on $\mathcal N_a$.

\begin{proof}
By Lemma \ref{lemma x0 in Na} we only need to prove that $Q_V(x_0) < 0$. 

Inserting the upper bound from Corollary \ref{corollary upper bound} on $S - S(a+ \eps V)$ into \eqref{S[u] lower bound}, and using $\mathcal R \geq 0$, we obtain that 
\[   (Q_V(x) +o(1)) \eps \lambda^{-N+2s} \leq - C_1 \eps^\frac{2s}{4s-N} + C_2 \lambda^{-2s}. \]
Here the numbers $C_1$ and $C_2$ are given by 
\[ C_1 := (1+o(1)) \sigma_{N,s} \sup_{x \in \mathcal N_a(V).} \frac{|Q_V(x)|^\frac{2s}{4s-N}}{|a(x)|^\frac{N-2s}{4s-N}}, \qquad C_2:= -a(x)+o(1)  \]
Using Lemma \ref{lemma x0 in Na} and the assumption $a < 0$ on $\mathcal N_a$, we have that $C_2$ is strictly positive and remains bounded away from zero by assumption. Since $\mathcal N_a(V)$ is not empty, the same is clearly true for $C_1$. Thus by Young's inequality
\[ - C_1 \eps^\frac{2s}{4s-N} + C_2 \lambda^{-2s} \leq - c \eps \lambda^{-N+2s} \]
for some $c > 0$. This implies $Q_V(x_0) < -c < 0$ as desired. 
\end{proof}

Now we are ready to prove our main results Theorems \ref{thm:13}, \ref{thm:14} and \ref{thm:17}. 

\begin{proof}
[Proof of Theorem \ref{thm:13}]
By using $\mathcal R \geq 0$ and minimizing the last term over $\lambda$, like in the proof of Corollary \ref{corollary upper bound}, the bound \eqref{S[u] lower bound} implies 
\begin{align*} 
(1+ o(1)) (S(a + \eps V) - S) &\geq - \sigma_{N,s}\frac{|Q_V(x_0)|^\frac{2s}{4s-N}}{|a(x_0)|^\frac{N-2s}{4s-N}} \eps^\frac{2s}{4s-N} + o(\eps^\frac{2s}{4s-N} ) \\
&\geq - \sigma_{N,s} \sup_{x \in \mathcal N_a(V)} \frac{|Q_V(x)|^\frac{2s}{4s-N}}{|a(x)|^\frac{N-2s}{4s-N}} \eps^\frac{2s}{4s-N} + o(\eps^\frac{2s}{4s-N} ),
\end{align*}
where the last inequality follows from Lemma \ref{lemma x0 in NaV}. This is equivalent to 
\[ S(a + \eps V) \geq S  - \sigma_{N,s} \sup_{x \in \mathcal N_a(V)} \frac{|Q_V(x)|^\frac{2s}{4s-N}}{|a(x)|^\frac{N-2s}{4s-N}} \eps^\frac{2s}{4s-N} + o(\eps^\frac{2s}{4s-N} ). \]
Since the matching upper bound has already been proved in Corollary \ref{corollary upper bound}, the proof of the theorem is complete. 
\end{proof}

\begin{proof}
[Proof of Theorem \ref{thm:14}]
Since $x_0 \in \mathcal N_a$ by Lemma \ref{lemma x0 in Na}, by assumption we have $Q_V(x_0) \geq 0$ and $a(x_0) < 0$. Together with $\mathcal R \geq 0$, the bound \eqref{S[u] lower bound} then implies 
\[ 0 \geq (1 + o(1)) (S - S(a + \eps V)) + c \lambda^{-2s} + o(\eps \lambda^{-N+2s}) \]
for some $c > 0$. Since $o(\eps \lambda^{-N+2s}) \geq - \frac{c}{2} \lambda^{-2s} + o(\eps^\frac{2s}{4s - N})$ by Young, this implies $S(a+\eps V) \geq S + o(\eps^\frac{2s}{N-4s}$. Since the inequality 
\begin{equation}
\label{S(a + epsV) leq S}
    S(a +\eps V) \leq S
\end{equation} always holds (e.g. by Theorem \ref{theorem upper bound}), we obtain $S(a+\eps V) \geq S + o(\eps^\frac{2s}{N-4s})$ as desired. 

Now assume that additionally $Q_V(x_0) > 0$. With $\mathcal R \geq 0$, \eqref{S[u] lower bound} implies, for $\eps > 0$ small enough and some $C_1, C_2 > 0$
\[ S(a + \eps V) - S \geq C_1 \eps \lambda^{-N+2s} + C_2 \lambda^{-2s} > 0, \]
which contradicts \eqref{S(a + epsV) leq S}. Thus assumption \eqref{strict ineq S(a+eps V) < S}, under which we have worked so far, cannot be satisfied, and we must have $S(a + \eps_0 V) = S$ for some $\eps_0 > 0$. Since $S(a + \eps V)$ is concave in $\eps$ (being the infimum of functions linear in $\eps$) and since $S(a) = S$, we must have $S(a + \eps V) = S$ for all $\eps \in [0, \eps_0]$. 
\end{proof}

\begin{proof}
[Proof of Theorem \ref{thm:17}]
We may first observe that the upper and lower bounds on $S(a + \eps V)$ already discussed in the proof of Theorem \ref{thm:13} imply 
\begin{equation}
    \label{x_0 proof 17}
    \frac{|Q_V(x_0)|^\frac{2s}{4s-N}}{|a(x_0)|^\frac{N-2s}{4s-N}} = \sup_{x \in \mathcal N_a} \frac{|Q_V(x)|^\frac{2s}{4s-N}}{|a(x)|^\frac{N-2s}{4s-N}}. 
\end{equation}
Now, by using additionally Lemma \ref{lem-taylor}, the estimate \eqref{S[u] lower bound} becomes 
\begin{equation}
\label{bound proof 17}
    (1 + o(1)) \left((S(a +\eps V) -S\right) \geq   - \sigma_{N,s} \frac{|Q_V(x_0)|^\frac{2s}{4s-N}}{|a(x_0)|^\frac{N-2s}{4s-N}} \epsilon^\frac{2s}{4s-N} + \mathcal R' + o(\epsilon^\frac{2s}{4s-N}), 
\end{equation} 
where 
\[ \mathcal R' = \begin{cases}
\mathcal R + c_0 \epsilon^\frac{2s-2}{4s-N}  \left(  \lambda^{-1} - \lambda_0(\epsilon)^{-1}\right)^2 & \text{ if } \quad \left(\frac{A_\epsilon }{B_\epsilon}\right)^{\frac{1}{4s-N}} \epsilon^{-\frac{1}{4s-N}} \lambda^{-1} \leq 2 \left(\frac{2s}{N-2s}\right)^\frac{1}{N-4s} , \\
\mathcal R + c_0 \epsilon ^\frac{2s}{4s-N}  & \text{ if } \quad  \left(\frac{A_\epsilon }{B_\epsilon}\right)^{\frac{1}{4s-N}} \epsilon^{-\frac{1}{4s-N}} \lambda^{-1} > 2 \left(\frac{2s}{N-2s}\right)^\frac{1}{N-4s},
\end{cases} 
\]  
in the notation of Lemma \ref{lem-taylor}, for $A_\eps := A_{N,s}^{-\frac{N-2s}{N}} (\alpha_{N,s} + c_{N,s} d_{N,s} b_{N,s} )(|a(x_0)|+o(1))$ and $B_\eps :=  A_{N,s}^{-\frac{N-2s}{N}} (|Q_V(x_0)| + o(1)) $, with $\lambda_0(\eps)$ given by \eqref{f eps minimum}. Now applying in \eqref{bound proof 17} the upper bound on $S(a +\eps V)$ from Corollary \ref{corollary upper bound} yields 
\[ \mathcal R'  = o(\eps^\frac{2s}{4s-N} ). \]
The terms that make up $\mathcal R'$ being separately nonnegative, this implies $\mathcal R = o(\eps^\frac{2s}{4s-N})$ and $(\lambda^{-1} - \lambda_0(\eps)^{-1})^2 = o(\eps^\frac{2}{4s-N})$, that is,
\begin{align} \lambda &= \left(\frac{2s A_\epsilon}{(N-2s) B_\epsilon } \right)^\frac{1}{4s-N}  \epsilon^{-\frac{1}{4s-N}} + o(\epsilon^{-\frac{1}{4s-N}}) \nonumber \\
&= \left(\frac{2s (\alpha_{N,s} + c_{N,s} d_{N,s} b_{N,s} ) |a(x_0)| }{(N-2s) |Q_V(x_0)| } \right)^\frac{1}{4s-N}  \epsilon^{-\frac{1}{4s-N}} + o(\epsilon^{-\frac{1}{4s-N}}) \label{bound lambda proof 17}
\end{align}
and
\begin{equation}
    \label{bound r proof 17}
    \|\ds r\|_2 = o(\eps^\frac{s}{4s-N} ). 
\end{equation}
Inserting the asymptotics of $\lambda$ back into  $\mathcal R = o(\eps^\frac{2s}{4s-N})$ now gives 
\begin{equation}
    \label{bound phi proof 17}
    \phi_a(x) = o(\eps). 
\end{equation} 
It remains to derive the claimed expansion for $\alpha$. From Lemma \ref{lemma expansion refined denominator}, we deduce
\[ |\alpha|^{-\frac{2N}{N-2s}} \int_\Omega u_\eps^\frac{2N}{N-2s} \diff y = \int_\Omega \psi_\xl^\frac{2N}{N-2s} \diff y + \mathcal O\left(\|\ds s\|_2 + \|\ds r\|^2_2 + \lambda^{-N+2s} \right). \]
Using the bound $\|\ds s\|_2 \lesssim \lambda^{-N+2s}$, together with \eqref{bound lambda proof 17}, \eqref{bound r proof 17} and the expansion of $\int_\Omega \psi_\xl^\frac{2N}{N-2s}$ from Theorem \ref{theorem upper bound}, we obtain
\[ |\alpha|^{-p} = 1 + \mathcal O(\lambda^{-N+2s}) = 1 + \mathcal O(\eps^\frac{N-2s}{4s-N}). \]
This completes the proof of Theorem \ref{thm:17}. 
\end{proof}

\appendix
\vspace{3mm}

\section{Green's function}
\label{sec:app:greens}

\subsection{The Green's function $G_0$ and the projections $PU_\xl$}
\label{subsection G_0 and PU}

We begin by studying the case $a = 0$. The next lemma collect some important estimates on the regular part $H_0(\cdot, \cdot)$ of the Green's function and the Robin function $\phi_0(x) = H_0(x,x)$, which will turn out very important for our analysis. Similar estimates for $s=1$ have been derived in \cite[Section 2 and Appendix A]{Rey1990}. 

We denote in the following $d(x) := \text{dist}(x, \partial \Omega)$. 

\begin{lemma}
	\label{lemma greens fct}
	Let $x \in \Omega$ and $N > 2s$. Then $y \mapsto H_0(x,y)$ is continuous on $\Omega$ and we have, for all $y \in \Omega$,
	\begin{align*}
	0 \leq H_0(x,y) &\lesssim  d(x)^{2s-N} , \\
	| \nabla_y H_0(x, y)| &\lesssim d(x)^{2s - N- 1} . 
	\end{align*}
	Moreover, the Robin function $\phi_0$ satisfies the two-sided bound 
	\begin{equation}
	\label{phi_0_bound}
	d(x)^{2s-N} \lesssim \phi_0(x) \lesssim  d(x)^{2s-N}.
	\end{equation}
\end{lemma}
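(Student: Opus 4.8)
The whole argument rests on recognizing $H_0(x,\cdot)$ as the $\Ds$-harmonic extension of its exterior values: since $\Ds|x-\cdot|^{-N+2s}=\gamma_{N,s}\delta_x$ on $\R^N$ and $\Ds G_0(x,\cdot)=\gamma_{N,s}\delta_x$ in $\Omega$, the function $H_0(x,\cdot)=|x-\cdot|^{-N+2s}-G_0(x,\cdot)$ satisfies $\Ds H_0(x,\cdot)=0$ in $\Omega$ and $H_0(x,y)=|x-y|^{-N+2s}$ for $y\in\R^N\setminus\Omega$. The exterior datum is bounded, continuous and nonnegative (as $x\in\Omega$ is separated from $\R^N\setminus\Omega$), so three facts follow at once: interior regularity for $\Ds$-harmonic functions gives $H_0(x,\cdot)\in C^\infty(\Omega)$, hence the asserted continuity of $y\mapsto H_0(x,y)$ (and, together with continuity of $(x,y)\mapsto H_0(x,y)$, of $\phi_0$); the weak maximum principle gives $H_0(x,\cdot)\ge0$; and the weak maximum principle further gives $\max_{\overline\Omega}H_0(x,\cdot)\le\sup_{\R^N\setminus\Omega}|x-\cdot|^{-N+2s}=d(x)^{-N+2s}$, using that the nearest point of $\R^N\setminus\Omega$ to $x$ sits at distance exactly $d(x)$ and that $t\mapsto t^{-N+2s}$ is decreasing. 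In particular this already yields the upper half of \eqref{phi_0_bound}, namely $\phi_0(x)=H_0(x,x)\le d(x)^{-N+2s}$.

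For the gradient bound I would invoke the standard interior estimate for $\Ds$-harmonic functions: if $u$ is $\Ds$-harmonic in $B_{2r}(y)$ and bounded on $\R^N$, then $|\nabla u(y)|\lesssim r^{-1}\|u\|_{L^\infty(\R^N)}$ (from the Poisson representation on $B_{2r}(y)$, differentiated, together with the decay of the Poisson kernel at infinity). Taking $u=H_0(x,\cdot)$ and $2r=\dist(y,\partial\Omega)$ and inserting the $L^\infty$ bound above gives $|\nabla_y H_0(x,y)|\lesssim\dist(y,\partial\Omega)^{-1}d(x)^{-N+2s}$; on the range $\dist(y,\partial\Omega)\gtrsim d(x)$ — which contains the ball $B_{d(x)/2}(x)$, in particular the diagonal point $y=x$, and which is the only range in which the estimate is invoked later — this reduces to the stated $|\nabla_y H_0(x,y)|\lesssim d(x)^{-N+2s-1}$.

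The only step requiring genuine work is the lower bound $\phi_0(x)\gtrsim d(x)^{-N+2s}$. The plan is to use the mean value property: since $H_0(x,\cdot)$ is $\Ds$-harmonic on $B_r(x)\subset\Omega$ with $r:=d(x)/2$, one has $\phi_0(x)=H_0(x,x)=\int_{|z-x|>r}P_r(x,z)\,H_0(x,z)\,\diff z$, where $P_r(x,\cdot)$ is the Poisson kernel of $B_r(x)$ at its center, $P_r(x,z)=c_{N,s}\bigl(r^2/(|z-x|^2-r^2)\bigr)^s|z-x|^{-N}$. As $H_0(x,\cdot)\ge0$ everywhere, the integral may be restricted to any set $E\subset\R^N\setminus\Omega$, on which $H_0(x,\cdot)=|x-\cdot|^{-N+2s}$ is explicit. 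Using the uniform exterior ball condition on $\Omega$ at the nearest boundary point $z_0$ (with $|x-z_0|=d(x)$), one produces, for $d(x)$ below a fixed threshold $\delta_0$, such an $E$ with $|E|\gtrsim d(x)^N$ and $d(x)\le|x-z|\lesssim d(x)$ on $E$; there $P_r(x,z)\sim d(x)^{-N}$ and $|x-z|^{-N+2s}\sim d(x)^{-N+2s}$, whence $\phi_0(x)\ge\int_E P_r(x,z)|x-z|^{-N+2s}\diff z\gtrsim d(x)^N\cdot d(x)^{-N}\cdot d(x)^{-N+2s}=d(x)^{-N+2s}$. For $x$ in the complementary region $\{x\in\Omega:\dist(x,\partial\Omega)\ge\delta_0\}$, a compact subset of $\Omega$, the function $\phi_0$ is continuous and strictly positive (strong maximum principle), hence bounded below by a positive constant there, while $d(x)^{-N+2s}$ is bounded above, so the inequality persists; all implied constants depend only on $\Omega$, $N$ and $s$.

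I expect this last step to be the main obstacle, as it is the only point where the soft tools (maximum principle, domain monotonicity, interior regularity) do not suffice and one must combine the explicit Poisson kernel of a ball with quantitative geometry of $\partial\Omega$ — in particular the lower bound genuinely fails without some regularity of $\Omega$ (e.g. for a ball with a lower-dimensional slit removed, where the complement is too thin near $\partial\Omega$). An essentially equivalent alternative to the Poisson-kernel route is to use domain monotonicity in the form $G_0\le G_0^{\,\R^N\setminus\overline{B_{\rho_0}(\zeta_0)}}$ together with the explicit Kelvin-transformed Green's function of the exterior of a ball; the argument above has the advantage of avoiding that computation.
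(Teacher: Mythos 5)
Your treatment of the sign, the $L^\infty$ bound and the upper half of \eqref{phi_0_bound} matches the paper's, which phrases the same maximum principle via the harmonic measure representation $H_0(x,y)=\int_{\R^N\setminus\Omega}|x-z|^{2s-N}\,\diff P_\Omega^y(z)$. The one genuine gap is the gradient estimate. The interior estimate $|\nabla u(y)|\lesssim \dist(y,\partial\Omega)^{-1}\|u\|_{L^\infty(\R^N)}$ only yields $|\nabla_y H_0(x,y)|\lesssim \dist(y,\partial\Omega)^{-1}d(x)^{2s-N}$, so your restriction to the range $\dist(y,\partial\Omega)\gtrsim d(x)$ proves strictly less than the lemma asserts (``for all $y\in\Omega$''). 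Moreover, the claim that the estimate is only ever invoked on $B_{d(x)/2}(x)$ is not accurate: in the proof of Lemma \ref{lemma expansion PU+w} it is used in the form $\|\nabla_y H_0(x,\cdot)\|_\infty\lesssim d^{2s-N-1}$ to control $H_0(x,y)-\phi_0(x)$ for $y$ ranging over all of $B_{d(x)}(x)$, a ball that touches $\partial\Omega$, so the weaker version would not suffice downstream either. The fix is exactly the paper's argument: $\nabla_y H_0(x,\cdot)$ is itself $s$-harmonic in $\Omega$ (the fractional Laplacian commutes with $\nabla_y$), with exterior datum $\nabla_y|x-\cdot|^{2s-N}$, whose modulus is $(N-2s)|x-\cdot|^{2s-N-1}\leq (N-2s)\,d(x)^{2s-N-1}$ on $\R^N\setminus\Omega$; the same harmonic-measure/maximum-principle step then gives the stated bound uniformly in $y\in\Omega$, with no interior regularity theory needed.

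For the lower bound in \eqref{phi_0_bound} you part ways with the paper, which simply cites \cite[Lemma 7.6]{DaLoSi2017}. Your Poisson-kernel argument --- restricting the mean value representation at the center of $B_{d(x)/2}(x)$ to a set $E\subset\R^N\setminus\Omega$ of measure $\gtrsim d(x)^N$ supplied by the exterior ball condition, on which $H_0(x,\cdot)=|x-\cdot|^{2s-N}\sim d(x)^{2s-N}$ and $P_r(x,\cdot)\sim d(x)^{-N}$ --- is correct and self-contained, at the price of invoking the exterior-ball ($C^{1,1}$) regularity of $\partial\Omega$, which is covered by the paper's standing $C^2$ assumption; you rightly note that some such regularity is genuinely necessary for this half of the estimate.
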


\begin{proof}
	$H_0(x, \cdot)$ satisfies 
	\begin{align*}
	(-\Delta)^s H_0(x, \cdot) &= 0 \qquad \text{ on } \Omega,   \\
	H_0(x, \cdot) &= \frac{1}{|x - \cdot|^{N-2s}} \qquad \text{ on } \R^N \setminus \Omega. 
	\end{align*}
	Thus we can write 
	\[ H_0(x, y) = \int_{\R^N \setminus \Omega}  \frac{1}{|x - z|^{N-2s}} \diff P_\Omega^y(z) \]
	where $P_\Omega^y$ denotes harmonic measure for $\Ds$, see \cite[Theorem 7.2]{Kwasnicki2019}. Since $P_\Omega^y$ is a probability measure, this implies 
	\[0 \leq  H_0(x,y)  \lesssim d(x)^{-N+2s}. \]

	Similarly, since 
	\begin{align*}
	(-\Delta)^s \nabla H_0(x, \cdot) &= \nabla (-\Delta)^s  H_0(x, \cdot) = 0 && \text{ on } \Omega,   \\
	\nabla H_0(x, \cdot) &= \nabla \frac{1}{|x - \cdot|^{N-2s}} && \text{ on } \R^N \setminus \Omega,
	\end{align*}
	we have 
	\[ |\nabla_y H_0(x, y)| = \left|\int_{\R^N \setminus \Omega} \left( \nabla_z \frac{1}{|x - z|^{N-2s}} \right) \diff P_\Omega^y(z)\right| \lesssim d(x)^{-N+2s-1}. \]
The lower bound $d(x)^{2s-N} \lesssim \phi_0(x)$ is proved in \cite[Lemma 7.6]{DaLoSi2017}. 
\end{proof}

The following important lemma shows the relation between the regular part $H_0(x, \cdot)$ and the projections $PU_\xl$ introduced in \eqref{eq:projintro}. For the classical case $s = 1$, this is \cite[Proposition 1]{Rey1990}. For fractional $s \in (0,1)$, a slightly weaker version relying on the extension formulation of $(-\Delta)^s$ appears in \cite[Lemma C.1]{ChKiLe2014}.

\begin{lemma}
	\label{lemma decomp PU}
	Let $x \in \Omega$ and $N > 2s$. 
	\begin{enumerate}
		\item[(i)] We have
		\begin{equation}
		\label{PU_leq_U}
		0 \leq PU_\xl \leq U_\xl
			\end{equation}
	and the function $\varphi_\xl := U_\xl - PU_\xl$ satisfies the estimates
		\begin{equation}
		\label{varphi_Linfty_bound}
		\|\varphi_\xl\|_{L^\infty(\R^N)} \lesssim  d(x)^{-N + 2s} \lambda^{\frac{-N+2s}{2}}
		\end{equation}
		and
		\begin{equation}
		\label{varphi_Lp_bound}
		\|\varphi_\xl\|_{L^p(\R^N)} \lesssim \left( d(x) \lambda \right)^{-\frac{N-2s}{2}} . 
		\end{equation} 
		\item [(ii)]
		Moreover, the expansion
		\begin{equation}
		\label{exp_PU}
		PU_\xl = U_\xl - \lambda^\frac{N-2s}{2} H_0(x, y) + f_\xl,
		\end{equation}
holds with
		\begin{align*}
		\|f_\xl\|_{L^\infty(\Omega)} & \lesssim  d(x)^{-N-2 + 2s}  \lambda^{-\frac{N + 4-2s}{2}}
		\end{align*}
	\end{enumerate}
\end{lemma}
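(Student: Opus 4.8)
The plan is to compare $PU_\xl$ with the two explicit functions $U_\xl$ and $\lambda^{-\ns}H_0(x,\cdot)$: all three are $s$-harmonic or $s$-superharmonic in $\Omega$, so the estimates reduce to the maximum principle for $\Ds$ together with the harmonic-measure representation $u(y)=\int_{\R^N\setminus\Omega}u(z)\diff P_\Omega^y(z)$ of bounded $s$-harmonic functions, exactly as used in the proof of Lemma \ref{lemma greens fct} (see \cite[Theorem 7.2]{Kwasnicki2019}).

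First I would set $\varphi_\xl:=U_\xl-PU_\xl$. By \eqref{eq:projintro} and \eqref{eq_U_xl}, $\Ds\varphi_\xl=0$ in $\Omega$ with $\varphi_\xl=U_\xl$ on $\R^N\setminus\Omega$, while $\Ds PU_\xl=c_{N,s}U_\xl^{p-1}\geq0$ in $\Omega$ with $PU_\xl=0$ on $\R^N\setminus\Omega$; the maximum principle then gives $PU_\xl\geq0$ and $\varphi_\xl\geq0$, which is \eqref{PU_leq_U}. For \eqref{varphi_Linfty_bound} I would use that for $z\notin\Omega$ one has $|x-z|\geq d(x)$ and $U_\xl(z)=\lambda^{\ns}(1+\lambda^2|x-z|^2)^{-\ns}\leq\lambda^{-\ns}|x-z|^{-(N-2s)}\leq\lambda^{-\ns}d(x)^{-(N-2s)}$; since $P_\Omega^y$ is a probability measure, the representation bounds $\varphi_\xl$ on $\Omega$, and on $\R^N\setminus\Omega$ one has $\varphi_\xl=U_\xl$ with the same estimate (the case $\lambda d(x)\leq1$ being trivial, since then $\|\varphi_\xl\|_\infty\leq\|U_\xl\|_\infty=\lambda^{\ns}\lesssim\lambda^{-\ns}d(x)^{-(N-2s)}$).

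The one point that needs some care is the $L^p$-bound \eqref{varphi_Lp_bound}, since feeding the $L^\infty$-estimate into $\int_\Omega\varphi_\xl^p$ produces only the power $d(x)^{-(N-2s)}$ instead of $(d(x)\lambda)^{-\ns}$. Instead I would combine the two pointwise bounds $\varphi_\xl\lesssim\lambda^{-\ns}d(x)^{-(N-2s)}$ (just obtained) and $\varphi_\xl\leq U_\xl\leq\lambda^{-\ns}|x-y|^{-(N-2s)}$, splitting $\|\varphi_\xl\|_p^p=\int_{B_{d(x)}(x)}\varphi_\xl^p+\int_{\Omega\setminus B_{d(x)}(x)}\varphi_\xl^p+\int_{\R^N\setminus\Omega}U_\xl^p$. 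Using $(N-2s)p=2N$ and $\ns p=N$, the first term is $\lesssim(\lambda^{-\ns}d(x)^{-(N-2s)})^p\,d(x)^N=(d(x)\lambda)^{-N}$, and the other two are $\lesssim\lambda^{-N}\int_{|t|>d(x)}|t|^{-2N}\diff t\lesssim(d(x)\lambda)^{-N}$ after rescaling; taking $p$-th roots gives \eqref{varphi_Lp_bound}.

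For part (ii) I would set $f_\xl:=PU_\xl-U_\xl+\lambda^{-\ns}H_0(x,\cdot)=\lambda^{-\ns}H_0(x,\cdot)-\varphi_\xl$ (the correct exponent being $-\ns$, as in the rest of the paper). As a difference of functions that are $s$-harmonic in $\Omega$, $f_\xl$ is $s$-harmonic in $\Omega$; and on $\R^N\setminus\Omega$, using $PU_\xl=0$, $H_0(x,\cdot)=|x-\cdot|^{-(N-2s)}$ and Lemma \ref{lemma h}, one gets $f_\xl=\lambda^{-\ns}|x-\cdot|^{-(N-2s)}-U_\xl=\lambda^{\ns}h(\lambda(x-\cdot))$. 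From the decay $h(w)\lesssim|w|^{-(N+2-2s)}$ for $|w|\gtrsim1$ (Lemma \ref{lemma h}) and $|x-z|\geq d(x)$, the exterior data is bounded by $\lambda^{\ns-(N+2-2s)}d(x)^{-(N+2-2s)}=\lambda^{-\frac{N+4-2s}{2}}d(x)^{-N-2+2s}$ (the case $\lambda d(x)\leq1$ again being trivial), and the maximum principle propagates this bound to all of $\Omega$, giving \eqref{exp_PU}. The only genuinely delicate step is the sharp $d(x)$-dependence in \eqref{varphi_Lp_bound}; everything else is a routine application of the maximum principle, the harmonic-measure formula, and the elementary asymptotics of $h$ recorded in Lemma \ref{lemma h}.
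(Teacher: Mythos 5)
Your proof is correct, and parts (i) (positivity, the $L^\infty$ bound) and (ii) coincide with the paper's argument: maximum principle plus the harmonic-measure representation of $s$-harmonic functions, with the exterior data for $f_\xl$ controlled via Lemma~\ref{lemma h}. (You are also right that the exponent in \eqref{exp_PU} is a typo for $\lambda^{-\ns}$.) Where you genuinely diverge is the $L^p$ bound \eqref{varphi_Lp_bound}: the paper does \emph{not} integrate pointwise bounds. Instead it invokes the Sobolev inequality $\|\varphi_\xl\|_p^2 \lesssim \|\ds\varphi_\xl\|_2^2$ and expands the energy
\[
\|\ds\varphi_\xl\|_2^2 = \|\ds U_\xl\|_2^2 + \|\ds PU_\xl\|_2^2 - 2\int_{\R^N}(-\Delta)^s U_\xl\, PU_\xl \diff y,
\]
showing via the equation $\Ds PU_\xl = c_{N,s}U_\xl^{p-1}$ and the already-proved $L^\infty$ bound on $\varphi_\xl$ that the right side is $\mathcal O\left((d(x)\lambda)^{-N+2s}\right)$. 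Your alternative — splitting $\int\varphi_\xl^p$ over $B_{d(x)}(x)$ (where you use $\|\varphi_\xl\|_\infty$) and its complement (where you use $0\le\varphi_\xl\le U_\xl\le\lambda^{-\ns}|x-y|^{-(N-2s)}$, noting $B_{d(x)}(x)\subset\Omega$) — gives the stronger conclusion $\|\varphi_\xl\|_p^p\lesssim(d(x)\lambda)^{-N}$, i.e.\ the same bound after taking $p$-th roots, and is more elementary since it avoids the Sobolev inequality and the energy computation entirely. Both routes are sound; the paper's has the side benefit of establishing the $\dot H^s$-norm estimate $\|\ds\varphi_\xl\|_2\lesssim(d(x)\lambda)^{-\frac{N-2s}{2}}$ along the way (which is what is actually used in Step 2 of the proof of Proposition~\ref{proposition decomposition}), whereas yours stays entirely at the level of pointwise comparison.
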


\begin{proof}

\textit{Claim (i).  }		
	Our proof follows mostly \cite[Appendix A]{Rey1990}. Since
		\begin{align*}
		(-\Delta)^s PU_\xl &\geq 0 \qquad \text{ on } \Omega,   \\
		PU_\xl &\equiv 0 \qquad \text{ on } \R^N \setminus \Omega. 
		\end{align*}
		the maximum principle (see e.g. \cite[Proposition 2.17]{Silvestre2007}) implies that $PU_\xl \geq 0$. 
		Similarly, $\varphi_\xl = U_\xl - PU_\xl$ satisfies
		\begin{align}
		(-\Delta)^s \varphi_\xl &= 0 && \text{ on } \Omega,  \label{varphi eq}  \\
		\varphi_\xl &= U_\xl \geq 0 && \text{ on } \R^N \setminus \Omega. \nonumber
		\end{align}
	and thus $\varphi_\xl \geq 0$ by the maximum principle. This completes the proof of \eqref{PU_leq_U}. 
		
By \eqref{varphi eq}, we can moreover write	
\[	\varphi_\xl(y) = \int_{\R^N \setminus \Omega} U_\xl(z) \diff P_\Omega^y(z), \quad y \in \Omega. \]
Thus $\|\varphi_\xl\|_{L^\infty(\R^N)} = \|U_\xl\|_{L^\infty(\R^N \setminus \Omega)}  \lesssim \lambda^{\frac{-N+2s}{2}} d(x)^{-N + 2s}$. 
		
Next, let us prove the $L^p$ estimate on $\varphi_\xl$. Since $\varphi_\xl \in H^s(\R^N)$, by the Sobolev inequality we have
		\begin{align}
		\| \varphi_\xl\|^2_{L^p(\R^N)}  &\lesssim \|(-\Delta)^{s/2} \varphi_\xl \|^2_2 \nonumber \\
		&= \|(-\Delta)^{s/2} U_\xl \|^2_2 + \|(-\Delta)^{s/2} PU_\xl \|^2_2 - 2 \int_{\R^N} (-\Delta)^s U_\xl PU_\xl \diff y. \label{est_varphi_xl}
		\end{align} 
		The second summand in \eqref{est_varphi_xl} can be written as 
		\begin{align*}
		\|(-\Delta)^{s/2} PU_\xl \|^2_2 &= c_{N,s} \int_\Omega PU_\xl U_\xl^{p-1} \diff y = \|(-\Delta)^{s/2} U_\xl \|^2_2 - c_{N,s} \int_\Omega \varphi_\xl U_\xl^{p-1} \diff y \\
		&= \|(-\Delta)^{s/2} U_\xl \|^2_2 + \mathcal O\left(\|\varphi_\xl\|_\infty \int_\Omega U_\xl^{p-1} \diff y \right) =  \|(-\Delta)^{s/2} U_\xl \|^2_2  + \mathcal O \left((d(x) \lambda)^{-N+2s} \right)
		\end{align*}
		by \eqref{varphi_Linfty_bound}. 
		
		Similarly, the third summand in \eqref{est_varphi_xl} is 
		\begin{align*}
		- 2 \int_{\R^N} (-\Delta)^s U_\xl PU_\xl \diff y &= -2 c_{N,s} \int_\Omega U_\xl^p \diff y + 2 c_{N,s} \int_\Omega U_\xl^{p-1} \varphi_\xl \diff y \\
		&= -2 \|(-\Delta)^{s/2} U_\xl \|^2_2 + \mathcal O\left((d(x) \lambda)^{-N+2s}\right),
		\end{align*}
		where we also used the bound 
		\[ \int_{\R^N \setminus \Omega} U_\xl^p \diff y \lesssim (d(x) \lambda)^{-N}. \]
		Collecting these estimates and returning to \eqref{est_varphi_xl}, we obtain 
		\[ \| \varphi_\xl\|^2_{L^p(\R^N)} \lesssim (d(x) \lambda)^{-N+2s}. \]
		This concludes the proof of \eqref{varphi_Lp_bound}.

\textit{Claim (ii).  }		
The function $f_\xl := \varphi_\xl - \lambda^{-\frac{N-2s}{2}} H_0(x, \cdot)$ satisfies 
		\begin{align*}
		(-\Delta)^s f_\xl &= 0 && \text{ on } \Omega,   \\
		f_\xl &= U_\xl - \frac{ \lambda^{-\frac{N-2s}{2}}}{|x - \cdot|^{N-2s}} && \text{ on } \R^N \setminus \Omega. 
		\end{align*}
		As in the proof of Lemma \ref{lemma greens fct}, we have
		\begin{equation*}
		f_\xl(y) = \int_{\R^N \setminus \Omega} \left( U_\xl(y) - \frac{ \lambda^{-\frac{N-2s}{2}}}{|x - z|^{N-2s}} \right) \diff P_\Omega^y(z), 
		\end{equation*}
		and hence, since $P_\Omega^x$ is a probability measure, we have 
		\begin{align*}
		\|f_\xl\|_{L^\infty(\Omega)} &\leq \left\|U_\xl(y) - \frac{ \lambda^{-\frac{N-2s}{2}}}{|x - y|^{N-2s}}\right\|_{L^\infty(\R^N \setminus \Omega)} 
		\\ &= \mathcal O \left(\lambda^{-\frac{N + 4-2s}{2}} \textrm{dist}(x, \partial \Omega)^{-N-2 + 2s}\right) 
		\end{align*}
		by Lemma \ref{lemma h} below. 
\end{proof}

\subsection{Expanding the regular part $H_b(x,y)$ near the diagonal}
\label{subsection Hb near diagonal}

We now turn to the Green's function $G_b$, for a general potential $b \in C^1(\Omega) \cap C(\overline{\Omega})$ such that $\Ds + b$ is coercive. By noting the potential $b$ rather than $a$, we emphasize the fact that criticality of $b$ is not needed for the following expansions. 
Moreover, in contrast to the previous subsection, we specialize to the condition $2s < N < 4s$ again, which plays a role in the proof of Lemmas \ref{lemma Ha expansion} and \ref{lemma Up-k Hk} below. 

\begin{lemma}
	\label{lemma Ha expansion}
	Let $x \in \Omega$ and $2s < N < 4s$.
	\begin{enumerate}
		\item[(i)] If $4s - N < 1$, then as $y \to x$
		\[ H_b(x,y) = \phi_b(x) - d_{N,s} b(x) |x-y|^{4s - N} + o( |x-y|^{4s - N}). \]
		\item[(ii)] If $4s - N \geq 1$, then there is $\xi_x \in \R^N$ such that
		\[ H_b(x,y) = \phi_b(x) + \xi_x \cdot (y-x) - d_{N,s} b(x) |x-y|^{4s - N} + o( |x-y|^{4s - N}). \]
	\end{enumerate}
	Here the constant $d_{N,s} > 0$ is given by \eqref{definition dNs}. 
	The asymptotics are uniform for $x$ in compact subsets of $\Omega$. 
\end{lemma}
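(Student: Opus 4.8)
The plan is to isolate the singular part of $H_b(x,\cdot)$ near the diagonal by subtracting an explicit Riesz potential, and then to apply interior regularity for $(-\Delta)^s$ to the remainder. Since $G_b(x,\cdot)=|x-\cdot|^{-N+2s}-H_b(x,\cdot)$ and $(-\Delta)^s|x-\cdot|^{-N+2s}=\gamma_{N,s}\delta_x$, the equation $((-\Delta)^s+b)G_b(x,\cdot)=\gamma_{N,s}\delta_x$ reduces to
\[
(-\Delta)^s H_b(x,\cdot)=b\,G_b(x,\cdot)\quad\text{in }\Omega,\qquad H_b(x,\cdot)=|x-\cdot|^{-N+2s}\quad\text{on }\R^N\setminus\Omega.
\]
First I would check that $H_b(x,\cdot)\in L^\infty_\loc(\Omega)$, with a bound depending on $x$ only through $\dist(x,\partial\Omega)$ (and on $\|b\|_{C(\overline{\Omega})}$): this follows from the resolvent identity $H_b(x,y)-H_0(x,y)=c\int_\Omega b(z)G_b(x,z)G_0(z,y)\diff z$ and the bounds of Lemma~\ref{lemma greens fct}, using that $2(N-2s)<N$ — which holds precisely because $N<4s$ — so that the convolution of the two kernels $|\cdot|^{-N+2s}$ stays bounded up to the diagonal. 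Consequently the right-hand side of the displayed equation equals $b(x)|x-\cdot|^{-N+2s}$ plus an error of size $O(|x-\cdot|^{1-N+2s})+O(1)$, the first piece coming from $(b-b(x))|x-\cdot|^{-N+2s}$ (here $b\in C^1(\Omega)$ is used) and the second from $-b\,H_b(x,\cdot)$.

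The heart of the argument is the classical identity $(-\Delta)^s\bigl(|x-\cdot|^{4s-N}\bigr)=\kappa_{N,s}\,|x-\cdot|^{-N+2s}$ on $\R^N$, valid because $4s-N\in(0,2s)$, with $\kappa_{N,s}<0$; one sets $d_{N,s}:=-\kappa_{N,s}^{-1}>0$, which by the standard Beta-function formula for the fractional Laplacian of a power coincides with the constant of \eqref{definition dNs}. Then $R_x(y):=H_b(x,y)+d_{N,s}b(x)\,|x-y|^{4s-N}$ solves
\[
(-\Delta)^s R_x=\bigl(b(\cdot)-b(x)\bigr)|x-\cdot|^{-N+2s}-b(\cdot)\,H_b(x,\cdot)=:g_x\quad\text{in }\Omega.
\]
By the previous paragraph $g_x\in L^q_\loc(\Omega)$ for every $q<\tfrac{N}{N-1-2s}$ if $N>1+2s$, and $g_x\in L^\infty_\loc(\Omega)$ if $N\le 1+2s$, with norms bounded locally uniformly in $x$.

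Interior $H^{2s,q}$- and Schauder-type estimates for $(-\Delta)^s$ now give $R_x\in C^\gamma_\loc(\Omega)$ for every $\gamma<\min\{2s,\,4s+1-N\}$. This minimum is strictly larger than $4s-N$, and it exceeds $1$ whenever $4s-N\ge 1$ — note that $4s-N\ge 1$ forces $s>\tfrac12$, so $2s\in(1,2)$ and $H^{2s,q}$ embeds into spaces $C^{1,\alpha}$. Since $R_x(x)=H_b(x,x)=\phi_b(x)$, Taylor expanding $R_x$ at $x$ to the available order yields $R_x(y)=\phi_b(x)+o(|x-y|^{4s-N})$ when $4s-N<1$ and $R_x(y)=\phi_b(x)+\xi_x\cdot(y-x)+o(|x-y|^{4s-N})$ with $\xi_x:=\nabla R_x(x)$ when $4s-N\ge 1$. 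Rewriting $H_b(x,y)=R_x(y)-d_{N,s}b(x)|x-y|^{4s-N}$ then gives the two claimed expansions, and uniformity for $x$ in compact subsets of $\Omega$ follows because every constant above depends on $x$ only through $\dist(x,\partial\Omega)$.

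I expect the main obstacle to be the regularity bookkeeping in the third step: one must make sure that the Hölder exponent $\min\{2s,4s+1-N\}$ afforded by the source $g_x$ strictly exceeds the order $4s-N$ of the extracted term, so that the remainder is genuinely $o(|x-y|^{4s-N})$ rather than only $O(|x-y|^\gamma)$ for some smaller $\gamma$; this is exactly the place where the hypothesis $N<4s$ is needed, as it also is for the a priori boundedness of $H_b(x,\cdot)$. The only other point requiring care is the explicit evaluation of $\kappa_{N,s}$, hence $d_{N,s}$ — a routine but somewhat delicate special-function computation.
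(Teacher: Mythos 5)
Your proposal is correct and follows essentially the same route as the paper's proof: subtract $d_{N,s}b(x)|x-y|^{4s-N}$ using $(-\Delta)^s|x|^{4s-N}=-d_{N,s}^{-1}|x|^{2s-N}$, observe that the remainder solves $(-\Delta)^s\psi_x=(b(\cdot)-b(x))|x-\cdot|^{-N+2s}-b\,H_b(x,\cdot)$ with a right-hand side in $L^q$ (or Hölder) thanks to $b\in C^1$ and $N<4s$, and then apply interior $C^\gamma$ (resp.\ $C^{1,\alpha}$) regularity plus Taylor expansion at $y=x$, with the same case split according to whether $4s-N<1$. The only cosmetic difference is that you make explicit the a priori boundedness of $H_b(x,\cdot)$ via the resolvent identity, which the paper takes for granted.
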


\begin{proof}
	Fix $x \in \Omega$ and let 
	\[ \psi_x(y) := H_b(x,y) - \phi_b(x) + d_{N,s} b(x) |x-y|^{4s - N}, \]
	with $d_{N,s}$ as in \eqref{definition dNs}. We use the facts that, in the distributional sense, 
	\[ (-\Delta)^s_y H_b(x,y) = b(y) G_b(x,y) = \frac{b(y)}{|x-y|^{N-2s}} - b(y) H_b(x,y) \]
	and, by Lemma \ref{lemma constants},
	\[ \Ds |x|^{4s - N} = -d_{N,s}^{-1} |x|^{2s - N}. \]
Thus $\psi_x$ solves, in the distributional sense, the equation
	\begin{equation}
	\label{eq psi}
	(-\Delta)^s_y  \psi_x(y) = F_x(y),
	\end{equation}
	with 
	\[ F_x(y) = \frac{b(y) - b(x)}{|x-y|^{N-2s}} - b(y) H_b(x,y).  \]
	Since $b \in C^1(\Omega)$, we have
	\[ \frac{|b(x) - b(y)|}{|x-y|^{N-2s}} \lesssim |x-y|^{-N + 2s + 1}. \]
	We will deduce the assertion of the lemma in each case from elliptic estimates on the equation \eqref{eq psi} and appropriate bounds on $F_x$.
	
	\emph{Case $- N + 2s + 1 < 0$.  }	Since the second summand $b(y) H_b(x,y)$ is in $L^\infty$, we have $F_x \in L^p(\Omega)$ for every $p < \frac{N}{N-2s-1}$. For the following, fix some $p \in (\frac{N}{2s}, \frac{N}{N-2s-1})$. (The assumption $N < 4s$ guarantees that this interval is not empty.) 
	
	Define $\tilde{\psi_x} := (-\Delta)^{-s} F_x$, where $(-\Delta)^{-s}$ is convolution with the Riesz potential. Then by \cite[Theorem 1.6.(iii)]{RoSe2014-2} we have $[\tilde{\psi_x}]_{C^\alpha(\R^N)} \lesssim \|F_x\|_{L^p(\R^N)}$, where $\alpha = 2s - \frac{N}{p}$. Moreover $(-\Delta)^s (\psi_x - \tilde{\psi_x}) = 0$ on $\Omega$. Since $s$-harmonic functions are smooth (see e.g. \cite[Section 2]{Abatangelo2015}), we conclude that $\psi_x \in C^{2s - \frac{N}{p}}(B_{d/2}(x))$. 
	
	Since $\psi_x(x) = 0$, we conclude that as $y \to x$, 
	\begin{equation}
	\label{psi x o} \psi_x(y) = \mathcal O(|x-y|^{2s - \frac{N}{p}}). 
	\end{equation}
	If we choose $p \in (\frac{N}{N-2s}, \frac{N}{N-2s-1})$, then $2s - \frac{N}{p} > 4s - N$. (As a consequence of $N < 4s$, we have the inclusion $(\frac{N}{N-2s}, \frac{N}{N-2s-1}) \subset (\frac{N}{2s}, \frac{N}{N-2s-1})$. Together with the definition of $\psi_x$, \eqref{psi x o} then implies
	\[ H_b(x,y) = \phi_b(x) - d_{N,s} b(x) |x-y|^{4s - N}
	+ o(|x-y|^{4s-N}), \]
	which is the assertion of the lemma. 
		
	\emph{Case $- N + 2s + 1 \geq 0$. }
	In this case $F_x \in L^\infty(\Omega)$. More precisely, we have 
	\[ F_x \in \
	\begin{cases}
	L^\infty(\Omega) & \text{ if } N = 2s +1, \\
	C^{0,  - N + 2s +1}(\Omega)  & \text{ if } 0 < - N + 2s +1.
	\end{cases}
	\]
	Notice that we always have $ - N + 2s +1 < 1$, since $N > 2s$.
	As above, define $\tilde{\psi_x} = (-\Delta)^{-s} F_x$. By \cite{Silvestre2007}, we find using $N < 4s$ that in any of the above cases, $\tilde{\psi_x} \in C^{1, \alpha}$ for all $\alpha \in (0,1]$ with $\alpha < 4s - N$. Using Hölder continuity of the gradient, we easily find
	\[ \psi_x(y) = \psi_x(x) + \nabla \psi_x(x) \cdot(y-x) + \mathcal O(|x-y|^{\alpha + 1}). \]
	Choosing $\alpha > 4s - N - 1$ and inserting the definition of $\psi_x$, we find 
	\[ H_b(x,y) = \phi_b(x) + \nabla \psi_x(x) \cdot (y-x) - d_{N,s} b(x) |x-y|^{4s - N} + o( |x-y|^{4s - N}), \]
	which is the assertion of the lemma with $\xi_x :=  \nabla \psi_x(x)$. 
\end{proof}

\begin{lemma}
\label{lemma Up-k Hk}
Let $k \in \N$ with $k \leq p = \frac{2N}{N-2s}$. 
If $k > \frac{2s}{N-2s}$, then 
\[ \lambda^{-\frac{k}{2}(N-2s)} \int_\Omega U_\xl^{p-k} H_a(x, \cdot)^k \diff y = o(\lambda^{-2s}) .\]
If $2 \leq k \leq \frac{2s}{N-2s}$, then
\[ \lambda^{-\frac{k}{2}(N-2s)} \int_\Omega U_\xl^{p-k} H_a(x, \cdot)^k \diff y = \left( \int_{\R^N} U_{0,1}(y)^{p-k} \diff y \right) \phi_a(x)^k \lambda^{-k(N-2s)} + o(\lambda^{-2s}). \]
If $k = 1, $
	\[ \lambda^{-\ns} \int_\Omega U_\xl^{p-1} H_b(x, \cdot) \diff y = a_{N,s} \phi_b(x) \lambda^{-N+2s} - d_{N,s} b_{N,s} b(x) \lambda^{-2s} + o (\lambda^{-2s}) + o(\phi_b(x) \lambda^{-N+2s}). \]
The asymptotics are uniform for $x$ in compacts of $\Omega$. 
\end{lemma}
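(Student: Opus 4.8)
The plan is to rescale every integral around $x$ at scale $\lambda^{-1}$, extract the leading behaviour from the value $H_b(x,x)=\phi_b(x)$, and control all remaining contributions by means of the near-diagonal expansion of Lemma~\ref{lemma Ha expansion} together with the elementary decay and growth estimates for $U_\xl$ and $U_{0,1}$.

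First I would substitute $y=x+\lambda^{-1}z$, so that $U_\xl(x+\lambda^{-1}z)=\lambda^{\frac{N-2s}{2}}U_{0,1}(z)$ and $\diff y=\lambda^{-N}\diff z$. Using the identity $\frac{(N-2s)p}{2}=N$, the powers of $\lambda$ collapse and the integral becomes
\[
\lambda^{-\frac{k}{2}(N-2s)}\int_\Omega U_\xl^{p-k}H_b(x,\cdot)^k\,\diff y
=\lambda^{-k(N-2s)}\int_{\lambda(\Omega-x)}U_{0,1}(z)^{p-k}\,H_b\big(x,x+\lambda^{-1}z\big)^k\,\diff z .
\]
The set $\lambda(\Omega-x)$ contains $B_{d(x)\lambda}(0)$ and is contained in $B_{C\lambda}(0)$. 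I would split it into the inner ball $B_{\delta\lambda}(0)$, on which Lemma~\ref{lemma Ha expansion} applies after undoing the scaling, and the outer part, on which I would only use that $|H_b(x,\cdot)|\lesssim1$ on $\Omega$ for $x$ in a fixed compact set, together with the tail bound $\int_{|z|>R}U_{0,1}^{p-k}\diff z\lesssim R^{k(N-2s)-N}$, valid because $k(N-2s)<N$ throughout the stated range (the sole equality case $k=p$, $U_{0,1}^{0}=1$, being treated directly). Writing $H_b^k=\phi_b(x)^k+k\phi_b(x)^{k-1}\big(H_b-\phi_b\big)+O\big(|H_b-\phi_b|^2\big)$, the constant term $\phi_b(x)^k$ produces exactly $\phi_b(x)^k\lambda^{-k(N-2s)}\|U_{0,1}\|_{p-k}^{p-k}$ whenever $U_{0,1}^{p-k}\in L^1(\R^N)$, i.e. $k(N-2s)<N$, which holds a fortiori when $k\le\frac{2s}{N-2s}$; when $k>\frac{2s}{N-2s}$ the prefactor $\lambda^{-k(N-2s)}$ is itself $o(\lambda^{-2s})$ and one only needs the crude bound $\int_{\lambda(\Omega-x)}U_{0,1}^{p-k}\diff z\lesssim\lambda^{\max(0,k(N-2s)-N)}$ (with a logarithm in the borderline case), so that the whole expression is $O\big(\lambda^{-\min(k(N-2s),N)}\log\lambda\big)=o(\lambda^{-2s})$ since $N>2s$.

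It remains to handle the remainders. On $B_{\delta\lambda}(0)$, Lemma~\ref{lemma Ha expansion} gives, after undoing the scaling, $H_b(x,y)=\phi_b(x)+L_x(y-x)-d_{N,s}b(x)|x-y|^{4s-N}+o(|x-y|^{4s-N})$, where $L_x(\cdot)$ is a linear form present only if $4s-N\ge1$. Since $L_x$ is odd and $U_{0,1}^{p-k}$ is radial, the contribution of $L_x$ integrates to zero over balls, hence it survives only through the asymmetry of $\lambda(\Omega-x)$ and is $O(\lambda^{-N})$ by the tail estimate. Inserting $|H_b-\phi_b-L_x(y-x)|\lesssim|x-y|^{4s-N}$ and the elementary growth estimate $\int_{B_{\delta\lambda}}U_{0,1}^{p-k}|z|^{\tau}\diff z\lesssim\lambda^{\max(0,k(N-2s)-N+\tau)}$ (again with a logarithm at the borderline), and bounding $O(|H_b-\phi_b|^2)$ via $|H_b-\phi_b|\lesssim|x-y|^{\min(1,4s-N)}$, one finds that for $k\ge2$ every such contribution is $O(\lambda^{-N}\log\lambda)=o(\lambda^{-2s})$, using $2s<N<4s$; while for $k=1$ the term built from $-d_{N,s}b(x)|x-y|^{4s-N}$ lands precisely at order $\lambda^{-2s}$ and, after undoing the scaling, equals $-d_{N,s}b_{N,s}b(x)\lambda^{-2s}$ with $b_{N,s}=\int_{\R^N}U_{0,1}^{p-1}|z|^{4s-N}\diff z$ (finite since $N>2s$), the constant term equals $a_{N,s}\phi_b(x)\lambda^{-N+2s}+o(\phi_b(x)\lambda^{-N+2s})$ with $a_{N,s}=\|U_{0,1}\|_{p-1}^{p-1}$, and the $o(|x-y|^{4s-N})$ error contributes $o(\lambda^{-2s})$ by a truncated dominated-convergence argument on $B_{\delta\lambda}$. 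The proof for $k\ge2$ with $H_a$ in place of $H_b$ is identical, criticality playing no role.

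The step I expect to be the main obstacle is the case $k=1$: one has to verify that the $o(|x-y|^{4s-N})$ remainder in Lemma~\ref{lemma Ha expansion}, which gets integrated against the weight $U_{0,1}^{p-1}|z|^{4s-N}$ that fails to be integrable at infinity, still produces $o(\lambda^{-2s})$; to check that the odd linear term $L_x$ is genuinely negligible; and to match the explicit constants $a_{N,s}$, $d_{N,s}$ and $b_{N,s}$ with their definitions in Lemma~\ref{lemma constants}. Uniformity for $x$ in compact subsets of $\Omega$ is inherited from the uniformity in Lemma~\ref{lemma Ha expansion} and from the fact that all the estimates above depend on $x$ only through $d(x)$.
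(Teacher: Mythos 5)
Your proposal is correct and follows essentially the same route as the paper's proof: split the integral into a neighbourhood of $x$ and its complement, use boundedness of $H_b(x,\cdot)$ on the outer region, apply the near-diagonal expansion of Lemma \ref{lemma Ha expansion} on the inner region, kill the linear term by oddness, and read off the constants $a_{N,s}$, $b_{N,s}$, $d_{N,s}$ (the explicit rescaling $y=x+\lambda^{-1}z$ is only a cosmetic difference). Your flagged concern about the $o(|x-y|^{4s-N})$ remainder for $k=1$ is resolved exactly as you suggest, by the standard $\eta$--$\delta$ truncation argument, so no gap remains.
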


\begin{proof}
 Let us start with the easy case of $k > \frac{2s}{N-2s}$. In that case, since $H_a(x, \cdot)$ is uniformly bounded, we have 
 \[ \lambda^{-\frac{k}{2}(N-2s)} \int_\Omega U_\xl^{p-k} H_a(x, \cdot)^k \diff y \lesssim \lambda^{-k ( N -2s)} \int_{B_{R\lambda}} U_{0,1}^{p-k} \diff y \lesssim \begin{cases}
 \lambda^{-k (N-2s)} & \text{ if } k < \frac{N}{N-2s}, \\
 \lambda^{-N} \ln \lambda &\text{ if } k = \frac{N}{N-2s}, \\
 \lambda^{-N} &\text{ if } k > \frac{N}{N-2s}.
 \end{cases}
 \]
 In any case, this is $o(\lambda^{-2s})$. 
 
Now assume  $1 \leq k \leq \frac{2s}{N-2s}$. Let us abbreviate $d = d(x)$ and $B_d = B_d(x)$ and show that the integral over $\Omega \setminus B_d$ is $o(\lambda^{-2s})$. Indeed, since $H_a(x, \cdot)$ is uniformly bounded, 
\begin{align}
    \lambda^{-\frac{k}{2}(N-2s)} \int_{\Omega \setminus B_d} U_\xl^{p-k} H_a(x, y)^k \diff y & \lesssim \lambda^{-\frac{k}{2}(N-2s)}\int_{\R^N \setminus B_d} U_\xl^{p-k} \diff y \\
    & = \lambda^{-k(N-2s)} \int_{\R^N \setminus B_{d\lambda}} U_{0,1}^{p-k} \diff y = \lambda^{-N} = o(\lambda^{-2s}) . 
\end{align}
To evaluate the remaining integral over $B_d$, we use the formula 
\begin{equation}
    \label{Ha formula lemma integral}
    (H_a(x,y))^k = \left( \phi_a(x) + \xi_x \cdot (y-x) - d_{N,s} (a(x)+o(1)) |x-y|^{4s - N} \right) ^k 
\end{equation} 
by Lemma \ref{lemma Ha expansion} (where $\xi_x$ may be zero if we are in case (i) of that lemma). After multiplying out the right side, every term containing the factor $\xi_x \cdot (y-x)$ only once vanishes by oddness. 

Let now $k \geq 2$. Since $\phi_a(x)$ and $a(x)$ are uniformly bounded and $\Omega$ is bounded, it is clear that we can estimate 
\[ H_a(x,y)^k = \phi_a(x)^k + \mathcal O( |y-x|^2 +  |y-x|^{4s - N})  \leq \phi_a(x)^k + \mathcal O( |y-x|^{4s - N}). \]
For the last step we used that $4s - N \leq 2 + 2s - N < 2$. Now 
\begin{align}
  \lambda^{-\frac{k}{2}(N-2s)}  \int_{B_d} U_\xl^{p-k} |x-y|^{4s-N}  \diff y &= \lambda^{-k(N-2s)} \lambda^{N-4s} \int_{B_{d \lambda}(0)} U_{0,1}^{p-k} |y|^{4s - N} \diff y \\
  & \lesssim \lambda^{-N} \lambda^{-(k-2)(N-2s)} \times \begin{cases}
  \ln \lambda  & \text{ if } k = 2, \\
  \lambda^{(k-2)(N-2s)} & \text{ if } k > 2.
  \end{cases}
\end{align}
In any case, this is $o(\lambda^{-2s})$. 

Finally, if $k=1$, plugging in expansion \eqref{Ha formula lemma integral}, the term involving $a(x)$ is not negligible anymore. Instead, it gives 
\[ \lambda^{-\ns} \int_{B_d} U_\xl^{p-1} (a(x) + o(1)) |x-y|^{4s-N} \diff y = \lambda^{-2s} a_{N,s} a(x) + o(\lambda^{-2s}),  \]
which completes the proof. 
\end{proof}

\section{Auxiliary computations}
\label{sec:app:auxiliary}

In this appendix, we collect some technical results and computations used throughout the paper. 

First, we compute the $L^q$ norm of $U_{x,\lambda}$ for various values of $q$.
\begin{lemma}[$L^q$-norm of $U_{x,\lambda}$]
	\label{lemma int Uq}
	Let $x \in \Omega$ and $q \in [1, \infty]$. As $\lambda \to \infty$, we have, uniformly for $x$ in compact subsets, 
	\[ \|U_\xl \|_{L^q(\Omega)} \sim 
	\begin{cases} \lambda^{\frac{N-2s}{2}- \frac{N}{q}}, & q > \frac{N}{N-2s}, \\
	\lambda^{-\frac{N-2s}{2}} (\ln \lambda)^{\frac{N-2s}{N}}, & q = \frac{N}{N-2s}, \\
	\lambda^{-\frac{N-2s}{2}}, & q < \frac{N}{N-2s}. 
	\end{cases} 
	\]
	Moreover, for $\pl U_\xl = \frac{N-2s}{2} \lambda^{\frac{N-2s-2}{2}} \frac{1 - \lambda^2 |x-y|^2}{(1+ \lambda^2 |x-y|^2)^\frac{N-2s+2}{2}}$, we have $|\pl U_\xl | = \mathcal O(\lambda^{-1} U_\xl)$ pointwise and therefore 
	\[ \| \pl U_\xl \|_q \lesssim \lambda^{-1} \|U_\xl\|_q, \qquad q \in [1, \infty]. \]
	Finally, for $\pxi U_\xl = (-N +2s) \lambda^\frac{N-2s+2}{2} \frac{\lambda(x-y)}{(1 + \lambda^2 |x-y|^2)^\frac{N-2s+2}{2}}$, we have 
	\[ \|\pxi U_\xl \|_{L^q(\Omega)} \sim 
	\begin{cases} \lambda^{\frac{N-2s+2}{2}- \frac{N}{q}}, & q > \frac{N}{N-2s+1}, \\
	\lambda^{-\frac{N-2s}{2}} (\ln \lambda)^{\frac{N-2s+1}{N}}, & q = \frac{N}{N-2s+1}, \\
	\lambda^{-\frac{N-2s}{2}}, & q < \frac{N}{N-2s+1}. 
	\end{cases} 
	\]
	
\end{lemma}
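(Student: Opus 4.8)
The plan is to derive all three families of bounds from the single observation that, after substituting the explicit formula for $U_\xl$ (or one of its derivatives) and rescaling via $z = \lambda(y - x)$, each $L^q$ integral becomes a fixed power of $\lambda$ times a radial integral over the dilated domain $\lambda(\Omega - x)$. The only geometry I need is that, for $x$ in a compact set $K \subset \Omega$, one has $d_K := \inf_{x \in K} d(x) > 0$ and $D_K := \sup_{x \in K,\, y \in \Omega} |x - y| < \infty$, so that
\[ B_{d_K \lambda}(0) \subseteq \lambda(\Omega - x) \subseteq B_{D_K \lambda}(0) ; \]
hence every integral below is squeezed between two integrals over balls whose radii are comparable to $\lambda$, which is exactly what the uniform statement requires.

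For $\|U_\xl\|_{L^q(\Omega)}$ I would first write, by the change of variables above,
\[ \int_\Omega U_\xl^q \diff y = \lambda^{\frac{q(N-2s)}{2} - N} \int_{\lambda(\Omega - x)} (1 + |z|^2)^{-\frac{q(N-2s)}{2}} \diff z . \]
The integrand decays like $|z|^{-q(N-2s)}$, so elementary one-dimensional estimates show that $\int_{B_R(0)} (1 + |z|^2)^{-q(N-2s)/2} \diff z$ tends to a finite positive constant if $q(N-2s) > N$, grows like $\ln R$ if $q(N-2s) = N$, and grows like $R^{N - q(N-2s)}$ if $q(N-2s) < N$. Evaluating at radius $\sim \lambda$, combining with the prefactor, and taking the $q$-th root produces exactly the three alternatives in the statement; the endpoint $q = \infty$ is immediate since $\|U_\xl\|_\infty = U_\xl(x) = \lambda^{\frac{N-2s}{2}}$.

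The bound on $\pl U_\xl$ needs no further integration. Differentiating the explicit formula gives the stated expression for $\pl U_\xl$, and since $|1 - t| \le 1 + t$ for all $t \ge 0$ (applied with $t = \lambda^2|x-y|^2$) one gets the pointwise inequality $|\pl U_\xl(y)| \lesssim \lambda^{-1} U_\xl(y)$; the claimed $L^q$ bound then follows by monotonicity. For $\pxi U_\xl$ I would repeat the rescaling used for $U_\xl$: from the explicit formula,
\[ \int_\Omega |\pxi U_\xl|^q \diff y \sim \lambda^{\frac{q(N-2s+2)}{2} - N} \int_{\lambda(\Omega - x)} \frac{|z|^q}{(1 + |z|^2)^{\frac{q(N-2s+2)}{2}}} \diff z , \]
where the integrand now decays like $|z|^{-q(N-2s+1)}$, so the critical exponent moves from $\frac{N}{N-2s}$ to $\frac{N}{N-2s+1}$; the same convergent / logarithmic / polynomially divergent trichotomy for $\int_{B_R(0)}$ (with growth $R^{N - q(N-2s+1)}$ in the last case) yields the stated asymptotics for $\|\pxi U_\xl\|_{L^q(\Omega)}$.

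None of this is conceptually hard, so I do not expect a genuine obstacle; the only points that call for a little care are the two borderline cases $q = \frac{N}{N-2s}$ and $q = \frac{N}{N-2s+1}$, where the radial integral diverges logarithmically, together with the purely mechanical bookkeeping of the powers of $\lambda$ produced by each rescaling.
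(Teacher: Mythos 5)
Your argument is correct: the rescaling $z=\lambda(y-x)$, the sandwiching of $\lambda(\Omega-x)$ between balls of radius $d_K\lambda$ and $D_K\lambda$, and the convergent/logarithmic/polynomial trichotomy for the radial integrals give exactly the stated exponents (and the pointwise bound $|1-t|\le 1+t$ handles $\pl U_\xl$). The paper states this lemma without proof, treating it as a standard computation, and your proof is precisely that standard computation; the only micro-point worth noting is that for $\pxi U_\xl$ the integrand involves $|z_i|^q$ rather than $|z|^q$, which changes nothing since the two agree up to a constant when integrated over balls.
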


\begin{lemma}
    \label{lemma Hs norms PU}
    We have 
    \[ \| \ds PU\| \sim 1, \qquad \|\ds \pl PU\| \sim \lambda^{-1}  , \qquad \|\ds \pxi PU\| \sim \lambda . \]
    Moreover, 
    \begin{align*}
         \int_{\R^N} \ds PU_\xl \ds \pl PU_\xl \diff y & \lesssim \lambda^{-N+2s-1},  \\
      \int_{\R^N} \ds PU_\xl \ds \pxi PU_\xl \diff y &\lesssim \lambda^{-N+2s}, \\
         \int_{\R^N} \ds \pl PU_\xl \ds \pxi PU_\xl \diff y &\lesssim \lambda^{-N +2s -1}, \\
         \int_{\R^N} \ds \pxi PU_\xl \ds \pxj PU_\xl \diff y&\lesssim \lambda^{-N+2s}. 
    \end{align*} 
\end{lemma}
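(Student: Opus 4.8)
The whole computation is built on the single identity $\int_{\R^N}\ds PU_\xl\,\ds\eta\,\diff y=c_{N,s}\int_\Omega U_\xl^{p-1}\eta\,\diff y$ for every $\eta\in\ths$ (the weak form of \eqref{eq:projintro}), applied with $\eta$ equal to each of $PU_\xl$, $\pl PU_\xl$, $\pxi PU_\xl$. Since differentiation in $\lambda$ and $x_i$ commutes with $(-\Delta)^s$, one also has $(-\Delta)^s\pl PU_\xl=c_{N,s}(p-1)U_\xl^{p-2}\pl U_\xl$ on $\Omega$ and similarly for $\pxi$, so \emph{all} the scalar products in the statement reduce to integrals of the form $\int_\Omega U_\xl^{p-1}\,\partial(PU_\xl)\,\diff y$ and $\int_\Omega U_\xl^{p-2}\,\partial U_\xl\,\partial(PU_\xl)\,\diff y$, which are ordinary (not singular) integrals. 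The plan is: first establish the three diagonal estimates $\|\ds PU_\xl\|^2\sim1$, $\|\ds\pl PU_\xl\|^2\sim\lambda^{-2}$, $\|\ds\pxi PU_\xl\|^2\sim\lambda^2$, then the four off-diagonal bounds, reusing the same inputs.

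\emph{Diagonal terms.} For $PU_\xl$: write $\|\ds PU_\xl\|^2=c_{N,s}\int_\Omega U_\xl^{p-1}PU_\xl\,\diff y$ and substitute $PU_\xl=U_\xl-\lambda^{-\ns}H_0(x,\cdot)+f_\xl$ from Lemma \ref{lemma decomp PU}. The leading term is $c_{N,s}\|U_\xl\|_p^p=c_{N,s}\|U_{0,1}\|_p^p+o(1)$, a positive constant; the corrections are $O((d\lambda)^{-N+2s})$ by the $L^\infty$-bounds on $H_0$ and $f_\xl$ (Lemmas \ref{lemma greens fct}, \ref{lemma decomp PU}) together with Lemma \ref{lemma int Uq} for $\|U_\xl\|_{p-1}^{p-1}$. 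Hence $\|\ds PU_\xl\|^2\sim1$. For $\pl PU_\xl$: by the identity above, $\|\ds\pl PU_\xl\|^2=c_{N,s}(p-1)\int_\Omega U_\xl^{p-2}\pl U_\xl\,\pl PU_\xl\,\diff y$; using $\pl PU_\xl=\pl U_\xl-(\text{lower order})$ and $|\pl U_\xl|\lesssim\lambda^{-1}U_\xl$ pointwise (Lemma \ref{lemma int Uq}), the main term is $c_{N,s}(p-1)\int_{\R^N}U_{0,1}^{p-2}|\pl U_\xl|^2=\lambda^{-2}\cdot c_{N,s}(p-1)B_{N,s}$ up to $o(\lambda^{-2})$, where $B_{N,s}=\int_{\R^N}U_{0,1}^{p-2}|\pl U_{0,1}|^2>0$ is the constant already introduced in Lemma \ref{lemma N_1 D_1}; the error terms are controlled exactly as before. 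The $\pxi$ case is identical with $|\pxi U_\xl|\lesssim\lambda\,U_\xl^{(N-2s+1)/(N-2s)}$-type scaling from Lemma \ref{lemma int Uq}, giving $\|\ds\pxi PU_\xl\|^2\sim\lambda^2$.

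\emph{Off-diagonal terms.} For $\int_{\R^N}\ds PU_\xl\,\ds\pl PU_\xl=c_{N,s}\int_\Omega U_\xl^{p-1}\pl PU_\xl\,\diff y$: note $\int_\Omega U_\xl^{p-1}\pl U_\xl\,\diff y=\tfrac1p\pl\int_\Omega U_\xl^p\,\diff y=\tfrac1p\pl(\|U_{0,1}\|_p^p)+O(\lambda^{-N-1})=O(\lambda^{-N-1+2s})$ (the full-space integral is $\lambda$-independent, so only the tail over $\R^N\setminus\Omega$ survives, contributing $\mathcal O(\lambda^{-1}\cdot(d\lambda)^{-N})$ after differentiation); replacing $\pl PU_\xl$ by $\pl U_\xl$ costs $\lambda^{-\ns}\pl H_0+\pl f_\xl$, bounded via Lemma \ref{lemma decomp PU}, yielding the claimed $\lambda^{-N+2s-1}$. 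The bound for $\int\ds PU_\xl\,\ds\pxi PU_\xl$ uses $\int_\Omega U_\xl^{p-1}\pxi U_\xl\,\diff y=\tfrac1p\pxi\int_{\R^N}U_\xl^p-(\text{tail})=O((d\lambda)^{-N})$ by oddness of $\pxi U_\xl$ (the full-space integral vanishes), plus the correction terms, giving $\lambda^{-N+2s}$. The remaining two mixed terms $\int\ds\pl PU_\xl\,\ds\pxi PU_\xl$ and $\int\ds\pxi PU_\xl\,\ds\pxj PU_\xl$ are handled the same way: rewrite as $c_{N,s}(p-1)\int_\Omega U_\xl^{p-2}\pl U_\xl\,\pxi PU_\xl$ (resp. $U_\xl^{p-2}\pxi U_\xl\,\pxj PU_\xl$), and on the full space $\int_{\R^N}U_\xl^{p-2}\pl U_\xl\,\pxi U_\xl=0$ by oddness in $y_i$, so only boundary-tail contributions of size $(d\lambda)^{-N+2s-1}$ (resp. $(d\lambda)^{-N+2s}$, where for $i\ne j$ oddness again kills the main term) and the $H_0/f_\xl$-corrections remain.

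\emph{Main obstacle.} The only delicate point is bookkeeping: making sure that in each off-diagonal estimate the putative ``main term'' from the full-space integral genuinely cancels (either because $\int_{\R^N}U_\xl^p$ and $\int_{\R^N}U_\xl^{p-2}|\pl U_\xl|^2$ are $\lambda$-independent after rescaling, so their $x$-derivatives vanish, or by a parity argument in $y-x$), leaving only the smaller tail contributions over $\R^N\setminus\Omega$ and the Green's-function remainders. One must also track that the error terms really are $o$ of the stated order uniformly for $x$ in compacts — this is where the $d(x)$-dependence in Lemmas \ref{lemma greens fct} and \ref{lemma decomp PU} must be invoked, using that $d(x)$ stays bounded below on a compact subset of $\Omega$. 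No single estimate is hard; the risk is merely a sign or parity slip, so I would organize the proof around the two reductions ``scalar product $=$ integral against $U_\xl^{p-1}$ or $U_\xl^{p-2}\partial U_\xl$'' and ``full-space integral is $\lambda$-independent or odd, hence negligible'' and then quote Lemmas \ref{lemma int Uq}, \ref{lemma greens fct}, \ref{lemma decomp PU} mechanically.
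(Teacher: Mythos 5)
Your overall strategy is the right one, and it is the same one the paper uses for the analogous computations elsewhere (e.g.\ the derivation of \eqref{ds PU expansion} and Step~1 of the proof of Lemma \ref{lemma beta gamma delta}); note that the paper itself gives no proof of this lemma, only the remark that the bounds match \cite[Appendix B]{Rey1990}, so there is nothing to compare against verbatim. The diagonal estimates and the first three off-diagonal bounds go through exactly as you describe, provided you are careful (as you are) to put the $(-\Delta)^s$ on the factor whose correction is smallest: for $\int \ds \pl PU_\xl\, \ds \pxi PU_\xl$ you must use $c_{N,s}(p-1)\int_\Omega U^{p-2}\pl U\,\pxi PU_\xl$ and not the symmetric alternative, since $\|\pl \varphi_\xl\|_\infty \int_\Omega U^{p-2}|\pxi U|$ only gives $\lambda^{-N+2s}$, not $\lambda^{-N+2s-1}$.

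There is, however, one genuine gap: the last bound $\int \ds \pxi PU_\xl\, \ds \pxj PU_\xl \lesssim \lambda^{-N+2s}$ (which is only meant for $i\neq j$) does not follow from the "mechanical" estimate of the $H_0/f_\xl$-correction. Here both available representations lead to the term $\int_\Omega U_\xl^{p-2}\,\pxi U_\xl\,\pxj \varphi_\xl\,\diff y$ with $\varphi_\xl = \lambda^{-\ns}H_0(x,\cdot)-f_\xl$, and the sup-norm bound $\|\pxj\varphi_\xl\|_\infty \lesssim \lambda^{-\ns}$ combined with $\int_{\R^N} U_\xl^{p-2}|\pxi U_\xl|\sim \lambda^{1-\ns}$ yields only $\lambda^{-N+2s+1}$, one power of $\lambda$ too large. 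To recover the stated bound you need the additional cancellation $\int_{\R^N} U_\xl^{p-2}\,\pxi U_\xl\,\diff y=0$: Taylor-expand $\partial_{x_j}H_0(x,y)=\partial_{x_j}H_0(x,x)+\mathcal O(|x-y|)$, observe that the constant term integrates to zero against $U_\xl^{p-2}\pxi U_\xl$ up to a boundary tail, and that the $\mathcal O(|x-y|)$ remainder contributes an extra factor $\lambda^{-1}$, giving $\lambda^{-N+2s}$. This is exactly the device used for the coefficients $a_{i+2}$ in Step~1 of the proof of Lemma \ref{lemma beta gamma delta}; your plan invokes parity only for the product $\pxi U_\xl\,\pxj U_\xl$, not for the Green's-function correction where it is actually needed. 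A second, smaller omission: the bounds on $\partial_{x_j}H_0(x,\cdot)$, $\nabla_y\partial_{x_j}H_0(x,\cdot)$ and $\partial_{x_j}f_\xl$ that this requires are not contained in Lemmas \ref{lemma greens fct} and \ref{lemma decomp PU} (which only control $\nabla_y H_0$ and $\|f_\xl\|_\infty$); they must be supplied, e.g.\ from the harmonic-measure representation $\pxj\varphi_\xl(y)=\int_{\R^N\setminus\Omega}\pxj U_\xl(z)\,\diff P_\Omega^y(z)$ together with interior regularity of $s$-harmonic functions.
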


We remark that the bounds of Lemma \ref{lemma Hs norms PU} are consistent with the ones proved in \cite[Appendix B]{Rey1990}. 

\begin{lemma}
	\label{lemma h}
	We have 
	\begin{equation}
	\label{identity h} 0 \leq \frac{ \lambda^{-\frac{N-2s}{2}}}{|x - y|^{N-2s}} - U_\xl(y) = \lambda^\frac{N-2s}{2} h(\lambda (x-y)), 
	\end{equation}
	with 
	\[ h(z) := \left( \frac{1}{1 + |z|^2}\right)^\frac{N-2s}{2} - \frac{1}{|z|^{N-2s}}. \]
	Moreover $h(z) \sim |z|^{-N-2 + 2s}$ and $|\nabla h(z)| \sim |z|^{-N + 2s - 3}$ as $|z| \to \infty$. Consequently, $h \in L^p(\R^N)$ for every $p \in [1, \frac{N}{N-2s})$ and $\nabla h \in L^p(\R^N)$ for every $p \in [1, \frac{N}{N-2s+1})$, where the latter interval is possibly empty.
\end{lemma}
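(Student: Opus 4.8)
The plan is to establish the three assertions by elementary manipulations, with no conceptual difficulty involved. For the identity \eqref{identity h}, I would substitute $z = \lambda(x-y)$ into the definition \eqref{bubble s} of $U_\xl$. Since $U_\xl(y) = \lambda^{\frac{N-2s}{2}}(1+|z|^2)^{-\frac{N-2s}{2}}$ and $\lambda^{-\frac{N-2s}{2}}|x-y|^{-(N-2s)} = \lambda^{\frac{N-2s}{2}}|z|^{-(N-2s)}$, the left-hand side of \eqref{identity h} reduces to
\[ \lambda^{\frac{N-2s}{2}} \left( |z|^{-(N-2s)} - (1+|z|^2)^{-\frac{N-2s}{2}} \right) = \lambda^{\frac{N-2s}{2}} h(\lambda(x-y)), \]
and the non-negativity is immediate from $|z|^2 \le 1 + |z|^2$.

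For the behavior of $h$ at infinity I would factor out the leading singularity, writing $h(z) = |z|^{-(N-2s)}\bigl(1 - (1+|z|^{-2})^{-\frac{N-2s}{2}}\bigr)$, and Taylor-expand $(1+t)^{-\frac{N-2s}{2}} = 1 - \tfrac{N-2s}{2}t + \mathcal O(t^2)$ as $t = |z|^{-2} \to 0^+$. This gives $h(z) = \tfrac{N-2s}{2}|z|^{-N-2+2s} + \mathcal O(|z|^{-N-4+2s})$, hence $h(z) \sim |z|^{-N-2+2s}$. For the gradient I would use $\nabla |z|^{-a} = -a|z|^{-a-2}z$ and $\nabla(1+|z|^2)^{-b} = -2b(1+|z|^2)^{-b-1}z$, which yield
\[ \nabla h(z) = -(N-2s)\, z\left( |z|^{-(N-2s+2)} - (1+|z|^2)^{-\frac{N-2s+2}{2}}\right). \]
The bracket has exactly the form of $h$ with the exponent $N-2s$ replaced by $N-2s+2$, so the same Taylor argument shows it is $\sim \tfrac{N-2s+2}{2}|z|^{-N+2s-4}$; multiplying by $|z|$ gives $|\nabla h(z)| \sim |z|^{-N+2s-3}$.

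The $L^p$ statements then follow by treating the behavior near the origin and at infinity separately. Near $z=0$ one has $h(z) = |z|^{-(N-2s)} + \mathcal O(1)$ since $(1+|z|^2)^{-\frac{N-2s}{2}}\to 1$, so $|h| \lesssim |z|^{-(N-2s)}$ there, which lies in $L^p$ of a bounded neighbourhood of the origin exactly when $p(N-2s) < N$; at infinity the decay $|h| \lesssim |z|^{-N-2+2s}$ is $L^p$ for every $p\ge 1$, because $p(N+2-2s) > N$ always holds. Hence $h \in L^p(\R^N)$ precisely for $p \in [1, \tfrac{N}{N-2s})$. The identical reasoning applied to $|\nabla h(z)| \lesssim |z|^{-(N-2s+1)}$ near $0$ and $\lesssim |z|^{-N+2s-3}$ at infinity gives $\nabla h \in L^p(\R^N)$ for $p \in [1, \tfrac{N}{N-2s+1})$, and this interval is empty exactly when $\tfrac{N}{N-2s+1}\le 1$, i.e. $s \le \tfrac12$. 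There is no genuine obstacle here; the only points requiring care are pinning down the precise leading exponents in the Taylor expansions (rather than merely $\mathcal O$-bounds), which is what fixes the decay rates $-N-2+2s$ and $-N+2s-3$, and correctly reading off the two distinct integrability thresholds — one coming from the origin and one from infinity.
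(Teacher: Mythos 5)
Your proof is correct and is the standard elementary argument; the paper in fact omits the proof of this lemma entirely, presumably because it is routine, so there is nothing to compare against and your write-up fills the gap adequately. The only point worth flagging is that your computation implicitly takes $h(z)=|z|^{-(N-2s)}-(1+|z|^2)^{-(N-2s)/2}$, which is the negative of the definition printed in the lemma statement but is the sign that makes the identity \eqref{identity h} and the nonnegativity claim true (and matches the definition of $h$ used in Lemma \ref{lemma constants}); the sign error is in the statement, not in your argument.
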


\begin{lemma}
	\label{lemma T xl}
	Let $b \in C(\overline{\Omega}) \cap C^1(\Omega)$. As $\lambda \to \infty$, uniformly for $x$ in compact subsets of $\Omega$, 
	\[ \int_\Omega b(y) U_\xl(y) \lambda^{\frac{N-2s}{2}} h(\lambda(x-y)) \diff y = \alpha_{N,s} \lambda^{-2s} b(x) + o( \lambda^{-2s}). \]
	The numerical value of $\alpha_{N,s} = \int_{\R^N} U_{0,1}(y) h(y) \diff y$ is given in Lemma \ref{lemma constants} below. 
\end{lemma}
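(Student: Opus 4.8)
The plan is to prove Lemma \ref{lemma T xl} by the standard concentration-and-rescaling technique. First I would substitute $y = x - \lambda^{-1}z$, so that $U_\xl(y) = \lambda^{\frac{N-2s}{2}} U_{0,1}(z)$ and $\lambda^{\frac{N-2s}{2}} h(\lambda(x-y)) = \lambda^{\frac{N-2s}{2}} h(z)$, while $\diff y = \lambda^{-N} \diff z$. This turns the integral into
\[
\int_{\lambda(x - \Omega)} b(x - \lambda^{-1} z)\, U_{0,1}(z)\, h(z)\, \lambda^{-2s} \diff z,
\]
so after factoring out $\lambda^{-2s}$ it remains to show that $\int_{\lambda(x-\Omega)} b(x - \lambda^{-1} z) U_{0,1}(z) h(z) \diff z \to b(x) \int_{\R^N} U_{0,1}(z) h(z) \diff z = \alpha_{N,s} b(x)$ as $\lambda \to \infty$, uniformly for $x$ in compact subsets of $\Omega$.

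The convergence is then a dominated-convergence argument. By Lemma \ref{lemma h}, $h(z) \sim |z|^{-N-2+2s}$ at infinity and $h$ is bounded near the origin (indeed $h(z) \to 1$ as $z \to 0$), and $U_{0,1}(z) \sim |z|^{-(N-2s)}$ at infinity, so the product $U_{0,1}(z) h(z)$ decays like $|z|^{-2N-2+4s}$, which is integrable at infinity since $2N+2-4s > N$ (equivalently $N > 4s - 2$, which holds as $N > 2s$); near the origin $U_{0,1} h$ is bounded. Thus $U_{0,1} h \in L^1(\R^N)$ and the integral defining $\alpha_{N,s}$ converges. For a compact $K \subset \Omega$ there is $d_0 > 0$ with $d(x) \geq d_0$ for $x \in K$, so $\lambda(x - \Omega) \supset B_{\lambda d_0}(0) \uparrow \R^N$; using $\|b\|_{L^\infty(\overline\Omega)} |U_{0,1}(z) h(z)| \in L^1(\R^N)$ as the dominating function, splitting the integral over $B_R(0)$ and its complement, and using continuity of $b$ (so $b(x - \lambda^{-1} z) \to b(x)$ uniformly for $z \in B_R(0)$ and $x \in K$) together with the uniform smallness of the tail $\int_{|z| > R} \|b\|_\infty U_{0,1} h \diff z$, gives the claimed convergence uniformly on $K$.

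I do not anticipate a genuine obstacle here; the only point requiring a little care is checking the integrability of $U_{0,1} h$ near infinity and confirming the inequality $2N + 2 - 4s > N$ holds in the relevant range $2s < N < 4s$ — but this is immediate since $N + 2 - 4s > 2s + 2 - 4s = 2 - 2s > 0$. One should also note the exact value $\alpha_{N,s} = \int_{\R^N} U_{0,1}(y) h(y) \diff y$ is positive (as $U_{0,1} > 0$ and $h > 0$ by the first inequality in Lemma \ref{lemma h}), consistent with its appearance as a positive constant in Lemma \ref{lemma constants}. The uniformity over compact subsets of $\Omega$ is what requires the observation that the domain of integration $\lambda(x-\Omega)$ eventually contains a fixed large ball uniformly in $x \in K$.
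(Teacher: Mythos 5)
Your argument is correct in substance but takes a different (and somewhat more elementary) route than the paper. The paper splits the integral into $B_{d(x)}(x)$ and $\Omega \setminus B_{d(x)}(x)$, estimates the outer piece directly via the decay $U_{0,1}h \sim |z|^{-2N+4s-2}$, and on the inner piece uses the $C^1$ hypothesis to write $b(y) = b(x) + \nabla b(x)\cdot(y-x) + o(|x-y|)$, killing the linear term by oddness and bounding the $o(|x-y|)$ remainder. You instead rescale globally and run a dominated convergence argument with the dominating function $\|b\|_\infty\, U_{0,1}|h| \in L^1(\R^N)$; this only uses continuity of $b$, so your proof is in fact slightly more general than the statement requires, at the cost of not producing the more quantitative error bounds that the paper's Taylor-expansion setup would yield if pushed further. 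Both arguments handle the uniformity in $x$ the same way, via $d(x) \geq d_0$ on compacts so that $\lambda(x-\Omega) \supset B_{\lambda d_0}(0)$.

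One subsidiary claim in your writeup is wrong, though it does not damage the conclusion: $h$ is \emph{not} bounded near the origin and does not tend to $1$ there. With the sign convention consistent with \eqref{identity h} (and with Lemma \ref{lemma constants}), $h(z) = |z|^{-(N-2s)} - (1+|z|^2)^{-(N-2s)/2}$, which blows up like $|z|^{-(N-2s)}$ as $z \to 0$; consequently $U_{0,1}h$ is unbounded near $0$. The correct justification for local integrability is simply that $|z|^{-(N-2s)}$ is locally integrable because $N-2s < N$. With that fix, $U_{0,1}h \in L^1(\R^N)$ holds and the rest of your dominated convergence argument goes through unchanged.
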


\begin{proof}
Abbreviate $d=d(x)$ and $B_d = B_d(x)$. We integrate separately over $B_d$ and over $\Omega \setminus B_d$. 
	
	For the outer integral, from Lemma \ref{lemma h} we get that $U_{0,1}(y) h(y) \sim |y|^{-2N + 4s -2}$. Thus
	\begin{align*}
	\int_{\Omega \setminus B_d}  b(y) U_\xl(y) \lambda^{\frac{N-2s}{2}} h(\lambda(x-y)) \diff y \lesssim \lambda^{-N + 2s - 2} = o(\lambda^{-N}) = o(\lambda^{-2s}). 
	\end{align*}

	For the inner integral, using that $b \in C^1(\Omega)$, we write $b(y) = b(x) + \nabla b(x) \cdot (y-x) + o(|x-y|)$ for $y \in B_d$. Then (the integral over $\nabla b(x) \cdot (y-x)$ cancels due to oddness) 
	\begin{align*}
	&\int_{B_d}  b(y) U_\xl(y) \lambda^{\frac{N-2s}{2}} h(\lambda(x-y)) \diff y \\
	&= b(x) \lambda^{-2s} \int_{B_{\lambda d}(0)} U_{0,1}(y) h(y) \diff y + o\left( \lambda^{-2s-1} \int_{B_{\lambda d}(0)} U_{0,1}(z) h(z) |z| \diff z \right) \\
	&= b(x) \lambda^{-2s} \alpha_{N,s} + o(\lambda^{-2s} +  o\left( \lambda^{-2s-1} \int_{B_{\lambda d}(0)} U_{0,1}(z) h(z) |z| \diff z \right) .
	\end{align*}
	
	To show that the last term is $o(\lambda^{-2s})$ as well, note that by Lemma \ref{lemma h} we have $U_{0,1}(z) h(z) |z| \lesssim |z|^{-2N + 4s -1}$. Thus
	\[ \lambda^{-2s-1} \int_{B_{\lambda d}(0)} U_{0,1}(z) h(z) |z| \diff z \lesssim \begin{cases}
	\lambda^{-2s - 1} & \text{ if } N > 4s-1, \\
	\lambda^{-2s -1} \log \lambda & \text{ if } N = 4s-1, \\
	\lambda^{-N+4s-1} & \text{ if } N < 4s-1.  \\
	\end{cases}
	\]
	This is $o(\lambda^{-2s})$ in all cases. 
\end{proof}

We compute explicitly the constants that appear in the asymptotic expansions throughout the paper. 

\begin{lemma}[Constants]
	\label{lemma constants}
		For $N > 2s$ and $p = \frac{2N}{N-2s}$, let $U_{0,1}(y) =  \left(\frac{1}{1+|y|^2} \right)^{\frac{N-2s}{2}} $ and $h(y)= \frac{1}{|y|^{N-2s}} - \frac{1}{(1 + |y|^2)^{\frac{N-2s}{2}}}$. Then for every $0 \leq k < \frac{N}{N-2s}$ we have 
	\begin{align*}
	a_{N,s}(k) &:= \int_{\R^N}  U_{0,1}(y)^{p-k} \diff y = \frac{\pi^{N/2} \Gamma\left(\frac{N}{2} (1-k) + ks\right)}{\Gamma\left(\frac{N}{2} (2-k) + ks\right)}.
	\end{align*}
We denote $A_{N,s} := a_{N,s}(0)$ and $a_{N,s} := a_{N,s}(1)$. Further,
	\begin{align*}
	b_{N,s} &:= \int_{\R^N} U_{0,1}(y)^\frac{N+2s}{N-2s} |y|^{4s-N} \diff y = \pi^{N/2} \frac{\Gamma(2s)\Gamma\left(\frac{N}{2} - s\right)}{\Gamma\left(\frac{N}{2}\right)\Gamma\left(\frac{N}{2} +s \right)}, \\
	\alpha_{N,s} &:= \int_{\R^N} U_{0,1}(y) h(y) \diff y =  \frac{\pi^{N/2}}{\Gamma\left(\frac{N}{2}\right)} \Gamma(\frac{N}{2}-2s)\left(\frac{\Gamma(s)}{\Gamma(\frac{N}{2}-s)}-\frac{\Gamma(\frac{N}{2})}{\Gamma(N-2s)}\right),  
	\end{align*}

	Moreover, the constant in $(-\Delta)^s u(x) := C_{N,s} P.V. \int_{\R^N} \frac{u(x) - u(y)}{|x-y|^{N+2s}} \diff y$ is given by
\[ 
C_{N,s} :=   \frac{2^{2s} \Gamma(\frac{N+2s}{2})}{\pi^{N/2} s \Gamma (1-s)} \]
and the constant in $\Ds U_{0,1} = c_{n,s} U_{0,1}^\frac{N+2s}{N-2s}$ is given by
	\[ c_{N,s} =  2^{2s} \frac{\Gamma(\frac{N+2s}{2})}{\Gamma(\frac{N-2s}{2})}. \]
	The explicit value of the best fractional Sobolev constant in $\|\ds u\|^2 \geq S \|u\|_\frac{2N}{N-2s}^2$ is   
	\[ S := S_{N,s} = 2^{2s} \pi^s \frac{\Gamma(\frac{N+2s}{2})}{\Gamma(\frac{N-2s}{2})} \left( \frac{\Gamma(N/2)}{\Gamma(N)}\right)^{2s/N}. \]
The constant in $\Ds |x|^{4s - N} = -d_{N,s}^{-1} |x|^{2s-N}$ is given by
	\begin{equation}
    \label{definition dNs}
    d_{N,s} := - 2^{-2s} \frac{\Gamma(\frac{N-4s}{2}) \Gamma(s)}{\Gamma(\frac{N-2s}{2}) \Gamma(2s)} > 0.
\end{equation}
The constant $\gamma_{N,s}$ in $(\Ds + a) G_a(x, \cdot) = \gamma_{N,s} \delta_x$ is given by
\[ \gamma_{N,s} = \frac{2^{2s} \pi^{N/2} \Gamma(s)}{\Gamma(\frac{N-2s}{2})}. \]
\end{lemma}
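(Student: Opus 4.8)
The plan is to reduce every integral in the statement to the classical Beta integral, and to obtain the remaining (operator-theoretic) constants from standard identities for the fractional Laplacian. The one step requiring genuine care is the evaluation of $\alpha_{N,s}$, since in the range $2s<N<4s$ the two integrals whose difference defines it diverge separately; this will be handled by analytic continuation.

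First I would record the elementary identity obtained by passing to polar coordinates and substituting $t=r^2$: for $\mathrm{Re}\,\alpha>N/2$ and $0\le\gamma<N$,
\[
\int_{\R^N}\frac{|y|^{-\gamma}}{(1+|y|^2)^\alpha}\,\diff y=\frac{\pi^{N/2}}{\Gamma(N/2)}\,\frac{\Gamma\!\big(\tfrac{N-\gamma}{2}\big)\,\Gamma\!\big(\alpha-\tfrac{N-\gamma}{2}\big)}{\Gamma(\alpha)},
\]
using that the surface measure of $\mathbb S^{N-1}$ equals $2\pi^{N/2}/\Gamma(N/2)$. Applying this with $\gamma=0$ and $\alpha=\tfrac{N-2s}{2}(p-k)=N-\tfrac{N-2s}{2}k$ (so $\alpha-N/2=\tfrac N2(1-k)+ks$) gives $a_{N,s}(k)$, hence $A_{N,s}=a_{N,s}(0)$ and $a_{N,s}=a_{N,s}(1)$; applying it with $\gamma=N-4s$ and $\alpha=\tfrac{N+2s}{2}$ (noting $U_{0,1}^{(N+2s)/(N-2s)}=(1+|y|^2)^{-(N+2s)/2}$ and $\tfrac{N-\gamma}{2}=2s$) gives $b_{N,s}$.

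The main obstacle is $\alpha_{N,s}=\int_{\R^N}U_{0,1}h\,\diff y$: here $U_{0,1}(y)|y|^{-(N-2s)}$ and $U_{0,1}(y)^2$ are not separately integrable, only their difference $U_{0,1}h$ is (by Lemma \ref{lemma h}). I would introduce the holomorphic family
\[
F(t):=\int_{\R^N}(1+|y|^2)^{-t}\Big(|y|^{-(N-2s)}-(1+|y|^2)^{-\frac{N-2s}{2}}\Big)\,\diff y,
\]
so that $\alpha_{N,s}=F\big(\tfrac{N-2s}{2}\big)$. Using the behaviour of $h$ at $0$ and at $\infty$ one checks that the defining integral converges locally uniformly for $\mathrm{Re}\,t>s-1$ (and $s-1<\tfrac{N-2s}{2}$ because $s<1$), so $F$ is holomorphic there; on the smaller half-plane $\mathrm{Re}\,t>s$ the two summands are separately integrable and the Beta formula above gives
\[
F(t)=\pi^{N/2}\,\Gamma(t-s)\left(\frac{\Gamma(s)}{\Gamma(N/2)\,\Gamma(t)}-\frac{1}{\Gamma\!\big(t+\tfrac{N-2s}{2}\big)}\right).
\]
By the identity theorem this closed form persists for $\mathrm{Re}\,t>s-1$; since $-1<\tfrac{N-4s}{2}<0$ in our range, the right-hand side is finite at $t=\tfrac{N-2s}{2}$, and evaluating there (with $t-s=\tfrac N2-2s$ and $t+\tfrac{N-2s}{2}=N-2s$) yields exactly the stated expression for $\alpha_{N,s}$.

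Finally I would collect the remaining constants from standard facts. The value of $C_{N,s}$ is the classical normalization making \eqref{frac-lap-sg-int} agree with \eqref{frac lap def Fourier}, namely $4^s\Gamma(\tfrac{N+2s}{2})/(\pi^{N/2}|\Gamma(-s)|)$, rewritten via $|\Gamma(-s)|=\Gamma(1-s)/s$ (see \cite{BuVa2016,MR3916700}). The bubble $U_{0,1}$ is the optimizer of \eqref{sobolev_ineq} and solves $\Ds U_{0,1}=c_{N,s}U_{0,1}^{(N+2s)/(N-2s)}$ with the stated $c_{N,s}$ (a classical identity; for $s=1$ it is $N(N-2)$); integrating this equation against $U_{0,1}$ and using optimality gives $S_{N,s}=c_{N,s}A_{N,s}^{2s/N}$, which combined with the value of $A_{N,s}$ above reproduces \eqref{eq:sobolevconstant}. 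For $d_{N,s}$ I would use the identity $\Ds|x|^{-\beta}=2^{2s}\tfrac{\Gamma((\beta+2s)/2)\Gamma((N-\beta)/2)}{\Gamma(\beta/2)\Gamma((N-\beta-2s)/2)}|x|^{-\beta-2s}$ with $\beta=N-4s$; positivity of $d_{N,s}$ is immediate since $-1<\tfrac{N-4s}{2}<0$ forces $\Gamma(\tfrac{N-4s}{2})<0$. Lastly $\gamma_{N,s}$ is the reciprocal of the Riesz-potential constant, i.e. $\Ds|x|^{-(N-2s)}=\gamma_{N,s}\delta_0$ with $\gamma_{N,s}=2^{2s}\pi^{N/2}\Gamma(s)/\Gamma(\tfrac{N-2s}{2})$. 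All of these are direct Beta-integral evaluations or textbook properties of $\Ds$; the only subtle point in the whole lemma is the analytic-continuation computation of $\alpha_{N,s}$.
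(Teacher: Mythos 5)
Your proposal is correct and follows the same overall route as the paper: both reduce $a_{N,s}(k)$ and $b_{N,s}$ to the Beta integral $\int_{\R^N}(1+|y|^2)^{-\alpha}|y|^{-\gamma}\diff y$, both handle $\alpha_{N,s}$ by analytic continuation from the regime where the two summands of $U_{0,1}h$ are separately integrable, and both dispose of the operator-theoretic constants by quoting standard identities (the paper cites \cite{CoTa2004} for $S$ and \cite{Kwasnicki2019} for $d_{N,s}$ and does not re-derive $C_{N,s}$, $c_{N,s}$, $\gamma_{N,s}$ at all, so your level of detail there is at least as complete).

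The one place where you genuinely diverge is the continuation variable for $\alpha_{N,s}$. The paper fixes the integrand's structure and continues analytically in $s$ itself, viewing $\int_0^\infty I(r,N,s)\diff r$ as a holomorphic function on $\{0<\re s<N/2\}\setminus\{N/4\}$ and matching it with the closed form valid for $N>4s$; this requires checking holomorphy of an integral in which $s$ enters every exponent, and excising the pole of $\Gamma(\tfrac N2-2s)$ at $s=N/4$. You instead freeze $(N,s)$ and deform a single auxiliary exponent $t$ in $(1+|y|^2)^{-t}$, continuing $F(t)$ from $\re t>s$ to $\re t>s-1$ and evaluating at $t=\tfrac{N-2s}{2}$. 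This is a cleaner one-parameter deformation, and your verification that the target point lies in the domain of holomorphy ($s-1<0<\tfrac{N-2s}{2}$) and that $\Gamma(t-s)$ is finite there ($\tfrac{N-4s}{2}\in(-1,0)$) is complete. Two small remarks. First, your auxiliary Beta identity is stated for $0\le\gamma<N$ with $\re\alpha>N/2$, but for $b_{N,s}$ in the range $2s<N<4s$ you apply it with $\gamma=N-4s<0$; the identity of course still holds there under the correct condition $\re\alpha>\tfrac{N-\gamma}{2}$ (which your $\alpha=\tfrac{N+2s}{2}$, $\tfrac{N-\gamma}{2}=2s$ satisfies since $N>2s$), so this is only a matter of stating the hypothesis uniformly. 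Second, your normalization $C_{N,s}=4^s\Gamma(\tfrac{N+2s}{2})/(\pi^{N/2}|\Gamma(-s)|)$ with $|\Gamma(-s)|=\Gamma(1-s)/s$ places the factor $s$ in the numerator, in agreement with \eqref{eq:constant}; the display in the statement of the lemma, which has $s$ in the denominator, is inconsistent with \eqref{eq:constant} and appears to be a typo in the paper rather than an error on your part.
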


\begin{proof}
The values of $a_{n,s}(k)$ and $b_{N,s}$ are a consequence of the following computation. For $\alpha, \beta >0$,
\begin{align}\label{eq:integralcomp}
\begin{split}
        \int_{\R^N} \left(\frac{1}{1+|y|^2} \right)^\alpha |y|^\beta \dd y &= \frac{2\pi^{N/2}}{\Gamma\left(\frac{N}{2}\right)} \int_0^\infty \left(\frac{1}{1+r^2}  \right)^\alpha r^{N-1+\beta} \dd r \\
    &= \frac{\pi^{N/2}}{\Gamma\left(\frac{N}{2}\right)}  B\left(\tfrac{\beta + N}{2},  \alpha - \tfrac{\beta + N}{2}  \right) =\frac{\pi^{N/2}}{\Gamma\left(\tfrac{N}{2}\right)}  \frac{\Gamma\left(\frac{\beta + N}{2} \right) \Gamma\left(\alpha - \frac{\beta + N}{2} \right)}{\Gamma(\alpha)}. 
\end{split}
\end{align}

To compute $\alpha_{N,s} $, we write 
\begin{align*}
    \alpha_{N,s} &= \int_{\R^N} U_{0,1}(y) h(y) \diff y\\ &= \frac{2\pi^{N/2}}{\Gamma\left(\frac{N}{2} \right)} \int_0^\infty \underbrace{\Bigg(\Bigg(\frac{1}{1+r^2}  \Bigg)^{\frac{N-2s}{s}} r^{2s-1} - \left(\frac{1}{1+r^2}  \right)^{N-2s} r^{N-1}  \Bigg)}_{I(r,N,s)} \dd r. 
\end{align*}
If $N > 4s$, then the summands of $I(r, N,s)$ are separately integrable, in which case \eqref{eq:integralcomp} gives 
\begin{align}
\label{alpha Ns}
 \alpha_{N,s} &= \frac{\pi^{N/2}}{\Gamma\left(\frac{N}{2}\right)} \Gamma\left(\frac{N}{2}-2s\right)\left(\frac{\Gamma(s)}{\Gamma\left(\frac{N}{2}-s\right)}-\frac{\Gamma(\frac{N}{2})}{\Gamma(N-2s)}\right).
\end{align}
To extend this formula to the case $2s < N < 4s$ which concerns us, we remark that the right side of \eqref{alpha Ns} defines a holomorphic function of $s$ in the complex subdomain $\mathcal D_N := \{ 0 < \text{ Re}(s) < N/2 \} \setminus \{N/4\} \subset \C$. On the other hand, by a cancellation $I(r, N,s)$ remains integrable in $r \in (0, \infty)$ for every $s \in (0,1)$ and $N \in (2s, 4s)$. Indeed,
\begin{align*}
    I(r,N,s) &\sim (r^{2s-1}-r^{N-1}) \qquad \text{ as $r \to 0$}, \\
    I(r,N,s) &\sim \left( r^{-2\frac{N-2s}{2}}r^{2s-1}(\frac{1}{1+\frac{1}{r^2}})^{\frac{N-2s}{2}}-r^{-2(N-2s)}r^{N-1}(\frac{1}{1+\frac{1}{r^2}})^{N-2s}\right)
    \\&\quad=\left(r^{-2\frac{N-2s}{2}}r^{2s-1}(1+\frac{1}{r^2})^{-\frac{N-2s}{2}}-r^{-2(N-2s)}r^{N-1}(1+\frac{1}{r^2})^{-(N-2s})\right)
    \\&\quad=r^{-N+4s-1}\left(1-\frac{\frac{N}{2}-s}{r^2}+O(\frac{1}{r^4})-1+\frac{N-2s}{r^2}+O(\frac{1}{r^4})\right)
    \\&\quad=\left(\frac{N}{2}-s\right)r^{-N+4s-3}+O(r^{-N+4s-5}) \quad \text{ as $r \to \infty$}.
\end{align*}
By a standard argument, this implies that $\int_0^\infty I(r,N,s) \diff r$ is holomorphic in $\mathcal D_N$ as a function of $s$. By the identity theorem for analytic functions, the formula \eqref{alpha Ns} thus holds also for $s \in (N/4, N/2)$, which is what we wanted to show. 

Finally, the claimed value of $S$ can be found e.g. in \cite[Theorem 1.1]{CoTa2004} and that of $d_{N,s}$ in \cite[Table 1, p. 168]{Kwasnicki2019}.
\end{proof}

\begin{lemma}\label{lem-taylor}
Let $2s < N <4s$ and let $f_\epsilon: (0, \infty) \to \R$ be given by
\[ f_\epsilon(\lambda) = \frac{A_\epsilon }{\lambda^{2s}} - B_\epsilon  \frac{\epsilon }{\lambda^{N-2s}}\]
with $A_\epsilon, B_\epsilon > 0$ uniformly bounded away from 0 and $\infty$. The unique global minimum of $f_\epsilon$ is given by 
\begin{equation}
\label{f eps minimum}
    \lambda_0 = \lambda_0(\epsilon) = \left(\frac{2s A_\epsilon}{(N-2s) B_\epsilon } \right)^\frac{1}{4s-N}  \epsilon^{-\frac{1}{4s-N}}. 
\end{equation} 
with corresponding minimal value
\begin{equation}
    \label{f eps min value}
     \min_{\lambda> 0} f_\eps (\lambda) =  f_\eps(\lambda_0) = - \eps^\frac{2s}{4s-N} \frac{B_\eps^\frac{2s}{4s-N}}{A_\eps^\frac{N-2s}{4s-N}} \left(\frac{N-2s}{2s} \right)^\frac{2s}{4s-N} \frac{4s - N}{N-2s}. 
\end{equation}
Moreover, there is a $c_0 > 0$ such that, for all $\epsilon > 0$, we have
\begin{equation}
\label{f eps lower bound}
    f_\epsilon(\lambda) - f_\epsilon(\lambda_0) \geq 
\begin{cases}
c_0 \epsilon^\frac{2s-2}{4s-N}  \left(  \lambda^{-1} - \lambda_0(\epsilon)^{-1}\right)^2 & \text{ if } \quad \left(\frac{A_\epsilon }{B_\epsilon}\right)^{\frac{1}{4s-N}} \epsilon^{-\frac{1}{4s-N}} \lambda^{-1} \leq 2 \left(\frac{2s}{N-2s}\right)^\frac{1}{N-4s} , \\
c_0 \epsilon ^\frac{2s}{4s-N}  & \text{ if } \quad  \left(\frac{A_\epsilon }{B_\epsilon}\right)^{\frac{1}{4s-N}} \epsilon^{-\frac{1}{4s-N}} \lambda^{-1} > 2 \left(\frac{2s}{N-2s}\right)^\frac{1}{N-4s}.
\end{cases} 
\end{equation} 
\end{lemma}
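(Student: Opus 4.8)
The plan is to reduce everything to an analysis of the single-variable function $f_\eps$ and then use scaling to extract the $\eps$-dependence. First I would compute $f_\eps'(\lambda) = -2s A_\eps \lambda^{-2s-1} + (N-2s) B_\eps \eps \lambda^{-N+2s-1}$; setting this to zero and solving gives $\lambda^{N-4s} = \frac{2s A_\eps}{(N-2s) B_\eps \eps}$, which (since $N - 4s < 0$) yields the stated $\lambda_0(\eps)$. One checks $f_\eps \to +\infty$ as $\lambda \to 0^+$ (the $\lambda^{-2s}$ term dominates) and $f_\eps \to 0^-$ as $\lambda \to \infty$ (the $-\eps\lambda^{-N+2s}$ term dominates, being the slower-decaying one since $N - 2s < 2s$), while $f_\eps'$ has exactly one zero; hence $\lambda_0$ is the unique global minimum. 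Plugging $\lambda_0$ back into $f_\eps$ is a routine simplification: writing $\lambda_0^{-2s} = \left(\frac{(N-2s)B_\eps\eps}{2sA_\eps}\right)^{\frac{2s}{4s-N}}$ and similarly for $\lambda_0^{-N+2s}$, one collects powers of $A_\eps$, $B_\eps$, $\eps$ and the numerical constant to obtain \eqref{f eps min value}.

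For the quadratic lower bound \eqref{f eps lower bound}, the natural move is to change variables to $\mu := \lambda^{-1}$, so that $g_\eps(\mu) := f_\eps(\mu^{-1}) = A_\eps \mu^{2s} - B_\eps \eps \mu^{N-2s}$, with minimum at $\mu_0 = \lambda_0^{-1}$. Then I would introduce the scaled variable $\tau := \mu / \mu_0$, i.e. $\mu = \mu_0 \tau$, and note that $\mu_0 = \left(\frac{2sA_\eps}{(N-2s)B_\eps}\right)^{\frac{1}{N-4s}}\eps^{\frac{1}{N-4s}}$, so $\mu_0^{2s} \sim \eps^{\frac{2s}{N-4s}} = \eps^{-\frac{2s}{4s-N}}$ up to constants bounded away from $0$ and $\infty$. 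A direct computation gives $g_\eps(\mu) = A_\eps \mu_0^{2s}\bigl(\tau^{2s} - \tfrac{2s}{N-2s}\tau^{N-2s}\bigr)$ and $g_\eps(\mu_0) = A_\eps \mu_0^{2s}\bigl(1 - \tfrac{2s}{N-2s}\bigr) = -A_\eps\mu_0^{2s}\tfrac{4s-N}{N-2s}$, so that
\begin{align*}
g_\eps(\mu) - g_\eps(\mu_0) = A_\eps \mu_0^{2s}\, \Phi(\tau), \qquad \Phi(\tau) := \tau^{2s} - \tfrac{2s}{N-2s}\tau^{N-2s} + \tfrac{4s-N}{N-2s},
\end{align*}
where $\Phi \geq 0$ with $\Phi(1) = \Phi'(1) = 0$ and $\Phi''(1) = 2s(4s-N)/(N-2s) > 0$. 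The two cases in \eqref{f eps lower bound} correspond to $\tau \le 2\bigl(\tfrac{2s}{N-2s}\bigr)^{\frac{1}{N-4s}}$ — wait, more carefully: the condition $\left(\frac{A_\eps}{B_\eps}\right)^{\frac{1}{4s-N}}\eps^{-\frac{1}{4s-N}}\lambda^{-1} \le 2\left(\frac{2s}{N-2s}\right)^{\frac{1}{N-4s}}$ unwinds (using the formula for $\mu_0$) to $\tau = \mu/\mu_0 \le 2$. So on $\tau \in (0,2]$ I would use Taylor's theorem with the uniform lower bound $\Phi(\tau) \ge c\,(\tau-1)^2$ valid on any compact interval containing $1$ in its interior where $\Phi$ vanishes only at $\tau=1$ to second order; then $A_\eps\mu_0^{2s}\Phi(\tau) \gtrsim \eps^{-\frac{2s}{4s-N}}(\tau-1)^2 = \eps^{-\frac{2s}{4s-N}}\mu_0^{-2}(\mu-\mu_0)^2$, and since $\mu_0^{-2} \sim \eps^{-\frac{2}{N-4s}} = \eps^{\frac{2}{4s-N}}$, the prefactor becomes $\eps^{\frac{2-2s}{4s-N}}\cdot(-1)$... let me recompute: $\eps^{-\frac{2s}{4s-N}}\cdot\eps^{\frac{2}{4s-N}} = \eps^{\frac{2-2s}{4s-N}} = \eps^{\frac{2s-2}{4s-N}}$ only if $2-2s = -(2s-2)$, i.e. it equals $\eps^{-(2s-2)/(4s-N)}$; double-checking the exponent arithmetic against the claim $c_0\eps^{\frac{2s-2}{4s-N}}(\lambda^{-1}-\lambda_0^{-1})^2$ — I will verify this matches and adjust constants accordingly. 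On $\tau > 2$, $\Phi(\tau) \ge \Phi(2) > 0$ (since $\Phi$ is eventually... one checks $\Phi$ is increasing then — actually $\Phi'(\tau) = 2s\tau^{2s-1} - 2s\tau^{N-2s-1} = 2s\tau^{N-2s-1}(\tau^{4s-N}-1)$, which for $\tau>1$ has the sign of $\tau^{4s-N}-1 < 0$ since $4s-N>0$ would give... wait $4s-N>0$ so $\tau^{4s-N}>1$, so $\Phi'>0$ for $\tau>1$): hence $\Phi$ is strictly increasing on $(1,\infty)$ and $\Phi(\tau) \ge \Phi(2) =: c_1 > 0$ for $\tau \ge 2$, giving $g_\eps(\mu)-g_\eps(\mu_0) \ge c_1 A_\eps\mu_0^{2s} \gtrsim \eps^{-\frac{2s}{4s-N}} = \eps^{\frac{2s}{4s-N}}$... hmm, $\eps^{-2s/(4s-N)}$ versus $\eps^{2s/(4s-N)}$: since $4s-N>0$ and we expect the minimum value to be negative and $o(1)$ small, and indeed $f_\eps(\lambda_0) \sim -\eps^{2s/(4s-N)}$, the gap $f_\eps(\lambda) - f_\eps(\lambda_0)$ in the far regime should be of order $|f_\eps(\lambda_0)| \sim \eps^{2s/(4s-N)}$, so I expect $\mu_0^{2s} \sim \eps^{2s/(4s-N)}$ (not $\eps^{-2s/(4s-N)}$) — I would recheck the sign of the exponent in $\mu_0$ carefully at this point.

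The main obstacle — really the only nontrivial point — is bookkeeping the exponents of $\eps$ correctly through the scaling $\mu = \mu_0\tau$ and matching them to the two displayed cases; the rest is elementary calculus (one critical point, convexity of an explicit auxiliary function $\Phi$ near its double zero, monotonicity at infinity). I would organize the write-up as: (1) critical point and global minimum; (2) evaluation of $f_\eps(\lambda_0)$; (3) the substitution $\mu = \lambda^{-1}$, $\tau = \mu/\mu_0$ and the factorization $f_\eps(\lambda)-f_\eps(\lambda_0) = A_\eps\mu_0^{2s}\Phi(\tau)$; (4) the two-regime estimate on $\Phi$, converting back to $\lambda$ and $\eps$. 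Throughout, "uniformly bounded away from $0$ and $\infty$" for $A_\eps, B_\eps$ is used precisely to absorb all $A_\eps, B_\eps$-dependent factors into the constant $c_0$.
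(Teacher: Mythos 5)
Your plan is correct and follows essentially the same route as the paper: both rescale $f_\eps$ onto a fixed one-variable profile (the paper's $F(t)=t^{2s}-t^{N-2s}$ versus your $\Phi(\tau)$, which differ only by a normalization and an additive constant) and prove the two-regime bound via a quadratic estimate near the nondegenerate minimum and strict monotonicity away from it. The sign slip you flag yourself resolves exactly as you anticipate — $\mu_0=\lambda_0^{-1}\sim\eps^{1/(4s-N)}$, hence $\mu_0^{2s}\sim\eps^{2s/(4s-N)}$ and $\mu_0^{2s-2}\sim\eps^{(2s-2)/(4s-N)}$, matching the stated exponents in both regimes.
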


\begin{proof}
The values of $\lambda_0$ and $f_\eps(\lambda_0)$ are obtained by standard computations. Thus we only prove \eqref{f eps lower bound}. 
Let $F(t):= t^{2s} - t^{N-2s}$ and denote by $t_0 := (\frac{2s}{N-2s})^{-\frac{1}{4s-N}}$ the unique global minimum  of $F$ on $(0, \infty)$. Then, there exists $c > 0$ such that 
\[ F(t) - F(t_0) \geq 
\begin{cases}
c(t - t_0)^2 & \text{ if } \quad 0 < t  \leq 2 t_0, \\
c t_0^{N-2s} & \text{ if } \quad t > 2 t_0.
\end{cases}
\]
The assertion of the lemma now follows by rescaling. Indeed, it suffices to observe that 
\[ f_\epsilon(\lambda) =  A_\epsilon^{-\frac{N-2s}{4s-N}} B_\epsilon^{\frac{2s}{4s-N}}  \epsilon^{\frac{2s}{4s-N}}   F\left( \left(\frac{A_\epsilon }{B_\epsilon}\right)^{\frac{1}{4s-N}} \epsilon^{-\frac{1}{4s-N}} \lambda^{-1} \right) \]
and to use the boundedness of $A_\epsilon$ and $B_\epsilon$. 
\end{proof}

\vspace{5mm}

\section*{Acknowledgments}

We acknowledge the kind hospitality of the Universität Ulm during the winter school ``Gradient Flows and Variational Methods in PDEs'' (2019), where parts of this project were discussed. 

N. De Nitti is a member of the Gruppo Nazionale per l’Analisi Matematica, la Probabilità e le loro Applicazioni (GNAMPA) of the Istituto Nazionale di Alta Matematica (INdAM). He has been partially supported by the Alexander von Humboldt Foundation and by the TRR-154 project of the Deutsche Forschungsgemeinschaft (DFG). 

T. König acknowledges partial support through ANR BLADE-JC ANR-18-CE40-002. 

\vspace{3mm}

\bibliographystyle{abbrv}
\bibliography{BNAsympt.bib}

\begin{thebibliography}{10}

\bibitem{Abatangelo2015}
N.~Abatangelo.
\newblock Large {$S$}-harmonic functions and boundary blow-up solutions for the
  fractional {L}aplacian.
\newblock {\em Discrete Contin. Dyn. Syst.}, 35(12):5555--5607, 2015.

\bibitem{AbChHa2018}
W.~Abdelhedi, H.~Chtioui, and H.~Hajaiej.
\newblock The {B}ahri-{C}oron theorem for fractional {Y}amabe-type problems.
\newblock {\em Adv. Nonlinear Stud.}, 18(2):393--407, 2018.

\bibitem{BaCo1988}
A.~Bahri and J.-M. Coron.
\newblock On a nonlinear elliptic equation involving the critical {S}obolev
  exponent: the effect of the topology of the domain.
\newblock {\em Comm. Pure Appl. Math.}, 41(3):253--294, 1988.

\bibitem{BaCoPaSa2011}
B.~Barrios, E.~Colorado, A.~Pablo, and U.~S'anchez.
\newblock On some critical problems for the fractional {L}aplacian operator.
\newblock {\em Journal of Differential Equations}, 252:6133--6162, 2011.

\bibitem{Beckner1993}
W.~{Beckner}.
\newblock {Sharp Sobolev inequalities on the sphere and the Moser-Trudinger
  inequality}.
\newblock {\em {Ann. Math. (2)}}, 138(1):213--242, 1993.

\bibitem{BrCoPaSa2013}
C.~Br{\"a}ndle, E.~Colorado, A.~de~Pablo, and U.~Sanchez.
\newblock A concave—convex elliptic problem involving the fractional
  {L}aplacian.
\newblock {\em Proceedings of the Royal Society of Edinburgh: Section A
  Mathematics}, 143:39 -- 71, 2013.

\bibitem{BrGoVa2021}
L.~{Brasco}, D.~{G\'omez-Castro}, and J.~L. {V\'azquez}.
\newblock {Characterisation of homogeneous fractional Sobolev spaces}.
\newblock {\em {Calc. Var. Partial Differ. Equ.}}, 60(2):41, 2021.
\newblock Id/No 60.

\bibitem{Brezis1986}
H.~Brezis.
\newblock Elliptic equations with limiting {S}obolev exponents---the impact of
  topology.
\newblock {\em Communications on Pure and Applied Mathematics},
  39(S1):S17--S39, 1986.

\bibitem{BrNi}
H.~Br\'{e}zis and L.~Nirenberg.
\newblock Positive solutions of nonlinear elliptic equations involving critical
  {S}obolev exponents.
\newblock {\em Comm. Pure Appl. Math.}, 36(4):437--477, 1983.

\bibitem{BrPe1989}
H.~Brezis and L.~A. Peletier.
\newblock Asymptotics for elliptic equations involving critical growth.
\newblock In {\em Partial differential equations and the calculus of
  variations, {V}ol. {I}}, volume~1 of {\em Progr. Nonlinear Differential
  Equations Appl.}, pages 149--192. Birkh\"{a}user Boston, Boston, MA, 1989.

\bibitem{BuVa2016}
C.~Bucur and E.~Valdinoci.
\newblock {\em Nonlocal diffusion and applications}, volume~20 of {\em Lecture
  Notes of the Unione Matematica Italiana}.
\newblock Springer, [Cham]; Unione Matematica Italiana, Bologna, 2016.

\bibitem{CaTa2009}
X.~Cabr{\'e} and J.~Tan.
\newblock Positive solutions of nonlinear problems involving the square root of
  the {L}aplacian.
\newblock {\em Advances in Mathematics}, 224:2052--2093, 2009.

\bibitem{CaSi2007}
L.~Caffarelli and L.~Silvestre.
\newblock An extension problem related to the fractional {L}aplacian.
\newblock {\em Communications in Partial Differential Equations}, 32:1245 --
  1260, 2007.

\bibitem{ChKi2017}
W.~Choi and S.~Kim.
\newblock Classification of finite energy solutions to the fractional
  {L}ane-{E}mden-{F}owler equations with slightly subcritical exponents.
\newblock {\em Ann. Mat. Pura Appl. (4)}, 196(1):269--308, 2017.

\bibitem{ChKiLe2014}
W.~Choi, S.~Kim, and K.-A. Lee.
\newblock Asymptotic behavior of solutions for nonlinear elliptic problems with
  the fractional {L}aplacian.
\newblock {\em J. Funct. Anal.}, 266(11):6531--6598, 2014.

\bibitem{CoTa2004}
A.~Cotsiolis and N.~K. Tavoularis.
\newblock Best constants for {S}obolev inequalities for higher order fractional
  derivatives.
\newblock {\em J. Math. Anal. Appl.}, 295(1):225--236, 2004.

\bibitem{DaLoSi2017}
J.~D\'{a}vila, L.~L\'{o}pez~R\'{\i}os, and Y.~Sire.
\newblock Bubbling solutions for nonlocal elliptic problems.
\newblock {\em Rev. Mat. Iberoam.}, 33(2):509--546, 2017.

\bibitem{DNK21}
N.~De~Nitti and T.~König.
\newblock Energy asymptotics for the fractional {B}rézis--{N}irenberg problem
  in high-dimension.
\newblock {\em in preparation}, 2021.

\bibitem{MR2944369}
E.~Di~Nezza, G.~Palatucci, and E.~Valdinoci.
\newblock Hitchhiker's guide to the fractional {S}obolev spaces.
\newblock {\em Bull. Sci. Math.}, 136(5):521--573, 2012.

\bibitem{Druet2002}
O.~Druet.
\newblock Elliptic equations with critical {S}obolev exponents in dimension 3.
\newblock {\em Ann. Inst. H. Poincar\'{e} Anal. Non Lin\'{e}aire},
  19(2):125--142, 2002.

\bibitem{Esposito2004}
P.~Esposito.
\newblock On some conjectures proposed by {H}a\"{\i}m {B}rezis.
\newblock {\em Nonlinear Anal.}, 56(5):751--759, 2004.

\bibitem{FrKoKo2020}
R.~Frank, T.~König, and H.~Kovarik.
\newblock Energy asymptotics in the {B}rezis-{N}irenberg problem. {T}he
  higher-dimensional case.
\newblock {\em Mathematics in Engineering}, 2(1):119--140, 2020.

\bibitem{FrKoKo2021blow-up}
R.~L. {Frank}, T.~{K{\"o}nig}, and H.~{Kova{\v{r}}{\'\i}k}.
\newblock {Blow-up of solutions of critical elliptic equation in three
  dimensions}.
\newblock {\em arXiv e-prints}, page arXiv:2102.10525, Feb. 2021.

\bibitem{FrKoKo2021}
R.~L. {Frank}, T.~{K\"onig}, and H.~{Kova\v{r}\'{\i}k}.
\newblock {Energy asymptotics in the three-dimensional Brezis-Nirenberg
  problem}.
\newblock {\em {Calc. Var. Partial Differ. Equ.}}, 60(2):47, 2021.
\newblock Id/No 58.

\bibitem{MR3916700}
N.~Garofalo.
\newblock Fractional thoughts.
\newblock In {\em New developments in the analysis of nonlocal operators},
  volume 723 of {\em Contemp. Math.}, pages 1--135. Amer. Math. Soc.,
  [Providence], RI, 2019.

\bibitem{MR4232665}
Y.~Guo, B.~Li, A.~Pistoia, and S.~Yan.
\newblock The fractional {B}rezis-{N}irenberg problems on lower dimensions.
\newblock {\em J. Differential Equations}, 286:284--331, 2021.

\bibitem{Han1991}
Z.-C. Han.
\newblock Asymptotic approach to singular solutions for nonlinear elliptic
  equations involving critical {S}obolev exponent.
\newblock {\em Ann. Inst. H. Poincar\'{e} Anal. Non Lin\'{e}aire},
  8(2):159--174, 1991.

\bibitem{HeVa2001}
E.~Hebey and M.~Vaugon.
\newblock From best constants to critical functions.
\newblock {\em Math. Z.}, 237(4):737--767, 2001.

\bibitem{MR1705383}
E.~Jannelli.
\newblock The role played by space dimension in elliptic critical problems.
\newblock {\em J. Differential Equations}, 156(2):407--426, 1999.

\bibitem{Kwasnicki2019}
M.~Kwa\'{s}nicki.
\newblock Fractional {L}aplace operator and its properties.
\newblock In {\em Handbook of fractional calculus with applications. {V}ol. 1},
  pages 159--193. De Gruyter, Berlin, 2019.

\bibitem{LiLo2001}
E.~H. Lieb and M.~Loss.
\newblock {\em Analysis}, volume~14 of {\em Graduate Studies in Mathematics}.
\newblock American Mathematical Society, Providence, RI, second edition, 2001.

\bibitem{PaPi2014}
G.~Palatucci and A.~Pisante.
\newblock Improved {S}obolev embeddings, profile decomposition, and
  concentration-compactness for fractional {S}obolev spaces.
\newblock {\em Calc. Var. Partial Differential Equations}, 50(3-4):799--829,
  2014.

\bibitem{Rey1989}
O.~Rey.
\newblock Proof of two conjectures of {H}. {B}r\'{e}zis and {L}. {A}.
  {P}eletier.
\newblock {\em Manuscripta Math.}, 65(1):19--37, 1989.

\bibitem{Rey1990}
O.~Rey.
\newblock The role of the {G}reen's function in a nonlinear elliptic equation
  involving the critical {S}obolev exponent.
\newblock {\em J. Funct. Anal.}, 89(1):1--52, 1990.

\bibitem{RoSe2014-2}
X.~Ros-Oton and J.~Serra.
\newblock The extremal solution for the fractional {L}aplacian.
\newblock {\em Calculus of variations and partial differential equations},
  50(3):723--750, 2014.

\bibitem{SeVa2013}
R.~Servadei and E.~Valdinoci.
\newblock A {B}rezis-{N}irenberg result for non-local critical equations in low
  dimension.
\newblock {\em Commun. Pure Appl. Anal.}, 12(6):2445--2464, 2013.

\bibitem{MR3233760}
R.~Servadei and E.~Valdinoci.
\newblock On the spectrum of two different fractional operators.
\newblock {\em Proc. Roy. Soc. Edinburgh Sect. A}, 144(4):831--855, 2014.

\bibitem{SeVa2015}
R.~Servadei and E.~Valdinoci.
\newblock The {B}rezis-{N}irenberg result for the fractional {L}aplacian.
\newblock {\em Trans. Amer. Math. Soc.}, 367(1):67--102, 2015.

\bibitem{Silvestre2007}
L.~Silvestre.
\newblock Regularity of the obstacle problem for a fractional power of the
  {L}aplace operator.
\newblock {\em Communications on Pure and Applied Mathematics: A Journal Issued
  by the Courant Institute of Mathematical Sciences}, 60(1):67--112, 2007.

\bibitem{StTo2009}
P.~Stinga and J.~Torrea.
\newblock Extension problem and {H}arnack's inequality for some fractional
  operators.
\newblock {\em Communications in Partial Differential Equations}, 35:2092 --
  2122, 2009.

\bibitem{Tan2011}
J.~Tan.
\newblock The {B}rezis–{N}irenberg type problem involving the square root of
  the {L}aplacian.
\newblock {\em Calculus of Variations and Partial Differential Equations},
  42:21--41, 2011.

\end{thebibliography}

\vfill

\end{document}